\def\SOUL@ulstunderline#1{{%
    \setbox\z@\hbox{#1}%
    \dimen@=\wd\z@
    \dimen@i=\SOUL@uloverlap
    \advance\dimen@2\dimen@i
    \rlap{
        \null
        \kern-\dimen@i
        \SOUL@ulcolor{\SOUL@ulleaders\hskip\dimen@}%
    }%
    \SOUL@stpreamble
    \rlap{%
        \null
        \kern-\dimen@i
        \SOUL@ulcolor{\SOUL@ulleaders\hskip\dimen@}%
    }%
    \unhcopy\z@
}}
\def\SOUL@ulsteverysyllable{%
    \SOUL@ulstunderline{%
        \the\SOUL@syllable
        \SOUL@setkern\SOUL@charkern
    }%
}
\def\SOUL@ulstsetup{%
  \SOUL@ulsetup
  \let\SOUL@everysyllable\SOUL@ulsteverysyllable
}
\DeclareRobustCommand*\textulst{\SOUL@ulstsetup\SOUL@}
\renewcommand\labelenumi{(\roman{enumi})}
\renewcommand\theenumi\labelenumi
\numberwithin{equation}{section}
\newtheorem{theorem}{Theorem}[section]
\newtheorem{lemma}[theorem]{Lemma}
\newtheorem{corollary}[theorem]{Corollary}
\newtheorem{proposition}[theorem]{Proposition}
\newtheorem{assumption}{Assumption}
\theoremstyle{definition}
\newtheorem{definition}[theorem]{Definition}
\newtheorem{remark}[theorem]{Remark}
\renewcommand{\epsilon}{\varepsilon}
\newcommand{\Y}{Y}
\newcommand{\X}{X}
\newcommand{\A}{\mathcal{A}}
\newcommand{\B}{\mathcal{B}}
\newcommand{\C}{\mathbb{C}}
\newcommand{\D}{\mathcal{D}}
\newcommand{\E}{\mathbb{E}}
\newcommand{\F}{\mathcal{F}}
\newcommand{\G}{\mathcal{G}}
\newcommand{\bH}{\mathbb{H}}
\newcommand{\bN}{\mathbb{N}}
\newcommand{\R}{\mathbb{R}}
\renewcommand{\S}{\mathcal{S}}
\newcommand{\NC}{\operatorname{NC}}
\newcommand{\bcX}{\mathcal{X}}
\newcommand{\uy}{\mathbf{y}}
\newcommand{\uz}{\mathbf{z}}
\newcommand{\ux}{\mathbf{x}}
\newcommand{\Zb}{\mathfrak{b}}
\newcommand{\x}{\textbf{x}}
\newcommand{\id}{\operatorname{id}}
\newcommand{\tr}{\operatorname{tr}}
\newcommand{\sa}{{\operatorname{sa}}}
\newcommand{\1}{\mathbf{1}}
\newcommand{\changelocaltocdepth}[1]{%
  \addtocontents{toc}{\protect\setcounter{tocdepth}{#1}}%
  \setcounter{tocdepth}{#1}%
}
\def\moverlay{\mathpalette\mov@rlay}
\def\mov@rlay#1#2{\leavevmode\vtop{%
\baselineskip\z@skip \lineskiplimit-\maxdimen
\ialign{\hfil$#1##$\hfil\cr#2\crcr}}}
\def\@settitle{\begin{center}%
  \baselineskip14\p@\relax
    \normalfont \Large \uppercase{ \textbf{\@title}}
  \end{center}%
 }
\title[Berry-Esseen Bounds]{
Berry-Esseen bounds for the multivariate $\B$-free CLT \\ and operator-valued matrices
}
\author[M. Banna]{Marwa Banna}
\address{New York University Abu Dhabi, Division of Science, Mathematics, Abu Dhabi, UAE}
\email{marwa.banna@nyu.edu}
\author[T. Mai]{Tobias Mai}
\address{Saarland University, Department of Mathematics, D-66123 Saarbr\"{u}cken, Germany}
\email{mai@math.uni-sb.de}
\date{\today}
\thanks{This work has been partially supported by the ERC Advanced Grant NCDFP 339760 held by Roland Speicher. The authors would like to thank Guillaume C\'ebron and Roland Speicher for helpful discussions,  and the referee for the substantial suggestions/comments that helped improve on the paper.}
\keywords{noncommutative distributions, Berry-Esseen bounds, noncommutative polynomials, linear matrix pencils, operator-valued multivariate free CLT, operator-valued matrices, L\'evy distance, Kolmogorov distance, operator-valued semicircular family, Linearizations, Lindeberg method. }
\subjclass[2000]{46L54, 60B10, 46L53, 60B20}
\begin{document}

\begin{abstract}
We provide bounds of Berry-Esseen type for fundamental limit theorems in operator-valued free probability theory such as the operator-valued free Central Limit Theorem and the asymptotic behaviour of distributions of operator-valued matrices. Our estimates are on the level of operator-valued Cauchy transforms and the L\'evy distance. We address the single-variable as well as the multivariate setting for which we consider linear matrix pencils and noncommutative polynomials as test functions. The estimates are in terms of operator-valued moments and yield the first quantitative bounds on the L\'evy distance for the operator-valued free Central Limit Theorem. Our results also yield quantitative estimates on joint noncommutative distributions of operator-valued matrices having a general covariance profile. In the scalar-valued multivariate case, these estimates could be passed to explicit bounds on the order of convergence under the Kolmogorov distance.
\end{abstract}

\maketitle
\vspace{-1cm}
\section{Introduction}
Since its foundation in the 1980's by Voiculescu, the development of free probability theory has drawn much inspiration from its deep and far reaching analogy with classical probability theory. The same is true for operator-valued free probability, where the fundamental notion of free independence is generalized to free independence with amalgamation as a kind of conditional version of the former. Its development naturally led to operator-valued free analogues of key and fundamental limiting theorems such as the operator-valued free Central Limit Theorem (CLT) due to Voiculescu \cite{Voi-95, Speicher-98, BPV2013, Je-Li-19} and results about the asymptotic behaviour of distributions of matrices with operator-valued entries \cite{Voi-91,Shl-96, Sh-97, Ryan-98,Liu-18, Ba-Ce-18}. In this paper, we give quantitative versions of such limit theorems by providing bounds on the level of the operator-valued Cauchy transform and the L\'evy distance.

When applied to the particular case of the operator-valued free CLT, our results lead to operator-valued free analogues of the classical Berry-Esseen bounds(see Section \ref{section:free-CLT}); this continues \cite{Chi-Got-08} and \cite{Sp-07,Mai-Speicher-13}. Let $(\A, \varphi, E , \B)$ be an operator-valued $W^*$-probability space and  $x=\{x_1, \ldots, x_n\}$ be a family of selfadjoint elements in $\A$ that are free with amalgamation over $\B$ and are such that $E[x_j] =0$. Set $X_n= n^{-1/2} \sum_{j=1}^n x_j$ and let $S_n$ be an operator-valued semicircular element over $\B$ whose variance is given by the completely positive map $
\eta_n : \B \rightarrow \B$, $\eta_n(b) = n^{-1} \sum_{j=1}^n E[x_j b x_j]
$. Denoting by $\G^\B_a(\Zb) = E[(\Zb-a)^{-1}]$ the $\B$-valued Cauchy transform of a selfadjoint element $a$ in $\A$,
the bound which we will derive reads then as follows: for any $\Zb$ in the upper half plane $\bH^+(\B)$,
\[
\| \G^\B_{X_n}(\Zb) - \G^\B_{S_n}(\Zb) \| \leq \frac{1}{\sqrt{n}} \|\Im(\Zb)^{-1}\|^4 A_1(x),
\]
where $\Im(b)$ is the imaginary part of $\Zb$ and $A_1(x)$ depends only on the second and fourth $\B$-valued moments of the elements of the family $x$. As this bound also holds on the level of the fully matricial extensions of Cauchy transforms, it is sufficient to capture convergence in distribution over $\B$; see Remark \ref{rem:Cauchy-convergence}.
Furthermore, we give a first quantitative bound for the operator-valued CLT in terms of the L\'evy distance. Indeed, we prove that there exists a universal constant $c>0$ such that
\[
L(\mu_{X_n}, \mu_{\S_n}) \leq c A_1(x)^{1/7} n^{-1/14}
\]
where $L(\mu_{X_n}, \mu_{\S_n})$ denotes the L\'evy distance between the analytic distributions $\mu_{X_n}$ and $\mu_{S_n}$ of $X_n$ and $S_n$ respectively. It is worth mentioning that this bound on the L\'evy distance holds without requiring any regularity conditions on $\mu_{S_n}$.
Moreover, it extends to the setting of unbounded operators as it does not depend on the norms of the operators involved; see Proposition \ref{prop:Op-freeCLT_unbounded}.

A natural step afterwards is to consider extensions of the multivariate setting to the operator-valued realm and investigate joint distributions. Indeed, noncommutative distributions transfer the well established notion of joint distributions known from classical probability theory to the realm of noncommutative probability. However, in the noncommutative setting,
the definition of these joint distributions is purely combinatorial in nature, in contrast to classical probability theory where the joint distribution of $n$ commuting random variables can be described as a Borel probability measure on $\R^n$. Apart the particular case of commuting variables, including that of a single selfadjoint or normal noncommutative random variable, one cannot encode a noncommutative distribution analytically via a compactly supported real probability measure. Nonetheless, much work has been done in recent years to uncover the still existing rich analytic structure of noncommutative distributions. A typical and also  successful approach is to consider noncommutative test functions and study for each evaluation the distribution of the single noncommutative random variable produced by means of measure theory.

Our aim is to study noncommutative joint distributions of correlated sums in freely independent elements over $\B$ that are not necessarily identically distributed. More precisely, our object of interest is of the form
$
\big(  \sum_j x^{(1)}_j, \dots ,  \sum_j x^{(d)}_j  \big)
$
for which general correlations are allowed between different sums. We consider two classes of test functions, namely selfadjoint linear matrix pencils and noncommutative polynomials, and give explicit estimates on the associated Cauchy transforms and on the L\'evy distance. The case of linear matrix pencils follows from the single-variable operator-valued setting described above and hence the bounds are in terms of the second and fourth $\B$-valued moments. However, this is not the case for noncommutative polynomials which thus need to be treated separately. In this case, the operator norm of the variables appears in the estimates but only as a non-leading term. The leading terms are again in terms of the second and fourth $\B$-valued moments.

We emphasize in this paper on two immediate consequences of this general framework: the operator-valued multivariate free CLT and joint distributions of operator-valued matrices having a general covariance profile.  

The novelties that our approach provides on the $\B$-valued multivariate free CLT, proved in \cite{Voi-95, Speicher-98}, are the explicit quantitative estimates on the operator-valued Cauchy transforms in terms of moments with the optimal rate of convergence. In particular, the estimate on the operator-valued Cauchy transform yields convergence in $*$-distribution over $\B$. Moreover, the estimate on the scalar-valued Cauchy transform establishes the first quantitative bound on the L\'evy distance that holds without demanding any regularity conditions on the analytic distribution of the limiting object. In the scalar-valued multivariate case, i.e. when $\B=\C$, our results also yield explicit bounds on the order of convergence in terms of the Kolmogorov distance. 

As for operator-valued matrices, we are interested in studying joint noncommutative distributions of families of correlated matrices having a general covariance profile. Indeed, Voiculescu proved in his fundamental paper \cite{Voi-91} that a family of independent GUE matrices is asymptotically free and converges in $*$-distribution to free semicircular elements. This result was then extended to families of free operator-valued Wigner matrices \cite{Shl-96, Sh-97, Ryan-98,Liu-18}. Our results provide immediately the first quantitative estimates on scalar-valued Cauchy transforms when considering linear matrix pencils and noncommutative polynomials as test functions. The entries of each individual matrix are free with amalgamation over $\B$ but correlations are allowed between the matrices themselves. These explicit bounds give, when $\B= \mathbb{C}$, quantitative estimates on the order of convergence under the Kolmogorov distance.

To obtain quantitative bounds on Cauchy transforms, we extend and refine an operator-valued Lindeberg method by blocks. In the noncommutative setting, this method was employed for approximation purposes on mixed moments in \cite{Kargin} to generalize Voiculescu's free CLT and in \cite{Deya-Nourdin, Simone} to prove an invariance principle for multilinear homogeneous sums in free elements. It was recently extended for approximations on Cauchy transforms in \cite{Ba-Ce-18} to study distributions of operator-valued matrices with free or exchangeable entries.  
However, using the same machinery in the setting of noncommutative polynomials is not directly applicable since linearity is an essential ingredient for the Lindeberg method. For this purpose, we use the linearization trick that allows to pass from a problem involving arbitrary polynomials to a problem about linear polynomials but with variables living in an amplified probability space. Then our operator-valued extension of the Lindeberg method, together with the linearization trick and essential estimates on the linearization matrix, provide quantitative and explicit estimates on Cauchy transforms that can be passed onto the L\'evy distance. Our results can be extended to study Chebyshev sums; this would be the subject of a future project. 
 
The paper is organized as follows: The general estimates in the operator setting are stated in Theorem \ref{theo1:Lin-Lin:freeness} 
followed by applications to the operator-valued free CLT in Theorem \ref{theo:Op-freeCLT} and operator-valued Wigner matrices in Theorem \ref{theo:op-matrices}. An introduction to noncommutative joint distributions and how they can be studied with the help of suitable noncommutative test functions is given in Section \ref{section:Lin,-Lin} before stating our main results on the multivariate setting in Theorems \ref{theo:Linear-pencil:freeness} and \ref{theo:Lin-Lin:freeness} for linear matrix pencils and noncommutative polynomials respectively. Applications to the multivariate $\B$-free CLT and distributions of families of operator-valued matrices are then given in Sections \ref{section:MCLT}, \ref{section:matrices} and \ref{section:Wigner}. The proofs of the main theorems are postponed to Sections \ref{section:proof-operator-valued} and \ref{section:proof-Polynomials}, whereas Section \ref{section:useful-lemmas} is dedicated to prove useful lemmas that yield key estimates and that could be of independent interest. Finally, we recall in Section \ref{section:preliminaries} preliminary results from free probability theory that are essential for our purposes and present in Section \ref{section:linearizations} some background on the linearization trick. 

\setcounter{tocdepth}{1}
\tableofcontents

\section{Preliminaries and notations}\label{section:preliminaries}

In this section, we give a brief introduction to some basic concepts of free probability theory, both in the scalar and the operator-valued settings. Our exposition relies mainly on \cite{Ni-Di-Sp-operator-valued, Nica-Speicher, Speicher-98}.

\subsection{Scalar and operator-valued probability spaces}

In the most basic and purely algebraic setting, a \emph{noncommutative probability space} means a pair $(\A,\varphi)$ of a unital complex algebra $\A$ and a unital linear functional $\varphi: \A \to \C$. Elements of $\A$ are considered as \emph{noncommutative random variables} and $\varphi$ is referred to as the \emph{expectation} on $\A$.

For our purpose, we need to impose in addition some topological structure. Such an analytic setting is provided for instance by \emph{$C^\ast$-probability spaces}, which are pairs $(\A,\varphi)$ consisting of a unital $C^\ast$-algebra $\A$ and a distinguished state $\varphi: \A \to \C$ on $\A$.
Another framework of this kind are tracial $W^\ast$-probability spaces. Recall that $(\A,\varphi)$ is said to be a \emph{tracial $W^*$-probability space} if $\A$ is a von Neumann algebra and $\varphi: \A \rightarrow \mathbb{C}$ is a faithful normal tracial state on it.
The \emph{$L^p$-norms} are defined, for all $p\geq 1$ and all $x\in \A$, by $\|x\|_{L^p(\A,\varphi)}=[\varphi(|x|^p)]^{1/p}$, where $|x|=(x^*x)^{1/2}$. We simply denote $\|\cdot\|_{L^p(\A,\varphi)}$ by $\|\cdot\|_{L^p}$ when the context is sufficiently clear.
If $(\A_1, \varphi_1)$ and $(\A_2,\varphi_2)$ are two tracial $W^*$-probability spaces, the tracial $W^*$-probability space $(\A_1 \otimes \A_2,\varphi_1 \otimes \varphi_2)$ is the tensor product von Neumann algebra $\A_1 \otimes \A_2$ endowed with the unique faithful normal tracial state $\varphi_1 \otimes \varphi_2$ such that $\varphi_1 \otimes \varphi_2(x\otimes y)=\varphi_1(x)\varphi_2(y)$ for all $x\in \A_1$ and $y\in \A_2$.
For all $p\in [1,\infty]$ we have
$$\|x\otimes y\|_{L^p(\A_1 \otimes \A_2,\varphi_1 \otimes \varphi_2)} = \|x\|_{L^p(\A_1, \varphi_1)}\cdot \|y\|_{L^p(\A_2, \varphi_2)}.$$

An \emph{operator-valued probability space $(\A,E,\B)$} consists of a unital complex algebra $\A$, a unital complex subalgebra $\B$ of $\A$, which is unitally embedded, and a \emph{conditional expectation $E: \A\to\B$}, i.e., a unital linear map $E: \A\to\B$ satisfying:
\begin{itemize}
  \item $E[b] = b$ for all $b\in\B$ and
  \item $E[b_1 x b_2] = b_1 E[x] b_2$ for all $x\in\A$, $b_1,b_2\in\B$.
\end{itemize}
Operator-valued conditional expectations can be seen as natural noncommutative analogues of conditional expectations known in classical probability.

An \emph{operator-valued $C^\ast$-probability space} means an operator-valued probability space $(\A,E,\B)$ which consists of a unital $C^\ast$-algebra $\A$, a unital $C^\ast$-subalgebra $\B$ of $\A$ which is unitally embedded in $\A$, and a conditional expectation $E: \A\to \B$ which is moreover positive in the sense that $E[a^\ast a]$ is a positive element in $\B$ for each $a\in\A$.

If $(\A,\varphi)$ is a tracial $W^\ast$-probability space and $\B$ a von Neumann subalgebra of $\A$, then there exists a unique conditional expectation $E: \A \to \B$ which is trace preserving in the sense that $\varphi \circ E = \varphi$. Then, the quadruple $(\A,\varphi,E,\B)$ is referred to as an \emph{operator-valued $W^\ast$-probability space}.

\subsection{Scalar and operator-valued Cauchy transforms}\label{subsec:Cauchy_transform}

Let $\A$ be a unital $C^\ast$-algebra. For any element $x\in \A$, we denote by $G_x$ the \emph{resolvent} of $x$ given by $G_x(\Zb)=(\Zb -x)^{-1}$ for any $\Zb \in \A$ such that $\Zb -x$ is invertible in $\A$. If $x=x^*$, then $z \1 - x$ is invertible in $\A$ for all $z\in \mathbb{C}^+$, where $\C^+ := \{z\in \mathbb{C} \mid \Im (z)>0\}$, and if $(\A,\varphi)$ is a $C^\ast$-probability space, we have
$$\G_x(z) := \varphi[G_x(z)] = \int_\mathbb{R}\frac{1}{z-\lambda} \text{d} \mu_x (\lambda),$$
where $\mu_x$ is the \emph{analytic distribution} of $x$, i.e., the unique probability measure on $\mathbb{R}$ with the same moments as $x$.
Note that $\G_x = \varphi \circ G_x: \C^+ \to \C^-$, where $\mathbb{C}^- := \{z\in \mathbb{C} \mid \Im (z)<0\}$, is the \emph{Cauchy transform} of the analytic distribution $\mu_x$, which determines $\mu_x$ completely. As a consequence, the pointwise convergence of $(\G_{x_n})_{n\in\bN}$ to the Cauchy transform  of a measure $\nu$ implies the weak convergence of the sequence of analytic distributions $(\mu_{x_n})_{n\in\bN}$ to $\nu$. We refer to $\G_x$ as the \emph{scalar-valued Cauchy transform}, or simply the \emph{Cauchy transform}, of $x$.

For later use, we record here that, for each $x=x^\ast\in\A$ and every $\epsilon>0$,
\begin{equation}\label{eq:Cauchy-integral}
\int_\R \|G_x(t+i\epsilon)\|^2_{L^2}\, \mathrm{d} t = \frac{\pi}{\epsilon}.
\end{equation}
Indeed, the resolvent identity yields that, for every $z\in\C^+$,
\[
\|G_x(z)\|^2_{L^2} = \varphi\big((\overline{z}-x)^{-1} (z-x)^{-1}\big) = - \frac{\Im(\G_x(z))}{\Im(z)},
\]
from which \eqref{eq:Cauchy-integral} follows, using that $-\frac{1}{\pi} \Im(\G_x(t+i\epsilon))\, \mathrm{d} t$ is a probability measure for every $\epsilon>0$.

Cauchy transforms also play an important role in the analytic treatment of operator-valued free probability theory. Let $(\A,E,\B)$ be an operator-valued $C^\ast$-probability space. We call
$$\bH^\pm(\B) := \{\Zb\in\B \mid \exists \varepsilon>0:\ \pm\Im(\Zb) \geq \varepsilon \1\}$$
the upper and lower half-plane of $\B$, respectively, where we use the notation $\Im(\Zb) := \frac{1}{2i}(\Zb-\Zb^\ast)$. The \emph{$\B$-valued Cauchy transform} of $x$ is the function $\G_x^\B: \bH^+(\B) \to \bH^-(\B)$ defined by
$$\G_x^\B(\Zb) := E[G_x(\Zb)] = E[(\Zb-x)^{-1}] \qquad\text{for all $\Zb\in\bH^+(\B)$.}$$

\subsection{Scalar and operator-valued noncommutative distributions}\label{section:nc-distributions}

The joint noncommutative distribution $\mu_x$ of a family $x=(x_i)_{i\in I}$ of noncommutative random variables in the noncommutative probability space $(\A,\varphi)$ is given as the collection of all joint moments, i.e.,
\[
\mu_x=\big\{\varphi(x_{i_1} \cdots x_{i_k}) \mathrel{\big|} k\in\mathbb{N}_0,\ i_1,\dots,i_k\in I\big\}.
\]
Let $(\A_n, \varphi_n)$ and $(\A, \varphi)$ be noncommutative probability spaces and consider, for each $i\in I$, the random variables $x_n^{(i)} \in \A_n$ and $x_i \in \A$. We say that $(x_n^{(i)})_{i\in I}$ converges in distribution to $(x_i)_{i\in I}$ and  write $(x_n^{(i)})_{i\in I} \stackrel{d}{\longrightarrow} (x_i)_{i\in I}$ if all joint moments of $(x_n^{(i)})_{i\in I}$  converge to the corresponding joint moments of $(x_i)_{i\in I}$ ; i.e., if for any $k\in \mathbb{N}_0$ and $i_1,\dots,i_k\in I$
\[
\lim_{n\rightarrow\infty} \varphi(x_n^{(i_1)} \cdots x_n^{(i_k)}) =
  \varphi(x_{i_1} \cdots x_{i_k}).
\]
We say that $(x_n^{(i)})_{i\in I}$ converges in $*$-distribution to $(x_i)_{i\in I}$ and write $(x_n^{(i)})_{i\in I} \stackrel{~^*d}{\longrightarrow} (x_i)_{i\in I}$ whenever $(x_n^{(i)}, x_n^{(i)*})_{i\in I} \stackrel{d}{\longrightarrow} (x_i, x_i^*)_{i\in I}$. 

Similarly, the $\B$-valued joint distribution $\mu_x^\B$ of a family $x=(x_i)_{i\in I}$ in the operator-valued probability space $(\A,E,\B)$ is given as the collection of all $\B$-valued joint moments, i.e.,
$$ \mu_x^\B=\big\{E[x_{i_1} b_1 x_{i_2} \cdots b_{k-1} x_{i_k} ] \mathrel{\big|} k\in\mathbb{N}_0,\ i_1,\dots,i_k\in I, b_1, \dots , b_{k-1} \in \B \big\}. $$
For any $k\geq 1$, we denote by $m_k^{x_{1}, \dots , x_k}$ the multilinear map given by 
\begin{equation}\label{moment-map}
m_k^{x_{1}, \dots , x_k}: \B^{k-1} \to \B, 
 \qquad 
(b_1, \dots , b_{k-1}) \mapsto E[x_{1} b_1 x_{2} \cdots x_{{n-1}} b_{k-1} x_k].
 \end{equation}
If $(\A,E,\B)$ is an operator-valued $C^\ast$-probability space, then the norm of $m_k^{x_{1}, \dots , x_k}: \B^{k-1} \to \B$ is trivially bounded by $\|m_k^{x_{1}, \dots , x_k}\| \leq \prod_{j=1}^k  \|x_{j}\|$ where we recall that the norm $\|\Phi\|$ of a multilinear map $\Phi: \B^{k-1} \to \B$ is given by
\begin{equation}\label{eq:multilinear-map-norm} 
\|\Phi\| := \sup_{\|b_1\| \leq 1,\ \dots,\ \|b_{k-1}\|\leq 1} \|\Phi(b_1,\dots,b_{k-1})\|.
\end{equation}
We shall simply write $m_k^x$ instead of $m_k^{x_{1}, \dots , x_k}$ whenever $x_{1}=\dots =x_k=x$.

Let $(\A_n,E,\B)$ and $(\A,E,\B)$ be operator-valued $C^\ast$-probability spaces and consider, for each $i\in I$, the random variables $x_n^{(i)} \in \A_n$ and $x_i \in \A$. We say that $(x_n^{(i)})_{i\in I}$ converges in distribution over $\B$ to $(x_i)_{i\in I}$ and write $(x_n^{(i)})_{i\in I} \stackrel{\B-d}{\longrightarrow} (x_i)_{i\in I}$ if all joint $\B$-moments of $(x_n^{(i)})_{i\in I}$  converge in norm to the corresponding joint $\B$-moments of $(x_i)_{i\in I}$ ; i.e., if for any $k\in \mathbb{N}_0$ and $i_1,\dots,i_k\in I$, $b_1,\dots,b_{k-1} \in \B$
\[
\lim_{n\rightarrow\infty} \big\| m_k^{x_n^{(i_1)}, \dots , x_n^{(i_k)}}(b_1,\dots,b_{k-1}) - m_k^{x_{i_1}, \dots , x_{i_k}}(b_1,\dots,b_{k-1}) \big\|=0. 
\]
We say that $(x_n^{(i)})_{i\in I}$ converges in $*$-distribution over $\B$  to $(x_i)_{i\in I}$ and write $(x_n^{(i)})_{i\in I} \stackrel{\B-^*d}{\longrightarrow} (x_i)_{i\in I}$ whenever $(x_n^{(i)},x_n^{(i)*})_{i\in I} \stackrel{\B-d}{\longrightarrow} (x_i,x_i^*)_{i\in I}$.

\begin{remark}\label{rem:Cauchy-convergence}
Consider an operator-valued $C^\ast$-probability space $(\A,E,\B)$ and $X=X^\ast\in\A$. For every $k\in\bN$, we can construct out of $(\A,E,\B)$ the $C^\ast$-probability space $(M_k(\A),\id_k \otimes E,M_k(\B))$; thus, besides the $\B$-valued Cauchy-transform $\G^\B_X: \bH^+(\B) \to \bH^-(\B)$, we have at our disposal also their matricial extensions $\G^{M_k(\B)}_{\1_k \otimes X}: \bH^+(M_k(\B)) \to \bH^-(M_k(\B))$ for every $k\in\bN$.
The relevance of the so-called \emph{fully matricial extension $(\G^{M_k(\B)}_{\1_k \otimes X})_{k\in\bN}$ of $\G^\B_X$} comes from the fact that it can be used to detect convergence of $\B$-valued distributions. To make this more precise, we take operator-valued $C^\ast$-probability spaces $(\A_n,E_n,\B)$ for $n\in\bN$ and $(\A,E,\B)$ over some common $C^\ast$-algebra $\B$, and we consider operators $X_n=X_n^\ast\in\A_n$ for every $n\in\bN$ and $X=X^\ast\in\A$. If $\sup_{n\in\bN} \|X_n\| < \infty$ and if, for every $k\in\bN$, the sequence $(\G^{M_k(\B)}_{\1_k \otimes X_n})_{n\in\bN}$ converges to $\G^{M_k(\B)}_{\1_k\otimes X}$ uniformly on every ball in $\bH^+(M_k(\B))$ which lies at positive distance from $\partial \bH^+(M_k(\B))$, then necessarily
\begin{equation}\label{eq:norm-convergence_moments}
\lim_{n\to\infty} \|m_k^{X_n} - m_k^{X}\| = 0 \qquad\text{for each $k\in\bN_0$}
\end{equation}
and in particular $X_n \stackrel{\B-d}{\longrightarrow} X$; this follows from \cite[Proposition 2.11]{BPV2012}.

If the assumption of selfadjointness is dropped, we may apply the previous consideration to the \emph{hermitizations} $(\tilde{X}_n)_{n\in\bN}$ and $\tilde{X}$ of $(X_n)_{n\in\bN}$ and $X$ which are defined by
$$\tilde{X}_n := \begin{bmatrix} 0 & X_n\\ X_n^\ast & 0 \end{bmatrix} \qquad\text{and}\qquad \tilde{X} := \begin{bmatrix} 0 & X\\ X^\ast & 0 \end{bmatrix}$$
and which belong to the operator-valued $C^\ast$-probability spaces $(M_2(\A_n), \id_2 \otimes E_n, M_2(\B))$ and $(M_2(\A), \id_2 \otimes E, M_2(\B))$, respectively. Notice that $\tilde{X}_n \stackrel{M_2(\B)-d}{\longrightarrow} \tilde{X}$ if and only if $X_n \stackrel{\B-^\ast d}{\longrightarrow} X$.

Now, suppose that we are given a sequence $(\ux_n)_{n\in\bN}$ of $d$-tuples $\ux_n=(x_n^{(1)},\dots,x_n^{(d)}) \in \A_n^d$ and $\ux = (x_1,\dots,x_d) \in \A^d$. Consider the noncommutative random variables
$$X_n := \operatorname{diag}(\ux_n) \qquad\text{and}\qquad X := \operatorname{diag}(\x)$$
which are living in the operator-valued $C^\ast$-probability spaces $(M_d(\A_n), \id_d \otimes E_n, M_d(\B))$ and $(M_d(\A), \id_d \otimes E, M_d(\B))$, respectively. Then $X_n \stackrel{M_d(\B)-^\ast d}{\longrightarrow} X$ if and only if $\ux_n \stackrel{\B-^\ast d}{\longrightarrow} \ux$.
\end{remark}

\subsection{Positivity of conditional expectations}

We recall now some properties that will be useful in the sequel. Let $(\A,E,\B)$ be an operator-valued $C^\ast$-probability space. Since the conditional expectation $E$ is positive, it induces a $\B$-valued pre-inner product
$$\langle \cdot,\cdot\rangle:\ \A\times\A \rightarrow \B,\ (x,y) \mapsto E[x^\ast y]$$
with respect to which $\A$ becomes a right pre-Hilbert $\B$-module. In particular, we have the following analogue of the Cauchy-Schwarz inequality:
\begin{equation}\label{Cauchy-Schwarz}
\|E[x^\ast y]\|^2 \leq \|E[x^\ast x]\| \|E[y^\ast y]\|
\end{equation}
More generally, for any $n\in\bN$, we can turn $\A^n$ into a right pre-Hilbert $\B$-module by endowing it with the $\B$-valued pre-inner product
$$\langle \cdot,\cdot\rangle:\ \A^n \times\A^n \rightarrow \B,\ \bigg(\begin{bmatrix} x_1\\ \vdots\\ x_n \end{bmatrix}, \begin{bmatrix} y_1\\ \vdots\\ y_n \end{bmatrix} \bigg) \mapsto \sum^n_{j=1} E[x^\ast_j y_j].$$
If we impose the ordinary rules of matrix multiplication, we can write $\langle x, y \rangle = E[x^\ast y]$ for all $x,y\in\A^n$. Therefore, the Cauchy-Schwarz inequality formulated in \eqref{Cauchy-Schwarz} holds verbatim for (column vectors) $x,y\in\A^n$.
The positivity of $E$ implies moreover the following important inequality
\begin{equation}\label{positivity}
\| E[x^\ast w x] \| \leq \|w\| \|E[x^\ast x]\|
\end{equation}
which holds for all $x\in\A^n$ and $w\in M_n(\A)$ satisfying $w\geq 0$.

\begin{remark}\label{rem:Cauchy-moment}
Consider a selfadjoint operator $x$ in some operator-valued $C^\ast$-probability space $(\A,E,\B)$. For each $N\in\bN_0$ and for all $z\in\C^+$, we have the expansion
\begin{equation}\label{eq:Cauchy-moment-expansion}
\G_x^\B(z\1) = E[G_x(z\1)] = \sum^N_{k=0} \frac{1}{z^{k+1}} E[x^k] + \frac{1}{z^{N+1} }E[G_x(z\1) x^{N+1}].
\end{equation} 
With the help of \eqref{Cauchy-Schwarz}, we infer from \eqref{eq:Cauchy-moment-expansion} that
\begin{equation}\label{eq:Cauchy-moment-expansion-remainder-bound}
\Big\|\G_x^\B(z\1) - \sum^N_{k=0} \frac{1}{z^{k+1}} \E[x^k]\Big\| \leq \frac{1}{|z|^{N+1}} \|E[G_x(z\1) x^{N+1}]\| \leq \frac{1}{|z|^{N+1}\Im(z)} \|E[x^2]\|^{1/2} \|E[x^{2N}]\|^{1/2}.
\end{equation}
Inductively, it follows from the latter estimates that if $(x_n)_{n\in\bN}$ is a sequence of selfadjoint operators in $\A$ such that $\sup_{n\in\bN} \|x_n\| < \infty$ and such that $(\G_{x_n}^\B)_{n\in\bN}$ converges pointwise on $\bH^+(\B)$ to $\G_x^\B$, then $E[x^k_n] \to E[x^k]$ as $n\to \infty$ for every $k\in\bN_0$.
Now, suppose more specifically that we have
$$\|\G_{x_n}^\B(z\1) - \G_x^\B(z\1)\| \leq \epsilon_n \Big(1+\frac{1}{\Im(z)}\Big)^4 \qquad\text{for all $z\in\C^+$}$$
for some sequence $(\epsilon_n)_{n\in\bN}$ of positive real numbers satisfying $\lim_{n\to \infty} \epsilon_n = 0$; see Remark \ref{rem:convergence_polynomial}. Take $\sigma>0$ such that $\sigma^2 \geq \sup_{n\in\bN}\|E[x_n^2]\|$; then $\|E[x^2]\| \leq \sigma^2$ and \eqref{eq:Cauchy-moment-expansion-remainder-bound} with $N=1$ yields for $z=iy$ with $y>0$ that
$$\|E[x_n] - E[x]\| \leq \frac{2\sigma^2}{y} + \epsilon_n \Big(1+\frac{1}{y}\Big)^4 y^2.$$
Take any $r>0$ and let $n_0\in\bN$ be such that $\epsilon_n < \sigma^2 r^{-3} (1+\frac{1}{r})^{-4}$ for all $n\in\bN$ satisfying $n\geq n_0$; then $\|E[x_n] - E[x]\| \leq f(y)$ for all $y>r$ where $f:(r,\infty)\to\R$ is defined by $f(y) := \frac{2\sigma^2}{y} + \epsilon_n (1+\frac{1}{r})^4 y^2$. The function $f$ attains its minimum $f(y_0) = 3(1+\frac{1}{r})^{4/3} \sigma^{4/3} \epsilon_n^{1/3}$ at the point $y_0 = \sigma^{2/3} (1+\frac{1}{r})^{-4/3} \epsilon_n^{-1/3} \in (r,\infty)$. Thus, for all $n\geq n_0$,
$$\|E[x_n] - E[x]\| \leq 3 \Big(1+\frac{1}{r}\Big)^{4/3} \sigma^{4/3} \epsilon_n^{1/3}.$$
\end{remark}

\subsection{Freeness and freeness with amalgamation}\label{subsec:freeness}

Let $(\A,\varphi)$ be a noncommutative probability space. We say that a family $(\A_i)_{i\in I}$ of unital subalgebras of $\A$ is \emph{freely independent}, if
$$\varphi(x_1 \cdots x_n) = 0$$
whenever we take any finite number $n\in\bN$ of the elements $x_1,\dots,x_n$ which satisfy $\varphi(x_j) = 0$ for $j=1,\dots,n$ and $x_j \in \A_{i_j}$ where $i_j \in I$ and $i_1 \neq i_2, \dots, i_{n-1} \neq i_n.$
Elements $(x_i)_{i\in I}$ are called \emph{freely independent} if the unital subalgebras generated by the $x_i$'s are freely independent.

Similarly we define freeness with amalgamation over $\B$ in the framework of an operator-valued noncommutative probability space $(\A,E,\B)$. We say that the unital subalgebras $(\mathcal{A}_i)_{i\in I}$ of $\A$ with $\B \subseteq \A_i$ for each $i\in I$, are \emph{free with amalgamation over $\B$} if
$$E[x_1 \cdots x_n] = 0$$
holds whenever we take finitely many elements $x_1,\dots,x_n$ in $\A$ satisfying $E[x_j] = 0$ for $j=1,\dots,n$ and $x_j \in \A_{i_j}$ where $i_j \in I$ and $i_1 \neq i_2, \dots, i_{n-1} \neq i_n.$
Elements $(x_i)_{i\in I}$ are called \emph{free with amalgamation over $\B$} if the algebras generated by $\B$ and the $x_i$'s are also so.

\subsection{Scalar and operator-valued moment-cumulant formulas}

The characteristic combinatorial structure behind free probability theory are non-crossing partitions \cite[Lecture 9]{Nica-Speicher}. 
For a finite ordered set $S$, we denote by $\NC(S)$ the set of non-crossing partitions of $S$ and simply write $\NC(n)$ if $S=[n]:=\{1 , \dots , n\}$. 
For any  disjoint sets $S_1$ and $ S_2$ such that $S_1 \cup S_2 = [n]$, we denote by $\NC(S_1,S_2)$ the set  consisting of all non-crossing partitions in  $\NC(n)$ whose blocks are either subsets of $S_1$ or  $S_2$; i.e.,
\[
\NC(S_1,S_2) = \{ \pi \in \NC(n) \mid \pi = \pi_1 \cup \pi_2,\ \pi_1 \in \NC(S_1),\ \pi_2 \in \NC(S_2) \}.
\]
Let $(\A,\varphi)$ be a noncommutative probability space. For $n\in\bN$, the \emph{free cumulants} $\kappa_n: \A^n \to \C$ are multilinear functionals, defined inductively by the \emph{moment cumulant formula}
\begin{equation}\label{moment-cumulant-for}
\varphi(x_1 \cdots x_n) = \sum_{\pi\in\NC(n)} \kappa_\pi(x_1,\dots,x_n)
\end{equation}
where, for $\pi= \{V_1, \dots , V_r\}$, 
\[
\kappa_\pi(x_1,\dots,x_n) : = \prod_{\substack{V \in \pi \\ V=(i_1 , \dots , i_\ell)} }\kappa_\ell (x_{i_1} , \dots , x_{i_\ell}) . 
\]

There exists an operator-valued analogue of the moment cumulant formula. It is also based on the lattice of non-crossing partitions but the definition of cumulants gets slightly more involved as one has to take care now of the order of the variables. In fact, if $(\A,E,\B)$ is an operator-valued probability space, the \emph{operator-valued free cumulants} $\kappa^\B_n: \A^n \rightarrow \B$ for $n\in\bN$ are defined inductively by the moment cumulant formula
\[
E[ x_1 \cdots x_n] = \sum_{\pi \in \NC(n)} \kappa^\B_\pi (x_1, \dots , x_n),
\]
where the arguments of $\kappa^\B_\pi$ are distributed according to the blocks of $\pi$ by nesting the cumulants inside each other according to the nesting of the blocks of $\pi$. In other words, the $\B$-valued cumulant $\kappa^\B_\pi$ needs to remember the position of any block of the non-crossing partition $\pi$. We refer to \cite[Section 9.2]{Mingo-Speicher} for a more detailed presentation of operator-valued free cumulants. 

Free independence, both in the scalar and in the operator-valued case, provides rules to compute mixed moments; \cite[Section 3.4]{Speicher-98} makes this explicit:  
Let $x_1,\dots,x_n$ be elements in $\A_1 \cup \A_2 \subset \A$  with $\B\subset \A_1$, $\B\subset \A_2$ and $\A_1,\A_2 $ are freely independent with amalgamation over $\B$. Setting $S_1 := \{i \mid x_i \in \A_1 , 1\leq i \leq n\}$ and $S_2 := \{i \mid x_i \in \A_2, 1\leq i \leq n\}$ then
\begin{equation}\label{operator-moment-cumulant-for}
E[x_1 \cdots x_n] 
= \sum_{\pi \in\NC(S_1)} (  \kappa^\B_{\pi} \cup E_{\pi^c} )[x_1, \dots , x_n],
\end{equation}
where  $\pi^c$ denotes the maximal element $\sigma$ in $\NC(S_2)$ satisfying the condition $\pi\cup \sigma \in \NC(S_1,S_2)$, and where $ \kappa^\B_{\pi} \cup E_{\pi^c}$ acts on blocks of $\pi$ as $\kappa^\B$ and on blocks of $\pi^c$ as $E$. For instance, if $\{x_1, x_2 , x_3 \}$ and $\{y_1, y_2\}$ freely independent with amalgamation over $\B$ with $E[y_1] =E[y_2] = 0$ then 
\begin{equation}\label{operator-moment-cumulant-examples}
E[x_1 y_1 x_2] = E[x_1 E[y_1] x_2]= 0
\quad
\text{and} 
\quad
 E[x_1 y_1 x_2 y_2 x_3] = E[x_1 E[y_1 E[x_2] y_2] x_3]  .
 \end{equation}
 For a proof of \eqref{operator-moment-cumulant-for} and a complete presentation of the combinatorial aspect of freeness with amalgamation, we refer to \cite[Chapter 3]{Speicher-98}.

\subsection{Scalar and operator-valued circular and semicircular families}

We say that $\{s_1, \dots , s_d \}$ $ \subset \A$ is a \emph{centered semicircular family} of covariance $C = (c_{k\ell})_{k,\ell =1}^d$ if for any $n \geq 1$ and any $k_1, \dots ,k_n \in [d]$:
\[
\kappa_n[s_{k_1}, \dots ,s_{k_n}] = 0 \text{  if  } n >2, \quad \kappa_1[s_k]= \varphi(s_k)=0  \quad \text{and}  \quad \kappa_2[s_k, s_\ell]=  \varphi(s_ks_\ell) =: c_{k\ell}.
\]
If $C$ is diagonal then $\{s_1, \dots , s_d \}$ is a free semicircular family.

We say that a family $\{S_1 , \dots , S_d\} \subset \A$ is a \emph{centered operator-valued semicircular family} over $\B$, or simply a centered $\B$-valued  semicircular family, with covariance given by the completely positive map 
\[
\eta: \B \rightarrow M_{d}(\B), \qquad b \mapsto \big(\eta_{k,\ell} ( b) \big)_{k,\ell=1}^d
\]
 if  for any  $k,\ell, k_1, \dots , k_n \in [d]$ and  $b, b_1 , \dots , b_{n-1} \in \B$:
\[
\kappa_n^\B[S_{k_1}b_1, \dots, S_{k_{n-1}} b_{n-1} ,S_{k_n}] = 0 \text{  if  } n >2,\]
\[
 \kappa_1^\B[S_k]= E(S_k)=0   \qquad \text{and}  \qquad \kappa^\B_2[S_k b , S_\ell]= E(S_k b S_\ell)=:  \eta_{k,\ell}(b).
\]
If $\eta$ is diagonal, i.e. $\eta_{k,\ell} \equiv 0$ for any $k \neq \ell $, then $S_1 , \dots , S_d$ are free with amalgamation over $\B$.
For more details on how to realize operator-valued semicircular families for a given $\eta: \B \to M_{d}(\B)$ on a suitable Fock space, we refer the readers to \cite{Speicher-98,Shl1999}.

It follows from \cite[Theorem 4.1.12]{Speicher-98} that the $\B$-valued Cauchy transform $\G_S^\B$ of an $\B$-valued semicircular element $S$ with mean zero and variance $\eta$ satisfies the equation
$$1+\eta(\G_S^\B(\Zb)) \G^\B_S(\Zb) = \Zb \G^\B_S(\Zb) \qquad \text{for all $\Zb\in\bH^+(\B)$}.$$
In fact, it was shown in \cite{HeltonFarSpeicher2007} as part of a more general statement that this equation has for each completely positive map $\eta:\B \to \B$ a unique solution $\G: \bH^+(\B) \to \bH^-(\B)$. It is an additional feature of the proof given in \cite{HeltonFarSpeicher2007} that (a slight modification of) this solution $\G$ can be obtained by a fixed point iteration; from this, it can be deduced that $\G$ is in fact a locally bounded Fr\'echet holomorphic function and hence analytic.

We say that a family $\{C_1 , \dots , C_d\} \subset \A$  is a \emph{centered  $\B$-valued circular family} with covariance $(\eta, \widetilde{\eta})$ given by the completely positive maps 
\[
\eta: \B \rightarrow M_{d}(\B), \quad b \mapsto [\eta_{k,\ell} ( b)]_{k,\ell=1}^d
\qquad \text{and} \qquad
\widetilde{\eta}: \B \rightarrow M_{d}(\B), \quad b \mapsto [\widetilde{\eta}_{k,\ell} ( b)]_{k,\ell=1}^d
\]
if for any  $k,\ell, k_1, \dots , k_n \in [d]$, $\epsilon_1, \ldots , \epsilon_n \in \{1, *\}$ and  $b, b_1 , \dots , b_{n-1} \in \B$:
\[
\kappa_n^\B[C_{k_1}^{\epsilon_1} b_1, \dots, C_{k_{n-1}}^{\epsilon_{n-1}} b_{n-1} ,C_{k_n}^{\epsilon_n}] = 0 \text{  if  } n >2,\]
 \[
\kappa_2^\B[C_k b , C_\ell] = \kappa_2^\B[C_k^* b , C_\ell^*]=0, \quad 
\kappa_2^\B[C_k^* b , C_\ell]:= \eta_{k,\ell} (b)
\quad \text{and} \quad
 \kappa_2^\B[C_k b , C_\ell^*]:= \widetilde{\eta}_{k,\ell} (b)\,.
\]
The $\B$-valued $*$-moments of operator-valued circular elements can be computed using Speicher's moment-culumant formula \eqref{operator-moment-cumulant-for}. We refer to \cite{Dyk-05} for a nice review and an extensive study of $\B$-valued circular elements, especially regarding the existence of non-trivial hyperinvariant subspaces.
Of particular interest in this context is the so-called triangular operator $T$, which was introduced by Dykema and Haagerup as a particular instance of their DT-operators in \cite{Dyk-Haa-04-2}. It was studied further in \cite{Sni-03} and \cite{Dyk-Haa-04}, for which purpose it was crucial that $T$ also falls into the class of operator-valued circular elements.

\subsection{The L\'evy and Kolmogorov distances}\label{section:Levy-Kolmogorov}

On the set of all Borel probability measures on the real line $\R$, there are several well-established and useful notions of distance. We recall here the L\'evy and the Kolmogorov distances that will be used in the context of this paper. For a Borel probability measure $\mu$ on $\R$, we denote by $\F_\mu$ the \emph{cumulative distribution function} of $\mu$ defined by $\F_\mu: \R \to [0,1]$, $\F_\mu(t) := \mu((-\infty,t])$.

If $\mu$ and $\nu$ are two Borel probability measures on $\R$, then
\begin{itemize}
 \item the \emph{L\'evy distance} is defined by
$$L(\mu,\nu) := \inf\{\epsilon>0 \mid \forall t\in\R:\ \F_\mu(t-\epsilon) - \epsilon \leq \F_\nu(t) \leq \F_\mu(t+\epsilon) + \epsilon\};$$
 \item the \emph{Kolmogorov distance} is defined by
$$\Delta(\mu,\nu) := \sup_{t\in\R} |\F_\mu(t) - \F_\nu(t)|.$$
\end{itemize}

It is well-known that the L\'evy distance provides a metrization of convergence in distribution.
Furthermore, it can be bounded by
\begin{equation}\label{eq:Levy_bound}
L(\mu,\nu) \leq 2\sqrt{\frac{\epsilon}{\pi}} + \frac{1}{\pi} \int_\R | \Im(\G_\mu(t+i\epsilon)) - \Im(\G_\nu(t+i\epsilon)) |\, \mathrm{d} t
\end{equation}
for any choice of $\epsilon>0$. We provide a proof of \eqref{eq:Levy_bound} in Section \ref{sec:Levy_bound}; a slightly weaker form of \eqref{eq:Levy_bound} was obtained recently in \cite{Salazar2020}. If we suppose that $\nu$ has a cumulative distribution function $\F_\nu$ which is H\"older continuous with exponent $\beta\in(0,1]$ and H\"older constant $C>0$, i.e., $|\F_\nu(t) - \F_\nu(s)| \leq C |t-s|^\beta$ holds for all $s,t\in\R$, then \cite[Lemma 12.18]{BS10} says that the L\'evy and the Kolmogorov distance are related by
\begin{equation}\label{eq:Levy-Kolmogorov}
L(\mu,\nu) \leq \Delta(\mu,\nu) \leq (C+1) L(\mu,\nu)^\beta.
\end{equation}

\section{The operator-valued setting}\label{section:operator-valued}

This section is devoted to proving an analogue of the classical Berry-Esseen theorem in the realm of operator-valued free probability theory, namely for the sum of variables which are freely independent with amalgamation.
The estimates we provide are on both the scalar- and operator-valued levels. We will be working in the framework of the operator-valued $C^\ast$-probability space $(\A, E, \B)$, if nothing else is said. Depending on the situation, we will restrict ourselves to the case of an operator-valued $W^\ast$-probability space $(\A, \varphi, E, \B)$.

Fix $n \in \mathbb{N}$ and consider two families  $x=\{x_j \mid 1\leq j\leq n\}$ and $y=\{y_j \mid 1\leq j\leq n\}$ of selfadjoint elements which are freely independent with amalgamation over $\B$. We will study how close the noncommutative distributions of 
\[
\ux_n= \sum_{j=1}^n x_j
\qquad  \text{and} \qquad
\uy_n=  \sum_{j=1}^n y_j
\]  
are under the mere conditions that the first and second $\B$-valued moments match; i.e. $E[ x_j] = E [ y_j]$ and $E \big[ x_j \,  b \, x_j \big] = E \big[ y_j \,  b \,  y_j \big]$  for all $j=1, \dots , n$  and $b \in \B$. This will be measured in terms of the scalar- and operator-valued Cauchy transforms of $\ux_n$ and $\uy_n$ by deriving quantitative estimates on their difference. As direct consequences of such approximations, we obtain quantitative bounds on the $\B$-free CLT, illustrated in Section \ref{section:free-CLT}, and on the distribution of matrices with $\B$-free entries, illustrated in Section \ref{section:Wigner}. The estimates on the operator-valued Cauchy transforms yield convergence in distribution over $\B$ to some operator-valued semicircular element, while the estimates on the scalar-valued Cauchy transforms yield convergence of the scalar-valued distributions, which can be measured even in terms of the L\'evy distance.
 
\vspace*{0.1cm}
Before stating our main theorem, we introduce some further notation: let 
\begin{align}\label{def:norms}
\|x\| = \max_{1\leq j\leq n} \|x_j\| \qquad \text{and} \qquad \|x\|_{L^r} = \max_{1\leq j\leq n}  \|x_j\|_{L^r(\A,\varphi)} \quad \text{for any  } r\geq1.
\end{align}
Recalling the definition of the maps in \eqref{moment-map}, we encode the relevant information about the second and fourth moments by $\alpha_2$ and $\alpha_4$ which are defined as
\begin{equation}\label{def:second-fourth-moment}
\alpha_2(x) := \max_{1\leq j\leq n} \sup \big\| m_2^{x_j} (b) \big\|  = \max_{1\leq j\leq n} \big\| m_2^{x_j} (\1) \big\| \enspace \text{and} \enspace
\alpha_4(x) := \max_{1\leq j\leq n} \sup \big\| m_4^{x_j}(b^*,\1,b) \big\|,
\end{equation}
where the above supremums are taken over all $b \in \B$ such that $\|b\| \leq 1$;
the equality of the two expressions defining $\alpha_2(x)$ relies on the fact that $\sup_{\|b\| \leq 1} \|m_2^{x_j}(b)\| = \|m^{x_j}_2(\1)\|$ for $j=1,\dots,n$, which follows from \cite[Corollary 2.9]{Paulsen} since the positivity of $E$ guarantees that each $m_2^{x_j}: \B \to \B$ is a positive linear map. Finally, we set 
\[
A_1(x,y):= \sqrt{\alpha_2(x)} \Big(\sqrt{\alpha_4(x) +  \alpha_2(x)^2} + \sqrt{\alpha_4(y) +  \alpha_2(x)^2} \Big)
\quad \text{and} \quad
A_2(x,y):=  \|x\|_{L^3}^3 + \|y\|_{L^3}^3.
\]

\begin{theorem}\label{theo1:Lin-Lin:freeness}
Let $(\A, E, \B)$ be an operator-valued $C^\ast$-probability space. Let $n\in\bN$ and consider two sets $\{x_1, \ldots, x_n\}$ and $\{y_1, \ldots, y_n\}$ each of which consists of selfadjoint elements in $\A$ which are freely independent with amalgamation over $\B$. Suppose that $E[ x_j] = E [ y_j]=0$ and $E \big[ x_j \,  b \, x_j \big] = E \big[ y_j \,  b \,  y_j \big]$ for  any $b \in \B$ and  $j=1, \dots , n$. Then, for any $\Zb \in \bH^+(\B)$,
\begin{align}\label{eq:theo1_Lin-Lin:freeness-1}
\big\| E[G_{\ux_n}(\Zb)] - E[G_{\uy_n}(\Zb)] \big\|
\leq \|\Im(\Zb)^{-1}\|^4 A_1(x,y) n.
\end{align}
In the case of an operator-valued $W^\ast$-probability space $(\A, \varphi, E, \B)$, we have in addition
\begin{equation}\label{eq:theo1_Lin-Lin:freeness-2}
\big| \varphi [G_{\ux_n}(z)] - \varphi [G_{\uy_n}(z)] \big| \leq \frac{1}{\Im (z)^4} A_2(x,y) n
\end{equation}
for any $z\in \mathbb{C}^+$, and furthermore for every $\epsilon>0$,
\begin{align}\label{eq:theo1_Lin-Lin:freeness-3}
\frac{1}{\pi} \int_\R \big| \varphi [G_{\ux_n}(t+i\epsilon)] - \varphi [G_{\uy_n}(t+i\epsilon)] \big|\, \mathrm{d} t
\leq \frac{1}{\epsilon^3} A_1(x,y) n.
\end{align}
\end{theorem}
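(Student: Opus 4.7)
The approach is a Lindeberg-type swap at the level of operator-valued Cauchy transforms. Introduce the hybrid sums $Z_j := y_1 + \cdots + y_j + x_{j+1} + \cdots + x_n$ with $Z_0 = \ux_n$ and $Z_n = \uy_n$, giving the telescoping decomposition
\[
E[G_{\ux_n}(\Zb)] - E[G_{\uy_n}(\Zb)] = \sum_{j=1}^n \bigl(E[G_{Z_{j-1}}(\Zb)] - E[G_{Z_j}(\Zb)]\bigr).
\]
Set $W_j := Z_{j-1} - x_j = Z_j - y_j$, so that both $x_j$ and $y_j$ are free over $\B$ from $W_j$. Iterating the resolvent identity $G_{W_j+x_j} - G_{W_j} = G_{W_j}\,x_j\, G_{W_j+x_j}$ three times yields
\[
G_{W_j+x_j}(\Zb) = G_{W_j} + G_{W_j}\,x_j\, G_{W_j} + G_{W_j}\,x_j\, G_{W_j}\,x_j\, G_{W_j} + R_j^{(x)}(\Zb),
\]
with $R_j^{(x)} := G_{W_j}\,x_j\, G_{W_j}\,x_j\, G_{W_j}\,x_j\, G_{W_j+x_j}$, and analogously for $y_j$. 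Applying $E$ and invoking Speicher's formula \eqref{operator-moment-cumulant-for} together with $E[x_j] = 0$, the first-order term vanishes and the second-order term becomes $E[G_{W_j}\,m_2^{x_j}(E[G_{W_j}])\,G_{W_j}]$. Since $m_2^{x_j} = m_2^{y_j}$ by hypothesis, the leading three orders cancel against the $y_j$ analogs, leaving $E[G_{Z_{j-1}}] - E[G_{Z_j}] = E[R_j^{(x)}] - E[R_j^{(y)}]$.

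The principal task is to bound $\|E[R_j^{(x)}]\|$ by $\|\Im(\Zb)^{-1}\|^4\sqrt{\alpha_2(x)(\alpha_4(x)+\alpha_2(x)^2)}$, and symmetrically for $R_j^{(y)}$, \emph{without incurring operator norms of $x_j$}. I would apply the operator-valued Cauchy--Schwarz inequality \eqref{Cauchy-Schwarz} to the split $R_j^{(x)} = A\cdot B$ with $A := G_{W_j}\,x_j$ and $B := G_{W_j}\,x_j\, G_{W_j}\,x_j\, G_{W_j+x_j}$. The left factor is handled by freeness, which reduces $E[A A^*] = E[G_{W_j}\,E[x_j^2]\,G_{W_j}^*]$ and gives $\|E[AA^*]\|\leq \alpha_2(x)\|\Im(\Zb)^{-1}\|^2$. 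The main obstacle is the right factor, since $G_{W_j+x_j}$ is entangled with $x_j$ and the naive bound introduces $\|x_j\|$. To circumvent this I would exploit the algebraic identity
\[
G_{W_j}\,x_j\, G_{W_j}\,x_j\, G_{W_j+x_j} = G_{W_j+x_j} - G_{W_j} - G_{W_j}\,x_j\, G_{W_j},
\]
a direct consequence of the iterated resolvent identity, combined with the elementary inequality $(u-v-w)^*(u-v-w)\leq 3(u^*u + v^*v + w^*w)$, the positivity \eqref{positivity}, and another application of Speicher's formula \eqref{operator-moment-cumulant-for} to the resulting four-point expectation (where the non-crossing pair partitions contribute $\alpha_2(x)^2$ via nested and side-by-side $m_2^{x_j}$ factors, and the single block of size four produces $\kappa_4^\B$, bounded in turn by $\alpha_4(x)$ together with lower-order pair contributions). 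This delivers a bound on $\|E[B^*B]\|$ of the form $(\alpha_4(x)+\alpha_2(x)^2)\|\Im(\Zb)^{-1}\|^6$; multiplying by the left factor and summing over $j$ yields \eqref{eq:theo1_Lin-Lin:freeness-1}.

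For the scalar pointwise bound \eqref{eq:theo1_Lin-Lin:freeness-2}, the identical Lindeberg/expansion scheme applies with $\varphi$ in place of $E$, the tracial property of $\varphi$ serving in the same role as the freeness identities above to implement the zeroth- through second-order cancellations. The third-order remainder $|\varphi(R_j^{(x)}(z))|$ is then bounded by the noncommutative H\"older inequality with exponents $(\infty,\infty,3,3,3,\infty)$ distributed over the resolvent and $x_j$ factors: $\|G_{W_j}(z)\|,\|G_{W_j+x_j}(z)\|\leq \Im(z)^{-1}$ and $\varphi(|x_j|^3) = \|x_j\|_{L^3}^3$ together produce $\Im(z)^{-4}\|x_j\|_{L^3}^3$ per Lindeberg step, which sums to the stated estimate. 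For the integral bound \eqref{eq:theo1_Lin-Lin:freeness-3}, I would apply scalar Cauchy--Schwarz to $\int_\R|\varphi(R_j^{(x)}(t+i\epsilon))|\,dt$ along the same $A\cdot B$ split and trade two $\Im(\Zb)^{-1}$-factors for $\epsilon^{-1}$ via the $L^2$-identity \eqref{eq:Cauchy-integral}, $\int_\R\|G_{W_j}(t+i\epsilon)\|_{L^2}^2\,dt = \pi/\epsilon$; the surviving estimate on the remaining factors supplies the $A_1(x,y)$-dependence and the $\epsilon^{-3}$ rate. The recurring technical difficulty throughout is avoiding $\|x_j\|$ and $\|y_j\|$; this is achieved by the systematic use of the resolvent identity to replace any entangled $x_j\cdot G_{W_j+x_j}$ by a controlled combination of resolvents, together with freeness to reduce moments involving $x_j$ to the $\B$-valued data $m_2^{x_j}$ and $m_4^{x_j}$.
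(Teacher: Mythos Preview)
Your overall architecture---Lindeberg swap, third-order resolvent expansion, and cancellation of orders $0$ through $2$ via freeness and the matching second moments---is exactly the paper's, and your treatment of \eqref{eq:theo1_Lin-Lin:freeness-2} via H\"older is the same. The gap is in your handling of the third-order remainder for \eqref{eq:theo1_Lin-Lin:freeness-1} and \eqref{eq:theo1_Lin-Lin:freeness-3}.

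Your split $R_j^{(x)}=A\cdot B$ with $A=G_{W_j}x_j$, followed by the rewriting $B=G_{W_j+x_j}-G_{W_j}-G_{W_j}x_jG_{W_j}$ and the bound $(u-v-w)^*(u-v-w)\le 3(u^*u+v^*v+w^*w)$, is internally consistent but does \emph{not} generate a four-point expectation: each of $u^*u$, $v^*v$, $w^*w$ contains at most two copies of $x_j$, so moment--cumulant delivers at most $\alpha_2(x)$ and never $\alpha_4(x)$. Carrying your computation through gives
\[
\|E[B^*B]\|\le 3\bigl(2\|\Im(\Zb)^{-1}\|^2+\alpha_2(x)\|\Im(\Zb)^{-1}\|^4\bigr),
\]
hence $\|E[R_j^{(x)}]\|\le\sqrt{3\alpha_2(x)}\,\|\Im(\Zb)^{-1}\|^2\sqrt{2+\alpha_2(x)\|\Im(\Zb)^{-1}\|^2}$. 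This is a valid inequality, but it is not of the form $\|\Im(\Zb)^{-1}\|^4\sqrt{\alpha_2(x)(\alpha_4(x)+\alpha_2(x)^2)}$ required for \eqref{eq:theo1_Lin-Lin:freeness-1}: it has the wrong homogeneity in $\|\Im(\Zb)^{-1}\|$ and omits $\alpha_4$ entirely. Your description invokes a ``resulting four-point expectation'' that simply does not arise from this route; the same defect propagates to your argument for \eqref{eq:theo1_Lin-Lin:freeness-3}.

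The paper resolves the entanglement differently. It first uses the commutation $G_{\uz_i}(\Zb)\,x_i\,G_{\uz^0_i}(\Zb)=G_{\uz^0_i}(\Zb)\,x_i\,G_{\uz_i}(\Zb)$ (immediate from the resolvent identity) to rewrite the remainder as $G_{\uz^0_i}\,x_i\,G_{\uz_i}\,x_i\,G_{\uz^0_i}\,x_i\,G_{\uz^0_i}$, placing the entangled resolvent $G_{\uz_i}$ between the \emph{first two} copies of $x_i$. Cauchy--Schwarz is then applied at the split $(G_{\uz^0_i}\,x_i\,G_{\uz_i})\cdot(x_i\,G_{\uz^0_i}\,x_i\,G_{\uz^0_i})$: in the left factor the entangled resolvent enters only through $\|G_{\uz_i}\|$ via \eqref{positivity}, while the right factor yields the genuine four-$x_i$ quantity $E[G_{\uz^0_i}^*x_i\,G_{\uz^0_i}^*\,x_i^2\,G_{\uz^0_i}\,x_i\,G_{\uz^0_i}]$, on which Speicher's formula \eqref{operator-moment-cumulant-for} produces exactly the combination bounded by $\alpha_4(x)+\alpha_2(x)^2$. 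This is the content of Lemma~\ref{lemma:moment-bounds}; its $\varphi$-variant Lemma~\ref{lemma:moment-bounds_variant} replaces two operator-norm resolvent bounds by $\|G_{\uz^0_i}\|_{L^2}^2$ and, after integration via \eqref{eq:Cauchy-integral}, gives \eqref{eq:theo1_Lin-Lin:freeness-3}.
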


Note that the above estimates are merely in terms of the third and fourth moments and do not depend on the operator norm. Moreover, they hold over all the upper half plane $\mathbb{H}^+(\B)$ and all the upper complex plane, respectively.

The proof relies on an operator-valued Lindeberg method and is postponed to Section \ref{section:proof-operator-valued}. The Lindeberg method is a replacement method, which goes back to Lindeberg \cite{Lindeberg}. It was first employed in the free probability setting by Kargin \cite{Kargin} to approximate polynomials in noncommutative variables with the aim of generalizing Voiculescu's free CLT. It was then implemented by Deya and Nourdin \cite{Deya-Nourdin} to prove an invariance principle for multilinear homogeneous sums in free elements. This method was later extended by Banna and C\'ebron \cite{Ba-Ce-18} to approximate resolvents of linear functions in free or exchangeable noncommutative variables. Theorem \ref{theo1:Lin-Lin:freeness} above extends the approach in \cite{Ba-Ce-18} to the operator-valued realm, derives estimates on both the scalar- and operator-valued Cauchy transforms, and allows controlling the L\'evy distance. For readers not familiar with the Lindeberg method, what is meant by it will become clear in the proof of Theorem \ref{theo1:Lin-Lin:freeness}.

\changelocaltocdepth{2}
\subsection{The $\B$-Free Central Limit Theorem}\label{section:free-CLT}
We show in this section how our result yields immediately Berry-Esseen bounds on the level of the operator-valued free CLT. Let us first recall the following Berry-Esseen bound in the setting of classical probability. If $\{x_j\}$ is a family of independent and identically distributed random variables with mean $0$ and variance $1$, then the distance between the distributions of $X_n = n^{-1/2} \sum_{j=1}^n x_j$ and a standard Gaussian variable $\mathcal{N}$ is bounded in terms of the Kolmogorov distance as follows:
\[
\Delta(\mu_{X_n}, \mu_{\mathcal{N}}) \leq C \frac{m_3}{\sqrt{n}},
\]
where $C$ is a constant and $m_3$ is the absolute third moment of the $x_j$'s. In the free case, we have a similar bound but now the Gaussian variable $\mathcal{N}$ is replaced by its free analogue, the semicircular element $s$, and the fourth moment $m_4$ of the $x_j$'s appears in addition to the third order moment $m_3$. In fact, we have the following bound: 
\[
\Delta(\mu_{X_n}, \mu_s) \leq c \frac{|m_3|+\sqrt{m_4}}{\sqrt{n}}.
\]
This free analogue of the Berry-Esseen bound was proven by Christyakov and G\"otze in \cite{Chi-Got-08}; in fact, they give a bound for $\Delta(\mu^{\boxplus n}_n,\mu_s)$, where $\mu_n$ is a suitable scaling of an arbitrary (not necessarily compactly supported) Borel probability measure $\mu$ on $\R$ and $\mu_n^{\boxplus n}$ denotes its $n$-fold free additive convolution. This remarkable result, when applied to $\mu$ being the distribution of the $x_j$'s, yields the bound for $\Delta(\mu_{X_n},\mu_s)$ as stated above; an earlier version of the Berry-Esseen bound in this case was obtained by Kargin in \cite{Kargin-Berry-Esseen}.
We extend the above bounds to the operator-valued setting.

The operator-valued free CLT is due to Voiculescu \cite[Theorem 8.4]{Voi-95} and states in the case of an operator-valued $C^\ast$-probability space $(\A,E,\B)$ the following: let $x = (x_n)_{n\in\bN}$ be a sequence of selfadjoint elements in $\A$ that are free with amalgamation over $\B$ and are such that $E[x_j] = 0$. Assume that $\sup_{n\in\bN} \|m_k^{x_n}\| < \infty$ for all $k\in\bN_0$ and that there exists a linear map $\eta: \B \rightarrow \B$ such that $\lim_{n\rightarrow \infty} n^{-1} \sum_{i=1}^n E [x_i b x_i] = \eta (b)$ for any $b \in \B$. Then
\[
X_n = \frac{1}{\sqrt{n}} \sum_{j=1}^n x_j \xrightarrow[n\rightarrow \infty]{\B-d} S,
\]  
where $S$ is an operator-valued semicircular element over $\B$ with variance $\eta$. Under the assumption that $\lim_{n\rightarrow \infty} \sup_{\|b\|\leq 1} \|n^{-1} \sum_{i=1}^n E [x_i b x_i] - \eta (b)\| = 0$, Theorem \ref{theo1:Lin-Lin:freeness} (in combination with Theorem \ref{theo:opval_semicirculars_comparison}) proves the above convergence analytically and strengthens it to norm-convergence of the distribution over $\B$. In the case of identical second moments (see Remark \ref{rem:CLT_convergence_in_distribution}), the convergence is quantified by providing Berry-Esseen type bounds for the scalar- and operator-valued Cauchy transforms. It also yields quantitative estimates in terms of the L\'evy distance. More details on previous works in this direction can be found in Section \ref{section:MCLT} on the multivariate setting following Remark \ref{rem:convergence_polynomial}.

With this aim, we let $S_n$ be a $\B$-valued centered semicircular element whose variance is given by the completely positive map 
\[
\eta_n : \B \rightarrow \B, \qquad b \mapsto \eta_n(b) = \frac{1}{n} \sum_{j=1}^n E[x_j b x_j].
\]
Finally, setting
\[
A_1(x) = \sqrt{\alpha_2(x)} \Big(\sqrt{\alpha_4(x) + \alpha_2(x)^2} + \sqrt{3}\alpha_2(x) \Big)\enspace \text{and} \enspace A_2(x) = \|x\|_{L^3}^3 +  \sqrt{2\alpha_2(x)} \|x\|_{L^2}^2,
\]
our result then reads as follows:

\begin{theorem}\label{theo:Op-freeCLT}
Let $x = \{x_1, \ldots, x_n\}$ be a family of selfadjoint elements in $\A$ that are free with amalgamation over $\B$ and are such that $E[x_j] =0$. Then for any $\Zb \in \bH^+(\B)$,
\begin{equation}\label{eq:Op-freeCLT}
\|E [G_{X_n}(\Zb)] - E [G_{S_n}(\Zb)] \| \leq \frac{1}{\sqrt{n}} \| \Im (\Zb)^{-1}\|^4 A_1(x)
\end{equation}
and for any $z\in \mathbb{C}^+$, 
\[
|\varphi [G_{X_n}(z)] - \varphi [G_{\S_n}(z)]| \leq \frac{1}{\sqrt{n}} \frac{1}{\Im (z)^4} A_2(x).
\]
Furthermore, there is a universal positive constant $c<1.672$ such that
\[
L(\mu_{X_n}, \mu_{\S_n}) \leq c A_1(x)^{1/7} n^{-1/14}.
\]
\end{theorem}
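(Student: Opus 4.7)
The strategy is to reduce Theorem \ref{theo:Op-freeCLT} to a direct application of Theorem \ref{theo1:Lin-Lin:freeness} by realising $S_n$ as a sum of free operator-valued semicirculars. Since the variance $\eta_n = n^{-1} \sum_{j=1}^n E[x_j \cdot x_j]$ decomposes as $\eta_n = \sum_{j=1}^n \eta_j$ with each $\eta_j(b) := n^{-1} E[x_j b x_j]$ completely positive, one realises (possibly in an enlargement of $\A$) a family $s_1,\dots,s_n$ of centered $\B$-valued semicirculars with respective variances $\eta_j$ that are free with amalgamation over $\B$; then $\sum_j s_j$ is $\B$-valued semicircular with variance $\eta_n$ and may be substituted for $S_n$. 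Setting $\tilde{x}_j := x_j/\sqrt{n}$, one verifies that $E[\tilde{x}_j] = 0 = E[s_j]$ and $E[\tilde{x}_j b \tilde{x}_j] = n^{-1} E[x_j b x_j] = \eta_j(b) = E[s_j b s_j]$ for all $b\in\B$, so that the hypotheses of Theorem \ref{theo1:Lin-Lin:freeness} are met with $\ux_n = X_n$ and $\uy_n = S_n$.

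\textbf{Computing the constants.} Direct rescaling gives $\alpha_2(\tilde{x}) = n^{-1} \alpha_2(x)$, $\alpha_4(\tilde{x}) = n^{-2}\alpha_4(x)$, and $\alpha_2(s) = n^{-1}\alpha_2(x)$ since $E[s_j^2] = \eta_j(\1) = n^{-1} E[x_j^2]$. For $\alpha_4(s)$, the $\B$-valued moment-cumulant formula yields, using the two non-crossing pair-partitions of $\{1,2,3,4\}$, that $E[s_j b^\ast s_j s_j b s_j] = \eta_j(b^\ast \eta_j(\1) b) + \eta_j(b^\ast) \eta_j(b)$; both terms are bounded in norm by $\|\eta_j(\1)\|^2 \leq n^{-2}\alpha_2(x)^2$ via the positivity inequality \eqref{positivity}, giving $\alpha_4(s) \leq 2 n^{-2}\alpha_2(x)^2$. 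Inserting these into the definition of $A_1(\cdot,\cdot)$ collapses a factor of $n^{-3/2}$, so that $n A_1(\tilde{x}, s) \leq n^{-1/2} A_1(x)$, and \eqref{eq:theo1_Lin-Lin:freeness-1} yields the first asserted estimate. For the scalar bound we have $\|\tilde{x}\|_{L^3}^3 = n^{-3/2}\|x\|_{L^3}^3$, while an interpolation $\|s_j\|_{L^3}^3 \leq \|s_j\|_{L^2}\|s_j\|_{L^4}^2$, combined with $\|s_j\|_{L^2}^2 = \varphi(\eta_j(\1)) = n^{-1}\|x_j\|_{L^2}^2$ and the bound $\varphi(s_j^4) = \varphi(\eta_j(\1)^2) + \varphi(\eta_j(\eta_j(\1))) \leq 2 n^{-2}\alpha_2(x)\|x_j\|_{L^2}^2$ (using traciality of $\varphi$ and the positivity inequality), delivers $\|s\|_{L^3}^3 \leq n^{-3/2}\sqrt{2\alpha_2(x)}\|x\|_{L^2}^2$. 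Thus $n A_2(\tilde{x},s) \leq n^{-1/2} A_2(x)$, and \eqref{eq:theo1_Lin-Lin:freeness-2} gives the scalar bound.

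\textbf{Lévy bound.} For the final estimate, combine \eqref{eq:Levy_bound} with the integrated bound \eqref{eq:theo1_Lin-Lin:freeness-3} applied to $\tilde{x}$ and $s$ and the dominating inequality $|\Im(\G_\mu(z) - \G_\nu(z))| \leq |\G_\mu(z) - \G_\nu(z)|$, which yields
\[
L(\mu_{X_n}, \mu_{S_n}) \leq 2\sqrt{\tfrac{\epsilon}{\pi}} + \frac{A_1(x)}{\epsilon^3 \sqrt{n}} \qquad\text{for every $\epsilon>0$.}
\]
Minimising the right-hand side in $\epsilon$ (setting the derivative to zero gives $\epsilon_\ast = (3\sqrt{\pi})^{2/7}A_1(x)^{2/7} n^{-1/7}$) produces a bound proportional to $A_1(x)^{1/7} n^{-1/14}$ with explicit constant $c = 7\pi^{-3/7} 3^{-6/7} < 1.672$, completing the proof.

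\textbf{Main obstacle.} The substantive content is already carried by Theorem \ref{theo1:Lin-Lin:freeness}; here the delicate bookkeeping lies in controlling $\alpha_4(s)$ and $\|s\|_{L^3}^3$ for the auxiliary semicircular family, since these require operator-valued moment/cumulant computations together with careful use of the positivity inequality \eqref{positivity} and $L^p$-interpolation to extract the factor $\sqrt{2\alpha_2(x)}\|x\|_{L^2}^2$ appearing in $A_2(x)$. Everything else is a scaling computation and a one-variable optimisation.
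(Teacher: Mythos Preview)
Your proof is correct and follows essentially the same approach as the paper: realise $S_n$ as a sum of $\B$-free centered semicirculars matching the second moments of the (rescaled) $x_j$'s, verify the bounds $\alpha_4 \leq 2\alpha_2^2$ and $\|\cdot\|_{L^3}^3 \leq \sqrt{2\alpha_2}\|\cdot\|_{L^2}^2$ for the semicircular family via the moment--cumulant formula and $L^2$--$L^4$ interpolation, and then invoke Theorem~\ref{theo1:Lin-Lin:freeness} followed by the optimisation over~$\epsilon$. The only cosmetic difference is that the paper keeps the normalisation in $S_n = n^{-1/2}\sum_j y_j$ (with $y_j$ having the unscaled variance $E[x_j\,\cdot\,x_j]$) rather than absorbing it into $\tilde{x}_j$ and $s_j$ as you do; the resulting computations and the final constant $c = 7(9\pi)^{-3/7} = 7\pi^{-3/7}3^{-6/7}$ agree.
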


The proof of Theorem \ref{theo:Op-freeCLT} is postponed to the end of this section.

\begin{remark}\label{rem:CLT_convergence_in_distribution}
Let us point out that if $x_1,\dots,x_n$ are such that $\eta := m_2^{x_1} = \dots = m_2^{x_n}$, then $S_n$ is an operator-valued semicircular element with zero mean and variance $\eta$ which is independent of $n$. We denote it, in this case, by $S$.

We apply Theorem \ref{theo:Op-freeCLT} to the family $\1_k \otimes x = \{\1_k \otimes x_1, \dots, \1_k \otimes x_n\}$ in the operator-valued $C^\ast$-probability space $(M_k(\A),\id_k \otimes E,M_k(\B))$ and note that $A_1(\1_k \otimes x) \leq k \overline{A}_1(x)$, where
$$\overline{A}_1(x) :=  \sqrt{\alpha_2(x)} \Big(\sqrt{\overline{\alpha}_4(x)+  \alpha_2(x)^2} + \sqrt{3} \alpha_2(x) \Big) \qquad \text{and} \qquad \overline{\alpha}_4(x) := \max_{1 \leq j \leq n} \|m_4^{x_j}\|,$$
which follows by Lemma \ref{lem:tensor_moments_diagonal}. Then from \eqref{eq:Op-freeCLT}, we conclude that for any $\Zb \in \bH^+(M_k(\B))$,
$$\|\G^{M_k(\B)}_{\1_k \otimes X_n}(\Zb) - \G^{M_k(\B)}_{\1_k \otimes S_n}(\Zb)\| \leq \frac{k}{\sqrt{n}} \| \Im (\Zb)^{-1}\|^4 \overline{A}_1(x).$$

If we suppose in addition that $x$ extends to an infinite family of selfadjoint elements which are freely independent with amalgamation over $\B$ and satisfy $m_2^{x_j} = \eta$ for all $j\in\bN$, then $\sup_{n\in\bN} \|X_n\| < \infty$ thanks to \cite[Proposition 7.1]{Junge} and by the above, for every $\epsilon>0$,
$$\lim_{n\to \infty} \sup_{\Zb \in \bH^+(M_k(\B)) \colon \Im(\Zb) \geq \epsilon 1} \|\G^{M_k(\B)}_{\1_k \otimes X_n}(\Zb) - \G^{M_k(\B)}_{\1_k \otimes S_n}(\Zb)\| = 0$$
provided that $\sup_{n\in\bN} \alpha_4(\{x_1,\dots,x_n\}) < \infty$. From Remark \ref{rem:Cauchy-convergence}, we get $\lim_{n\to \infty} \|m_k^{X_n} - m_k^S\| = 0$ for every $k \in \bN_0$ and in particular $X_n \stackrel{\B-d}{\longrightarrow} S$ as $n \to \infty$.
\end{remark}

\begin{remark}
In the situation of the previous Remark \ref{rem:CLT_convergence_in_distribution}, we suppose now that the analytic distribution $\mu_S$ of $S$ has a H\"older continuous cumulative distribution function, say with H\"older exponent $\beta\in(0,1]$ and H\"older constant $C>0$. Then Theorem \ref{theo:Op-freeCLT} combined with \eqref{eq:Levy-Kolmogorov} implies that $\Delta(\mu_{X_n},\mu_S) \leq C_1 n^{-\beta/14}$ for some constant $C_1>0$ (independent of $n$).
On the other hand, Theorem 5.3 in \cite{Banna-Mai-18}, whose boundedness condition is satisfied thanks to \cite[Proposition 7.1]{Junge}, yields in this particular situation the better bound $\Delta(\mu_{X_n},\mu_S) \leq C_2 n^{-\frac{\beta}{2\beta+8}}$ for some constant $C_2>0$. The strength of Theorem \ref{theo:Op-freeCLT} lies more in its universality, namely that it applies also when $\F_{\mu_S}$ is not (known to be) H\"older continuous or even when $\mu_S$ has atoms.
\end{remark}

We continue with the following result by which we recover from Theorem \ref{theo:Op-freeCLT} the operator-valued CLT of Voiculescu \cite[Theorem 8.4]{Voi-95} in the setting of operator-valued $C^\ast$-probability spaces and under the assumption of uniform convergence of the $\eta_n$'s and not only in the case of identical second moments which was discussed in Remark \ref{rem:CLT_convergence_in_distribution}; notably, we get in this case the stronger conclusion that the $\B$-valued distribution of $X_n$ is norm-convergent in the sense of \cite{BPV2012} meaning that \eqref{eq:norm-convergence_moments} holds. Beyond this particular application, the result might also be of independent interest; for instance, in comparison with the results of \cite[Proposition 2.3]{AEK2020}.

\begin{theorem}\label{theo:opval_semicirculars_comparison}
Let $(\A,E,\B)$ be an operator-valued $C^\ast$-probability space and consider two operator-valued semicircular elements $S_0,S_1$ with respective covariance maps $\eta_0,\eta_1: \B \to \B$. Then, for every $k\in\bN$ and each $\Zb\in \bH^+(M_k(\B))$, we have that
\begin{equation}\label{eq:opval_semicirculars_comparison_Cauchy}
\|\G^{M_k(\B)}_{\1_k \otimes S_1}(\Zb) - \G^{M_k(\B)}_{\1_k \otimes S_0}(\Zb)\| \leq k \|\Im(\Zb)^{-1}\|^3 \|\eta_1-\eta_0\|.
\end{equation}
Moreover, if $(\A,\varphi,E,\B)$ is an operator-valued $W^\ast$-probability space, then the scalar-valued Cauchy transforms of $S_1$ and $S_0$ satisfy
\begin{equation}\label{eq:opval_semicirculars_comparison_integral}
\frac{1}{\pi} \int_\R |\G_{S_1}(t+i\epsilon) - \G_{S_0}(t+i\epsilon)|\, \mathrm{d} t \leq \frac{1}{\epsilon^2} \|\eta_1 - \eta_0\|
\end{equation}
for each $\epsilon>0$ and, with the universal positive constant
$c = 5(\frac{1}{4\pi})^{2/5} <  1.817$, we have that
\begin{equation}\label{eq:opval_semicirculars_comparison_Levy}
L(\mu_{S_1},\mu_{S_0}) \leq c \|\eta_1-\eta_0\|^{1/5}.
\end{equation}
\end{theorem}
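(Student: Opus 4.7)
The three assertions stack on top of one another: (1) is the core analytic estimate, (2) is obtained by applying $\varphi$ and integrating with the aid of the $L^2$-identity \eqref{eq:Cauchy-integral}, and (3) reduces to an elementary optimization in the L\'evy bound \eqref{eq:Levy_bound}.

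For (1), the key ingredient is the matricial version of the fixed-point equation for the Cauchy transform of an operator-valued semicircular: since $\1_k \otimes S_j$ is $M_k(\B)$-valued semicircular with covariance $\tilde{\eta}_j := \id_k \otimes \eta_j$, the amplified Cauchy transform $G_j := \G^{M_k(\B)}_{\1_k \otimes S_j}(\Zb)$ satisfies $G_j = (\Zb - \tilde{\eta}_j(G_j))^{-1}$; see the discussion following \cite[Theorem 4.1.12]{Speicher-98}. The standard tensor estimate $\|\tilde{\eta}_1 - \tilde{\eta}_0\| \leq k \|\eta_1 - \eta_0\|$ accounts for the factor $k$. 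Applying the resolvent identity in $M_k(\B)$ to these two fixed-point equations yields
\[
G_1 - G_0 = G_1 \bigl[\tilde{\eta}_1(G_1) - \tilde{\eta}_0(G_0)\bigr] G_0,
\]
while complete positivity of $\eta_j$ forces $\Im(\tilde{\eta}_j(G_j)) \leq 0$ for $G_j \in \bH^-(M_k(\B))$, hence $\Im(\Zb - \tilde{\eta}_j(G_j)) \geq \Im(\Zb)$ and $\|G_j\| \leq h := \|\Im(\Zb)^{-1}\|$. Decomposing $\tilde{\eta}_1(G_1) - \tilde{\eta}_0(G_0) = (\tilde{\eta}_1 - \tilde{\eta}_0)(G_1) + \tilde{\eta}_0(G_1 - G_0)$ produces a self-referential identity for $G_1 - G_0$, which upon resolution delivers the sharp bound $kh^3\|\eta_1-\eta_0\|$.

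For (2), I specialize to $k=1$ and $\Zb = z\1$, and apply the tracial state $\varphi$ to the identity above. Using Cauchy--Schwarz in $L^2(\A,\varphi)$ together with $L^2$-contractivity of $E$ (so that $\|\G^\B_{S_j}(z\1)\|_{L^2(\B)} \leq \|G_{S_j}(z)\|_{L^2(\A)}$), I obtain a pointwise estimate of $|\G_{S_1}(z) - \G_{S_0}(z)|$ in terms of $\|\eta_1-\eta_0\|$, $\epsilon^{-1}$, and the $L^2$-norms $\|G_{S_j}(z)\|_{L^2}$. Integrating over $t\in\R$ with $z=t+i\epsilon$, a further Cauchy--Schwarz in $t$ combined with the identity $\int_\R \|G_{S_j}(t+i\epsilon)\|_{L^2}^2\,\mathrm{d}t = \pi/\epsilon$ from \eqref{eq:Cauchy-integral} yields the claimed bound $\epsilon^{-2}\|\eta_1-\eta_0\|$. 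For (3), I substitute (2) into \eqref{eq:Levy_bound} (using $|\Im w| \leq |w|$) to obtain $L(\mu_{S_1},\mu_{S_0}) \leq 2\sqrt{\epsilon/\pi} + \epsilon^{-2}\|\eta_1-\eta_0\|$ and minimize over $\epsilon>0$; the critical value $\epsilon_\ast = (2\sqrt\pi)^{2/5}\|\eta_1-\eta_0\|^{2/5}$ produces precisely the constant $c = 5(1/(4\pi))^{2/5}$.

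The principal technical obstacle lies in closing the self-referential identity in (1): a naive Neumann-series iteration of the identity converges only when $h^2\|\eta_0\|<1$, whereas the desired bound must hold throughout $\bH^+(M_k(\B))$. Overcoming this requires exploiting finer analytic structure of the operator-valued Cauchy transform---for instance the subordination relation for operator-valued free additive convolution, which at the level of the fixed-point equation replaces the divergent Neumann expansion by a direct identification and thereby reabsorbs the factor $1/(1-h^2\|\eta_0\|)$ that the naive approach would produce.
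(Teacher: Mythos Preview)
Your approach to (1) has a genuine gap that you yourself identify but do not actually resolve. The self-referential identity
\[
G_1 - G_0 = G_1\bigl[(\tilde\eta_1-\tilde\eta_0)(G_1)\bigr]G_0 + G_1\,\tilde\eta_0(G_1-G_0)\,G_0
\]
only closes when the linear map $W\mapsto G_1\,\tilde\eta_0(W)\,G_0$ has norm $<1$, i.e.\ when $h^2\|\eta_0\|<1$. Your invocation of ``the subordination relation for operator-valued free additive convolution'' is a hand-wave: subordination concerns the Cauchy transform of a \emph{sum} of free variables, whereas here $S_0$ and $S_1$ are two unrelated semicirculars (and $\eta_1-\eta_0$ need not be completely positive, so there is no obvious way to write one as the other plus something free). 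The stability theory for the Dyson equation (e.g.\ \cite{AEK2020}) does give global bounds, but with additional factors depending on $\eta_0$---precisely the $1/(1-h^2\|\eta_0\|)$-type loss you wish to avoid. Since the same self-referential term survives after applying $\varphi$, the gap propagates to your argument for (2) as well: the ``first'' term in your decomposition does integrate to $\epsilon^{-2}\|\eta_1-\eta_0\|$ exactly as you describe, but the ``second'' term $\varphi\bigl(G_1\,\eta_0(G_1-G_0)\,G_0\bigr)$ is still there and unbounded by your method.

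The paper circumvents this entirely via the Lindeberg method: write $S_j$ as the sum of $n$ free copies of $n^{-1/2}S_j$, swap the summands one at a time, and let $n\to\infty$. The first-order terms vanish by centering; the second-order terms yield, at step $i$, exactly $\frac{1}{n}E\bigl[G_{\uz_i^0}(\Zb)\,(\eta_0-\eta_1)(E[G_{\uz_i^0}(\Zb)])\,G_{\uz_i^0}(\Zb)\bigr]$, whose norm is at most $\frac{1}{n}\|\Im(\Zb)^{-1}\|^3\|\eta_1-\eta_0\|$; and the third-order remainder terms are each $O(n^{-3/2})$ by Lemma~\ref{lemma:moment-bounds}, so their total contribution is $O(n^{-1/2})$ and vanishes in the limit. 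This produces \eqref{eq:opval_semicirculars_comparison_Cauchy} with no contraction hypothesis. The same telescoping, combined with Lemma~\ref{lemma:moment-bounds_variant} and \eqref{eq:Cauchy-integral}, gives \eqref{eq:opval_semicirculars_comparison_integral}. Your part (3) is correct and matches the paper.
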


Like for Theorem \ref{theo1:Lin-Lin:freeness}, the proof of Theorem \ref{theo:opval_semicirculars_comparison} relies on the noncommutative Lindeberg method and is thus postponed to Section \ref{section:proof-operator-valued}.

In view of the remarkable fact that the Berry-Esseen bound provided by Theorem \ref{theo:Op-freeCLT} depends only on a finite number of moments of the selfadjoint operators $x$, it is natural to expect that this result allows some extension to the framework of unbounded operators; see \cite{BV1993,Wil2017}, for instance.

Let us consider an operator-valued $W^\ast$-probability space $(\A,\varphi,E,\B)$. By $\tilde{\A}$, we will denote the $\ast$-algebra of all closed and densely defined linear operators affiliated with $\A$; elements of $\tilde{\A}$ are considered as unbounded noncommutative random variables. We define $\tilde{\A}_\sa := \{x \in \tilde{A} \mid x^\ast = x\}$.

For every $x \in \tilde{\A}_\sa$, we may define the \emph{analytic distribution $\mu_x$} of $x$ as the unique Borel probability measure on $\R$ such that $\varphi(f(x)) = \int_\R f(t)\, \mathrm{d}\mu_x(t)$ for all $f\in B_b(\R)$, where $B_b(\R)$ denotes the algebra of all bounded measurable functions $f: \R \to \C$; this extends the definition of Section \ref{subsec:Cauchy_transform}.

Let $(\bcX_i)_{i\in I}$ be a family of subsets of $\tilde{\A}_\sa$. For $i\in I$, we define $\A_i$ as the subalgebra of $\A$ which is generated by $\{f(x) \mid x\in \bcX_i, f\in B_b(\R)\} \cup \B$, where $f(x) \in \A$ is obtained by the Borel functional calculus. The family $(\bcX_i)_{i\in I}$ is said to be \emph{free with amalgamation over $\B$} if the associated family $(\A_i)_{i\in I}$ of subalgebras of $\A$ is free with amalgamation over $\B$ in the sense of Section \ref{subsec:freeness}.
In particular, a family $(x_i)_{i\in I}$ of elements of $\tilde{\A}_\sa$ is said to be \emph{free with amalgamation over $\B$} if the family $(\{x_i\})_{i\in I}$ is free with amalgamation over $\B$ in the aforementioned sense.

\begin{proposition}\label{prop:Op-freeCLT_unbounded}
Let $x=\{x_1,\dots,x_n\}$ be a finite family in $\tilde{\A}_\sa$ which is free with amalgamation over $\B$ and which has the following property: for $i=1,\dots,n$, there exists a sequence $(p_i^k)_{k\in\bN}$ of projections in $\A_i$ such that $x_i^k := p_i^k x_i p_i^k \in \A_i$ and such that the following conditions are satisfied:
\begin{enumerate}
 \item\label{it:CLT-unbounded-i} We have $E[x_i^k] = 0$ for all $k\in\bN$ and $i=1,\dots,n$.
 \item\label{it:CLT-unbounded-ii} There exists an operator-valued semicircular element $S_n$ in $(\A,E,\B)$ such that $$\lim_{k\to \infty} \sup_{b\in\B\colon \|b\|\leq 1} \bigg\|\frac{1}{n} \sum^n_{j=1} E[x_j^k b x_j^k] - E[S_n b S_n] \bigg\| = 0.$$
 \item\label{it:CLT-unbounded-iii} For the family $x^k := \{x_1^k,\dots,x_n^k\}$, we have that
 $$\alpha_2(x) := \sup_{k\in\bN} \alpha_2(x^k) < \infty \qquad\text{and}\qquad \alpha_4(x) := \sup_{k\in\bN} \alpha_4(x^k) < \infty.$$
 \item\label{it:CLT-unbounded-iv} For each $i=1,\dots,n$, the sequence $(\varphi(p_i^k))_{k\in\bN}$ converges to $1$ as $k\to \infty$.
\end{enumerate}
Consider $X_n := \frac{1}{\sqrt{n}} \sum^n_{j=1} x_i$ in $\tilde{\A}_\sa$. Then $L(\mu_{X_n}, \mu_{S_n}) \leq c A_1(x)^{1/7} n^{-1/14}$ with the constant $c>0$ as in Theorem \ref{theo:Op-freeCLT} and $A_1(x) := \sqrt{\alpha_2(x)} \big(\sqrt{\alpha_4(x) + \alpha_2(x)^2} + \sqrt{3}\alpha_2(x) \big)$ as before.
\end{proposition}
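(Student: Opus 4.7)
The strategy is a truncation-and-limit argument that reduces the unbounded case to the bounded Berry-Esseen bound of Theorem \ref{theo:Op-freeCLT}.

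For each $k \in \bN$, I would first work with the bounded family $x^k = \{x_1^k, \dots, x_n^k\}$. Since $x_i^k \in \A_i$ by hypothesis and the subalgebras $(\A_i)_{i=1}^n$ are free with amalgamation over $\B$ by the definition of freeness for the unbounded family, the bounded family $x^k$ is automatically free with amalgamation over $\B$. Combined with condition \eqref{it:CLT-unbounded-i}, Theorem \ref{theo:Op-freeCLT} applies to $x^k$ and yields
\[
L(\mu_{X_n^k}, \mu_{S_n^k}) \;\leq\; c\, A_1(x^k)^{1/7}\, n^{-1/14},
\]
where $X_n^k := n^{-1/2} \sum_{j=1}^n x_j^k$ and $S_n^k$ is a $\B$-valued semicircular element with variance $\eta_n^k(b) := \frac{1}{n}\sum_j E[x_j^k b x_j^k]$. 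From condition \eqref{it:CLT-unbounded-iii} we directly get $\alpha_2(x^k) \leq \alpha_2(x)$ and $\alpha_4(x^k) \leq \alpha_4(x)$, so $A_1(x^k) \leq A_1(x)$ uniformly in $k$.

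Next I would establish the two convergence statements needed to pass to the limit $k \to \infty$. For the semicircular side, condition \eqref{it:CLT-unbounded-ii} gives $\|\eta_n^k - \eta_{S_n}\| \to 0$, where $\eta_{S_n}$ is the covariance of $S_n$; then the L\'evy-distance estimate \eqref{eq:opval_semicirculars_comparison_Levy} of Theorem \ref{theo:opval_semicirculars_comparison} gives $L(\mu_{S_n^k}, \mu_{S_n}) \to 0$. For the $X_n$ side, I would use that by condition \eqref{it:CLT-unbounded-iv} the projections $1 - p_i^k$ tend to $0$ in the measure topology on $\widetilde{\A}$ (since they are projections whose trace vanishes), hence $p_i^k \to 1$ in measure. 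Since multiplication is jointly continuous in measure on $\widetilde{\A}$ and each $x_i \in \widetilde{\A}_{\sa}$ is a fixed measurable operator, we obtain $x_i^k = p_i^k x_i p_i^k \to x_i$ in measure, and by linearity $X_n^k \to X_n$ in measure. Convergence in measure implies weak convergence of the analytic distributions, and hence $L(\mu_{X_n^k}, \mu_{X_n}) \to 0$.

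The conclusion then follows from the triangle inequality
\[
L(\mu_{X_n}, \mu_{S_n}) \;\leq\; L(\mu_{X_n}, \mu_{X_n^k}) + L(\mu_{X_n^k}, \mu_{S_n^k}) + L(\mu_{S_n^k}, \mu_{S_n}),
\]
together with the uniform bound on the middle term and letting $k \to \infty$ on the other two.

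\textbf{Main obstacle.} The routine part is the triangle-inequality assembly and the invocation of Theorems \ref{theo:Op-freeCLT} and \ref{theo:opval_semicirculars_comparison}. The delicate point is the convergence $\mu_{X_n^k} \to \mu_{X_n}$: one must carefully argue that $p_i^k x_i p_i^k \to x_i$ in the measure topology on $\widetilde{\A}$ (using that the tracial state is faithful and normal so that $\varphi(1-p_i^k) \to 0$ implies $1-p_i^k \to 0$ in measure), and that measure convergence of selfadjoint affiliated operators implies weak convergence of their analytic distributions, which in turn implies L\'evy-distance convergence. Once this functional-analytic approximation is in place, the universality of the Berry-Esseen constant of Theorem \ref{theo:Op-freeCLT}, depending only on $\alpha_2$ and $\alpha_4$, guarantees that the bound survives the limit.
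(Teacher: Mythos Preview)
Your proof is correct and follows the same triangle-inequality skeleton as the paper: bound the middle term via Theorem \ref{theo:Op-freeCLT}, kill $L(\mu_{S_n^k},\mu_{S_n})$ via Theorem \ref{theo:opval_semicirculars_comparison}, and show $L(\mu_{X_n^k},\mu_{X_n})\to 0$. The only real difference is in this last step. The paper avoids the measure-topology machinery entirely: it forms the single projection $p^k := p_1^k\wedge\cdots\wedge p_n^k$, observes that $p^k\le p_i^k$ forces $p^k X_n^k p^k = p^k X_n p^k$, and then invokes the elementary perturbation bound $\Delta(\mu_{X_n^k},\mu_{X_n})\le \varphi(1-p^k)\le n\epsilon$ from \cite[Theorem 3.9(i)]{BV1993}. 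Your route through Nelson's joint continuity of multiplication in measure and the implication ``measure convergence $\Rightarrow$ weak convergence of spectral distributions'' is valid but heavier, and you would still owe the reader a reference or short argument for that last implication (e.g.\ via uniform boundedness of resolvents and noncommutative dominated convergence). The paper's approach is more self-contained and even gives a quantitative bound at each fixed $k$; yours has the virtue of being conceptually uniform (everything is ``pass to the limit in measure'').
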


\begin{proof}
For each $k\in\bN$, we put $X_n^k := \frac{1}{\sqrt{n}} \sum^n_{j=1} x_j^k$. Since $x^k$ is a family of selfadjoint elements in $\A$ that are free with amalgamation over $\B$ and centered by \ref{it:CLT-unbounded-i}, Theorem \ref{theo:Op-freeCLT} in combination with \ref{it:CLT-unbounded-iii} yields that $L(\mu_{X_n^k},\mu_{S_n^k}) \leq c A_1(x^k)^{1/7} n^{-1/14} \leq c A_1(x)^{1/7} n^{-1/14}$, where $S_n^k$ is an operator-valued semicircular element with the covariance map $\eta_n^k: \B \to \B$ given by $\eta_n^k(b) := \frac{1}{n} \sum^n_{j=1} E[x_j^k b x_j^k]$.
Now, let us take an arbitrary $\epsilon>0$. Notice that $\lim_{k\to \infty} \|\eta_n^k - \eta_n\| = 0$ by assumption \ref{it:CLT-unbounded-ii}, where $\eta_n: \B \to \B, b \mapsto E[S_n b S_n]$ is the covariance of $S_n$; hence, $\lim_{k\to \infty} L(\mu_{S_n^k},\mu_{S_n}) = 0$ thanks to Theorem \ref{theo:opval_semicirculars_comparison}. Furthermore $\lim_{k\to \infty} \varphi(p_i^k) = 1$ for $i=1,\dots,n$ by assumption \ref{it:CLT-unbounded-iv}. Thus, we may choose $k_0\in \bN$ such that, for all $k\geq k_0$, $L(\mu_{S_n^k},\mu_{S_n}) \leq \epsilon$ and $\varphi(p_i^k) \geq 1 - \epsilon$ for $i=1,\dots,n$.
Consider $p^k := p_1^k \wedge \dots \wedge p_n^k \in \A$ like in \cite[Lemma 4.10]{BV1993}. Then $\varphi(p^k) \geq 1-n\epsilon$ and $p^k X_n p^k = p^k X_n^k p^k$, so that $\Delta(\mu_{X_n^k},\mu_{X_n}) \leq n\epsilon$ by \cite[Theorem 3.9 (i)]{BV1993}. In summary, we get that
$$L(\mu_{X_n},\mu_{S_n}) \leq L(\mu_{X_n},\mu_{X_n^k}) + L(\mu_{X_n^k},\mu_{S_n^k}) + L(\mu_{S_n^k},\mu_{S_n}) \leq (n+1) \epsilon + c A_1(x)^{1/7} n^{-1/14}.$$
Letting $\epsilon \searrow 0$, the latter inequality yields the assertion. 
\end{proof}

\begin{proof}[Proof of Theorem \ref{theo:Op-freeCLT}]  
Let $y=\{y_1 , \ldots , y_n\}$ be a family of $\B$-valued semicircular elements that are free over $\B$ and are such that $E[y_j] = 0$ and $E[x_j b x_j] = E[y_j b y_j]$ for any $j \in [n]$ and $b \in \B$. Then we have by the moment-cumulant formula,
\[
E[y_j b^* y_j^2 b y_j] = E[y_j b^* y_j] E[y_j b y_j] + E[y_j b^* E[y_j^2] b y_j]
\] 
and hence $m_4^{y_j}(b^*,1,b) = m_2^{x_j}(b^*) m_2^{x_j}(b) + m_2^{x_j}( b^* m_2^{x_j}(1) b )$; using \eqref{def:second-fourth-moment}, we deduce from the latter that $\alpha_4(y) \leq 2 \alpha_2(x)^2$.
On the other hand, we have that
\[
\|y_j\|_{L^3}^3 \leq \varphi(y_j^4)^{1/2} \|x_j\|_{L^2} = \sqrt{2} \varphi(E[x_j^2]^2)^{1/2} \|x_j\|_{L^2} \leq \sqrt{2} \|E[x_j^2]\|^{1/2} \|x_j\|_{L^2}^2,
\]
and therefore $\|y\|_{L^3}^3 \leq  \sqrt{2\alpha_2(x)} \|x\|_{L^2}^2$. Now set $S_n = \frac{1}{\sqrt{n}} \sum_{j=1}^n y_j$ and note that $S_n$ is a $\B$-valued centered semicircular element whose variance is given by the completely positive map $\eta_n$. The proof of the first part of the theorem then follows by applying Theorem \ref{theo1:Lin-Lin:freeness} to $X_n$ and $S_n$ and taking into account the normalization.

Moreover, with the help of \eqref{eq:Levy_bound}, we infer from the estimate \eqref{eq:theo1_Lin-Lin:freeness-3} in Theorem \ref{theo1:Lin-Lin:freeness} the following bound on the L\'evy distance between the analytic distributions of $X_n$ and $S_n$
\[
L(\mu_{X_n},\mu_{S_n}) \leq 2\sqrt{\frac{\epsilon}{\pi}} + \frac{A_1(x)}{\epsilon^3\sqrt{n}}.
\]
Optimizing over $\epsilon\in (0,\infty)$, we finally get that
\[
L(\mu_{X_n},\mu_{S_n}) \leq c A_1(x)^{1/7} n^{-1/14}
\]
for a universal positive constant $c = 7 (\frac{1}{9\pi})^{3/7} < 1.672$.
\end{proof}

\subsection{Wigner matrices with variance profile}

In this section, we present another consequence of Theorem \ref{theo1:Lin-Lin:freeness} on operator-valued Wigner matrices. Let $N \in \mathbb{N}$ and consider the $N \times N$ operator-valued matrix $A_N \in M_N(\A)= M_N(\C)\otimes \A$ defined by 
\[
A_N = \sum_{1\leq j \leq i \leq N} \big( e_{ij} \otimes a_{ij} + e_{ij}^* \otimes a_{ij}^* \big)
\]
where we set $e_{ii} = \frac{1}{2 \sqrt{N}} E_{ii}$ and $e_{ij}= \frac{1}{\sqrt{N}} E_{ij}$ for  $j<i$ with $(E_{ij})_{1\leq i,j \leq N}$ denoting the standard matrix units in $M_N(\C)$.
We say that $A_N$ is an \emph{operator-valued Wigner matrix} whenever $a= \{a_{ij} \mid 1\leq j \leq i \leq N\} $ is a family of elements that are freely independent with amalgamation over $\B$ such that $a_{ii}= a_{ii}^*$,  $E[a_{ij}]=0$ and  $E[a_{ij} b a_{ij}]=0$ for all  $b \in \B$ and $1\leq j < i \leq N$. Note that the entries of $A_N$ are assumed to be, up to symmetry, free with amalgamation over $\B$ but do not need to be identically distributed. Applying Theorem \ref{theo1:Lin-Lin:freeness}, in the framework of the operator-valued $W^\ast$-probability space $(M_N(\A), \tr_N \otimes  \varphi, \id_N \otimes E, M_N(\B))$ which is associated with $(\A,\varphi,E,\B)$, we approximate the distribution of $A_N$ by that of an operator-valued semicircular element.

\begin{theorem}\label{theo:op-matrices}
Let $A_N$ be an operator-valued Wigner matrix. Then for any $z\in \mathbb{C}^+$, 
\[
|(\tr_N \otimes \varphi ) [G_{A_N}(z)] - (\tr_N \otimes \varphi )  [G_{S_N}(z)]| \leq \frac{8}{\Im (z)^4 \sqrt{N}} \Big(\|a\|_{L^3}^3 + \sqrt{2\|E[a^*a]\|} \|a\|_{L^2}^2\Big)
\]
where $S_N$ is a centered operator-valued semicircular element over $\D_N^\B:= \D_N \otimes\B$, the subalgebra of diagonal matrices in $M_N(\C) \otimes \B$, and whose covariance is given by the map
\[
\eta_N: \D_N^\B \rightarrow \D_N^\B, \qquad  D \mapsto \eta_N (D), 
\]
where for any $D=(d_{ij})_{i,j=1}^N \in \D_N^\B$, 
\[
\big(\eta_N (D) \big)_{i,j} = \delta_{i,j} \frac{1}{N} \sum_{r=1 }^i E [a_{ir} d_{rr} a^*_{ir} ] + \delta_{i,j} \frac{1}{N} \sum_{r=i+1 }^N E[a^*_{ri} d_{rr} a_{ri}].
\]
\end{theorem}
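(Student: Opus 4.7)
The plan is to deduce Theorem \ref{theo:op-matrices} from the scalar-valued estimate \eqref{eq:theo1_Lin-Lin:freeness-2} of Theorem \ref{theo1:Lin-Lin:freeness}, applied inside the amplified operator-valued $W^\ast$-probability space $(M_N(\A), \tr_N \otimes \varphi, \id_N \otimes E, M_N(\B))$. Decompose $A_N = \sum_{1 \leq j \leq i \leq N} x_{ij}$ with the self-adjoint summands $x_{ij} := e_{ij} \otimes a_{ij} + e_{ij}^\ast \otimes a_{ij}^\ast$. Since the $a_{ij}$'s are free with amalgamation over $\B$, the standard matrix amplification renders the family $\{x_{ij}\}$ free with amalgamation over $M_N(\B)$.

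On a suitable enlargement, I would introduce a comparison family $\{s_{ij}\}_{1 \leq j \leq i \leq N}$ of $\B$-free elements where $s_{ii}$ is a centered $\B$-valued semicircular with $E[s_{ii} b s_{ii}] = E[a_{ii} b a_{ii}]$ and, for $j < i$, $(s_{ij}, s_{ij}^\ast)$ is a centered $\B$-valued circular pair with $E[s_{ij} b s_{ij}] = 0$, $E[s_{ij} b s_{ij}^\ast] = E[a_{ij} b a_{ij}^\ast]$ and $E[s_{ij}^\ast b s_{ij}] = E[a_{ij}^\ast b a_{ij}]$. Setting $y_{ij} := e_{ij} \otimes s_{ij} + e_{ij}^\ast \otimes s_{ij}^\ast$ then produces self-adjoint elements which are free over $M_N(\B)$ by the same matrix-amplification argument. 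A direct expansion using the matrix-unit rule $(E_{rs} \otimes c)(E_{pq} \otimes b) = \delta_{sp}(E_{rq} \otimes cb)$, together with the vanishing $E[a_{ij} c a_{ij}] = 0$ for $j < i$ (mirrored on the $s_{ij}$-side by the circularity), shows that $(\id_N \otimes E)[x_{ij} B x_{ij}] = (\id_N \otimes E)[y_{ij} B y_{ij}]$ for every $B \in M_N(\B)$. Hence all hypotheses of Theorem \ref{theo1:Lin-Lin:freeness} are met.

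Applying \eqref{eq:theo1_Lin-Lin:freeness-2} to these $n = N(N+1)/2$ summands yields
\[
\big| (\tr_N \otimes \varphi)[G_{A_N}(z)] - (\tr_N \otimes \varphi)[G_{\tilde{S}_N}(z)] \big| \leq \frac{1}{\Im(z)^4} A_2(x,y) \cdot \frac{N(N+1)}{2},
\]
with $\tilde{S}_N := \sum_{1 \leq j \leq i \leq N} y_{ij}$. Estimating $A_2(x,y)$ is largely bookkeeping: from $|x_{ij}|^3 = N^{-3/2}(E_{ii} \otimes (a_{ij} a_{ij}^\ast)^{3/2} + E_{jj} \otimes (a_{ij}^\ast a_{ij})^{3/2})$ for $j < i$ (with an analogous expression for $i = j$) and the tracial identity $\varphi((aa^\ast)^{3/2}) = \varphi((a^\ast a)^{3/2})$, I would obtain $\|x_{ij}\|_{L^3}^3 \leq 2 N^{-5/2} \|a\|_{L^3}^3$. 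For $\|y_{ij}\|_{L^3}^3$, I would adapt the $L^3$-by-$L^2$ trick from the proof of Theorem \ref{theo:Op-freeCLT}, using $\varphi(s_{ii}^4) \leq 2 \|E[a_{ii}^2]\| \|a_{ii}\|_{L^2}^2$ and the analogous estimate in the circular case to conclude $\|y_{ij}\|_{L^3}^3 \leq C N^{-5/2} \sqrt{\|E[a^\ast a]\|} \|a\|_{L^2}^2$. Summing over the $\binom{N+1}{2}$ pairs produces the claimed $1/\sqrt{N}$ prefactor, with the numerical constant collecting to $8$.

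The remaining task, which I anticipate to be the main obstacle, is to identify $\tilde{S}_N$ with the operator-valued semicircular $S_N$ of the statement in the sense that $\mu_{\tilde{S}_N} = \mu_{S_N}$. Since the $y_{ij}$'s are $M_N(\B)$-semicircular and free over $M_N(\B)$, $\tilde{S}_N$ is certainly $M_N(\B)$-semicircular; using in addition $E_\D[y_{ij}] = 0$, the evaluation of $E_\D[\tilde{S}_N D \tilde{S}_N]$ for $D \in \D_N^\B$ reduces via freeness to $\sum_{(i,j)} E_\D[y_{ij} D y_{ij}]$, which by the matrix-unit computation above equals precisely $\eta_N(D)$. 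Upgrading this to full $\D_N^\B$-semicircularity with covariance $\eta_N$ requires verifying that all higher $\D_N^\B$-valued cumulants vanish; this follows from the matrix-unit structure of the $e_{ij}$'s, which forces every non-vanishing contribution to an iterated $\D_N^\B$-moment of $\tilde{S}_N$ to arise from a noncrossing pair partition, and can be established either by a direct cumulant computation or by the Fock-space realization of \cite{Shl1999,Sh-97}. The identification $\mu_{\tilde{S}_N} = \mu_{S_N}$ then allows the above bound to transfer, completing the proof.
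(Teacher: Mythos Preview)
Your proposal is correct and follows essentially the same route as the paper: introduce a $\B$-free family of semicirculars/circulars $\{s_{ij}\}$ with matching second moments, form the comparison matrix $\tilde S_N=\sum y_{ij}$, apply \eqref{eq:theo1_Lin-Lin:freeness-2} in $(M_N(\A),\tr_N\otimes\varphi,\id_N\otimes E,M_N(\B))$ with $n=N(N+1)/2$, and then identify $\tilde S_N$ as $\D_N^\B$-semicircular with covariance $\eta_N$. The paper handles the $L^3$ bounds by the cruder triangle inequality $\|x_{ij}\|_{L^3}^3\le 8\|e_{ij}\|_{L^3}^3\|a_{ij}\|_{L^3}^3=8N^{-5/2}\|a_{ij}\|_{L^3}^3$ rather than your explicit diagonalization of $|x_{ij}|^3$, and for the identification step it dispatches what you flag as ``the main obstacle'' in one line by invoking the matrix-cumulant formula (the operator-valued analogue of \cite[Chapter~9, Proposition~13]{Mingo-Speicher}) to see directly that all $M_N(\B)$-cumulants of $\tilde S_N$ of order $\neq 2$ vanish while $\kappa_2^{M_N(\B)}[\tilde S_N B,\tilde S_N]=\eta_N(B)\in\D_N^\B$, after which closure of $\D_N^\B$ under $\eta_N$ finishes the job.
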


This theorem extends the result in \cite[Theorem 3.4]{Ba-Ce-18} to the operator-valued setting by relaxing the freeness assumption on the entries to freeness with amalgamation.

\begin{proof}
Let $c=\{c_{ij} \mid 1\leq j \leq i \leq N\}$ be a family of \emph{free elements over $\B$} such that
\begin{itemize}
    \item for any $1  \leq i \leq N$, $c_{ii}$ is a $\B$-valued semicircular element with $E[c_{ii}]=0$ and $E[c_{ii} b c_{ii}]= E[a_{ii} b a_{ii}]$ for any $b \in \B$,
    \item for any $1\leq j < i \leq N$,  $c_{ij}$ is a $\B$-valued circular element with $ E[c_{ij}]=0$ and  $E[c_{ij} b c_{ij}^*] = E[a_{ij} b a_{ij}^*] $ for any $b \in \B$.
\end{itemize}
Without of generality, $c$ can be assumed to be free from $a$ over $\B$. Now set $S_N$ to be the $N\times N$ operator-valued Wigner matrix whose entries are given by the family $c$, i.e.
 \[
 S_N = \sum_{1\leq j \leq i \leq N} \big(e_{ij} \otimes c_{ij}+ e_{ij}^* \otimes c_{ij}^* \big).
 \]
 Finally, set $x_{ij} = e_{ij} \otimes a_{ij} + e_{ij}^* \otimes a_{ij}^* $ and $y_{ij} = e_{ij} \otimes c_{ij} + e_{ij}^* \otimes c_{ij}^*$ for all $1\leq j\leq i\leq N$, where we recall that $e_{ii} = \frac{1}{2 \sqrt{N}} E_{ii}$ and $e_{ij}= \frac{1}{\sqrt{N}} E_{ij}$ for  $j<i$ with $(E_{ij})_{1\leq i,j \leq N}$ denoting the standard matrix units in $M_N(\C)$. As freeness with amalgamation over $\B$ is preserved when lifted to matrices then $\{x_{ij} , y_{ij} \mid 1\leq j \leq i \leq N\}$ are free over $M_N(\B)$ with respect to $\id_N \otimes E$. Moreover, for any $b \in M_N(\B)$,
  \[
(\id_N \otimes E)[x_{ij}] = (\id_N \otimes E)[y_{ij}]=0
\quad \text{and} \quad
(\id_N \otimes E)\big[ x_{ij} \,  b \, x_{ij}\big] = (\id_N \otimes E) \big[ y_{ij} \,  b \,  y_{ij} \big]. 
\]
Setting $n=N(N+1)/2$, then the desired estimate follows from \eqref{eq:theo1_Lin-Lin:freeness-2} in Theorem \ref{theo1:Lin-Lin:freeness} by noting that 
\[
\|x\|_{L^3}^3 \leq \frac{8}{N \sqrt{N}} \|a\|_{L^3}^3 \max_{1\leq j \leq i\leq N} \| E_{ij} \|_{L^3}^3 = \frac{8}{N^2 \sqrt{N}} \|a\|_{L^3}^3
\]
and, with arguments similar to the proof of Theorem \ref{theo:Op-freeCLT}, that
\[
\|y\|_{L^3}^3 \leq \frac{8}{N^2 \sqrt{N}} \|c\|_{L^3}^3 \leq \frac{8\sqrt{2}}{N^2 \sqrt{N}} \sqrt{\|E[a^*a]\|} \|a\|_{L^2}^2.
\]
Finally, we show that $S_N$ is an operator-valued element over $\D^\B_N$. By an operator-valued variation of Proposition 13 in \cite[Chapter 9]{Mingo-Speicher}, we  compute the $M_N(\B)$-valued cumulants of $S_N$ in terms of the $\B$-cumulants of its entries. Indeed, as the $c_{ij}$'s are $\B$-valued circular elements, we then get for $m \in \mathbb{N}$ and for any $i,j \in [N]$ and $B, B_1 , \dots , B_{m-1} \in M_N(\B)$
\[\kappa^{M_N(\B)}_m [S_NB_1, \dots , S_N B_{m-1} , S_N]  =0
\quad \text{for  } m>2,
\]
\[
 \kappa_1^{M_N(\B)}[S_N]= 0,   \quad  \text{and} \quad \kappa^{M_N(\B)}_2[S_N B , S_N] = (\id_N \otimes E) \big[S_N B S_N \big] = \eta_N (B),
\]
with $\eta_N (B)$ as described above. To end the proof, it suffices to remark that the subalgebra of diagonal matrices $\D_N^\B \subset M_N(\C) \otimes \B$ is closed under the covariance map $\eta_N$.
\end{proof}

\section{The multivariate operator-valued setting}\label{section:Lin,-Lin}

In this section, we extend our approach to the multivariate setting to study noncommutative joint distributions. Like in Section \ref{section:operator-valued}, we will mostly be working in an operator-valued $C^\ast$-probability space $(\A,E,\B)$ but we specify our considerations to an operator-valued $W^\ast$-probability space $(\A,\varphi,E,\B)$ when necessary.
Fix $n, d \in \mathbb{N}$ and consider a family $x=\{x_j^{(k)} \mid 1\leq j \leq n, \, 1\leq k\leq d \}$ of  elements in $\A $. Our aim is to study the noncommutative joint distribution of the correlated tuple
\[
\ux_n:=\Big( \sum_{j=1}^n  x_j^{(1)},  \sum_{j=1}^n x_j^{(1)*} , \dots , \sum_{j=1}^n  x_j^{(d)} , \sum_{j=1}^n  x_j^{(d)*} \Big)
\]
under the assumption that the summands of each component are freely independent with amalgamation over $\B$. More precisely, we assume that $\A_{x_1}, \dots, \A_{x_n}$ are free with amalgamation over $\B$, with $\A_{x_j} = \B\langle x_j^{(1)}, x_j^{(1)*},\dots, x_j^{(d)}, x_j^{(d)*}\rangle$ being the algebra generated by $\{ x_j^{(1)}, x_j^{(1)*}, \dots , x_j^{(d)}, x_j^{(d)*} \}$ and $\B$. In order to obtain a measure theoretic description of the joint distribution of $\ux_n$, we consider evaluations of ``noncommutative test functions'' $f$. Each such evaluation $f(\ux_n)$ produces a single noncommutative random variable whose distribution can be studied analytically in terms of Cauchy transforms. The aim of this section is to describe how close the distribution of $f(\ux_n)$ is to the distribution of $f(\uy_n)$, when $\uy_n$ is a tuple of correlated sums constructed in the same way as $\ux_n$ and out of a family $y$ consisting of elements with matching first and second moments. More precisely, we consider a family $y=\{y_j^{(k)} \mid 1\leq j \leq n , \, 1\leq k\leq d \}$ of elements in $\A$ satisfying the following: 

\begin{assumption}\label{A:general}
The family $y$ is free from $x$ with amalgamation over $\B$ and is such that 
 \begin{itemize}
\item   $\A_{y_1}, \dots, \A_{y_n}$  are free with amalgamation over $\B$
\end{itemize}
 and for any $j=1, \dots , n$ and $k, \ell= 1, \dots , d$, 
\begin{itemize}
\item $E[ y_j^{(k)}] = E [ x_j^{(k)}]=0$,
\item $E \big[ y_j^{(k), \epsilon_1} \,  b \, y_j^{(\ell), \epsilon_2}  \big] = E \big[ x_j^{(k), \epsilon_1}\,  b \,  x_j^{(\ell),\epsilon_2}  \big]$ for all $b \in \B$ and $\epsilon_1, \epsilon_2 \in \{1, *\}$.
\end{itemize}
\end{assumption}

With the suitable choice of $y$, one can then describe the distribution of $f(\uy_n)$ and, hence, give an analytic approximation of the distribution of $f(\ux_n)$. However, as the evaluation $f(\ux_n)$ does not contain all the information on the noncommutative joint distribution of $\ux_n$, the guiding idea is then: the larger the considered class of test functions is, the more information one gains about the underlying multivariate noncommutative distribution. 
We consider two classes of noncommutative test functions (see Appendix \ref{section:linearizations} for more details):

\begin{itemize}
\item linear matrix pencils: let $\C\langle x_1, x_1^*, \dots, x_d, x_d^*\rangle$ denote the $\ast$-algebra of all noncommutative polynomials in the non-commuting indeterminates $x=(x_1,x_1^*,\dots,x_d,x_d^*)$; for any selfadjoint linear matrix pencil over $\C\langle x_1, x_1^*, \dots, x_d, x_d^*\rangle$, i.e., an element in $M_m(\C) \otimes \C\langle x_1, x_1^*, \dots,x_d, x_d^*\rangle$ which is of the form
\begin{equation}\label{eq:linear_matrix_pencil_sa}
g = Q_0 \otimes 1 + \sum^d_{k=1} (Q_k \otimes x_k + Q_k^\ast \otimes x_k^\ast)
\end{equation}
for matrix coefficients $Q_0,Q_1,\dots,Q_d$ in $M_m(\C)$ with $Q_0$ being selfadjoint, we consider
\[
\qquad \quad \; \;
 g(\ux_n) := Q_0 \otimes 1 +
 Q_1 \otimes \sum_{j=1}^n x_j^{(1)} +  Q_1^* \otimes \sum_{j=1}^n x_j^{(1)*} + \cdots + Q_d \otimes \sum_{j=1}^n x_j^{(d)} +  Q_d^* \otimes \sum_{j=1}^n x_j^{(d)*}.
\]
\item noncommutative polynomials: denote by $\B\langle x_1,x_1^*,\dots, x_d,x_d^*\rangle$ the $\ast$-algebra of $\B$-valued polynomials in the non-commuting indeterminates $x=(x_1,x_1^*,\dots,x_d,x_d^*)$, i.e., the $\C$-linear span of all $\B$-valued monomials of the form
$$b_0 x_{i_1}^{\epsilon_1} b_1 x_{i_2}^{\epsilon_2} b_2 \cdots b_{k-1} x_{i_k}^{\epsilon_{k}} b_k$$
for integers $k\geq 0$, indices $1\leq i_1,\dots,i_k \leq d$, elements $b_0,b_1,\dots,b_k\in\B$ and $\epsilon_1, \ldots, \epsilon_k \in \{1,*\}$; for any noncommutative polynomial $p \in  \B\langle x_1, x_1^*, \dots,x_d, x_d^*\rangle $ which is selfadjoint and of degree $\geq 1$, we consider 
\[
p(\ux_n):= p \Big( \sum_{j=1}^n  x_j^{(1)},  \sum_{j=1}^n x_j^{(1)*} , \dots , \sum_{j=1}^n  x_j^{(d)} , \sum_{j=1}^n  x_j^{(d)*} \Big). 
\]
\end{itemize}

In fact, Remark \ref{rem:Cauchy-convergence} ensures that each of those classes is rich enough to determine convergence in $\ast$-distribution over $\B$. This will be explained below in Remark \ref{rem:convergence_linear-pencil} and Remark \ref{rem:convergence_polynomial}.

In order to simplify the presentation of our results on noncommutative polynomials, we chose to state and prove them for polynomials in $\B \langle  x_1, x_1^*, \ldots, x_d, x_d^* \rangle$. However, our methods and results can be extended easily to polynomials in $\B \langle \tilde{x}_1, \ldots , \tilde{x}_{d_1}, x_1, x_1^*, \ldots, x_{d_2}, x_{d_2}^* \rangle$, namely polynomials in $d_1$ selfadjoint indeterminates $(\tilde{x}_1, \ldots , \tilde{x}_{d_1}) $ and $2d_2$ indeterminates $(x_1, x_1^*, \ldots, x_{d_2}, x_{d_2}^*)$. This covers in particular the setting in \cite{EKY18}.

We fix all along this section a linear pencil $g$ and a noncommutative polynomial $p$ defined as above. Before stating our main theorems, we need to introduce more notation that will be used in the sequel. We denote by $C_{g}$ and $C_{p}$ positive constants that depend only on $g$ and $p$ respectively and that can change from one line to another. 
We also put
\[
\|x\|= \max_{1\leq j\leq n } \max_{1\leq k \leq d } \|x_j^{(k)}\| \qquad \text{and} \qquad \|x\|_{L^r} = \max_{1\leq j\leq n } \max_{1\leq k \leq d } \|x_j^{(k)}\|_{L^r} \quad \text{for any  } r\geq1,
\]
and moreover
\[
\|E[x x^\ast]\| := \max_{1\leq j \leq n} \max_{1 \leq k \leq d} \big\| E [x_j^{(k)}x_j^{(k)*}]\big\| \qquad \text{and} \qquad \|E[x x^\ast]\| := \max_{1\leq j \leq n} \max_{1 \leq k \leq d} \big\| E [x_j^{(k)*} x_j^{(k)}]\big\|.
\]
Recalling the definition of the maps in \eqref{moment-map}, we set for $\epsilon_1, \epsilon_2, \epsilon_3, \epsilon_4 \in \{1, *\}$ 
\begin{align*}
\| \mathfrak{m}_2^{x^{\epsilon_1},x^{\epsilon_2}} \| &= 
 \max_{1\leq j\leq n } \max_{1\leq \ell_1, \ell_2 \leq d } \sup  \Big\| m_2^{x_j^{(\ell_1),\epsilon_1}, x_j^{(\ell_2), \epsilon_2}} (b ) \Big\| 
 \\ \|\mathfrak{m}_4^{x^{\epsilon_1},x^{\epsilon_2},x^{\epsilon_3},x^{\epsilon_4}}\| &=\max_{1\leq j\leq n } \max_{1\leq \ell_1, \ell_2, \ell_3 , \ell_4 \leq d }   \sup
 \Big\|m_4^{x_j^{(\ell_1),\epsilon_1}, x_j^{(\ell_2),\epsilon_2},x_j^{(\ell_3),\epsilon_3}, x_j^{(\ell_4),\epsilon_4}}(b^*,\1,b) \Big\| 
\end{align*}
where the above supremums are taken over all $b \in \B$ such that $\|b\| \leq 1$.
We finally denote by $\alpha_2^*(x)$ and $\alpha_4^*(x)$ the sum of all possible combinations of the above second and fourth mixed moments of $x$ and $x^*$; i.e.
\begin{align}\label{def:alphas}
\alpha_2^*(x) = \sum_{\epsilon_1,\epsilon_2 \in \{1,*\}} \| \mathfrak{m}_2^{x^{\epsilon_1},x^{\epsilon_2}} \|
\quad \text{and} \quad
\alpha_4^*(x) = \sum_{\epsilon_1,\epsilon_2,\epsilon_3,\epsilon_4 \in \{1,*\}} \| \mathfrak{m}_4^{x^{\epsilon_1},x^{\epsilon_2}, x^{\epsilon_3}, x^{\epsilon_4}} \|.
\end{align}

Recalling the definition of the maps in \eqref{moment-map}, we set
\begin{equation}\label{def:betas}
\begin{aligned}
\beta_2(x) &= \sum_{\epsilon_1,\epsilon_2 \in \{1,*\}} \max_{1\leq j\leq n } \max_{1\leq \ell_1, \ell_2 \leq d } \Big\| m_2^{x_j^{(\ell_1),\epsilon_1}, x_j^{(\ell_2), \epsilon_2}} \Big\|
\qquad \text{and}\\
\beta_4(x) &= \sum_{\epsilon_1,\epsilon_2,\epsilon_3,\epsilon_4 \in \{1,*\}} \max_{1\leq j\leq n } \max_{1\leq \ell_1, \ell_2, \ell_3 , \ell_4 \leq d }  \Big\|m_4^{x_j^{(\ell_1),\epsilon_1}, x_j^{(\ell_2),\epsilon_2},x_j^{(\ell_3),\epsilon_3}, x_j^{(\ell_4),\epsilon_4}} \Big\|.
\end{aligned}
\end{equation}
Finally, consider the quantities
\begin{multline}\label{def:B1-B2}
B_1(x,y):= \sqrt{\beta_2(x)} \Big(\sqrt{\beta_4(x) +  \beta_2(x)^2} + \sqrt{\beta_4(y) + \beta_2(x)^2} \Big)\\ \text{and} \qquad  B_2(x,y):= \|x\|_{L^3}^3 + \|y\|_{L^3}^3. 
\end{multline}

\subsection{Linear matrix pencils}
Linear matrix pencils are the essence of various powerful linearization techniques and constitute a fundamental class of noncommutative functions. They are the most basic matrix-valued expressions, are easy to handle, and can capture at the same time much information about the operators to which they are applied. 

\begin{theorem}\label{theo:Linear-pencil:freeness}
Let $g$ be a linear matrix pencil of the form \eqref{eq:linear_matrix_pencil_sa} and let $x$ be a family in $\A$ that is centered with respect to $E$ and is such that $\A_{x_1}, \dots, \A_{x_n}$ are free with amalgamation over $\B$. Consider the operator-valued $C^\ast$-probability space $\big(M_m(\A), \id_m \otimes E, M_m(\B)\big)$ which is associated with $(\A,E,\B)$. If $y$ is a family in $\A$ satisfying Assumption \ref{A:general}, then for any $\Zb \in \bH^+(M_m(\B))$,
\begin{align}\label{eq:theo_Linearpencil-1}
\big\|( \id_m \otimes  E) [G_{g(\ux_n)}(\Zb)] - ( \id_m \otimes  E)[G_{g(\uy_n)}(\Zb)] \big\|
\leq C_g \|\Im(\Zb)^{-1}\|^4 \ B_1(x,y) n,
\end{align}
with $C_g= m^3\big( \sum^d_{\ell=1} \|Q_\ell\| \big)^3$. In the case of an operator-valued $W^\ast$-probability space $(\A,\varphi,E,\B)$, we consider $\big(M_m(\A), $ $ \tr_m \otimes \varphi, \id_m \otimes E, M_m(\B)\big)$. Then, for any $z\in \mathbb{C}^+$,
\begin{align}\label{eq:theo_Linearpencil-2}
\big| (\tr_m \otimes \varphi) [G_{g(\ux_n)}(z)] - (\tr_m \otimes \varphi)[G_{g(\uy_n)}(z)] \big| 
 \leq  c_g \frac{1}{\Im (z)^4}  B_2(x,y) n,
\end{align}
with $c_g= d^3 \max_{1\leq \ell \leq d} \|Q_\ell\|^3$. Furthermore, for every $\epsilon>0$,
 \begin{align}\label{eq:theo_Linearpencil-3}
\frac{1}{\pi} \int_\R \big| (\tr_m \otimes \varphi) [G_{g(\ux_n)}(t+i\epsilon)] - (\tr_m \otimes \varphi) [G_{g(\uy_n)}(t+i\epsilon)] \big|\, \mathrm{d} t
 \leq C_g \frac{1}{\epsilon^3}  B_1(x,y) n.
\end{align}
\end{theorem}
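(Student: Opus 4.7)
The plan is to reduce Theorem~\ref{theo:Linear-pencil:freeness} to the operator-valued Berry--Esseen estimate of Theorem~\ref{theo1:Lin-Lin:freeness} by working in the amplified operator-valued $C^\ast$-probability space $(M_m(\A), \id_m \otimes E, M_m(\B))$, respectively its $W^\ast$-version $(M_m(\A), \tr_m \otimes \varphi, \id_m \otimes E, M_m(\B))$. For each $j=1,\dots,n$, set
\[
g_j := \sum_{k=1}^d \bigl(Q_k \otimes x_j^{(k)} + Q_k^\ast \otimes x_j^{(k)\ast}\bigr) \in M_m(\A),
\]
and define $h_j$ analogously with $y_j^{(k)}$ in place of $x_j^{(k)}$. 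Selfadjointness of $g$ forces each $g_j, h_j$ to be selfadjoint, and the decomposition $g(\ux_n) = Q_0 \otimes 1 + \sum_{j} g_j$ together with selfadjointness of $Q_0$ means that the affine shift $\Zb \mapsto \Zb - Q_0 \otimes 1$ preserves both $\bH^+(M_m(\B))$ and $\Im(\Zb)$. Consequently, the left-hand sides of \eqref{eq:theo_Linearpencil-1}, \eqref{eq:theo_Linearpencil-2}, \eqref{eq:theo_Linearpencil-3} become differences of Cauchy transforms of $\sum_{j} g_j$ and $\sum_{j} h_j$ at shifted arguments whose imaginary parts agree with those appearing on the right-hand sides.

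The next step is to verify that the families $\{g_1,\dots,g_n\}$ and $\{h_1,\dots,h_n\}$ satisfy the hypotheses of Theorem~\ref{theo1:Lin-Lin:freeness} over $(M_m(\A), \id_m \otimes E, M_m(\B))$: freeness of $\A_{x_1},\dots,\A_{x_n}$ with amalgamation over $\B$ lifts to freeness of the corresponding matrix amplifications over $M_m(\B)$; centering $(\id_m \otimes E)[g_j] = 0$ is immediate from $E[x_j^{(k)}] = 0$; and the second-moment matching in Assumption~\ref{A:general} yields $(\id_m \otimes E)[g_j B g_j] = (\id_m \otimes E)[h_j B h_j]$ for every $B \in M_m(\B)$ by direct expansion. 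Applying Theorem~\ref{theo1:Lin-Lin:freeness} to these families in the amplified setting then delivers the three desired estimates in terms of $A_1(\{g_j\},\{h_j\})$ and $A_2(\{g_j\},\{h_j\})$, so that the remaining task is to compare these quantities with $B_1(x,y)$ and $B_2(x,y)$.

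These moment comparisons form the technical core. For $\alpha_2(\{g_j\})$, the complete positivity of $m_2^{g_j}: M_m(\B) \to M_m(\B)$ together with the Paulsen identity recalled after \eqref{def:second-fourth-moment} reduces the supremum to $\|(\id_m \otimes E)[g_j^2]\|$; expanding $g_j^2$ and invoking the triangle inequality yields $\alpha_2(\{g_j\}) \leq \bigl(\sum_k \|Q_k\|\bigr)^2 \beta_2(x)$. For $\alpha_4(\{g_j\})$, I use the factorization $m_4^{g_j}(B^\ast,1,B) = (\id_m \otimes E)[(g_j B g_j)^\ast (g_j B g_j)]$, expand $g_j B g_j$ in terms of the matrix units $E_{pq}$ of $B = \sum_{p,q} E_{pq} \otimes b_{pq}$, and apply the $\B$-valued Cauchy--Schwarz inequality \eqref{Cauchy-Schwarz} to the resulting sum of $E[a_i^\ast a_j]$-terms; summation over the $m^2$ pairs $(p,q)$ combined with $\|b_{pq}\| \leq \|B\|$ and the full multilinear norms of the relevant $m_4^{x^{(\ell_1),\epsilon_1},\dots,x^{(\ell_4),\epsilon_4}}$ produces a bound of the form $\alpha_4(\{g_j\}) \lesssim m^\kappa \bigl(\sum_k \|Q_k\|\bigr)^4 \beta_4(x)$ for an appropriate exponent $\kappa$, and likewise for $\{h_j\}$. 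Combining these via the definition of $A_1$ and the elementary inequality $\sqrt{a+b} \leq \sqrt{a}+\sqrt{b}$ yields \eqref{eq:theo_Linearpencil-1} and \eqref{eq:theo_Linearpencil-3} with $C_g = m^3\bigl(\sum_k \|Q_k\|\bigr)^3$ (absorbing numerical prefactors into the power of $m$). For \eqref{eq:theo_Linearpencil-2}, the triangle inequality in $L^3(M_m(\A), \tr_m \otimes \varphi)$ together with the tensor-product identity $\|Q \otimes a\|_{L^3} = \|Q\|_{L^3(\tr_m)} \|a\|_{L^3} \leq \|Q\|\cdot\|a\|_{L^3}$ gives $\|g_j\|_{L^3}^3 \lesssim d^3 (\max_k \|Q_k\|)^3 \|x\|_{L^3}^3$, whence $c_g = d^3 \max_k \|Q_k\|^3$.

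The main obstacle is the careful bookkeeping of the dimensional constant $m$ in the amplified fourth-moment estimate, all while avoiding any reintroduction of operator-norm dependence on the $x_j^{(k)}$. In contrast to $\alpha_2$, where complete positivity collapses the supremum over $\|B\|\leq 1$ to an evaluation at the identity, the quantity $(\id_m \otimes E)[g_j B^\ast g_j^2 B g_j]$ is genuinely multilinear in the blocks of $B$, and its analysis forces the use of the stronger multilinear norms encoded in $\beta_4(x)$ rather than the weaker centered evaluations encoded in $\alpha_4(x)$; this is also what produces the polynomial-in-$m$ factor in $C_g$. A secondary subtlety is extending the scalar-valued estimate \eqref{eq:theo1_Lin-Lin:freeness-2} of Theorem~\ref{theo1:Lin-Lin:freeness} to accommodate the matrix-valued shift by $Q_0 \otimes 1$; however, since $Q_0$ is selfadjoint, the Lindeberg-style derivation underlying \eqref{eq:theo1_Lin-Lin:freeness-2} carries through with $\Im(z)$ replaced by $\Im(z)\cdot 1_m$, so that the bound retains the factor $\Im(z)^{-4}$ as claimed.
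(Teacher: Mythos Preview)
Your approach is essentially the same as the paper's: reduce to Theorem~\ref{theo1:Lin-Lin:freeness} in the amplified space $(M_m(\A),\id_m\otimes E,M_m(\B))$ by writing $g(\ux_n)=Q_0\otimes 1+\sum_j X_j$ with $X_j=\sum_\ell(Q_\ell\otimes x_j^{(\ell)}+Q_\ell^\ast\otimes x_j^{(\ell)\ast})$, verify that freeness, centering and second-moment matching lift, and then bound $A_1(X,Y)$, $A_2(X,Y)$ in terms of $B_1(x,y)$, $B_2(x,y)$. The only cosmetic difference is that the paper disposes of $Q_0$ by a ``without loss of generality $Q_0=0$'' rather than your explicit shift, and it packages the amplified moment bounds once and for all as Lemma~\ref{lem:tensor_moments} (giving $\|m_k^{Q(x)}\|\le m^{2(k-1)}(\sum_\ell\|Q_\ell\|)^k\sum_\epsilon\max_\ell\|m_k^{x_\ell^\epsilon}\|$), which it then applies with $k=2,4$ to obtain $\alpha_2(X)\le(\sum_\ell\|Q_\ell\|)^2\beta_2(x)$ and $\alpha_4(X)\le m^6(\sum_\ell\|Q_\ell\|)^4\beta_4(x)$, rather than your ad hoc Cauchy--Schwarz expansion; this is what fixes your unspecified exponent $\kappa$ as $6$ and makes $C_g=m^3(\sum_\ell\|Q_\ell\|)^3$ exact.
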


The estimates hold for all points $\Zb$ and $z$ in the upper planes $\bH^+(M_m(\B))$ and $\mathbb{C}^+$ respectively, while depending on the inverse of their imaginary part. On the level of moments, the estimates are merely in terms of the scalar- or operator-valued moments up to order four. Thanks to the latter fact, Theorem \ref{theo:Linear-pencil:freeness} allows like Theorem \ref{theo:Op-freeCLT} an extension to unbounded operators in the spirit of Proposition \ref{prop:Op-freeCLT_unbounded}; this is straightforward and the details are thus omitted.

Note that linear matrix pencils enter in the framework of Theorem \ref{theo1:Lin-Lin:freeness} and thus the above estimates follow from the operator-valued setting. The proof will be illustrated in Section \ref{section:proof-LP}.

\subsection{Noncommutative polynomials}

Inasmuch as the joint $\B$-valued $\ast$-distribution of the operators under investigation is concerned, evaluations of selfadjoint noncommutative $\B$-valued $\ast$-polynomials provide access to this information in a more direct way. Namely, we consider polynomials in $\B \langle x_1, x_1^*, \ldots , x_d,x_d^*\rangle$, the complex unital algebra of noncommutative $\B$-valued polynomials in the non-commuting indeterminates $x=(x_1,x_1^*,\dots,x_d,x_d^*)$.

Linearity, which is an essential requirement for the Lindeberg method, is obviously lost when dealing with polynomials. This issue can be however resolved using linearization techniques for noncommutative polynomials (see Appendix \ref{section:linearizations}). This allows passing the study to linear polynomials having matrix-valued coefficients, applying an operator-valued Lindeberg method on the matrix level and finally deducing the desired estimates on the associated noncommutative polynomials. Contrary to the linear matrix pencil case, the polynomial case does not follow from Theorem \ref{theo1:Lin-Lin:freeness} even after linearization. It requires a closer analysis on the level of the linearization matrix which will become clearer in the proof. Recalling the definition of $B_1$ and $B_2$ in \eqref{def:B1-B2}, our result then reads as follows.

\begin{theorem}\label{theo:Lin-Lin:freeness}
Let $(\A, E, \B)$ be an operator-valued $C^\ast$-probability space. Let $x$ be a  family of elements in $\A$ that are centered with respect to $E$ and are such that $\A_{x_1}, \dots, \A_{x_n}$ are free with amalgamation over $\B$. If $y$ is a family satisfying Assumption \ref{A:general}
then, for sufficiently large $n$, there exists a positive integer $r_p$ such that:
for any $\Zb \in \bH^+(\B)$,
\begin{align}\label{oper:Lin-Lin:free}
\|  E [G_{p(\ux_n)}(\Zb)] - E [G_{p(\uy_n)}(\Zb)] \|  \leq  C_p  M_{x,y}^{8r_p} \Big(1 +\|\Im (\Zb)^{-1}\| \Big)^4 B_1(x,y)n,
\end{align}
and, in the case of an operator-valued $W^\ast$-probability space $(\A, \varphi, E, \B)$, for any $z\in \mathbb{C}^+$, 
\begin{equation}\label{scalar:Lin-Lin:free}
\big|\varphi [G_{p(\ux_n)}(z)] - \varphi [G_{p(\uy_n)}(z)]\big| \leq C_p  M_{x,y}^{8r_p} \Big(1 +\frac{1}{\Im (z)} \Big)^4 B_2(x,y) n ,
\end{equation}
and for any $\epsilon>0$,
\begin{align}\label{scalar:Lin-Lin:free-Levy}
\frac{1}{\pi} \int_\R \big| \varphi [G_{p(\ux_n)}(t+i\epsilon)] - \varphi [G_{p(\uy_n)}(t+i\epsilon)] \big|\, \mathrm{d} t
\leq C_p M_{x,y}^{6r_p}\frac{(1+\epsilon^2)}{\epsilon^3}   B_1(x,y) n,
\end{align}
where 
\begin{equation}\label{linearization-constant-1}
M_{x,y} := \|x\| + \|y\| +  \sqrt{n} \sqrt{\| E [xx^*] \|}+\sqrt{n} \sqrt{\| E [x^*x] \|}
\end{equation}
and $C_p$ is positive constant that only depends on the polynomial $p$ and that can take different values in each of the above bounds.
\end{theorem}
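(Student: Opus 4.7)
The plan is to reduce the polynomial case to the linear matrix pencil case via the linearization trick, then invoke Theorem~\ref{theo1:Lin-Lin:freeness} in an amplified operator-valued $C^\ast$-probability space, and finally transfer the matricial bound back to a bound on the scalar- or $\B$-valued Cauchy transform of $p(\ux_n)$. Careful bookkeeping of the norms that enter through the linearization is the delicate point.

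First, I would invoke the linearization procedure of Section~\ref{section:linearizations}: given the selfadjoint noncommutative $\B$-polynomial $p$, one obtains an integer $r_p\geq 1$ and a selfadjoint linear matrix pencil
\[
L_p \;=\; \Lambda_0 \otimes 1 + \sum_{k=1}^d \bigl(\Lambda_k\otimes x_k + \Lambda_k^\ast \otimes x_k^\ast\bigr)\in M_{r_p}(\B)\otimes\B\langle x_1,x_1^\ast,\ldots,x_d,x_d^\ast\rangle,
\]
together with an affine map $z\mapsto \Lambda(z)\in M_{r_p}(\B)$ taking $\C^+$ into $\bH^+(M_{r_p}(\B))$, for which the Schur complement identity yields $(z-p(X))^{-1}=[(\Lambda(z)-L_p(X))^{-1}]_{1,1}$ for any tuple $X$ making the needed inverses well defined. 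Consequently, the scalar- and $\B$-valued Cauchy transforms of $p(\ux_n)$ and $p(\uy_n)$ are corner entries of the $M_{r_p}(\B)$-valued Cauchy transforms of $L_p(\ux_n)$ and $L_p(\uy_n)$ at the point $\Lambda(z)$ (respectively, at a matrix amplification thereof when $\Zb\in\bH^+(\B)$).

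Secondly, I would apply Theorem~\ref{theo1:Lin-Lin:freeness} in the operator-valued $C^\ast$-probability space $(M_{r_p}(\A),\id_{r_p}\otimes E,M_{r_p}(\B))$. Decomposing
\[
L_p(\ux_n)\;=\;L_p^{(0)} +\sum_{j=1}^n \tilde x_j,\qquad \tilde x_j\;=\;\sum_{k=1}^d\bigl(\Lambda_k\otimes x_j^{(k)}+\Lambda_k^\ast\otimes x_j^{(k)\ast}\bigr),
\]
and analogously for $\uy_n$, freeness with amalgamation over $\B$ of the $\A_{x_j}$'s (respectively $\A_{y_j}$'s) lifts by matrix amplification to freeness with amalgamation over $M_{r_p}(\B)$ of the $\tilde x_j$'s (respectively the $\tilde y_j$'s). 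The centering assumption forces $(\id_{r_p}\otimes E)[\tilde x_j]=0$, and Assumption~\ref{A:general} yields the matching of second $M_{r_p}(\B)$-moments $(\id_{r_p}\otimes E)[\tilde x_j\,b\,\tilde x_j]=(\id_{r_p}\otimes E)[\tilde y_j\,b\,\tilde y_j]$ for every $b\in M_{r_p}(\B)$. Hence Theorem~\ref{theo1:Lin-Lin:freeness} applied at $\Lambda(z)$ yields
\[
\bigl\|(\id_{r_p}\otimes E)[G_{L_p(\ux_n)}(\Lambda(z))]-(\id_{r_p}\otimes E)[G_{L_p(\uy_n)}(\Lambda(z))]\bigr\|\;\leq\;\|\Im(\Lambda(z))^{-1}\|^4\,A_1(\tilde x,\tilde y)\,n,
\]
together with the corresponding scalar- and integral-type estimates from \eqref{eq:theo1_Lin-Lin:freeness-2} and \eqref{eq:theo1_Lin-Lin:freeness-3}.

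Third, each of the three announced bounds is obtained by estimating the two quantitative ingredients separately. On the one hand, a direct expansion of the second and fourth $M_{r_p}(\B)$-moments of $\tilde x_j,\tilde y_j$ in terms of the matrix coefficients $\Lambda_k$ shows that $A_1(\tilde x,\tilde y)\leq C_p\,B_1(x,y)$ and $A_2(\tilde x,\tilde y)\leq C_p\,B_2(x,y)$, where $C_p$ depends on $p$ only through the linearization data. On the other hand, one must control $\|\Im(\Lambda(z))^{-1}\|$ as $z$ ranges over $\C^+$. Choosing $\Lambda(z)$ via the standard regularized Schur-complement construction and invoking the key estimates of Section~\ref{section:useful-lemmas} on linearization matrices, one obtains a bound of the form $(1+\Im(z)^{-1})\,M_{x,y}^{2r_p}$: each of the $r_p$ Schur-complement layers contributes an operator-norm factor, and $M_{x,y}$ bounds both the individual $\|x_j^{(k)}\|,\|y_j^{(k)}\|$ and the free-sum norms $\|\sum_j x_j^{(k)}\|,\|\sum_j y_j^{(k)}\|$, the latter via the Haagerup-type inequality for free (with amalgamation) centered sums. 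Raising to the fourth power, as required by \eqref{eq:theo1_Lin-Lin:freeness-1} and \eqref{eq:theo1_Lin-Lin:freeness-2}, produces the exponent $8r_p$ appearing in \eqref{oper:Lin-Lin:free} and \eqref{scalar:Lin-Lin:free}. The L\'evy-type bound \eqref{scalar:Lin-Lin:free-Levy} then follows by using \eqref{eq:theo1_Lin-Lin:freeness-3} in place of the pointwise estimate: since this one carries only a cubic power of $\epsilon^{-1}$ and an extra factor $1+\epsilon^2$ is picked up when extracting the $(1,1)$-corner of the matricial resolvent through a Cauchy-Schwarz-type argument, the exponent reduces to $6r_p$ there.

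The main obstacle is indeed the third step: producing a quantitative estimate of $\|\Im(\Lambda(z))^{-1}\|$ and of the associated corner-extraction error in terms of $\Im(z)^{-1}$ and the operator norms of $\ux_n,\uy_n$, while simultaneously ensuring that the Schur-complement identification is valid on all of $\bH^+(\B)$ for sufficiently large $n$ (this is where the hypothesis ``for sufficiently large $n$'' enters the statement). Once these key lemmas from Section~\ref{section:useful-lemmas} are in place, the remaining arguments amount to careful bookkeeping of the linearization data and a straightforward application of Theorem~\ref{theo1:Lin-Lin:freeness}.
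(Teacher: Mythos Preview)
Your reduction has a genuine gap at the second step. The matrix $\Lambda(z)$ produced by the linearization is of the form $\operatorname{diag}(z,0,\dots,0)$, so $\Im(\Lambda(z))=\operatorname{diag}(\Im(z),0,\dots,0)$ is \emph{not} invertible; thus $\Lambda(z)\notin\bH^+(M_{m+1}(\B))$ and $\|\Im(\Lambda(z))^{-1}\|$ is undefined. Theorem~\ref{theo1:Lin-Lin:freeness} therefore cannot be applied at $\Lambda(z)$, and the paper says so explicitly just after \eqref{eq:Cauchy-linearization}. Perturbing $\Lambda(z)$ into the interior by $\Lambda_\epsilon(z)=\Lambda(z)+i\epsilon\mathbf{1}$ and then letting $\epsilon\to 0$ is exactly the approach of \cite{Sp-07,Mai-Speicher-13}, and the paper recalls that it leads to suboptimal rates and works only near infinity; your sketch does not explain how to avoid this degradation.

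What the paper actually does is different: it runs the Lindeberg replacement directly on $L_{p(\uz_i)}(\Zb)^{-1}$ (Proposition~\ref{prop:Lin-Lin}), without ever interpreting it as a Cauchy transform in an amplified space. The first and second order terms still vanish by the moment-matching hypotheses, but the third-order remainders $W_i(\Zb),\widetilde W_i(\Zb)$ are bounded using Lemmas~\ref{lemma:moment-bounds} and~\ref{lemma:moment-bounds_variant} together with the explicit Schur-complement bound for $\|L_{p(\uz_i)}(\Zb)^{-1}\|$ from Lemma~\ref{lem:estimates-L-inverse} (which in turn rests on Proposition~\ref{prop:Q_inverse} and Lemma~\ref{lem:norm-L_inverse}). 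This is where $M_{x,y}^{r_\rho}$ and the factor $(1+\|\Im(\Zb)^{-1}\|)$ enter, and it is also where \cite[Proposition~7.1]{Junge} is used to control $\|\sum_j x_j^{(k)}\|$. For \eqref{scalar:Lin-Lin:free-Levy}, an additional unpacking of the $(1,1)$-corner via the block decomposition \eqref{eq:Schur_decomposition} is needed so that the $L^2$-factor $\|G_{p(\uz_i^0)}(z)\|_{L^2}^2$ appears and \eqref{eq:Cauchy-integral} can be applied; this is what produces the exponent $6r_p$ instead of $8r_p$. Your outline captures the right ingredients (linearization, Lindeberg, Junge's inequality), but the crucial point is that one must bypass Theorem~\ref{theo1:Lin-Lin:freeness} and argue directly on $L_p^{-1}$.
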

The constants $C_p$ and $r_p$ do not depend on $n$. They only depend on $d$ and a linear representation $\rho$ associated with the polynomial $p$ (see Appendix \ref{section:linearizations}). For each of the bounds \eqref{oper:Lin-Lin:free}, \eqref{scalar:Lin-Lin:free} and \eqref{scalar:Lin-Lin:free-Levy}, an explicit expression of $C_p$ can be easily tracked in Section \ref{section:proof-Polynomials} in case of interest.

The proof relies on an operator-valued Lindeberg method by blocks on the level of the linearization matrix. The replacement is done in a way that, at each step, all associated correlated elements are replaced at once as a block; hence the name. Having lifted the computations to a linear polynomial with matrix-valued coefficients, one can then apply the Lindeberg method on the matrix level and finally pass the desired estimates back on the initial polynomial. The proof is postponed to Section \ref{section:proof-Polynomials}.

\begin{remark}\label{rem:terms-est}
Note that when considering noncommutative polynomials as test functions, the operator norm of the variables appears in our estimates through $M_{x,y}$ but only in its \emph{non-leading} term. The leading terms only depend on the imaginary part of $\Zb$ or $z$ and on the operator-valued moments of the $x_j^{(k)}$'s up to order $4$. Indeed, the operator norm pops out when controlling the operator norm of the linearization matrix of the polynomial as shown in Lemma \ref{lem:estimates-L-inverse}. 
On the other hand, when considering linear matrix pencils as test functions, the operator norm does not appear in our estimates. Again the bounds in Theorem \ref{theo:Linear-pencil:freeness} only depend on the imaginary part of $\Zb$ or $z$ and the operator-valued moments of the $x_j^{(k)}$'s up to order $4$.
\end{remark}

\subsection{Multivariate $\B$-Free Central Limit Theorem}\label{section:MCLT}

Having obtained, in Section \ref{section:free-CLT}, quantitative bounds on the operator-valued free CLT, a natural continuation is to consider the multivariate setting. The operator-valued multivariate free CLT was proved first by Speicher \cite[Theorem 4.2.4]{Speicher-98} for identically distributed variables. More precisely, Speicher showed that if $(x^{(1)}_1, \dots , x^{(d)}_1)$, $\dots$, $(x^{(1)}_n, \dots , x^{(d)}_n)$ are centered freely independent and  identically distributed over $\B$ then
\begin{equation}\label{MCLT-Speicher}
\Big(  \frac{1}{\sqrt{n}} \sum_{j=1}^n x^{(1)}_j, \dots ,  \frac{1}{\sqrt{n}} \sum_{j=1}^n x^{(d)}_j  \Big) \xrightarrow[n\rightarrow \infty]{\B-^*d} 
(C_1, \dots, C_d)
\end{equation}
where  $\{C_1, \dots, C_d\}$ is a family of  $\B$-valued circular elements that are centered with respect to $E$ and whose covariance $(\eta, \widetilde{\eta})$ is given by the completely positive  maps
\[
\eta: \B \rightarrow M_{d}(\B), \quad b \mapsto [\eta_{k,\ell} ( b)]_{k,\ell=1}^d
\qquad \text{and} \qquad
\widetilde{\eta}: \B \rightarrow M_{d}(\B), \quad b \mapsto [\widetilde{\eta}_{k,\ell} ( b)]_{k,\ell=1}^d
\]
where 
\[
 \eta_{k,\ell} ( b) = E [x_1^{(k)*} \,  b  \, x_1^{(\ell)}] \quad \text{and} \quad \widetilde{\eta}_{k,\ell} ( b) =  E [x_1^{(k)}  \, b \,   x_1^{(\ell)*}] . 
\]
Note that the above convergence is in $*$-distribution over $\B$; see Section \ref{section:nc-distributions}. 
 
To describe the $\B$-valued free CLT analytically, we consider selfadjoint test functions, and quantify the above convergence in terms of Cauchy transforms. This follows respectively from Theorems \ref{theo:Linear-pencil:freeness} and \ref{theo:Lin-Lin:freeness}, which provide quantitative estimates for this convergence in the more general case of free but not necessarily identically distributed variables over $\B$. More precisely, let $x=\{x_j^{(k)} \mid \, 1\leq j \leq n , \, 1\leq k\leq d \}$ be a family in $\A$ with  $\A_{x_j} = \B\langle x_j^{(1)}, x_j^{(1)*},\dots, x_j^{(d)}, x_j^{(d)*}\rangle$ denoting the algebra generated by $\{ x_j^{(1)}, x_j^{(1)*},$ $ \dots , $ $ x_j^{(d)}, x_j^{(d)*} \}$ and $\B$. Assume that $x$ satisfies the following:

\begin{assumption}\label{A:MCLT}
$\A_{x_1}, \dots, \A_{x_n}$  are free  with amalgamation over $\B$ and for any $k,\ell = 1, \dots, d$ and $j=1, \dots, n $, 
\begin{itemize}
\item $ E [x^{(k)}_j]=0 $,
\item $E [x^{(k)}_j \,  b \,  x^{(\ell)}_j ]=0$ for all $b\in \B$.
\end{itemize}
 \end{assumption}
Now, let $\{C^{(1)}_n, \dots ,  C^{(d)}_n\}$ be a family of $\B$-valued circular elements with covariance $(\eta_n, \widetilde{\eta}_n)$ given by the completely positive maps
\[
\eta_n: \B \rightarrow M_{d}(\B), \quad b \mapsto \big[\eta_{k,\ell}^{(n)} ( b)\big]_{k,\ell=1}^d
\qquad \text{and} \qquad
\widetilde{\eta}_n: \B \rightarrow M_{d}(\B), \quad b \mapsto \big[\widetilde{\eta}_{k,\ell}^{(n)} ( b)\big]_{k,\ell=1}^d
\]
where
\begin{equation}\label{cov-MCLT}
 \eta_{k,\ell}^{(n)}  (b) = \frac{1}{n} \sum_{j=1}^n E [x_j^{(k)*} \,  b  \, x_j^{(\ell)}] \qquad \text{and} \qquad \widetilde{\eta}_{k,\ell}^{(n)}  ( b) = \frac{1}{n} \sum_{j=1}^n E [x_j^{(k)}  \, b \,   x_j^{(\ell)*}] . 
\end{equation}  
Again, the aim is to obtain quantitative estimates analytically in terms of Cauchy transforms when considering linear matrix pencils and noncommutative polynomials as test functions. To settle the notation, let us first define
\[
X_ n:= (X^{(1)}_n, X^{(1)*}_n,\dots , X^{(d)}_n, X^{(d)*}_n)
\qquad \text{with} \qquad 
X_n^{(k)}:= \frac{1}{\sqrt{n}} \sum_{j=1}^n x^{(k)}_j
\quad \text{for  } k = 1 , \dots , d,
\]
and moreover
\[
B_1(x) := \sqrt{\beta_2(x)} \Big(\sqrt{\beta_4(x) +  \beta_2(x)^2} + \sqrt{2} \beta_2 (x) \Big) 
\quad\text{and} \quad B_2(x):= \|x\|_{L^3}^3 + \sqrt{\beta_2(x)} \|x\|_{L^2}^2 .
\]

Our estimates are stated in the following two theorems whose proofs are combined and postponed to the end of this section.

\begin{theorem}\label{theo:op-MCLT-LP}
Let $g$ be a linear matrix pencil of the form \eqref{eq:linear_matrix_pencil_sa} and consider a family $x$ satisfying Assumption \ref{A:MCLT}. Consider the operator-valued $C^\ast$-probability space $\big(M_m(\A), \id_m \otimes E, M_m(\B)\big)$ which is associated with $(\A,E,\B)$. Then the following bounds hold: for any $\Zb \in \bH^+(M_m(\B))$,
 \begin{align*}
\big\| ( \id_m \otimes  E) \big[G_{g(X_n)}(\Zb)\big] - ( \id_m \otimes  E)\big[ G_{g(C^{(1)}_n, C^{(1)*}_n, \dots , C^{(d)}_n, C^{(d)*}_n )}(\Zb)\big] \big\|
\leq  C_g \|\Im (\Zb)^{-1}\|^4  B_1(x) \frac{1}{\sqrt{n}}.
\end{align*}
In the case of an operator-valued $W^\ast$-probability space $(\A,\varphi,E,\B)$, we consider the associated operator-valued $W^\ast$-probability space $\big(M_m(\A), \tr_m \otimes \varphi, \id_m \otimes E, M_m(\B)\big)$. Then, it holds for any $z\in \mathbb{C}^+$,
\begin{align*}
\Big|(\tr_m \otimes \varphi ) \big[G_{g(X_n)}(z)\big]  - (\tr_m \otimes \varphi ) \big[G_{g(C^{(1)}_n, C^{(1)*}_n, \dots , C^{(d)}_n, C^{(d)*}_n )}(z)\big]\Big| 
 \leq  c_g \frac{1}{\Im (z)^4}  B_2(x) \frac{1}{\sqrt{n}},
\end{align*}
where $C_g= m^3\big( \sum^d_{\ell=1} \|Q_\ell\| \big)^3$, $c_g= d^3 \max_{1\leq \ell \leq d} \|Q_\ell\|^3$ and  $\{C^{(1)}_n, \dots ,  C^{(d)}_n\}$ is a family of  $\B$-valued circular operators that are centered with respect to $E$ with covariance $(\eta_n, \widetilde{\eta}_n)$ given by \eqref{cov-MCLT}. Furthermore, there is a universal positive constant $c < 1.672$ such that
\begin{align*}
L\big(\mu_{g(X_n)}, \mu_{g(C^{(1)}_n, C^{(1)*}_n, \dots , C^{(d)}_n, C^{(d)*}_n )}\big) \leq c B_1(x)^{1/7} n^{-1/14}.
\end{align*}
\end{theorem}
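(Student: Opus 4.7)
The plan is to reduce Theorem \ref{theo:op-MCLT-LP} to Theorem \ref{theo:Linear-pencil:freeness} by introducing a comparison family of free circular tuples and tracking the $n^{-1/2}$ rescaling, in the same spirit as the deduction of Theorem \ref{theo:Op-freeCLT} from Theorem \ref{theo1:Lin-Lin:freeness}.

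First I would set $\tilde x_j^{(k)} := n^{-1/2} x_j^{(k)}$, so that $X_n^{(k)} = \sum_{j=1}^n \tilde x_j^{(k)}$ and $g(X_n)$ equals $g$ applied to the un-normalized sums of the family $\tilde x$. On a possibly enlarged operator-valued $C^\ast$-probability space (obtained by a free product with amalgamation over $\B$), I would then introduce a family $\tilde y = \{\tilde y_j^{(k)}\}$ of centered $\B$-valued circular tuples $(\tilde y_j^{(1)},\dots,\tilde y_j^{(d)})$, such that the subalgebras $\A_{\tilde y_j}$ are free with amalgamation over $\B$ from one another and from $\tilde x$, and whose covariance matches that of $\tilde x_j$, namely
\[
E\bigl[\tilde y_j^{(k)\ast} b\, \tilde y_j^{(\ell)}\bigr] = E\bigl[\tilde x_j^{(k)\ast} b\, \tilde x_j^{(\ell)}\bigr], \qquad E\bigl[\tilde y_j^{(k)} b\, \tilde y_j^{(\ell)\ast}\bigr] = E\bigl[\tilde x_j^{(k)} b\, \tilde x_j^{(\ell)\ast}\bigr].
\]
Assumption \ref{A:MCLT} ensures that the remaining second $\B$-valued mixed moments of $\tilde x_j$ vanish, so $\tilde y$ satisfies Assumption \ref{A:general}. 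Additivity of $\B$-valued free cumulants on freely independent summands then makes $\bigl(\sum_j \tilde y_j^{(1)},\dots,\sum_j \tilde y_j^{(d)}\bigr)$ a centered $\B$-valued circular family with covariance $(\eta_n,\widetilde\eta_n)$ as in \eqref{cov-MCLT}, so it has the same joint $\B$-valued $\ast$-distribution as $(C_n^{(1)},\dots,C_n^{(d)})$ and $g$ evaluated on it has the same Cauchy transforms as $g(C_n^{(1)},C_n^{(1)\ast},\dots,C_n^{(d)},C_n^{(d)\ast})$.

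Next I would apply Theorem \ref{theo:Linear-pencil:freeness} to $\tilde x$ and $\tilde y$ and track the scaling. By multilinearity of $m_2$ and $m_4$ in each argument, $\beta_2(\tilde x) = n^{-1}\beta_2(x)$ and $\beta_4(\tilde x) = n^{-2}\beta_4(x)$, and by construction $\beta_2(\tilde y) = \beta_2(\tilde x)$. For $\beta_4(\tilde y)$, the operator-valued moment-cumulant formula \eqref{operator-moment-cumulant-for} applied to the $\B$-valued circular family $\tilde y_j$ expresses each fourth $\B$-mixed moment as a sum over non-crossing pair partitions of products of second mixed moments, yielding $\beta_4(\tilde y) \leq c\, \beta_2(\tilde x)^2$ for an absolute constant $c>0$ (the multivariate analog of the estimate $\alpha_4(y)\leq 2\alpha_2(x)^2$ used in the proof of Theorem \ref{theo:Op-freeCLT}). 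Substituting these into the definition of $B_1(\tilde x,\tilde y)$ gives $B_1(\tilde x,\tilde y) \leq C\, n^{-3/2} B_1(x)$, and the factor $n$ in \eqref{eq:theo_Linearpencil-1} is absorbed to yield the stated $n^{-1/2}B_1(x)$ bound. The scalar estimate follows analogously, with $\|\tilde y\|_{L^3}^3$ controlled via H\"older as $\|\tilde y_j^{(k)}\|_{L^3}^3 \leq \|\tilde y_j^{(k)}\|_{L^2}\|\tilde y_j^{(k)}\|_{L^4}^2$ together with the explicit circular formulas, exactly as in the proof of Theorem \ref{theo:Op-freeCLT}.

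For the L\'evy distance bound, I would combine the rescaled form of \eqref{eq:theo_Linearpencil-3} with the universal estimate \eqref{eq:Levy_bound} to obtain
\[
L\bigl(\mu_{g(X_n)}, \mu_{g(C_n^{(1)}, C_n^{(1)\ast},\dots, C_n^{(d)\ast})}\bigr) \;\leq\; 2\sqrt{\epsilon/\pi} + C_g\, B_1(x)\, \epsilon^{-3}\, n^{-1/2},
\]
and minimize over $\epsilon \in (0,\infty)$ exactly as at the end of the proof of Theorem \ref{theo:Op-freeCLT}. The minimizer is $\epsilon \asymp B_1(x)^{2/7} n^{-1/7}$, producing $L \leq c\, B_1(x)^{1/7} n^{-1/14}$ with the same universal constant $c = 7(1/(9\pi))^{3/7} < 1.672$. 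The main technical obstacle will be the construction of $\tilde y$ with the required freeness and covariance structure together with the combinatorial bound $\beta_4(\tilde y)\lesssim \beta_2(\tilde x)^2$; once those are in hand, the rest is routine rescaling of the single-variable argument.
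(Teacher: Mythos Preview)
Your proposal is correct and follows essentially the same route as the paper: introduce a rescaled comparison family $\tilde y$ of $\B$-valued circular tuples with matching second moments, apply Theorem~\ref{theo:Linear-pencil:freeness} to $(\tilde x,\tilde y)$, track the $n^{-3/2}$ scaling of $B_1$ and $B_2$ via the moment-cumulant formula, identify $\sum_j \tilde y_j$ with $(C_n^{(1)},\dots,C_n^{(d)})$, and for the L\'evy bound combine \eqref{eq:theo_Linearpencil-3} with \eqref{eq:Levy_bound} and optimize in $\epsilon$. The paper carries out exactly these steps, obtaining the sharp constant $\beta_4(\tilde y)\leq n^{-2}\beta_2(x)^2$ (your unspecified $c$ is in fact $1$), which is what makes $B_1(\tilde x,\tilde y)$ match the definition of $B_1(x)$ on the nose.
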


\begin{remark}\label{rem:convergence_linear-pencil}
Let $x = \{x_n^{(k)} \mid n\in\bN, 1 \leq k \leq d\}$ be a family in $\A$ with the property that each of the finite families $x_n := \{x_j^{(k)} \mid 1 \leq j \leq n, 1 \leq k \leq d\}$ satisfies Assumption \ref{A:MCLT}.
Suppose further that $(\eta_n, \widetilde{\eta}_n)$ given by \eqref{cov-MCLT} is independent of $n$, say $(\eta,\widetilde{\eta})$, and let $\{C^{(1)},\dots,C^{(d)}\}$ be a family of $\B$-valued circular elements with the covariance $(\eta, \widetilde{\eta})$. Finally, we impose the condition that $\sup_{n\in\bN} B_1(x_n) < \infty$.
Then Theorem \ref{theo:op-MCLT-LP} yields for an arbitrary linear pencil $g$ of the form \eqref{eq:linear_matrix_pencil_sa} and for each $\epsilon>0$ that
$$\lim_{n\to \infty} \sup_{\Zb\in\bH^+(M_m(\B))\colon \Im(\Zb) \geq \epsilon \1} \big\|( \id_m \otimes  E) [G_{g(X_n)}(\Zb)] - ( \id_m \otimes  E)[G_{g(C)}(\Zb)] \big\| = 0,$$
where $C := (C^{(1)}, C^{(1)*}, \dots , C^{(d)}, C^{(d)*})$. With the help of the results outlined in Remark \ref{rem:Cauchy-convergence}, we see that this ensures that $(X^{(1)}_n,\dots,X^{(d)}_n) \stackrel{\B-^\ast d}{\longrightarrow} (C^{(1)}, \dots, C^{(d)})$ as $n\to \infty$;
notice that indeed the hermitizations of $\operatorname{diag}(X^{(1)}_n,\dots,X^{(d)}_n)$ and $\operatorname{diag}(C^{(1)}, \dots, C^{(d)})$, as well as all their amplifications with $\1_k \otimes \cdot$, are nothing but selfadjoint linear matrix pencils in the variables $X_n$ and $C$, respectively.
\end{remark}

\begin{theorem}\label{theo:op-MCLT-NCP}
Let $(\A, E, \B)$ be an operator-valued $C^\ast$-probability space. Let $p$ be a selfadjoint noncommutative polynomial in $ \B\langle x_1,$ $x_1^*, \dots,x_d, x_d^*\rangle $ of degree $\geq 1$, and consider a family $x$ satisfying Assumption \ref{A:MCLT}. Then, for sufficiently large $n$, there exist  positive integers $r_p$ and $C_p$, only depending on $p$, such that:
 for any $\Zb \in \bH^+(\B)$,
 \begin{align}\label{est:OVCT-polynomial}
\big\| E \big[G_{p(X_n)}(\Zb)\big] - E\big[ G_{p(C^{(1)}_n, C^{(1)*}_n, \dots , C^{(d)}_n, C^{(d)*}_n )}(\Zb)\big] \big\|
\leq C_p  M_{x}^{8r_p} \Big(1 +\|\Im (\Zb)^{-1}\| \Big)^4  B_1(x) \frac{1}{\sqrt{n}} ,
\end{align}
and, in the case of an operator-valued $W^\ast$-probability space $(\A, \varphi, E, \B)$, for any $z\in \mathbb{C}^+$, 
\begin{align*}
\Big| \varphi  \big[G_{p(X_n)}(z)\big]  -  \varphi  \big[G_{p(C^{(1)}_n, C^{(1)*}_n, \dots , C^{(d)}_n, C^{(d)*}_n )}(z)\Big]\Big|  \leq   C_p  M_{x}^{8r_p} \Big(1 +\frac{1}{\Im (z)} \Big)^4  B_2(x) \frac{1}{\sqrt{n}}, 
\end{align*}
where 
\[
M_x= \frac{1}{\sqrt{n}}\|x\| + \sqrt{\beta_2 (x)},
\]
and $\{C^{(1)}_n, \dots ,  C^{(d)}_n\}$ is a family of  $\B$-valued circular operators that are centered with respect to $E$ and whose covariance  $(\eta, \widetilde{\eta})$ is given by \eqref{cov-MCLT}. Furthermore, there is a universal positive constant $c < 1.846$ such that
\[
L(\mu_{p(X_n)},\mu_{p(C^{(1)}_n, C^{(1)*}_n, \dots , C^{(d)}_n, C^{(d)*}_n )}) \leq c M_x^{6r_\rho/7} B_1(x)^{1/7} n^{-1/14}.
\]
\end{theorem}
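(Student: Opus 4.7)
The plan is to reduce to Theorem \ref{theo:Lin-Lin:freeness} by choosing the right comparison family and rescaling. Enlarge $(\A,E,\B)$ by a free product (over $\B$) to accommodate a family $y=\{y_j^{(k)} \mid 1\le j\le n,\ 1\le k\le d\}$ such that: the algebras $\A_{y_1},\dots,\A_{y_n}$ are free over $\B$; each tuple $(y_j^{(1)},\dots,y_j^{(d)})$ is a centered $\B$-circular family with covariance pair $\eta_{k,\ell}^{(j)}(b):=E[x_j^{(k)*}bx_j^{(\ell)}]$ and $\widetilde\eta_{k,\ell}^{(j)}(b):=E[x_j^{(k)}bx_j^{(\ell)*}]$; and $y$ is free over $\B$ from $x$. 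Assumption \ref{A:MCLT} together with the vanishing of $\kappa_2^\B[C_k\cdot,C_\ell]$ and $\kappa_2^\B[C_k^*\cdot,C_\ell^*]$ for circular elements ensures that $y$ satisfies Assumption \ref{A:general}. Since $\B$-freeness additively accumulates covariances on $\B$-circular elements, $\bigl(\frac{1}{\sqrt n}\sum_j y_j^{(k)}\bigr)_{k=1}^d$ is a centered $\B$-circular family with covariance pair exactly $(\eta_n,\widetilde\eta_n)$ of \eqref{cov-MCLT}, hence shares the full $\B$-valued $*$-distribution of $(C_n^{(1)},\dots,C_n^{(d)})$. Setting $Y_n^{(k)}:=\frac{1}{\sqrt n}\sum_jy_j^{(k)}$, it is therefore enough to compare $p(X_n)$ with $p(Y_n)$.

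\textbf{Applying Theorem \ref{theo:Lin-Lin:freeness}.} Put $\tilde x_j^{(k)}:=x_j^{(k)}/\sqrt n$ and $\tilde y_j^{(k)}:=y_j^{(k)}/\sqrt n$, so that $\sum_j\tilde x_j^{(k)}=X_n^{(k)}$ and $\sum_j\tilde y_j^{(k)}=Y_n^{(k)}$. Homogeneity gives $\beta_2(\tilde x)=\beta_2(x)/n$, $\beta_4(\tilde x)=\beta_4(x)/n^2$, and $\|\tilde x\|_{L^r}^r=\|x\|_{L^r}^r/n^{r/2}$, with the same scaling for $y$. The $\B$-valued moment-cumulant formula for circular elements expands any fourth moment of $y_j$ into a sum over the two non-crossing pair partitions of $\{1,2,3,4\}$ (only those compatible with the $*$-pattern survive), each term being a nested composition of $\B$-valued second moments of $x_j$; consequently $\beta_4(y)\le\beta_2(x)^2$. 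A H\"older/Wick computation as in the proof of Theorem \ref{theo:Op-freeCLT} yields $\|y\|_{L^3}^3\le\sqrt{\beta_2(x)}\,\|x\|_{L^2}^2$, while the standard norm estimate for $\B$-circular elements gives $\|y\|\le C\sqrt{\beta_2(x)}$. Combining these one obtains $n\,B_1(\tilde x,\tilde y)\le B_1(x)/\sqrt n$, $n\,B_2(\tilde x,\tilde y)\le B_2(x)/\sqrt n$, and $M_{\tilde x,\tilde y}\le C\bigl(\tfrac{1}{\sqrt n}\|x\|+\sqrt{\beta_2(x)}\bigr)=CM_x$, with the constants absorbed into $C_p$. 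Plugging these into \eqref{oper:Lin-Lin:free} and \eqref{scalar:Lin-Lin:free} yields the operator-valued and scalar-valued estimates announced in Theorem \ref{theo:op-MCLT-NCP}.

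\textbf{L\'evy bound.} We feed \eqref{scalar:Lin-Lin:free-Levy} (with the same rescaling) into \eqref{eq:Levy_bound} applied to $\mu=\mu_{p(X_n)}$, $\nu=\mu_{p(Y_n)}$. With $A:=C_pM_x^{6r_p}B_1(x)/\sqrt n$ and for $n$ large enough that the forthcoming optimizer lies in $(0,1]$, we have $(1+\epsilon^2)/\epsilon^3\le 2/\epsilon^3$, and thus
\[
L(\mu_{p(X_n)},\mu_{p(Y_n)})\le 2\sqrt{\epsilon/\pi}+2A/\epsilon^3.
\]
Minimizing in $\epsilon$ gives $\epsilon^*=(6A\sqrt\pi)^{2/7}$, at which point
\[
L(\mu_{p(X_n)},\mu_{p(Y_n)})\le 14\,\pi^{-3/7}6^{-6/7}A^{1/7}<1.846\,A^{1/7}=c\,M_x^{6r_p/7}B_1(x)^{1/7}n^{-1/14}.
\]
Since $\mu_{p(Y_n)}=\mu_{p(C_n^{(1)},C_n^{(1)*},\dots,C_n^{(d)},C_n^{(d)*})}$, this is the desired L\'evy estimate.

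\textbf{Main obstacle.} The conceptual backbone (circular comparison family plus rescaling) is transparent; the work lies in checking that the bounds on $\beta_4(y)$, $\|y\|_{L^3}^3$ and $\|y\|$ for the $\B$-circular family are sharp enough to reproduce precisely the quantity $B_1(x)$ (respectively $B_2(x)$ and $M_x$) appearing in the statement, so that all extra constants can be harmlessly absorbed into $C_p$. This is a careful but routine bookkeeping exercise based on Speicher's operator-valued moment-cumulant formula applied to circular elements.
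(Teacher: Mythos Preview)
Your proposal is correct and follows essentially the same approach as the paper: construct a comparison family $y$ of $\B$-valued circular elements with matching second moments, rescale by $1/\sqrt{n}$, apply Theorem \ref{theo:Lin-Lin:freeness}, and control $\beta_4(y)$, $\|y\|_{L^3}$, $\|y\|$ via the moment-cumulant formula and Dykema's norm estimate to reduce $B_1(\tilde x,\tilde y)$, $B_2(\tilde x,\tilde y)$, $M_{\tilde x,\tilde y}$ to $B_1(x)$, $B_2(x)$, $M_x$; the L\'evy bound is obtained by the same optimization of \eqref{eq:Levy_bound} combined with \eqref{scalar:Lin-Lin:free-Levy}, and your constant $14\pi^{-3/7}6^{-6/7}$ coincides with the paper's $7(9\pi)^{-3/7}2^{1/7}$.
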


\begin{remark}\label{rem:convergence_polynomial}
In the situation of Remark \ref{rem:convergence_linear-pencil}, Theorem \ref{theo:Lin-Lin:freeness} yields for an arbitrary selfadjoint $\B$-valued noncommutative polynomial $p$ and for each $\epsilon>0$ that
$$\lim_{n\to \infty} \sup_{\Zb\in\bH^+(\B)\colon \Im(\Zb) \geq \epsilon \1} \big\|E[G_{p(X_n)}(\Zb)] - E[G_{p(C)}(\Zb)] \big\| = 0.$$
If we apply Theorem \ref{theo:Lin-Lin:freeness} to the variables $\1_k \otimes x$ in the operator-valued $C^\ast$-probability space $(M_k(\A), \id_k \otimes E, M_k(\B))$, then we conclude that in fact for every $k\in\bN$ and for each $\epsilon>0$,
$$\lim_{n\to \infty} \sup_{\Zb\in\bH^+(M_k(\B))\colon \Im(\Zb) \geq \epsilon \1} \big\|(\id_k\otimes E)[G_{\1_k \otimes p(X_n)}(\Zb)] - (\id_k \otimes E)[G_{\1_k\otimes p(C)}(\Zb)] \big\| = 0.$$
With the help of Remark \ref{rem:Cauchy-convergence}, we infer that $p(X_n) \stackrel{\B-d}{\longrightarrow} p(C)$ as $n\to \infty$.
In particular, we have that $E[p(X_n)] \to E[p(C)]$ as $n \to \infty$ for every selfadjoint $\B$-valued noncommutative polynomial $p$ and hence for all $\B$-valued noncommutative polynomials (because we can apply the former to both their real and imaginary parts); in other words, we have that $(X^{(1)}_n,\dots,X^{(d)}_n) \stackrel{\B-^\ast d}{\longrightarrow} (C^{(1)}, \dots, C^{(d)})$ as $n\to \infty$. Using the observation made in Remark \ref{rem:Cauchy-moment}, we can derive $E[p(X_n)] \to E[p(C)]$ as $n \to \infty$ for selfadjoint and hence for all $\B$-valued noncommutative polynomials $p$ without going through matricial amplifications. More precisely, by a direct application of Remark \ref{rem:Cauchy-moment} with $\epsilon_n=n^{-1/2}$ which is guaranteed by \eqref{est:OVCT-polynomial}, we get for $n$ sufficiently large that $\|E[p(X_n)] - E[p(C)]\| \leq  c n^{-1/6}$ for some constant $c>0$ that depends on $p$ but is independent of $n$.
\end{remark}

Our approach produces the first quantitative bounds on the L\'evy distance for the multivariate $\B$-free CLT without requiring any regularity conditions on the analytic distributions of the $\B$-valued circular family. Note that quantitative results on the operator-valued Cauchy transforms for the CLT in the setting of $\mathcal{T}$-free independence were obtained by Jekel and Liu \cite{Je-Li-19}. The same rate of convergence as in our theorem can be obtained as a \emph{non-trivial} consequence of their Theorem 8.10. While their bound would improve on the power of $\|\Im (\Zb)^{-1}\|$, it yields on the other hand estimates in terms of the operator norm instead of the moments; cf. \cite[Remark 8.15]{Je-Li-19}. Our extension of the Lindeberg method to the operator-valued setting allows to obtain bounds depending only on the moments for linear matrix pencils.
Combined with the linearization technique, our approach also allows obtaining bounds for noncommutative polynomials in which the leading term depends again on moments
while the operator norm appears only in the non-leading term; see Remark \ref{rem:terms-est}.

The multivariate free CLT is a particular case of the operator-valued one with $\B= \mathbb{C}$. Note that the family $\{C^{(1)}_n, \dots , C^{(d)}_n\} $ consists in this case of scalar-valued circular elements in $\A$ that are centered with respect to $\varphi$ and whose covariance is given by the matrix $\eta_n =(\eta_{k,\ell}^{(n)})_{k,\ell=1}^d$  with
\begin{equation}\label{cov:circular-MCLT}
\eta_{k,\ell}^{(n)}=\frac{1}{n} \sum_{j=1}^n \varphi [x_j^{(k)*}   x_j^{(\ell)}].
\end{equation}
In this setting and for free, selfadjoint and \emph{identically distributed} elements, Fathi and Nelson \cite{Fat-Nel-17} provided quantitative estimates for the multivariate version of the \emph{entropic} free CLT in terms of the non-microstates free entropy due to Wang \cite{W10}, by which they extended results of Chistyakov and G\"otze \cite{CG13}. On the level of Cauchy transforms, this was studied by Speicher \cite{Sp-07} and by Mai and Speicher \cite{Mai-Speicher-13} who also obtained the same rate of convergence but only for $z$ in a neighborhood of infinity. However, in order to pass these quantitative estimates to the L\'evy distance, the estimates on the difference of the Cauchy transforms should hold near the real axis; while there are general methods to extend these estimates towards the real axis (see the appendix of \cite{SV12}), the resulting bounds are far from being satisfying. In the particular case where $\eta_n$ is independent of $n$ and invertible, we could also push these estimates to the Kolmogorov distance. Indeed, this is done by an application of our previous result \cite[Theorem 5.3]{Banna-Mai-18} which also requires quantitative estimates on the Cauchy transforms over the strip $\{z \in \mathbb{C} \mid 0 < \Im (z) <\rho\}$ for some $\rho > 0$. This requirement is satisfied after Theorems \ref{theo:op-MCLT-LP} and \ref{theo:op-MCLT-NCP} above and thus we can obtain estimates on the Kolmogorov distance as illustrated in Corollary \ref{coro:Kolmogorov-MCLT}.
 
Before we give the precise statements, let us first recall the following terminology (see \cite{AEK2019,MSY18}): a positive linear map $\mathcal{L}: M_m(\C) \to M_m(\C)$ is said to be \emph{semi-flat} if there exists a constant $c>0$ such that $\mathcal{L}(B) \geq c \tr_m(B) \1_m$ holds for all positive semidefinite matrices $B \in M_m(\C)$.
To a linear matrix pencil $g$ of the form \eqref{eq:linear_matrix_pencil_sa} with coefficients $Q_0,Q_1,\dots,Q_d$ in $M_m(\C)$ with $Q_0$ being selfadjoint, we associate the \emph{quantum operator} $\mathcal{L}: M_m(\C) \to M_m(\C)$ which is the (completely) positive linear map defined by
\[
\mathcal{L}:\ M_m(\C) \rightarrow M_m(\C), \qquad B \mapsto \sum_{\ell=1}^{2d} \widetilde{Q}_\ell B \widetilde{Q}_\ell\]
where $\widetilde{Q}_{2k} := Q_k + Q_k^\ast$ and $\widetilde{Q}_{2k-1} := i (Q_k - Q_k^\ast)$ for $k=1,\dots,d$.

\begin{corollary}\label{coro:Kolmogorov-MCLT}
Let the setting be as in Theorems \ref{theo:op-MCLT-LP} and \ref{theo:op-MCLT-NCP} with $\B = \C$.
Suppose in addition that the covariance matrix $\eta_n=(\eta^{(n)}_{k,\ell})_{k,\ell=1}^d$ as defined in \eqref{cov:circular-MCLT} is independent of $n$ and invertible.
If $g$ admits a semi-flat quantum operator $\mathcal{L}$, then there exists a constant $K>0$ such that for all $n \in \bN$, 
\[
\Delta \Big( \mu_{g(X_n)}, \mu_{g(C^{(1)}_n, C^{(1)*}_n,\dots , C^{(d)}_n, C^{(d)*}_n)}\Big) \leq K n^{-1/14}. 
\]
If $p$ has degree $\deg(p)=r \geq 1$, then there exists a constant $K>0$ such that for all $n \in \bN$, 
\begin{equation}\label{Kolm-bound}
\Delta \Big( \mu_{p(X_n)}, \mu_{p(C^{(1)}_n, C^{(1)*}_n,\dots , C^{(d)}_n, C^{(d)*}_n)}\Big) \leq K n^{-1/(2^{r+3}-6)}. 
\end{equation}
\end{corollary}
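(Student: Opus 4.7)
The strategy is to pass from the scalar-valued ($\B=\C$) Cauchy-transform estimates of Theorems \ref{theo:op-MCLT-LP} and \ref{theo:op-MCLT-NCP}, which in this setting are of order $O(n^{-1/2})$ with a controlled $\Im(z)^{-4}$ blow-up, to bounds on the Kolmogorov distance. The key technical tool is our previous result \cite[Theorem 5.3]{Banna-Mai-18}, which upgrades a quantitative Cauchy-transform bound on a horizontal strip $\{0<\Im z<\rho\}$ to a Kolmogorov-distance bound, provided the cumulative distribution function of the limit measure is H\"older continuous with some exponent $\beta\in(0,1]$. Since $\eta_n\equiv\eta$ is independent of $n$ and invertible, the limit $\mu_{\bullet(C)}$ is a \emph{fixed} measure, so the task reduces to (i) the quantitative Cauchy-transform control already provided by the two theorems, and (ii) H\"older regularity of $F_{\mu_{\bullet(C)}}$.

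\textbf{Linear matrix pencil case.} Under the semi-flatness of the quantum operator $\mathcal{L}$ together with invertibility of $\eta$, the regularity theory of operator-valued semicircular/circular distributions developed in \cite{AEK2019,MSY18} implies that $\mu_{g(C^{(1)},C^{(1)*},\dots,C^{(d)},C^{(d)*})}$ admits a bounded density on $\R$; in particular, its cumulative distribution function is Lipschitz, i.e.\ H\"older continuous with exponent $\beta=1$. Combining this Lipschitz property with the L\'evy bound of Theorem \ref{theo:op-MCLT-LP}, which is of order $n^{-1/14}$, through \eqref{eq:Levy-Kolmogorov} specialized to $\beta=1$ yields the claimed $\Delta\leq Kn^{-1/14}$.

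\textbf{Noncommutative polynomial case.} Here $\mu_{p(C)}$ is in general no longer Lipschitz; its cumulative distribution function is only H\"older continuous with an exponent $\beta_r$ that deteriorates geometrically with $r=\deg(p)$. Using the linearization trick (Appendix \ref{section:linearizations}), we realize $p(C)$ as a Schur complement of a selfadjoint linear matrix pencil $\tilde g$ over an amplified algebra, where the pencil-level distribution is regular by \cite{AEK2019,MSY18}. Transferring the pencil-level regularity down to $\mu_{p(C)}$ proceeds through successive Schur-complement / resolvent identities, each of which at worst halves the effective H\"older exponent; tracking the depth of nesting required for a degree-$r$ polynomial yields $\beta_r=1/(2^r-1)$. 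Feeding this $\beta_r$ together with the Cauchy-transform bound of Theorem \ref{theo:op-MCLT-NCP} (of order $n^{-1/2}\Im(z)^{-4}$ on strips bounded away from $\R$) into \cite[Theorem 5.3]{Banna-Mai-18} and optimizing the strip width $\epsilon$ in the usual way gives, after an elementary computation,
\[
\Delta\leq K\, n^{-\beta_r/(2(\beta_r+4))}=K\, n^{-1/(2^{r+3}-6)},
\]
which is exactly \eqref{Kolm-bound}.

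\textbf{Main obstacle.} The pencil case is essentially routine once the bounded-density property of $\mu_{g(C)}$ is in hand. The real difficulty lies in the polynomial case: rigorously establishing the precise degree-dependent H\"older exponent $\beta_r=1/(2^r-1)$ for $F_{\mu_{p(C)}}$ by tracking the regularity losses through the successive Schur complements of the linearization, and then matching this exponent cleanly against the $\Im(z)^{-4}$ blow-up in \cite[Theorem 5.3]{Banna-Mai-18} so as to recover exactly the exponent $1/(2^{r+3}-6)$. The arithmetic of the final optimization is elementary once $\beta_r$ is identified; what requires care is verifying that no further regularity losses appear in the linearization step beyond the halving per layer.
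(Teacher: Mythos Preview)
Your overall strategy is sound and your arithmetic for the polynomial exponent is exactly right: with $\beta_r=1/(2^r-1)$ one gets $\beta_r/(2\beta_r+8)=1/(2^{r+3}-6)$, matching \eqref{Kolm-bound}. However, your route differs from the paper's in two places worth noting.

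\emph{Linear pencil case.} You assert that semi-flatness of $\mathcal{L}$ together with invertibility of $\eta$ yields a \emph{bounded density} for $\mu_{g(C)}$, hence Lipschitz $\F$, and then pass from the L\'evy bound of Theorem~\ref{theo:op-MCLT-LP} to Kolmogorov via \eqref{eq:Levy-Kolmogorov} with $\beta=1$. The paper does \emph{not} claim bounded density: what \cite[Theorem~8.1]{MSY18} actually gives under the stated hypotheses (finite non-microstates Fisher information of the underlying semicircular family, coming from invertibility of $\eta$) is H\"older continuity of $\F_{\mu_{g(C)}}$ with exponent $2/3$, not $1$. The paper therefore does not go through the L\'evy distance at all; it feeds the scalar Cauchy-transform bound of Theorem~\ref{theo:op-MCLT-LP} (of order $n^{-1/2}\Im(z)^{-4}$) directly into \cite[Theorem~5.3]{Banna-Mai-18}, which with $\beta=2/3$ returns precisely $n^{-\beta/(2\beta+8)}=n^{-1/14}$. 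Your L\'evy route would collapse to $n^{-1/21}$ if only $\beta=2/3$ is available, so it genuinely requires the stronger bounded-density claim, which is not what the cited references furnish here.

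\emph{Polynomial case.} Your approach and the paper's coincide, except that the H\"older exponent $\beta_r=1/(2^r-1)$ does not need to be re-derived via Schur complements: it is exactly \cite[Theorem~1.1]{Banna-Mai-18}, applicable because the circular family can be rewritten as a semicircular family admitting Lipschitz conjugate variables (again thanks to invertibility of $\eta$). The paper simply cites this result and then applies \cite[Theorem~5.3]{Banna-Mai-18} (invoking \cite[Proposition~7.1]{Junge} for the required uniform norm bound on $X_n$), which is the same endgame you propose.
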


To the best of our knowledge, this is the first result providing quantitative bounds on the Kolmogorov distance in the multivariate setting. Remarkably, the speed of convergence in \eqref{Kolm-bound} depends only on the degree $r$ of the noncommutative polynomial $p$.

We end this section by giving the proofs of Theorems \ref{theo:op-MCLT-LP} and \ref{theo:op-MCLT-NCP} and of Corollary \ref{coro:Kolmogorov-MCLT}.

\begin{proof}[Proof of Theorems \ref{theo:op-MCLT-LP} and \ref{theo:op-MCLT-NCP}] This is a direct consequence of Theorems \ref{theo:Linear-pencil:freeness} and \ref{theo:Lin-Lin:freeness}. With a particular choice of the family $y$, one then only needs to control the corresponding bounds on the moments and operator norms. 

We consider a family $y=\{y_j^{(k)} \mid 1\leq j\leq n,$ $ 1 \leq k\leq d\}$ of $\B$-valued circular elements that is free from $x$ over $\B$. We choose $y$ such that it satisfies Assumption \ref{A:MCLT} and has the same moments of second order as $x$, i.e. $\A_{y_1}, \dots, \A_{y_n}$ are free over $\B$ and for any choice of $k,\ell  \in [d]$, $j \in [n]$ and $\epsilon_1 , \epsilon_2 \in \{ 1,* \}$
\[
E [ y_{j}^{(k)}] = E[ x_{j}^{(k)}]=0
\quad \text{and} \quad
E [ y_{j}^{(k), \epsilon_1} b \,  y_{j}^{(\ell), \epsilon_2} ] = E [ x_{j}^{(k), \epsilon_1} b \,   x_{j}^{(\ell),\epsilon_2}] \quad \text{for all } b \in \B. 
\]
Denote by $\tilde{x}_j:= \frac{1}{\sqrt{n}}(x_j^{(1)},$ $ x_j^{(1)*}, $  $\dots ,x_j^{(d)},$ $ x_j^{(d)*})$ and $\tilde{y}_j:=  \frac{1}{\sqrt{n}}(y_j^{(1)}, y_j^{(1)*}, \dots ,y_j^{(d)}, y_j^{(d)*})$. Applying Theorems \ref{theo:Linear-pencil:freeness} and \ref{theo:Lin-Lin:freeness} for  $\tilde{\ux}= \sum_{j=1}^n \tilde{x}_j$ and $\tilde{\uy}=  \sum_{j=1}^n \tilde{y}_j$, the bounds on the scalar- and operator-valued Cauchy transforms in Theorems \ref{theo:op-MCLT-LP} and \ref{theo:op-MCLT-NCP} follow after controlling the quantities $B_1(\tilde{x},\tilde{y})$, $B_2(\tilde{x},\tilde{y})$ and  $M_{\tilde{x},\tilde{y}}$. We start by noting that for any $n \in \mathbb{N}$,
\[
\begin{aligned}
& \beta_4(\tilde{x}) = \frac{1}{n^2}\beta_4(x), \quad \beta_2(\tilde{x}) = \frac{1}{n}\beta_2(x),
\quad \|\tilde{x}\|_{L^3}^3 = \frac{1}{n \sqrt{n}} \|x\|_{L^3}^3, \quad \text {and} \quad  \|\tilde{x}\| = \frac{1}{\sqrt{n}}\|x\|.
\end{aligned}
\]
With our choice of the family $y$, we then get by the $\B$-valued moment-cumulant formula,  
\[
\begin{aligned}
&\beta_4(\tilde{y})=\frac{1}{n^2} \beta_4(y) \leq \frac{1}{n^2} \beta_2(y)^2=\frac{1}{n^2} \beta_2(x)^2,
\quad \text{and} \quad
\beta_2 (\tilde{y})= \frac{1}{n} \beta_2(y)= \frac{1}{n} \beta_2(x).
\end{aligned}
\]
Now, with arguments similar to the proof of Theorem \ref{theo:Op-freeCLT}, we get
\[
\|y_j^{(k)}\|_{L_3}^3 \leq \big(2\min \big\{ \|E[y_j^{(k)*}y_j^{(k)}]\| , \|E[y_j^{(k)}y_j^{(k)*}]\|\big\}\big)^{1/2} \|y\|_{L_2}^2 \leq \sqrt{\beta_2(y)} \|y\|_{L_2}^2, 
\]
hence, by taking the maximum over $k \in [d]$ and $j \in [n]$, we get
\[
\|\tilde{y}\|_{L^3}^3 = \frac{1}{n \sqrt{n}} \|y\|_{L^3}^3 \leq \frac{1}{n \sqrt{n}} \sqrt{\beta_2(y)} \|y\|_{L_2}^2 =   \frac{1}{n \sqrt{n}} \sqrt{\beta_2(x)} \|x\|_{L_2}^2.
\]
Putting the above terms together, we obtain  
\[
B_1(\tilde{x},\tilde{y}) = \frac{1}{n \sqrt{n}} B_1(x)
\quad \text{and} \quad 
B_2(\tilde{x},\tilde{y}) = \frac{1}{n \sqrt{n}} B_2(x).
\]
Finally, by \cite[Proposition 4.14]{Dyk-05}, the operator norm of the $\B$-valued circular elements can be controlled as follows 
\[
\|y_j^{(k)}\| \leq 2\big(\max \big\{ \|E[y_j^{(k)*}y_j^{(k)}]\| , \|E[y_j^{(k)}y_j^{(k)*}]\|\big\}\big)^{1/2} \leq 2\sqrt{\beta_2(y)}=2\sqrt{\beta_2(x)} \leq 2\|x\|.
\]
Taking the maximum over $k \in [d]$ and $j \in [n]$, we we infer that $ M_{\tilde{x},\tilde{y}} \leq c M_x $ for some positive constant $c$. To conclude the bounds on the Cauchy transforms, it remains to notice that $\tilde{\uy}\stackrel{d}{=}(C^{(1)}_n, C^{(1)*}_n, \dots , C^{(d)}_n,C^{(d)*}_n )$ with $\{C^{(1)}_n, \dots , C^{(d)}_n\} $  a family of operator-valued circular elements over $\B$ that are centered with respect to $E$ and whose covariance is given by \eqref{cov-MCLT}. 

Finally, to get the estimate on the L\'evy distance for the case of linear matrix pencils, we use the bound \eqref{eq:theo_Linearpencil-3}, together with the L\'evy bound \eqref{eq:Levy_bound}, to get
\[
L(\mu_{g(X_n)},\mu_{g(C^{(1)}_n, C^{(1)*}_n, \dots , C^{(d)}_n, C^{(d)*}_n )}) \leq 2 \sqrt{ \frac{\epsilon}{\pi}} + \frac{B_1(x)}{\epsilon^3\sqrt{n}}.
\]
Optimizing over $\epsilon\in (0,\infty)$, we infer that
\[
L(\mu_{X_n},\mu_{g(C^{(1)}_n, C^{(1)*}_n, \dots , C^{(d)}_n, C^{(d)*}_n )}) \leq c B_1(x)^{1/7} n^{-1/14},
\]
for
the universal constant $c = 7(\frac{1}{9\pi})^{3/7} < 1.672$.
In the same way, we use the bound \eqref{scalar:Lin-Lin:free-Levy} to get
\[
L(\mu_{p(X_n)},\mu_{p(C^{(1)}_n, C^{(1)*}_n, \dots , C^{(d)}_n, C^{(d)*}_n )}) \leq 2 \sqrt{ \frac{\epsilon}{\pi}}  + \frac{1+\epsilon^2}{\epsilon^3}\frac{M_x^{6r_\rho}B_1(x)}{\sqrt{n}} \leq 2 \sqrt{ \frac{\epsilon}{\pi}}  + \frac{2}{\epsilon^3}\frac{M_x^{6r_\rho}B_1(x)}{\sqrt{n}}
\]
for any $\epsilon \in (0,1)$ and thus, after optimizing over $\epsilon\in (0,1) $, we get for $n$ sufficiently large that
\[
L(\mu_{p(X_n)},\mu_{p(C^{(1)}_n, C^{(1)*}_n, \dots , C^{(d)}_n, C^{(d)*}_n )}) \leq c M_x^{6r_\rho/7} B_1(x)^{1/7} n^{-1/14},
\]
for the universal constant $c = 7 (\frac{1}{9\pi})^{3/7} 2^{1/7} < 1.846$.
\end{proof}

\begin{proof}[Proof of Corollary \ref{coro:Kolmogorov-MCLT}]
We start by  noting that when $\B=\C$ and $E=\varphi$, the family $\{C^{(1)}_n, \dots ,$ $ C^{(d)}_n \}$ is a family of centered circular elements with covariance matrix given in \eqref{cov:circular-MCLT}.

If $g$ is a selfadjoint linear matrix pencil of the form \eqref{eq:linear_matrix_pencil_sa}, then $g(C^{(1)}_n, C^{(1)*}_n, \dots , C^{(d)}_n, C^{(d)*}_n )$ can be rewritten as a linear matrix pencil $\tilde{g}$ in $2d$ correlated semicircular elements having finite non-microstates free Fisher information thanks to the invertibility of $\eta_n$. Thus, Theorem~8.1 in \cite{MSY18} ensures that the cumulative distribution function of $\mu_{g(C^{(1)}_n, C^{(1)*}_n,\dots , C^{(d)}_n, C^{(d)*}_n)}$ is H\"older continuous with exponent $\frac{2}{3}$.
Notice that similarly $p (C^{(1)}_n, C^{(1)*}_n, \dots , C^{(d)}_n, C^{(d)*}_n) $ can be written as a selfadjoint polynomial $\tilde{p}$ in correlated semicircular elements with degree $\deg(\tilde{p})=\deg(p)=r$. Now as this semicircular family admits Lipschitz conjugate variables thanks to the invertibility of $\eta_n$, then by Theorem~1.1 in \cite{Banna-Mai-18}, the cumulative distribution function of $\mu_{p(C^{(1)}_n, C^{(1)*}_n,\dots , C^{(d)}_n, C^{(d)*}_n)}$ is H\"older continuous with exponent $1/(2^r-1)$.

Combing these facts with the bound on the scalar-valued Cauchy transform in Theorems \ref{theo:op-MCLT-LP} and \ref{theo:op-MCLT-NCP}, then Theorem~5.3 in \cite{Banna-Mai-18} guarantees thanks to \cite[Proposition 7.1]{Junge} the existence of a numerical constant $K>0$ for which the asserted bounds on the Kolmogorov distance hold.
\end{proof}

\subsection{Matrices with covariance profile} \label{section:matrices}

In his fundamental paper \cite{Voi-91}, Voiculescu proved an asymptotic freeness result that revealed the connection between random matrices and free probability theory. Indeed, he proved that a family of independent Gaussian matrices is asymptotically free and converges in $*$-distribution to free semicircular elements. This  result was then extended to matrices with bosonic and fermionic entries by Shlyakhtenko \cite{Shl-96, Sh-97} who illustrated the connection between random band matrices and freeness with amalgamation \cite[Theorem 4.1]{Sh-97}. Asymptotic freeness extensions of Voiculescu's result were also given by Ryan \cite{Ryan-98} and Liu \cite{Liu-18}, respectively, for Wigner matrices with free and conditionally free entries having identical variance. In this section, we push these results to families of operator-valued matrices with $\B$-free entries that can have a general covariance structure. We consider linear matrix pencils and noncommutative polynomials as test functions and provide quantitative estimates about such convergences in terms of Cauchy transforms. 
 
Fix $d \in \bN$ and $n\in \bN$, and let $A^{(1)}_N , \dots , A^{(d)}_N$ be operator-valued matrices in $M_N(\A)$ that are given, for any $1\leq k\leq d$, by 
\[
A^{(k)}_N := \frac{1}{\sqrt{N}} [ a^{(k)}_{ij}]_{i,j=1}^N =  \frac{1}{\sqrt{N}} \sum_{i,j=1}^N  E_{ij} \otimes a_{ij}^{(k)}
\]
where $(E_{ij})_{1\leq i,j \leq N}$ are the standard matrix units in $M_N(\C)$. All along this section, we will be working in the setting of an operator-valued $W^*$-probability space $(M_N(\A), \tr_N \otimes \varphi, \id_N \otimes E, M_N(\B))$. We assume that $a= \{a_{ij}^{(k)} \mid 1\leq i,j \leq N, 1\leq k\leq d\} $ satisfies Assumption \ref{A:MCLT}, i.e. all $\A_{a_{ij}}$'s are freely independent with amalgamation over $\B$, $E[a_{ij}^{(k)}]=0$ and  $E[a_{ij}^{(k)} b a_{ij}^{(\ell)}]= E[a_{ij}^{(k)*} b a_{ij}^{(\ell)*}]=0$ for all  $b \in \B$,  $1\leq i,j \leq N$ and $1 \leq k, \ell \leq d$. Note that the matrices themselves can be correlated and that their entries do not need to have identical variances.  Our aim is to obtain explicit estimates on the analytic distribution of $(A^{(1)}_N , \dots , A^{(d)}_N )$ in terms of Cauchy transforms by considering as before linear matrix pencils and selfadjoint polynomials in noncommutative variables as test functions. Moreover, we provide quantitative estimates, when $\B= \mathbb{C}$, on the order of convergence under the Kolmogorov distance. 

With this aim, let us consider a family $\{C^{(1)}_N, \dots , C^{(d)}_N\}$ of 
operator-valued circular elements over $\D_N^\B:=  \D_N \otimes \B$, the subalgebra of diagonal matrices in $ M_N(\C) \otimes \B $, whose covariance $(\eta_N, \widetilde{\eta}_N)$ is given by the completely positive maps
\[
\eta_N: \D_N^\B \rightarrow M_{d}(\D_N^\B), \; D \mapsto \big[\eta_{k,\ell}^{(N)} ( D)\big]_{k,\ell=1}^d
\quad \text{and} \quad
\widetilde{\eta}_N: \D_N^\B \rightarrow M_{d}(\D_N^\B), \, D \mapsto \big[\widetilde{\eta}_{k,\ell}^{(N)} ( D)\big]_{k,\ell=1}^d
\]
where for any $k,\ell =1 , \dots, d$ and any $  D=(d_{ij})_{i,j=1}^N \in \D_N^\B$, 
\begin{equation}\begin{aligned}\label{covariance-circular-family-matrices}
\eta_{k,\ell}^{(N)} (D) &= (\id_N \otimes E) \big[ A_N^{(k)*} DA_N^{(\ell)} \big] \quad \text{with }  \quad \big(\eta_{k,\ell}^{(N)} (D) \big)_{i,j} = \delta_{i,j} \frac{1}{N} \sum_{r=1}^N  E \big[a^{(k)*}_{ri} d_{rr} a^{(\ell)}_{ri}\big] ,\\
\widetilde{\eta}_{k,\ell}^{(N)} (D) &= (\id_N \otimes E) \big[ A_N^{(k)} DA_N^{(\ell)*} \big] \quad \text{with }  \quad \big(\widetilde{\eta}_{k,\ell}^{(N)} (D) \big)_{i,j} = \delta_{i,j} \frac{1}{N} \sum_{r=1}^N  E \big[a^{(k)}_{ir} d_{rr} a^{(\ell)*}_{ir}\big] \; .
\end{aligned}\end{equation}
We shall approximate the analytic distribution of $(A^{(1)}_N , \dots , A^{(d)}_N)$ with that of $(C^{(1)}_N, \dots , C^{(d)}_N)$ by providing Berry-Esseen bounds on the scalar-valued Cauchy transforms when considering linear matrix pencils and noncommutative polynomials as test functions. Let us recall first the quantities:
\[
B_2(x)= \|x\|_{L^3}^3 + \sqrt{\beta_2(x)} \|x\|_{L^2}^2
\quad \text{and} \quad
M_a=\frac{1}{\sqrt{N}}\|a\|+ \sqrt{\| E[a^*a] \|} + \sqrt{\| E[aa^*] \|}.
\]
\begin{theorem}\label{theo:Lin-Lin:matrices}
Let $a$	be a family satisfying Assumption \ref{A:MCLT} from which we construct a family of operator-valued matrices 
$
A_N:= \big(A^{(1)}_N, A^{(1)*}_N, \dots , A^{(d)}_N, A^{(d)*}_N\big).
$ The following bounds hold:\\ 
if $g$ is a linear matrix pencil \eqref{eq:linear_matrix_pencil_sa} then for any $z\in \mathbb{C}^+$,
\[
\big| (\tr_{mN}  \otimes \varphi) \big[G_{g(A_N)}(z) 
\big]  - (\tr_{mN}  \otimes \varphi) \big[
G_{g(C^{(1)}_N, C^{(1)*}_N,\dots , C^{(d)}_N,C^{(d)*}_N)}(z)\big] \big| \! \leq  \frac{c_g}{\Im (z)^4} B_2(a) \frac{1}{\sqrt{N}}, 
\]
with $c_g= d^3 \max_{1\leq \ell \leq d} \|Q_\ell\|^3$; whereas, if $p \in  \B\langle x_1, x_1^*,\dots , x_d, x_d^*\rangle $ is a noncommutative polynomial of degree $\geq 1$, then for sufficiently large $N$, there exist positive constants $C_p$ and $r_\rho$ depending only on $p$ such that for any $z\in \mathbb{C}^+$,
\begin{align*}
\big|\! (\tr_N \otimes \varphi) \big[G_{p(A_N)}(z) 
\big] - (\tr_N \otimes \varphi) \big[
G_{p(C^{(1)}_N, C^{(1)*}_N,\dots , C^{(d)}_N,C^{(d)*}_N)}(z)\big]\! \big| 
\leq   C_p \Big( \frac{1}{\Im (z)} +1 \Big)^4 M_a^{8r_\rho} B_2(a) \frac{1}{\sqrt{N}} 
\end{align*}
where $\{C^{(1)}_N, \dots , C^{(d)}_N\}$ is a family of centered operator-valued circular elements over $\D_N^\B:= \B \otimes\D_N$  with covariance $(\eta_N, \widetilde{\eta}_N)$. 
\end{theorem}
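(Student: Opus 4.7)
The plan is to deduce Theorem \ref{theo:Lin-Lin:matrices} from the multivariate estimates in Theorems \ref{theo:Linear-pencil:freeness} and \ref{theo:Lin-Lin:freeness}, applied in the amplified operator-valued $W^\ast$-probability space $(M_N(\A), \tr_N \otimes \varphi, \id_N \otimes E, M_N(\B))$. Because freeness with amalgamation lifts from $\B$ to $M_N(\B)$ under matrix amplification, the subalgebras of $M_N(\A)$ generated by $\{E_{ij} \otimes a^{(k)}_{ij}\}_{k=1}^d$ for distinct $(i,j) \in [N]^2$ are free with amalgamation over $M_N(\B)$. Setting $x^{(k)}_{(i,j)} := \tfrac{1}{\sqrt{N}}\, E_{ij} \otimes a^{(k)}_{ij}$, I identify $A^{(k)}_N = \sum_{(i,j)} x^{(k)}_{(i,j)}$ as a sum of $n = N^2$ free blocks, and a routine matrix-unit computation verifies that Assumption \ref{A:MCLT} on $a$ transfers to the analogous assumption on $x$ in the amplified space.

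For the replacement, I introduce a family $\{c^{(k)}_{ij}\}$ of $\B$-valued circular elements, free from $a$ over $\B$, such that for each fixed $(i,j)$ the tuple $(c^{(k)}_{ij})_{k=1}^d$ is a centered $\B$-valued circular family with the same second $\ast$-moments as $(a^{(k)}_{ij})_{k=1}^d$, and such that distinct $(i,j)$-groups are free with amalgamation over $\B$. Define $y^{(k)}_{(i,j)} := \tfrac{1}{\sqrt{N}}\, E_{ij} \otimes c^{(k)}_{ij}$ and $\tilde{C}^{(k)}_N := \sum_{(i,j)} y^{(k)}_{(i,j)}$. Then Assumption \ref{A:general} holds for the pair $(x,y)$, and an operator-valued variation of \cite[Proposition 13, Chapter 9]{Mingo-Speicher}, applied exactly as in the closing paragraph of the proof of Theorem \ref{theo:op-matrices} but now for circular (rather than semicircular) entries, shows that $\{\tilde{C}^{(k)}_N\}$ is a centered $\D_N^\B$-valued circular family with covariance $(\eta_N, \widetilde{\eta}_N)$ as in \eqref{covariance-circular-family-matrices}. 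Consequently $(\tilde{C}^{(k)}_N)$ and $(C^{(k)}_N)$ coincide in joint $(\tr_N \otimes \varphi)$-distribution, so their scalar-valued Cauchy transforms under $g$ and $p$ agree, and I may freely substitute $\tilde{C}_N$ for $C_N$ in what follows.

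For the linear matrix pencil case, the scalar bound \eqref{eq:theo_Linearpencil-2} of Theorem \ref{theo:Linear-pencil:freeness} applies directly. A short scaling computation using $\|E_{ij} \otimes a\|_{L^r}^r = N^{-1}\|a\|_{L^r}^r$ yields $\|x\|_{L^3}^3 = N^{-5/2}\|a\|_{L^3}^3$; the analogous bound for $y$ is obtained from the circular moment identity $\|c^{(k)}_{ij}\|_{L^3}^3 \leq \sqrt{\beta_2(a)}\, \|a^{(k)}_{ij}\|_{L^2}^2$ already used in the proof of Theorem \ref{theo:Op-freeCLT}. Multiplying by $n=N^2$ gives $B_2(x,y)\cdot n \lesssim B_2(a)/\sqrt{N}$ and produces the stated bound with $c_g = d^3 \max_\ell \|Q_\ell\|^3$.

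For the noncommutative polynomial case, I apply Theorem \ref{theo:Lin-Lin:freeness} with the same $x$, $y$, and $n = N^2$. The factor $B_2(x,y)\cdot n \lesssim B_2(a)/\sqrt{N}$ is computed as above. The main obstacle is controlling the constant $M_{x,y}$: since $\|(\id_N\otimes E)[x^\ast x]\| \leq N^{-1}\|E[a^\ast a]\|$, the na\"ive estimate $\sqrt{n}\sqrt{\|(\id_N\otimes E)[x^\ast x]\|} \leq \sqrt{N}\sqrt{\|E[a^\ast a]\|}$ inflates $M_{x,y}$ by a factor of $\sqrt{N}$ relative to $M_a$, which raised to the $8r_\rho$-th power would destroy the asserted $N^{-1/2}$ rate. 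Resolving this is the heart of the argument: one exploits the fact that after substituting $A^{(k)}_N = \tfrac{1}{\sqrt{N}}\sum_{ij} E_{ij} \otimes a^{(k)}_{ij}$, the linearization matrix of $p(A_N)$ becomes a linear matrix pencil in the entries $a^{(k)}_{ij}$ with coefficients of order $1/\sqrt{N}$, so that the precise operator-norm and resolvent estimates of Lemma \ref{lem:estimates-L-inverse} (combined with the circular norm bound $\|c^{(k)}_{ij}\| \leq 2\sqrt{\max\{\|E[a^\ast a]\|,\|E[aa^\ast]\|\}}$ from \cite[Proposition 4.14]{Dyk-05}) absorb the excess factors of $\sqrt{N}$ and leave an overall envelope of $M_a^{8r_\rho}$, with constants $C_p$ and $r_\rho$ depending only on the linear representation of $p$.
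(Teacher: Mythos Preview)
Your strategy is exactly the paper's: apply Theorems \ref{theo:Linear-pencil:freeness} and \ref{theo:Lin-Lin:freeness} in $(M_N(\A),\tr_N\otimes\varphi,\id_N\otimes E,M_N(\B))$ with $x^{(k)}_{(i,j)}=N^{-1/2}E_{ij}\otimes a^{(k)}_{ij}$, $n=N^2$, and $y^{(k)}_{(i,j)}$ built from matching $\B$-valued circular elements $c^{(k)}_{ij}$; then compute the moment quantities and identify $\tilde C_N$ with the $\D_N^\B$-circular family via the matrix-cumulant argument. The linear-pencil case and the $B_2$ bookkeeping are handled exactly as you describe.

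You are right to flag the $M_{x,y}$ issue, and indeed the paper itself simply asserts $M_{x,y}\le c\,M_a$ without comment, which does \emph{not} follow from plugging $n=N^2$ and $\|(\id_N\otimes E)[x^\ast x]\|=N^{-1}\|E[a^\ast a]\|$ into the displayed definition \eqref{linearization-constant-1}. However, your resolution is over-engineered: no re-interpretation of the linearization as a pencil in the individual entries is needed. The quantity $M_{x,y}$ enters the proof of Theorem \ref{theo:Lin-Lin:freeness} only through the proof of Lemma \ref{lem:estimates-L-inverse}, where it serves as a crude upper bound (via $\|\sum_j E[x_j^{(k)\ast}x_j^{(k)}]\|\le n\max_j\|E[x_j^{(k)\ast}x_j^{(k)}]\|$) for the quantity
\[
C_2=\max_{k}\Big\|\sum_{(i,j)} x^{(k)}_{(i,j)}+\sum_{(i,j)} y^{(k)}_{(i,j)}\Big\|.
\]
In the present setting one bounds $C_2$ directly by Junge's inequality, using that
\[
\sum_{i,j}(\id_N\otimes E)\big[x^{(k)\ast}_{(i,j)}x^{(k)}_{(i,j)}\big]=\frac1N\sum_{j}E_{jj}\otimes\sum_i E\big[a^{(k)\ast}_{ij}a^{(k)}_{ij}\big]
\]
is block-diagonal with each diagonal block a sum of only $N$ terms, so its norm is at most $\|E[a^\ast a]\|$ with no extra factor of $N$. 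Together with $\|x\|=N^{-1/2}\|a\|$, the analogous computation for $xx^\ast$, and the bound $\|c^{(k)}_{ij}\|\le 2\sqrt{\max\{\|E[a^\ast a]\|,\|E[aa^\ast]\|\}}$ from \cite[Proposition 4.14]{Dyk-05}, this gives $C_2\le c\,M_a$ uniformly in $N$. Feeding this refined $C_2$ (in place of the generic $M_{x,y}$) through Proposition \ref{prop:Q_inverse} and Lemma \ref{lem:norm-L_inverse} yields the envelope $M_a^{8r_\rho}$ stated in the theorem; nothing else in the proof of Theorem \ref{theo:Lin-Lin:freeness} changes.
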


This theorem is the first result, to the best of our knowledge, that gives quantitative estimates on the differences of Cauchy transforms for polynomials in operator-valued matrices with $\B$-free entries. Moreover, the matrices are allowed to have a very general covariance profile and the entries can have different distributions. Moreover, our estimates are in terms of the moments up to the third order. Only when considering polynomials as test functions, the operator norm appears in our estimates but merely as a non-leading term. 

Note that, when considering operator-valued matrices, the estimates \eqref{eq:theo_Linearpencil-3} and \eqref{scalar:Lin-Lin:free-Levy} yield bounds for the L\'evy distance which are of order $\sqrt{N}$. Hence, as $N\to \infty$, we cannot deduce convergence on the level of the L\'evy distance without imposing further conditions.
For instance, by Theorem 5.3 in \cite{Banna-Mai-18}, we can obtain quantitative estimates in terms of the Kolmogorov distance whenever the analytic distributions of $g(C^{(1)}_N, C^{(1)*}_N,\dots , C^{(d)}_N,C^{(d)*}_N)$ and  $p(C^{(1)}_N, C^{(1)*}_N,\dots , C^{(d)}_N,C^{(d)*}_N)$ have H\"older continuous cumulative distribution functions. Instances of such a scenario are given in the following corollary.

\begin{corollary}\label{coro:matices}
Let the setting be as in Theorems \ref{theo:Linear-pencil:freeness} and \ref{theo:Lin-Lin:matrices} with $\B= \C$. Assume that $k,\ell=1, \dots, d$
\begin{equation}\label{cond:matrices}
 \sum_{r=1}^N  \varphi (a^{(k)*}_{ri} a^{(\ell)}_{ri})
\quad \text{and} \quad 
\sum_{r=1}^N  \varphi (a^{(k)*}_{ir}  a^{(\ell)}_{ir}) 
\quad \text{are independent of i}.
\end{equation}
 Then  for any $N \in \bN$, $(C^{(1)}_N, \dots , C^{(d)}_N)$  is a family of centered circular elements in $\A$ whose covariance $(\eta, \widetilde{\eta} )$ is given by the matrices $\eta =(\eta_{k,\ell})_{k,\ell=1}^d$ and $\widetilde{\eta} =(\widetilde{\eta}_{k,\ell})_{k,\ell=1}^d$ with 
\[
\eta_{k,\ell}=\frac{1}{N} \sum_{i=1}^N  \varphi (a^{(k)*}_{i1} a^{(\ell)}_{i1})
\quad \text{and} \quad 
\widetilde{\eta}_{k,\ell}=\frac{1}{N} \sum_{j=1}^N  \varphi (a^{(k)}_{1j}  a^{(\ell)*}_{1j}).
\]
Moreover, if $\eta$ and $\widetilde{\eta}$ are independent of $N$ and if  $p$ is of degree $\deg(p)=r \geq 1$, then the same estimate  as in \eqref{Kolm-bound} holds on the Kolmogorov distance.
\end{corollary}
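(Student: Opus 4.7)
My plan is to establish the two assertions of the corollary separately.

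For the first assertion, I would show that $(C_N^{(1)}, \dots, C_N^{(d)})$, viewed in the tracial noncommutative probability space $(M_N(\A), \tr_N \otimes \varphi)$, has the same joint $*$-distribution as a scalar circular family with covariances $(\eta, \widetilde{\eta})$. The key observation is that condition \eqref{cond:matrices}, combined with the traciality of $\varphi$, forces the $\D_N$-valued covariance maps $\eta_N$ and $\widetilde{\eta}_N$ to preserve scalar matrices: for any $c \in \C$,
\[
(\eta_{k,\ell}^{(N)}(c\1_N))_{i,i} = \frac{c}{N} \sum_{r=1}^N \varphi(a_{ri}^{(k)*} a_{ri}^{(\ell)}) = c\, \eta_{k,\ell}
\]
is independent of $i$, and the analogous identity $\widetilde{\eta}_{k,\ell}^{(N)}(c\1_N) = c\, \widetilde{\eta}_{k,\ell}\, \1_N$ follows from the second half of \eqref{cond:matrices} after applying $\varphi(ab)=\varphi(ba)$ and swapping $k \leftrightarrow \ell$. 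Since each $C_N^{(k)}$ is $\D_N$-valued circular, all of its $\D_N$-valued free cumulants of order $\neq 2$ vanish; the operator-valued moment-cumulant formula then expresses every joint $*$-moment $(\id_N \otimes \varphi)[C_N^{(k_1), \epsilon_1} \cdots C_N^{(k_m), \epsilon_m}]$ as a sum, indexed by admissible non-crossing pair partitions, of nested applications of $\eta_N$ and $\widetilde{\eta}_N$, started at the innermost pair from $\1_N$. An induction on the nesting depth, using the ``scalars stay scalars'' invariance above, shows that every such nested expression is a scalar multiple of $\1_N$; applying $\tr_N$ then recovers exactly the corresponding joint $*$-moment of a scalar circular family with covariances $(\eta, \widetilde{\eta})$.

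For the second assertion, I would follow the strategy used in the proof of Corollary \ref{coro:Kolmogorov-MCLT}. Once Part 1 identifies the distribution of $(C_N^{(1)}, \dots, C_N^{(d)})$ as scalar circular with fixed covariances $(\eta, \widetilde{\eta})$, which are independent of $N$ by the additional hypothesis, the analytic distribution of $p(C_N^{(1)}, C_N^{(1)*}, \dots, C_N^{(d)}, C_N^{(d)*})$ is itself $N$-independent and, under the implicit nondegeneracy of $\eta$, coincides with the distribution of a selfadjoint noncommutative polynomial of degree $r$ in correlated semicircular elements admitting Lipschitz conjugate variables. Theorem~1.1 of \cite{Banna-Mai-18} then gives H\"older continuity of its cumulative distribution function with exponent $1/(2^r-1)$. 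Feeding this together with the scalar-valued Cauchy transform bound of order $N^{-1/2}$ from Theorem \ref{theo:Lin-Lin:matrices} into Theorem~5.3 of \cite{Banna-Mai-18} (whose boundedness hypothesis is supplied by \cite[Proposition~7.1]{Junge}) will produce the asserted Kolmogorov bound of order $N^{-1/(2^{r+3}-6)}$.

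The hard part will be Part 1: I must carefully set up the $\D_N$-valued moment-cumulant formula with its two distinct covariance maps alternating along the blocks of a non-crossing pair partition, and then propagate the scalar-preservation property through every level of nesting while tracking how each $\eta_{k,\ell}$ or $\widetilde{\eta}_{k,\ell}$ factor multiplies the scalar. Once this invariance is in place, the reduction to the scalar circular moments is essentially bookkeeping, and Part 2 then proceeds in direct analogy with Corollary \ref{coro:Kolmogorov-MCLT}.
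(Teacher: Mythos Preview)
Your proposal is correct. For Part~1, however, you take a more hands-on route than the paper. The paper observes that under condition \eqref{cond:matrices} the $\D_N$-valued second cumulants $\kappa_2^{\D_N}[C_N^{(k),\epsilon_1}, C_N^{(\ell),\epsilon_2}]$ are scalar multiples of $\1_N$, and then invokes \cite[Theorem~3.1]{Ni-Di-Sp-operator-valued} as a black box: that result says that whenever all operator-valued cumulants of a family take scalar values, the scalar-valued distribution coincides with the one determined by those cumulants. Your inductive argument --- showing that $\eta_N$ and $\widetilde\eta_N$ map $\C\cdot\1_N$ into itself and then propagating this through the nested structure of the $\D_N$-valued moment-cumulant formula --- is essentially a direct proof of the special case of that theorem needed here. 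Your approach is more elementary and self-contained; the paper's is shorter by outsourcing the combinatorial bookkeeping. For Part~2 your strategy matches the paper's (which defers implicitly to the argument of Corollary~\ref{coro:Kolmogorov-MCLT}).
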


\begin{proof}[Proof of Theorem \ref{theo:Lin-Lin:matrices}]
The proof follows from the estimates \eqref{eq:theo_Linearpencil-2} and \eqref{scalar:Lin-Lin:free} in Theorems \ref{theo:Linear-pencil:freeness} and \ref{theo:Lin-Lin:freeness} in the framework of the operator-valued probability space $ (M_N(\A), \tr_N \otimes \varphi, \id_N \otimes E, M_B(\B))$. With a particular choice of the family $y$, one then only needs to control the corresponding bounds on the moments and operator norms. 
We let $c=\{c_{ij}^{(k)} \mid 1\leq i,j\leq N,  1\leq k\leq d\}$ be a family of $\B$-valued circular elements in $\A$ that satisfies Assumption \ref{A:MCLT} with respect to $E$. Moreover, assume that $c$ is also free from $a$ over $\B$. Define for all $k=1, \ldots, d$, the $N \times N$ matrix $C^{(k)}_N= \frac{1}{\sqrt{N}} [ c^{(k)}_{ij}]_{i,j=1}^N$ and note that $\{A^{(1)}_N , \dots , A^{(d)}_N\}$ and $\{ C^{(1)}_N , \dots , C^{(d)}_N\}$ are free with amalgamation over $M_N(\B)$. Now,  set $n=N^2$ and 
\begin{align*}
\ux &= \Big( \sum_{i,j=1}^N  x_{ij}^{(1)}, \sum_{i,j=1}^N  x_{ij}^{(1)*}, \dots , \sum_{i,j=1}^N x_{ij}^{(d)}, \sum_{i,j=1}^N x_{ij}^{(d)*} \Big),
\\
\uy &= \Big( \sum_{i,j=1}^N  y_{ij}^{(1)}, \sum_{i,j=1}^N  y_{ij}^{(1)*}, \dots , \sum_{i,j=1}^N y_{ij}^{(d)}, \sum_{i,j=1}^N y_{ij}^{(d)*} \Big)
\end{align*}
with $x_{ij}^{(k)}= \frac{1}{\sqrt{N}}   E_{ij} \otimes a_{ij}^{(k)}$ and $y_{ij}^{(k)}=  \frac{1}{\sqrt{N}} E_{ij} \otimes c_{ij}^{(k)}$. Clearly by Assumption \ref{A:MCLT}, the subalgebras
$$\big\{ M_N(\B) \langle x_{ij}^{(1)} , x_{ij}^{(1)*} ,\dots , x_{ij}^{(d)},x_{ij}^{(d)*} \rangle ,\, M_N(\B)\langle y_{ij}^{(1)} ,y_{ij}^{(1)*} , \dots , y_{ij}^{(d)} ,y_{ij}^{(d)*}\rangle \mid 1 \leq i,j \leq N \big\}$$
are free with amalgamation over $M_N(\B)$. Moreover, the covariance structure is preserved on the matrix level in the sense that for any $k,\ell =1, \dots, d$, $1\leq i,j \leq N$,  $\epsilon_1, \epsilon_2 \in \{1,*\}$ and $B \in M_N(\B)$,
\begin{multline*}
(\id_N \otimes E) [ x_{ij}^{(k)}] =(\id_N \otimes E)[ y_{ij}^{(k)}]=0 \; \text{ and }\;
(\id_N \otimes E)[ x_{ij}^{(k),\epsilon_1} B x_{ij}^{(\ell),\epsilon_2}] =(\id_N \otimes E) [ y_{ij}^{(k),\epsilon_1} B y_{ij}^{(\ell),\epsilon_2}]. 
\end{multline*}
To this extend, we have shown that all the requirements of Theorems \ref{theo:Linear-pencil:freeness} and \ref{theo:Lin-Lin:freeness} hold. One only needs to control the terms $B_2(x,y)$ and $M_{x,y}$. With this aim, and using similar arguments as in Theorems \ref{theo:op-MCLT-LP}
 and  \ref{theo:op-MCLT-NCP}, we note that
 \[
 \begin{aligned}
 &\|x\| =\frac{1}{\sqrt{N}}\|a\|, \qquad \quad
 \| x\|_{L^3} \leq \frac{1}{N\sqrt{N}}\|a\|_{L^3(\A, \varphi)}^3 \max_{1\leq i,j \leq N}\| E_{ij}\|_{L^3(M_N(\C),\tr_N)}^3 = \frac{1}{N^2\sqrt{N}} \|a\|_{L^3}^3, 
 \\ 
 &\|y\|  \leq \frac{1}{\sqrt{N}}\|a\|,
 \qquad  \quad
 \| y\|_{L^3} \leq \frac{1}{N^2\sqrt{N}} \|c\|_{L^3}^3 \leq \frac{1}{N^2\sqrt{N}} \sqrt{\beta_2(a)}\|a\|_{L^2}^2,
 \\
& \|E[x^*x]\|= \frac{1}{N} \|E[a^*a]\|
 \quad \text{and}\quad
  \|E[xx^*]\|= \frac{1}{N} \|E[aa^*]\|.
 \end{aligned}
\]
Hence, we infer that 
\[
B_2(x,y) \leq \frac{1}{N^{5/2}} \Big( \|a\|_{L^3}^3 +\sqrt{\beta_2(a)}\|a\|_{L^2}^2\Big) 
\; \text{and} \; 
M_{x,y} \leq c\Big( \frac{1}{\sqrt{N}}\|a\|+ \sqrt{\| E[a^*a] \|} + \sqrt{\| E[aa^*] \|}\Big).
\]
The last step of the proof consists of showing that $\{C^{(1)}_N, \dots , C^{(d)}_N\}$ is a family of 
operator-valued circular elements over $\D_N^\B =  \D_N \otimes \B$ whose covariance $(\eta_N, \widetilde{\eta}_N)$ is given in \eqref{covariance-circular-family-matrices}. Indeed, by an operator-valued variation of  Proposition 13 in  \cite[Chapter 9]{Mingo-Speicher}, we  compute the $M_N(\B)$-valued cumulants of $C^{(1)}_N, \dots , C^{(d)}_N$ in terms of the $\B$-cumulants of their entries. With our choice of  the $c^{(k)}_{ij}$'s, we get for any $m \in \mathbb{N}$,  $k,k_1, \dots , k_m \in [d]$, $\epsilon, \epsilon_1, \dots , \epsilon_m \in \{1, *\}$  and $B, B_1 , \dots , B_{m-1} \in M_N(\B)$,
\[\kappa^{M_N(\B)}_m [C^{(k_1),\epsilon_1}_NB_1, \dots ,C^{(k_{m-1}), \epsilon_{m-1}}_N B_{m-1} ,C^{(k_m),\epsilon_m}_N]  =0
\quad \text{whenever } m>2
\]
and in the other cases $\kappa_1^{M_N(\B)}[C^{(k),\epsilon}_N] = 0$ and $\kappa^{M_N(\B)}_2[C^{(k),\epsilon}_N B , C^{(\ell),\epsilon}_N] =  0$ as well as
\[
\kappa^{M_N(\B)}_2[C^{(k)*}_N B , C^{(\ell)}_N]= (\id_N \otimes E) \big[ C_N^{(k)*} D C_N^{(\ell)} \big] 
\enspace \text{and} \enspace
\kappa^{M_N(\B)}_2[C^{(k)}_N B , C^{(\ell)*}_N]= (\id_N \otimes E) \big[ C_N^{(k)} D C_N^{(\ell)*} \big] 
\]
where for any  $i,j \in [N]$
\begin{align*}  \big(\kappa^{M_N(\B)}_2[C^{(k)*}_N B , C^{(\ell)}_N] \big)_{i,j} &= \delta_{i,j} \frac{1}{N} \sum_{r=1}^N  \kappa_2^\B [c^{(k)*}_{ri} B_{rr}, c^{(\ell)}_{ri}] = \delta_{i,j} \frac{1}{N} \sum_{r=1}^N  E [c^{(k)*}_{ri} B_{rr} c^{(\ell)}_{ri}],\\
 \big(\kappa^{M_N(\B)}_2[C^{(k)}_N B , C^{(\ell)*}_N] \big)_{i,j}& = \delta_{i,j} \frac{1}{N} \sum_{r=1}^N  \kappa_2^\B [c^{(k)}_{ir} B_{rr}, c^{(\ell)*}_{ir}] = \delta_{i,j} \frac{1}{N} \sum_{r=1}^N  E [c^{(k)}_{ir} B_{rr} c^{(\ell)*}_{ir}].
\end{align*}
To end the proof, it suffices to remark that the subalgebra of diagonal matrices $\D_N^\B \subset M_N(\C) \otimes \B$ is closed under the covariance maps.
\end{proof}

\begin{proof}[Proof of Corollary \ref{coro:matices}] The proof  follows from  Theorem \ref{theo:Lin-Lin:matrices}. It suffices to notice that under condition \eqref{cond:matrices}, we have that for any $k$ and $\ell$
\[
\kappa^{\D_N^\B}_2 \big[ C^{(k),\epsilon_1}_N  , C^{(\ell),\epsilon_2}_N\big]=  \eta_{k,\ell}(1)= \kappa^{\D_N}_2 \big[ C^{(k),\epsilon_1}_N  , C^{(\ell),\epsilon_2}_N\big]. 
\]   
Then by \cite[Theorem 3.1]{Ni-Di-Sp-operator-valued}, $C^{(1)}_N, \dots, C^{(d)}_N$  is a family of centered circular elements in $\A$ with covariance is as announced.
\end{proof}

\subsection{Wigner matrices with covariance profile}\label{section:Wigner}

This section is devoted for the study of the noncommutative joint distribution of correlated operator-valued Wigner matrices with $\B$-free entries. Consider the operator-valued $W^*$-probability space $ (M_N(\A), \tr_N \otimes \varphi, \id_N \otimes E, M_B(\B))$. Fix $d \geq 1$ and  let   $A^{(1)}_N , \dots , A^{(d)}_N$  be $N \times N$ operator-valued  Wigner matrices defined by 
\begin{equation}\label{def:map_Wigner-mult}
A^{(k)}_N = \sum_{1\leq j \leq i \leq N} \big(e_{ij} \otimes a_{ij}^{(k)}+ e_{ij}^* \otimes a_{ij}^{(k)*} \big)
\end{equation}
where $e_{ii} = \frac{1}{2 \sqrt{N}} E_{ii}$ and $e_{ij}= \frac{1}{\sqrt{N}} E_{ij}$ for  $j<i$ and $(E_{ij})_{1\leq i,j \leq N}$ are the standard matrix units in $M_N(\C)$. Consider the family $S^{(1)}_N, \dots , S^{(d)}_N$ of centered operator-valued semicircular elements over $\D_N^\B := \D_N \otimes \B$, the subalgebra of diagonal matrices in $M_N(\C) \otimes \B$, and whose covariance is given by the map
\[
\eta: \D_N^\B \rightarrow M_d(\D_N^\B), \qquad  D \mapsto \big[\eta_{k,\ell} (D) \big]_{k,\ell=1}^d
\]
where for any $k,\ell =1 , \dots, d$ and any $  D=(d_{ij})_{i,j=1}^N \in \D_N^\B$, 
\[
\big(\eta_{k,\ell} (D) \big)_{i,j} = \delta_{i,j} \frac{1}{N} \sum_{r=1 }^i E [a^{(k)}_{ir} d_{rr} a^{(\ell)*}_{ir} ] + \delta_{i,j} \frac{1}{N} \sum_{r=i+1 }^N E [a^{(k)*}_{ri} d_{rr} a^{(\ell)}_{ri} ]. \]

\begin{theorem}\label{theo:Lin-Lin:Wigner}
Let $a=\{a_{ij}^{(k)} \mid 1\leq k \leq d , 1\leq j <i \leq N\}$ be a family such that the algebras $\A_{a_{ij}} = \B\langle a_{ij}^{(1)}, a_{ij}^{(1)*},\dots, a_{ij}^{(d)}, a_{ij}^{(d)*}\rangle$ are free with amalgamation over $\B$ for any $1\leq j\leq i\leq N$, and  such that
\begin{itemize}
    \item for any $k$ and $i$, $a^{(k)}_{ii}=a^{(k)*}_{ii}$ with $E[a^{(k)}_{ii}]=0$, 
    \item for any $k$ and  $j<i$,  $ E[a_{ij}^{(k)}]=0$,  $E[a_{ij}^{(k)} b a_{ij}^{(k)}] = 0$, for any $b \in \B$.
\end{itemize}
 We construct from $a$ a family of operator-valued Wigner matrices 
$A_N:= \big(A^{(1)}_N, \dots , A^{(d)}_N\big)$ as described in \eqref{def:map_Wigner-mult}.
Then the bounds in Theorem \ref{theo:Lin-Lin:matrices} hold for $\{A^{(1)}_N, \dots , A^{(d)}_N\}$ and $\{S^{(1)}_N, \dots , S^{(d)}_N\}$.
\end{theorem}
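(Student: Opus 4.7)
The plan is to reduce the result to the already-established multivariate estimates of Theorems \ref{theo:Linear-pencil:freeness} and \ref{theo:Lin-Lin:freeness} applied inside the operator-valued probability space $(M_N(\A),\tr_N\otimes\varphi,\id_N\otimes E,M_N(\B))$, exactly as in the proofs of Theorem \ref{theo:op-matrices} and Theorem \ref{theo:Lin-Lin:matrices}, but now combining the Hermitian symmetry of a Wigner matrix with the multivariate circular/semicircular comparison.

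First I would construct the comparison family. Take $c=\{c^{(k)}_{ij}\mid 1\le j\le i\le N,\,1\le k\le d\}$ in $\A$ so that the algebras $\A_{c_{ij}}=\B\langle c^{(1)}_{ij},c^{(1)*}_{ij},\dots,c^{(d)}_{ij},c^{(d)*}_{ij}\rangle$ are free with amalgamation over $\B$, and free from $a$, with the following internal structure:
\begin{itemize}
\item for $j<i$, $(c^{(1)}_{ij},\dots,c^{(d)}_{ij})$ is a centered $\B$-valued circular family whose $(\eta,\widetilde\eta)$-covariance matches that of $(a^{(1)}_{ij},\dots,a^{(d)}_{ij})$;
\item for $j=i$, $(c^{(1)}_{ii},\dots,c^{(d)}_{ii})$ is a centered selfadjoint $\B$-valued semicircular family whose covariance matches that of $(a^{(1)}_{ii},\dots,a^{(d)}_{ii})$.
\end{itemize}
Set $S^{(k)}_N:=\sum_{1\le j\le i\le N}(e_{ij}\otimes c^{(k)}_{ij}+e_{ij}^{\ast}\otimes c^{(k)*}_{ij})$, and analogously define the selfadjoint blocks
$x^{(k)}_{ij}:=e_{ij}\otimes a^{(k)}_{ij}+e_{ij}^{\ast}\otimes a^{(k)*}_{ij}$ and $y^{(k)}_{ij}:=e_{ij}\otimes c^{(k)}_{ij}+e_{ij}^{\ast}\otimes c^{(k)*}_{ij}$.

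Next I would verify the hypotheses of Theorems \ref{theo:Linear-pencil:freeness} and \ref{theo:Lin-Lin:freeness} on the matrix level. Since $\A_{a_{ij}}$ and $\A_{c_{ij}}$ are free with amalgamation over $\B$, the corresponding $M_N(\B)$-subalgebras generated by $x^{(k)}_{ij}$ and by $y^{(k)}_{ij}$ (indexed by $1\le j\le i\le N$) are free with amalgamation over $M_N(\B)$ with respect to $\id_N\otimes E$; the families are also free from each other by construction. The centering conditions $(\id_N\otimes E)[x^{(k)}_{ij}]=(\id_N\otimes E)[y^{(k)}_{ij}]=0$ follow immediately. The key verification is that the $M_N(\B)$-valued second mixed moments coincide: expanding the four terms in $(\id_N\otimes E)[x^{(k),\epsilon_1}_{ij}Bx^{(\ell),\epsilon_2}_{ij}]$ one sees that each of the resulting expressions is a prescribed scalar multiple of a matrix unit tensored with a $\B$-valued second moment of the entries, and the defining relations on $c$ match these term by term. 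In particular, the "pseudo-second-moment" terms $E[a^{(k)}_{ij}b a^{(\ell)}_{ij}]$ appearing for $j<i$ pair with the analogous vanishing cumulants of the circular family (this is where the vanishing hypothesis of the theorem is used).

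With these ingredients in place, applying estimates \eqref{eq:theo_Linearpencil-2} and \eqref{scalar:Lin-Lin:free} to $A_N=(A^{(1)}_N,\dots,A^{(d)}_N)$ and $(S^{(1)}_N,\dots,S^{(d)}_N)$ reduces the proof to bookkeeping on $B_2(x,y)$ and $M_{x,y}$. The estimates
\[
\|x\|\le\tfrac{C}{\sqrt N}\|a\|,\qquad \|x\|_{L^3}\le\tfrac{C}{N^2\sqrt N}\|a\|^3_{L^3},\qquad \|y\|_{L^3}\le\tfrac{C}{N^2\sqrt N}\sqrt{\beta_2(a)}\|a\|^2_{L^2},
\]
and the operator-norm bound $\|c^{(k)}_{ij}\|\le 2\sqrt{\beta_2(a)}$ from \cite[Prop.~4.14]{Dyk-05}, together with the identities $\|(\id_N\otimes E)[x^\ast x]\|=\tfrac1N\|E[a^\ast a]\|$ and similarly for $xx^\ast$, reproduce exactly the scalings used in the proofs of Theorems \ref{theo:op-matrices} and \ref{theo:Lin-Lin:matrices}, and deliver the announced bounds with $n=N(N+1)/2$. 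Finally, it remains to check that the constructed $(S^{(1)}_N,\dots,S^{(d)}_N)$ is indeed a centered $\D^\B_N$-valued semicircular family with covariance $\eta$: this is the direct operator-valued analogue of the matrix-cumulant computation at the end of the proof of Theorem \ref{theo:op-matrices}, combining the fact that mixed free $\B$-cumulants of order $>2$ of the $c^{(k)}_{ij}$ vanish with Mingo-Speicher's matrix formula (Proposition 13 in \cite[Chapter 9]{Mingo-Speicher}); the only nontrivial step is checking that the second $M_N(\B)$-cumulant lands in $\D^\B_N$, which follows because only the index pattern $(i,j)(j,i)$ of matrix units survives.

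The main obstacle is not technical difficulty but bookkeeping: one has to simultaneously handle (i) the Hermitian symmetry that forces coupling between $a^{(k)}_{ij}$ and $a^{(k)*}_{ij}$ inside a single block, and (ii) the multivariate correlation across different $k$'s, while ensuring the covariance of $(S^{(1)}_N,\dots,S^{(d)}_N)$ is closed on $\D^\B_N$ and matches $\eta$. Once this matching is in place, the rest of the argument is essentially a merger of the arguments already carried out for Theorems \ref{theo:op-matrices} and \ref{theo:Lin-Lin:matrices}.
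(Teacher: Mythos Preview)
Your proposal is correct and follows precisely the approach the paper intends: the paper itself omits the proof, stating only that it ``follows closely the lines of Theorem \ref{theo:Lin-Lin:matrices}'' and is ``a simple adaptation'' thereof. Your plan merges the Hermitian block structure from the proof of Theorem \ref{theo:op-matrices} (selfadjoint summands $x^{(k)}_{ij}=e_{ij}\otimes a^{(k)}_{ij}+e_{ij}^\ast\otimes a^{(k)\ast}_{ij}$, $n=N(N+1)/2$, circular off-diagonal and semicircular diagonal comparison elements) with the multivariate machinery and moment bookkeeping from the proof of Theorem \ref{theo:Lin-Lin:matrices}, which is exactly the intended synthesis.
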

The proof of Theorem \ref{theo:Lin-Lin:Wigner} follows closely the lines of Theorem \ref{theo:Lin-Lin:matrices}. It will hence be omitted.

\begin{corollary}\label{coro:Lin-Lin:matrices-op-sc}
Let the setting be as in Theorem \ref{theo:Lin-Lin:Wigner} with $\B= \C$ and assume that for any $k, \ell=1, \dots, d$ and $i=1 , \dots, N$
\begin{equation}\label{cond:sc-element}
\sum_{j=1}^i \varphi   \big(a_{ij}^{(k)} a_{ij}^{(\ell)*}\big)  + \sum_{j=i+1}^N \varphi   \big(a_{ji}^{(k)*} a_{ji}^{(\ell)}\big) 
\quad
\text{is independent of  } i.  
\end{equation}
 Then  for any $N \in \bN$, $(S^{(1)}_N, \dots , S^{(d)}_N) $ has the same distribution as $ (s_1, \dots , s_d)$ where $s_1, \dots , s_d $ is a family of centered semicircular elements  and having covariance matrix 
$C:=(c_{k\ell})_{k,\ell=1}^d $ with $ c_{k,\ell}= \frac{1}{N} \sum_{j=1}^N\varphi   \big(a_{Nj}^{(k)} a_{Nj}^{(\ell)*}\big) $.  Moreover, if the entries of $C$ do not depend on $N$ and if $p$ is of degree $\deg(p)=r \geq 1$, then  the same estimate  as in \eqref{Kolm-bound} holds on the Kolmogorov distance.
\end{corollary}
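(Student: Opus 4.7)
The plan is to mirror the two-step argument used for Corollary \ref{coro:matices} and Corollary \ref{coro:Kolmogorov-MCLT}. The first task is to translate the combinatorial condition \eqref{cond:sc-element} into an algebraic statement about the covariance map $\eta$ of the operator-valued semicircular family $(S^{(1)}_N,\dots,S^{(d)}_N)$ defined just above Theorem \ref{theo:Lin-Lin:Wigner}. Evaluating $\eta_{k,\ell}$ at $D=1_N$ gives
\[
(\eta_{k,\ell}(1_N))_{i,i} = \frac{1}{N}\Big( \sum_{r=1}^{i} \varphi(a^{(k)}_{ir} a^{(\ell)*}_{ir}) + \sum_{r=i+1}^{N} \varphi(a^{(k)*}_{ri} a^{(\ell)}_{ri}) \Big),
\]
which by \eqref{cond:sc-element} is independent of $i$; specializing to $i=N$ identifies the common value with $c_{k,\ell}$ as defined in the statement. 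Thus $\eta_{k,\ell}(1_N) = c_{k,\ell}\, 1_N$ for every $k,\ell \in [d]$.

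Once this reduction is in place, I would apply Theorem 3.1 of \cite{Ni-Di-Sp-operator-valued}, exactly as in the proof of Corollary \ref{coro:matices}. Since all second $\D_N$-valued cumulants satisfy $\kappa^{\D_N}_2[S^{(k)}_N \cdot 1_N,\, S^{(\ell)}_N] = c_{k,\ell}\, 1_N$ and higher $\D_N$-valued cumulants vanish by operator-valued semicircularity, this theorem identifies the $\tr_N \otimes \varphi$-joint distribution of $(S^{(1)}_N,\dots,S^{(d)}_N)$ with that of a centered scalar semicircular family $(s_1,\dots,s_d) \subset \A$ of covariance $C=(c_{k,\ell})_{k,\ell=1}^d$. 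This settles the first assertion of the corollary for every $N\in\bN$.

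For the Kolmogorov bound I would follow the pattern of Corollary \ref{coro:Kolmogorov-MCLT}. When the matrix $C$ is independent of $N$ (and invertible, as implicitly required), the fixed family $(s_1,\dots,s_d)$ admits Lipschitz conjugate variables; hence \cite[Theorem 1.1]{Banna-Mai-18} applies and gives that $\mu_{p(s_1,\dots,s_d)}$ has a H\"older continuous cumulative distribution function with exponent $1/(2^r-1)$. Combining this regularity with the scalar Cauchy transform estimate of Theorem \ref{theo:Lin-Lin:Wigner} (which, crucially, holds on a full strip above the real axis) through \cite[Theorem 5.3]{Banna-Mai-18} yields the Kolmogorov rate $N^{-1/(2^{r+3}-6)}$ announced in \eqref{Kolm-bound}. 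The only non-routine step is the algebraic reduction $\eta_{k,\ell}(1_N) = c_{k,\ell}\, 1_N$; once this is in hand, all remaining combinatorial work — in particular, the fact that nested $\D_N$-valued cumulants associated with arbitrary non-crossing pair partitions automatically collapse to scalar multiples of $1_N$ — is absorbed by the Nica-Dykema-Speicher theorem, so no further partition-level bookkeeping is needed and the proof runs in direct parallel to Corollaries \ref{coro:matices} and \ref{coro:Kolmogorov-MCLT}.
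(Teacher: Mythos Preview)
Your proposal is correct and follows exactly the route the paper intends: the authors explicitly omit the proof, noting that Corollary \ref{coro:Lin-Lin:matrices-op-sc} is a direct consequence of Theorem \ref{theo:Lin-Lin:Wigner} obtained by adapting the arguments of Corollaries \ref{coro:matices} and \ref{coro:Kolmogorov-MCLT}, which is precisely what you do. Your only additional observation---that invertibility of $C$ is implicitly needed to invoke \cite[Theorem 1.1]{Banna-Mai-18}---is a fair point that the paper's statement leaves tacit.
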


Theorem \ref{theo:Lin-Lin:Wigner} follows from Theorem \ref{theo:Lin-Lin:freeness} and its proof a simple adaptation of that of Theorem \ref{theo:Lin-Lin:matrices}. Corollary \ref{coro:Lin-Lin:matrices-op-sc} is a direct consequence of Theorem \ref{theo:Lin-Lin:Wigner}. Therefore, the proofs are omitted.

\section{Useful lemmas and estimates}\label{section:useful-lemmas}

In this section, we illustrate useful lemmas that are essential for the proofs of several results and that could be of independent interest.  We start by the following basic algebraic identity:

\begin{lemma}\label{lem:Taylor_resolvents}
 Let $x$ and $y$ be invertible in some unital complex algebra $\A$, then for each $m\in\mathbb{N}_0$, the following identity holds:
\begin{align*}
x^{-1} - y^{-1} = \sum^m_{k=1} y^{-1} \big[ \big(y - x\big) y^{-1} \big]^k + x^{-1} \big[ \big(y - x\big) y^{-1} \big]^{m+1}.
\end{align*}
\end{lemma}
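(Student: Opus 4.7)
The plan is to derive the identity by a one-step resolvent expansion followed by an induction on $m$. First, I would establish the single-step identity
\[
x^{-1} - y^{-1} = x^{-1}(y-x)y^{-1},
\]
which follows from $x^{-1} - y^{-1} = x^{-1}(yy^{-1}) - (x^{-1}x)y^{-1} = x^{-1}(y-x)y^{-1}$. Rewriting this as $x^{-1} = y^{-1} + x^{-1}(y-x)y^{-1}$ gives the substitution rule that drives the whole argument.

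Next, I would prove the claim by induction on $m \geq 0$. The base case $m=0$ reduces to the identity just established. Assuming the formula for some $m \geq 0$, apply the substitution rule to the $x^{-1}$ appearing in the remainder term $x^{-1}\bigl[(y-x)y^{-1}\bigr]^{m+1}$ to get
\[
x^{-1}\bigl[(y-x)y^{-1}\bigr]^{m+1} = y^{-1}\bigl[(y-x)y^{-1}\bigr]^{m+1} + x^{-1}\bigl[(y-x)y^{-1}\bigr]^{m+2},
\]
and substitute this back into the inductive hypothesis; this yields the desired formula with $m$ replaced by $m+1$.

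There is essentially no obstacle here, since everything is purely algebraic and needs only associativity and the definitions of the inverses. The only care required is to keep the order of the (possibly noncommuting) factors correct throughout the manipulations, in particular when splitting $x^{-1}$ off the left of the remainder in the inductive step.
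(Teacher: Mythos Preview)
Your proof is correct; the paper states this lemma as a ``basic algebraic identity'' and omits the proof entirely, so there is nothing to compare against. The induction via the single-step resolvent identity $x^{-1} = y^{-1} + x^{-1}(y-x)y^{-1}$ is exactly the standard way to verify it.
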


The following two lemmas will be crucial for bounding the remainder terms resulting from Lemma \ref{lem:Taylor_resolvents} when performing the Lindeberg method; we will use the notation introduced in \eqref{moment-map}.

\begin{lemma}\label{lemma:moment-bounds}
Let $(\A, E, \B)$ be an operator-valued $C^\ast$-probability space and let $x, y_1, y_2,y_3$ and $w$ be elements in $\A$ such that $\{x\}$ and $\{y_1,y_2,y_3\}$ are free with amalgamation over $\B$ with $E[x]=0$. Then,
\begin{multline*}
\| E [ y_1 x w x y_2 x y_3] \|^2 \\ \leq \|w\|^2 \|E[y_1 y_1^*]\| \, \|E[y_2^* y_2]\| \, \|E[y_3^* y_3]\| \|E[x x^*]\| \big( \sup \|m_4^{x^*,x^*,x,x} (b^*,1,b)\| +  \|E[x^* x]\|^2 \big),
\end{multline*}
where the supremum is over all $b \in \B$ with $\|b\|\leq 1$.
\end{lemma}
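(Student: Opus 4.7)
The plan is to iteratively peel off the $y_i$'s from $E[y_1 x w x y_2 x y_3]$ using the $\B$-valued Cauchy--Schwarz inequality \eqref{Cauchy-Schwarz}, the positivity estimate \eqref{positivity}, and the freeness identities \eqref{operator-moment-cumulant-for}--\eqref{operator-moment-cumulant-examples}, ultimately reducing the bound to a single $\B$-valued fourth moment in $x$.

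I would begin with Cauchy--Schwarz applied to the splitting $E[y_1 x w x y_2 x y_3] = E[(y_1 x)\cdot(w x y_2 x y_3)]$:
\[
\|E[y_1 x w x y_2 x y_3]\|^2 \;\le\; \|E[y_1 x x^* y_1^*]\|\cdot\|E[y_3^* x^* y_2^* x^* w^* w x y_2 x y_3]\|.
\]
Since $\{x\}$ is free from $\{y_1\}$ over $\B$ and $E[x x^* - E[xx^*]] = 0$, the first identity in \eqref{operator-moment-cumulant-examples} (with $y_1, y_1^*$ in the role of the outer variables and $xx^* - E[xx^*]$ in the role of the centered element) collapses the first factor to $E[y_1 E[xx^*] y_1^*]$, and \eqref{positivity} bounds this by $\|E[xx^*]\|\|E[y_1 y_1^*]\|$. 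For the second factor, $w^* w \le \|w\|^2 \1$ yields $x^* w^* w x \le \|w\|^2 x^* x$, and nesting this between $y_2^*\cdot y_2$, $x^*\cdot x$, and $y_3^*\cdot y_3$ preserves operator order, so the second factor is at most $\|w\|^2\|E[y_3^* x^* y_2^* x^* x y_2 x y_3]\|$.

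The main step is then to prove
\[
\|E[y_3^* x^* y_2^* x^* x y_2 x y_3]\| \;\le\; \|E[y_2^* y_2]\|\|E[y_3^* y_3]\|\Big(\sup_{\|b\|\le 1}\|m_4^{x^*,x^*,x,x}(b^*,\1,b)\| + \|E[x^*x]\|^2\Big).
\]
For this I expand the left-hand side by the operator-valued moment--cumulant formula \eqref{operator-moment-cumulant-for} with $S_x = \{2,4,5,7\}$ (the positions of $x^*,x^*,x,x$). Because $E[x] = E[x^*] = 0$, the only contributing non-crossing partitions of $S_x$ are those without singletons, namely $\pi_a = \{\{2,4,5,7\}\}$, $\pi_b = \{\{2,4\},\{5,7\}\}$, and $\pi_c = \{\{2,7\},\{4,5\}\}$ (the fourth pair-partition $\{\{2,5\},\{4,7\}\}$ being crossing). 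Their Kreweras complements place $y_3^*,y_3$ into the outer block $\{1,8\}$ and handle $y_2^*,y_2$ either as $\B$-valued singletons $b_1 := E[y_2^*]$, $b_2 := E[y_2]$ (for $\pi_a,\pi_b$) or as the content of a nested block $\{3,6\}$ in $\pi_c$ whose inner pair $\{4,5\}$ contributes the further $\B$-insertion $E[x^*x]$. Summing the three contributions and using M\"obius inversion to express each cumulant as moments (many terms die because $E[x] = E[x^*] = 0$), one arrives at the identity
\[
E[y_3^* x^* y_2^* x^* x y_2 x y_3] \;=\; E\!\left[y_3^*\Big( m_4^{x^*,x^*,x,x}(b_2^*,\1,b_2) + E[x^*\,\delta\,x]\Big) y_3\right],
\]
where $b_1 = b_2^*$ (since $E$ is $\ast$-preserving) and $\delta := E[y_2^* E[x^*x] y_2] - E[y_2^*] E[x^*x] E[y_2] \in \B$.

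The decisive observation is that $\delta \ge 0$: factoring $E[x^*x] = d^* d$ with $d = E[x^*x]^{1/2} \in \B$, a direct computation gives $\delta = E[(dy_2 - E[dy_2])^*(dy_2 - E[dy_2])]$, whence $0 \le \delta \le E[y_2^* E[x^*x] y_2]$ and hence $\|\delta\| \le \|E[x^*x]\|\|E[y_2^* y_2]\|$ by \eqref{positivity}. Together with the quadratic scaling $\|m_4^{x^*,x^*,x,x}(b^*,\1,b)\| \le \|b\|^2 \sup_{\|b'\|\le 1}\|m_4^{x^*,x^*,x,x}(b'^*,\1,b')\|$, the Cauchy--Schwarz estimate $\|b_2\|^2 = \|E[y_2]\|^2 \le \|E[y_2^* y_2]\|$ (obtained from \eqref{Cauchy-Schwarz} applied to $1$ and $y_2$), and one last application of \eqref{positivity} to sandwich $y_3^*\cdot y_3$, this produces the claimed estimate and closes the proof. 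The main obstacle is precisely the recognition of the positive combination $\delta$: bounding the three moment--cumulant contributions separately through the triangle inequality would yield an unwanted extra factor of $2$ in front of $\|E[x^*x]\|^2$.
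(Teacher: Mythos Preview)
Your proof is correct and follows essentially the same route as the paper: a Cauchy--Schwarz split to peel off $y_1$ and $w$, then the operator-valued moment--cumulant formula combined with positivity to control $\|E[y_3^* x^* y_2^* x^* x y_2 x y_3]\|$. The only cosmetic differences are that the paper groups $x^*x$ as a single argument (so expands over $\NC(\{2,4,6\})$ rather than your $\NC(\{2,4,5,7\})$) and, instead of identifying your positive ``variance'' $\delta$, simply drops the corresponding positive term $E\big[y_3^*\,E[x^*\,E[y_2^*]E[x^*x]E[y_2]\,x]\,y_3\big]$ to reach the same upper bound.
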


\begin{proof}
By \eqref{Cauchy-Schwarz} and \eqref{positivity}, we have 
\begin{align*}
\| E [y_1 x w x y_2 x y_3] \|^2 &\leq \| E [ y_3^*x^*y_2^*x^*x y_2 x y_3] \| \, \| E [ y_1 x w w^*x^*y_1^*]\|\\
                          &\leq \| w\|^2  \| E [ y_3^*x^*y_2^*x^*x y_2 x y_3] \| \, \| E [ y_1 x x^*y_1^*] \|. 
\end{align*}
By freeness, we have $E [ y_1 x x^* y_1^*] = E [ y_1 \, E[x x^*] \, y_1^*]$; with the help of \eqref{positivity}, we infer from the latter that
 \[
	\| E [ y_1 x x^* y_1^* ] \| =  \| E [ y_1 \, E[x x^*] \, y_1^*] \| \leq \|E[y_1 y_1^*]\| \, \|E[x x^*]\|. 
 \]
Again by freeness and the moment-cumulant decomposition in \eqref{operator-moment-cumulant-for}, we have 
\begin{align*}
E [ y_3^*x^*y_2^*x^* x y_2 x y_3] &= \sum_{\pi \in \NC(7)} \kappa_\pi^\B [y_3^*,x^*,y_2^*,x^*x,y_2,x,y_3]  \\&
 = \sum_{\pi_1 \in \NC(\{2,4,6\})} ( \kappa^\B_{\pi_1} \cup E_{\pi_1^c} ) [y_3^*,x^*,y_2^*,x^*x,y_2,x,y_3]
\end{align*}
where $\pi$ in the first sum has to decompose as $\pi=\pi_1\cup\pi_2$ with $\pi_1\in\NC(\{2,4,6\})$ and $\pi_2\in\NC(\{1,3,5,7\})$. 
Since we have $E[x]=0$, the only possibilities for $\pi_1$ that contribute to the last sum are $\{(2,6)(4)\}$ and $\{(2,4,6)\}$. Thus, we get
 \begin{align*}
 E [ y_3^*x^*y_2^*x^* x y_2 x y_3] \!
 &\!= E\big[y_3^*  \kappa^\B_3 \big[x^*  E[y_2^*], x^*x, E[y_2]  x \big] y_3 \big]+ 
 E \big[ y_3^*  \kappa^\B_2  \big[ x^*  E [ y_2^*  E[x^*x]  y_2], x\big] y_3\big] 
 \\&\!= E\big[y_3^*  E \big[x^*  E[y_2^*]   x^*x   E[y_2]  x \big] y_3 \big] 
\! -  E\big[y_3^*  E \big[x^*  E[y_2^*]  E[ x^*x]   E[y_2]  x \big] y_3 \big]
 \\ &\qquad +
 E \big[ y_3^* E \big[ x^*  E [ y_2^*  E[x^*x]  y_2]x \big] y_3\big] 
 \\& \leq E\big[y_3^*  E \big[x^*  E[y_2^*]   x^*x   E[y_2]  x \big] y_3 \big]  +
 E \big[ y_3^* E \big[ x^*  E [ y_2^*  E[x^*x]  y_2]x \big] y_3\big],
 \end{align*}
 where the last inequality follows from the fact that  $E\big[y_3^*  E \big[x^*  E[y_2^*]  E[ x^*x]   E[y_2]  x \big] y_3 \big]$ is positive. Therefore, since $\|E[y_2]\|^2 \leq \|E[y_2^* y_2]\|$ by \eqref{Cauchy-Schwarz}, we get
\begin{align*}
\| E [ y_3^*x^*y_2^*x^* x y_2 x y_3] \|
 &\leq \|E[y_3^* y_3]\| \big( \|E[y_2]\|^2  \sup \|m_4^{x^*,x^*,x,x} (b^*,1,b)\| + \|E[y_2^* y_2]\| \|E[x^* x]\|^2 \big)\\
 &\leq \|E[y_2^* y_2]\| \, \|E[y_3^* y_3]\| \big( \sup \|m_4^{x^*,x^*,x,x} (b^*,1,b)\| +  \|E[x^* x]\|^2 \big),
 \end{align*}
where the above supremums are over all $b \in \B$ such that $\|b\|\leq 1$. Putting the above bounds together, we end the proof.
\end{proof}

Further, we will need the following variant of Lemma \ref{lemma:moment-bounds} which holds in the case of an operator-valued $W^\ast$-probability space.

\begin{lemma}\label{lemma:moment-bounds_variant}
Let $(\A, \varphi, E, \B)$ be an operator-valued $W^\ast$-probability space and suppose that $x$, $y_1,y_2,y_3$, and $w$ are elements in $\A$ such that $\{x\}$ and $\{y_1,y_2,y_3\}$ are free with amalgamation over $\B$ with $E[x]=0$. Then
\begin{multline*}
| \varphi ( y_1 x w x y_2 x y_3 ) |^2\\ \leq \| w\|^2  \|y_1\|_{L^2}^2 \|y_3\|_{L^2}^2 \|E[y^*_2 y_2]\| \, \|E[x x^*]\| \, \big( \sup \|m_4^{x^*,x^*,x,x} (b^*,1,b)\| + \|E[x^* x]\|^2 \big),
\end{multline*}
where the supremum is over all $b \in \B$ with $\|b\|\leq 1$.
\end{lemma}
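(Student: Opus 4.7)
The plan is to adapt the proof of Lemma~\ref{lemma:moment-bounds} from the operator-norm setting to the scalar-valued trace setting, exploiting the traciality of $\varphi$ and the identity $\varphi = \varphi \circ E$. First I apply the scalar Cauchy--Schwarz inequality $|\varphi(cd)|^2 \leq \varphi(cc^\ast)\varphi(d^\ast d) = \|c\|_{L^2}^2 \|d\|_{L^2}^2$ with $c = y_1 x w$ and $d = xy_2 x y_3$, which produces
\[
 |\varphi(y_1 x w x y_2 x y_3)|^2 \leq \|y_1 x w\|_{L^2}^2 \cdot \|xy_2 x y_3\|_{L^2}^2,
\]
reducing the problem to bounding each $L^2$-factor separately.

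For $\|y_1 x w\|_{L^2}^2 = \varphi(y_1 x w w^\ast x^\ast y_1^\ast)$, I use $ww^\ast \leq \|w\|^2 \1$ together with the order-preservation of the map $a \mapsto y_1 x a x^\ast y_1^\ast$ to peel off $\|w\|^2$. Then, since $\{x\}$ is free with amalgamation from $\{y_1,y_2,y_3\}$ and $E[x]=0$, the operator-valued moment-cumulant formula collapses $E[y_1 x x^\ast y_1^\ast]$ to $E[y_1 E[xx^\ast] y_1^\ast]$; using $\varphi\circ E = \varphi$ and the positivity of $E[xx^\ast]\in\B$ (which gives $y_1 E[xx^\ast] y_1^\ast \leq \|E[xx^\ast]\|\, y_1 y_1^\ast$), I conclude
\[
 \|y_1 x w\|_{L^2}^2 \leq \|w\|^2 \|E[xx^\ast]\| \|y_1\|_{L^2}^2.
\]

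For the second factor $\|xy_2 xy_3\|_{L^2}^2 = \varphi\bigl(E[y_3^\ast x^\ast y_2^\ast x^\ast x y_2 x y_3]\bigr)$ I reuse the non-crossing free-cumulant decomposition from the proof of Lemma~\ref{lemma:moment-bounds}, which (using $E[x]=0$) yields the operator inequality
\[
 E[y_3^\ast x^\ast y_2^\ast x^\ast x y_2 x y_3] \leq E[y_3^\ast T_1 y_3] + E[y_3^\ast T_2 y_3]
\]
with $T_1 := E[x^\ast E[y_2^\ast] x^\ast x E[y_2] x]$ and $T_2 := E[x^\ast E[y_2^\ast E[x^\ast x] y_2] x]$ in $\B$. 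The key observation is that both $T_i$ are \emph{positive} in $\B$: $T_1$ has the sandwich form $E[z^\ast(x^\ast x)z]$ with $z = E[y_2]x$, and $T_2 = E[x^\ast c x]$ with $c := E[y_2^\ast E[x^\ast x]y_2] \geq 0$. This positivity permits passing the operator inequality through $\varphi$ without loss, because $\varphi(y_3^\ast T_i y_3) \leq \|T_i\| \|y_3\|_{L^2}^2$ holds for positive $T_i \in \B$. The norms $\|T_i\|$ are then estimated verbatim from Lemma~\ref{lemma:moment-bounds}, via Cauchy--Schwarz \eqref{Cauchy-Schwarz} (yielding $\|E[y_2]\|^2 \leq \|E[y_2^\ast y_2]\|$) and two applications of~\eqref{positivity}, producing
\[
 \|T_1\|+\|T_2\| \leq \|E[y_2^\ast y_2]\|\bigl(\sup_{\|b\|\leq 1}\|m_4^{x^\ast,x^\ast,x,x}(b^\ast,1,b)\| + \|E[x^\ast x]\|^2\bigr).
\]
Multiplying the two factor bounds gives the claim.

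The main obstacle is verifying the positivity of $T_1$ and $T_2$; without it, passing the operator inequality through $\varphi$ would require an additional Cauchy--Schwarz step, at the cost of extra constants or a weaker bound. Once positivity is established, the rest of the argument mirrors the operator-norm version of Lemma~\ref{lemma:moment-bounds} step by step.
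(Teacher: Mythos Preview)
Your proof is correct and follows essentially the same approach as the paper: the same Cauchy--Schwarz split $c=y_1xw$, $d=xy_2xy_3$, the same reduction of $\varphi(y_1xx^\ast y_1^\ast)$ via $E[y_1E[xx^\ast]y_1^\ast]$, and the same reuse of the non-crossing decomposition from Lemma~\ref{lemma:moment-bounds} for the other factor. You are in fact slightly more careful than the paper in explicitly verifying the positivity of $T_1$ and $T_2$ needed to pass the bound $\varphi(y_3^\ast T_i y_3)\leq \|T_i\|\|y_3\|_{L^2}^2$.
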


\begin{proof}
The ordinary Cauchy-Schwarz inequality together with the positivity of $\varphi$ yield that
\begin{align*}
| \varphi (y_1 x w x y_2 x y_3) |^2 &\leq \varphi ( y^*_3 x^* y^*_2 x^* x y_2 x y_3 ) \, \varphi ( y_1 x w w^* x^* y^*_1 ) \\
                                    &\leq \| w\|^2 \varphi ( y^*_3 x^* y^*_2 x^* x y_2 x y_3 ) \, \varphi ( y_1 x x^* y^*_1 ).
\end{align*}
Like in the proof of Lemma \ref{lemma:moment-bounds}, we infer with the help of the moment cumulant formula that $E [ y_1 x x^* y^*_1] = E [ y_1 \, E[x x^*] \, y^*_1 ]$, from which we conclude that
\[
\varphi ( y_1 x x^* y^*_1 ) \leq \|y_1\|_{L^2}^2 \|E[x x^*]\|
\]
by using $\varphi \circ E = \varphi$ and the positivity of $\varphi$. In the proof of Lemma \ref{lemma:moment-bounds}, we have also seen that
\begin{align*}
E [ y^*_3 x^* y^*_2 x^* x y_2 x y_3]
&= E\big[y^*_3  E \big[x^*  E[y^*_2] x^* x E[y_2] x \big] y_3 \big] 
+ E \big[ y^*_2 E \big[ x^*  E [ y^*_2  E[x^*x]  y_2]x \big] y_3\big],
 \end{align*}
from which we derive after applying $\varphi$ that
\begin{align*}
\varphi ( y^*_3 x^* y^*_2 x^* x y_2 x y_3 )
&= \varphi\big(y^*_3 E \big[x^*  E[y^*_2]   x^*x   E[y_2]  x \big] y_3 \big)  + \varphi \big( y^*_3 E \big[ x^*  E [ y^*_2  E[x^*x]  y_2] x \big] y_3 \big)
 \end{align*}
and finally, by the triangle inequality and again the positivity of $\varphi$,
\begin{align*}
\varphi ( y^*_3 x^* y^*_2 x^* x y_2 x y_3)
&\leq \|y_3\|_{L^2}^2 \Big( \| E \big[x^*  E[y^*_2]  x^*x  E[y_2]  x\big] \| + \|E \big[ x^*  E [ y^*_2  E[x^*x]  y_2] x \big]\| \Big).
\end{align*}
Proceeding like in the proof of Lemma \ref{lemma:moment-bounds}, we obtain
\[
\varphi ( y^*_3 x^* y^*_2 x^* x y_2 x y_3 ) \leq \|y_3\|_{L^2}^2 \|E[y^*_2 y_2]\| \big( \sup \|m_4^{x^*,x^*,x,x} (b^*,1,b)\| +  \|E[x^* x]\|^2 \big),
\]
where the above supremums are over all $b \in \B$ such that $\|b\|\leq 1$. Putting the bounds together, we arrive at the inequality asserted in the lemma.
\end{proof}

Finally, we need the following result which allows us to control the moments of a selfadjoint linear pencil by the moments of its entries.

\begin{lemma}\label{lem:tensor_moments}
Let $(\A, E, \B)$ be an operator-valued $C^\ast$-probability space and let $m\in\bN$. Then for all $Q_1,\dots,Q_d \in M_m(\C)$ and for each $d$-tuple $x=(x_1,\dots,x_d)$ of operators in $\A$, the linear pencil $Q(x) = \sum^d_{\ell=1} (Q_\ell \otimes x_\ell + Q_\ell^\ast \otimes x_\ell^\ast)$ satisfies for all $k\geq 0$ that
\[
\|m_k^{Q(x)}\| \leq m^{2(k-1)} \Big( \sum^d_{\ell=1} \|Q_\ell\| \Big)^k \sum_{\epsilon_1,\dots,\epsilon_k \in \{1,\ast\}} \max_{1\leq\ell_1,\dots,\ell_k\leq d} \|m_k^{x_{\ell_1}^{\epsilon_1},\dots,x_{\ell_k}^{\epsilon_k}}\|
\]
with norms defined like in \eqref{eq:multilinear-map-norm}.
In particular, we have
\[
\|m_2^{Q(x)} (\1)\| \leq \Big( \sum^d_{\ell=1} \|Q_\ell\| \Big)^2 \sum_{\epsilon_1,\epsilon_2 \in \{1,\ast\}} \max_{1\leq\ell_1,\ell_2\leq d} \|m_2^{x_{\ell_1}^{\epsilon_1},x_{\ell_2}^{\epsilon_2}} (\1)\|.
\]
\end{lemma}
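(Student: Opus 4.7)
The plan is to reduce the bound to an explicit sum over matrix indices and then exploit a rank-one structure of the resulting scalar matrices. First, I would expand $Q(x) = \sum_{\ell=1}^d \sum_{\epsilon \in \{1,*\}} Q_\ell^\epsilon \otimes x_\ell^\epsilon$ (writing $Q_\ell^1 := Q_\ell$) and substitute into $m_k^{Q(x)}(B_1, \dots, B_{k-1}) = (\id_m \otimes E)[Q(x) B_1 Q(x) \cdots Q(x)]$. Expanding and carrying out the matrix multiplication in $M_m(\A)$, the $(p,q)$-entry of each summand indexed by a tuple $((\ell_1, \epsilon_1), \dots, (\ell_k, \epsilon_k))$ takes the form
\[
\sum_{\alpha_1, \beta_1, \dots, \alpha_{k-1}, \beta_{k-1} =1}^m (Q_{\ell_1}^{\epsilon_1})_{p\alpha_1} (Q_{\ell_2}^{\epsilon_2})_{\beta_1 \alpha_2} \cdots (Q_{\ell_k}^{\epsilon_k})_{\beta_{k-1} q} \cdot m_k^{x_{\ell_1}^{\epsilon_1},\dots,x_{\ell_k}^{\epsilon_k}}\big((B_1)_{\alpha_1 \beta_1}, \dots, (B_{k-1})_{\alpha_{k-1} \beta_{k-1}}\big),
\]
after applying $\id_m \otimes E$, using that the scalar matrix entries of the $Q_\ell^{\epsilon}$'s commute with everything.

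Next, I would bring the $(\vec \alpha, \vec \beta)$-summation to the outside and identify the $(p,q)$-dependent scalar factor $(Q_{\ell_1}^{\epsilon_1})_{p\alpha_1} (Q_{\ell_k}^{\epsilon_k})_{\beta_{k-1} q}$ as the $(p,q)$-entry of the rank-one product $Q_{\ell_1}^{\epsilon_1} E_{\alpha_1 \beta_{k-1}} Q_{\ell_k}^{\epsilon_k} \in M_m(\C)$, where $E_{\alpha_1 \beta_{k-1}}$ is a standard matrix unit of norm $1$. This matrix has operator norm at most $\|Q_{\ell_1}\| \|Q_{\ell_k}\|$, and this rank-one factorization is the key device that avoids an extra factor of $m$ when passing from an entry-wise $\B$-norm estimate to an estimate on the $M_m(\B)$-operator norm; this is the main subtlety of the proof, since the naive bound $\|A\|_{M_m(\B)} \leq m \max_{p,q}\|A_{pq}\|$ would lose one power of $m$.

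Combining the rank-one bound with the obvious estimates $|(Q_{\ell_s}^{\epsilon_s})_{uv}| \leq \|Q_{\ell_s}\|$ for the middle scalars ($s = 2, \dots, k-1$), $\|(B_s)_{\alpha\beta}\| \leq \|B_s\|$, and $\|m_k^{x_{\ell_1}^{\epsilon_1},\dots,x_{\ell_k}^{\epsilon_k}}(b_1,\dots,b_{k-1})\| \leq \|m_k^{x_{\ell_1}^{\epsilon_1},\dots,x_{\ell_k}^{\epsilon_k}}\| \prod_s \|b_s\|$, the triangle inequality over the $2(k-1)$ intermediate indices yields, for each fixed $(\vec \ell, \vec \epsilon)$, a bound of
\[
m^{2(k-1)} \|m_k^{x_{\ell_1}^{\epsilon_1},\dots,x_{\ell_k}^{\epsilon_k}}\| \prod_{s=1}^k \|Q_{\ell_s}\| \prod_{s=1}^{k-1} \|B_s\|.
\]
I would then sum over $(\vec \ell, \vec \epsilon)$, first pulling $\max_{\vec \ell}\|m_k^{x_{\ell_1}^{\epsilon_1},\dots,x_{\ell_k}^{\epsilon_k}}\|$ out of the $\vec \ell$-sum so that the remaining $\sum_{\vec \ell} \prod_s \|Q_{\ell_s}\|$ collapses to $(\sum_\ell \|Q_\ell\|)^k$; taking the supremum over $\|B_s\| \leq 1$ gives the asserted bound on $\|m_k^{Q(x)}\|$.

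For the ``In particular'' part, no matrix-index summation is needed and the estimate is sharper. The key observation is that $Q(x)^2 = \sum_{\vec \ell, \vec \epsilon} (Q_{\ell_1}^{\epsilon_1} Q_{\ell_2}^{\epsilon_2}) \otimes (x_{\ell_1}^{\epsilon_1} x_{\ell_2}^{\epsilon_2})$ contains no $B$-insertion, so that
\[
(\id_m \otimes E)[Q(x)^2] = \sum_{\vec \ell, \vec \epsilon} Q_{\ell_1}^{\epsilon_1} Q_{\ell_2}^{\epsilon_2} \otimes m_2^{x_{\ell_1}^{\epsilon_1}, x_{\ell_2}^{\epsilon_2}}(\1).
\]
The triangle inequality together with submultiplicativity $\|Q_{\ell_1}^{\epsilon_1} Q_{\ell_2}^{\epsilon_2}\| \leq \|Q_{\ell_1}\| \|Q_{\ell_2}\|$ and the same $\vec \ell$-max/sum factorization as above give the claim directly, with no factor of $m$ entering.
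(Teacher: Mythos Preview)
Your proof is correct and follows essentially the same approach as the paper's. The paper writes the expansion a bit more compactly as
\[
m_k^{Q(x)}(B^1,\dots,B^{k-1}) = \sum_{i_1,j_1,\dots,i_{k-1},j_{k-1}} \sum_{\vec\ell,\vec\epsilon} \big(Q_{\ell_1}^{\epsilon_1} E_{i_1j_1} Q_{\ell_2}^{\epsilon_2} \cdots E_{i_{k-1}j_{k-1}} Q_{\ell_k}^{\epsilon_k}\big) \otimes E\big[x_{\ell_1}^{\epsilon_1} B^1_{i_1j_1} \cdots B^{k-1}_{i_{k-1}j_{k-1}} x_{\ell_k}^{\epsilon_k}\big]
\]
and bounds the scalar matrix factor in one stroke via submultiplicativity, $\|Q_{\ell_1}^{\epsilon_1} E_{i_1j_1}\cdots Q_{\ell_k}^{\epsilon_k}\| \leq \prod_s \|Q_{\ell_s}\|$; your rank-one observation on the outer $(p,q)$-dependent factor amounts to the same thing after collapsing the intermediate matrix units, and your counting of $m^{2(k-1)}$ terms with $\|(B_s)_{\alpha\beta}\| \leq \|B_s\|$ is equivalent to the paper's $\prod_s \sum_{i,j}\|B^s_{ij}\| \leq m^{2(k-1)}\prod_s\|B^s\|$.
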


\begin{proof}
We start by recalling from \eqref{eq:multilinear-map-norm} that $\|m_k^{Q(x)}\| = \sup \|m_k^{Q(x)}(B^1,\dots,B^{k-1})\|$ where the supremum is taken over all matrices $B^1,\dots,B^{k-1} \in M_m(\B)$ such that $\|B^1\|, \ldots, \|B^{k-1}\|\leq 1$.
For any $B^1,\dots,B^{k-1} \in M_m(\B) \cong M_m(\C) \otimes \B$, we write
\begin{multline}\label{moments-linear-pencil-expression}
m_k^{Q(x)}(B^1,\dots,B^{k-1})
\\
=  \sum_{1\leq i_1, j_1, \ldots, i_{k-1},j_{k-1}\leq m} \sum_{\substack{1 \leq \ell_1,\dots,\ell_k \leq d\\ \epsilon_1,\dots,\epsilon_k \in \{1,\ast\}}} Q_{\ell_1}^{\epsilon_1} E_{i_1j_1} Q_{\ell_2}^{\epsilon_2} \cdots E_{i_{k-1}j_{k-1}} Q_{\ell_k}^{\epsilon_k} \otimes E[x_{\ell_1}^{\epsilon_1} B^1_{i_1j_1} x_{\ell_2}^{\epsilon_2} \cdots B^{k-1}_{i_{k-1}j_{k-1}} x_{\ell_k}^{\epsilon_k}], 
\end{multline}
where $(E_{ij})_{1\leq i,j \leq m}$ are the $m \times m$ standard matrix units. Hence we get
\begin{align*}
\|m_k^{Q(x)}(B^1,\dots,B^{k-1})\| & \leq \Bigg[ \prod_{s=1}^{k-1} \Big(\sum_{i, j=1}^m \|B^s_{ij} \| \Big)\Bigg] \Big( \sum^d_{\ell=1} \|Q_\ell\| \Big)^k \sum_{\epsilon_1,\dots,\epsilon_k \in \{1,\ast\}}  \max_{1\leq\ell_1,\dots,\ell_k\leq d} \|m_k^{x_{\ell_1}^{\epsilon_1},\dots,x_{\ell_k}^{\epsilon_k}}\|
\\& \leq m^{2(k-1)} \|B^1\| \cdots \|B^{k-1}\| \Big( \sum^d_{\ell=1} \|Q_\ell\| \Big)^k \! \! \! \sum_{\epsilon_1,\dots,\epsilon_k \in \{1,\ast\}}  \max_{1\leq\ell_1,\dots,\ell_k\leq d} \|m_k^{x_{\ell_1}^{\epsilon_1},\dots,x_{\ell_k}^{\epsilon_k}}\|.
\end{align*}
Similarly, we prove the bound for the particular case $\|m_2^{Q(x)} (\1)\|$. 
\end{proof}

The following lemma provides some strengthening of Lemma \ref{lem:tensor_moments} in the particular case of diagonal amplifications.

\begin{lemma}\label{lem:tensor_moments_diagonal}
Let $(\A,E,\B)$ be an operator-valued $C^\ast$-probability space and let $m\in\bN$. Then, for all $x=x^\ast \in \A$, we have
$$
\|m_k^{\1_m \otimes x}\| \leq m^{k-1} \|m_k^x\| \quad\text{all integers $k\geq 1$} \qquad\text{and}\qquad \alpha_4(\1_m \otimes x) \leq m^2 \|m_4^x\|.$$
\end{lemma}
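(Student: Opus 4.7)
The plan is to compute the matrix entries of $m_k^{\1_m \otimes x}(B^1, \ldots, B^{k-1})$ explicitly, bound each entry in terms of $\|m_k^x\|$, and then pass from this entry-wise estimate to an operator-norm bound on the full matrix.

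First, writing each $B^s = \sum_{i,j} E_{ij} \otimes b^s_{ij}$ with $b^s_{ij} \in \B$ and expanding the product $(\1_m \otimes x) B^1 (\1_m \otimes x) \cdots B^{k-1} (\1_m \otimes x)$ by means of the collapse rule $E_{i_1 j_1} E_{i_2 j_2} = \delta_{j_1 i_2} E_{i_1 j_2}$, one obtains, after applying $\id_m \otimes E$, the explicit formula
$$[m_k^{\1_m \otimes x}(B^1, \ldots, B^{k-1})]_{ij} = \sum_{\ell_1, \ldots, \ell_{k-2} = 1}^m m_k^x(b^1_{i \ell_1}, b^2_{\ell_1 \ell_2}, \ldots, b^{k-1}_{\ell_{k-2} j}).$$
This is essentially the computation behind Lemma \ref{lem:tensor_moments}, and it does all the work for us.

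Next, whenever $\|B^s\| \leq 1$, a standard compression argument shows $\|b^s_{ij}\|_\B \leq 1$ for every $i,j$, so each of the $m^{k-2}$ summands in the displayed formula has norm at most $\|m_k^x\|$, yielding the entry-wise bound $m^{k-2} \|m_k^x\|$. To convert this to an operator-norm bound I will use the inequality
$$\|M\|_{M_m(\B)} \leq m \max_{1 \leq i,j \leq m} \|M_{ij}\|_\B \qquad (M \in M_m(\B)),$$
which follows by representing $\B$ faithfully on a Hilbert space $H$, viewing $M$ as an operator on $H^m$, and applying one triangle inequality followed by one Cauchy--Schwarz to the coordinate expression $(M\xi)_i = \sum_j M_{ij} \xi_j$. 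Combining the two estimates gives $\|m_k^{\1_m \otimes x}\| \leq m \cdot m^{k-2} \|m_k^x\| = m^{k-1} \|m_k^x\|$ for $k \geq 2$; the case $k=1$ is immediate since $m_1^{\1_m \otimes x} = \1_m \otimes E[x]$.

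For the second claim, the same entry formula specialised to $B^1 = B^\ast$, $B^2 = \1_m$, $B^3 = B$ collapses one summation thanks to $(\1_m)_{\ell_1 \ell_2} = \delta_{\ell_1 \ell_2}$, giving
$$[m_4^{\1_m \otimes x}(B^\ast, \1_m, B)]_{ij} = \sum_{\ell=1}^m m_4^x(b_{\ell i}^\ast, \1, b_{\ell j}).$$
Each of the $m$ terms has norm at most $\|m_4^x\|$, so each entry is bounded by $m \|m_4^x\|$, and a second application of the matrix-norm-from-entries inequality produces the desired $m^2 \|m_4^x\|$ bound. The only subtlety in the whole proof is securing the sharp constant $m$ (rather than $m^2$) in the entry-to-norm estimate: the naive triangle inequality $M = \sum_{i,j} E_{ij} \otimes M_{ij}$ would only yield $\|M\| \leq m^2 \max\|M_{ij}\|$ and would degrade both final bounds by a factor of $m$.
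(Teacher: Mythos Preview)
Your proof is correct and follows essentially the same approach as the paper's: both compute the same entry formula, bound each entry by $m^{k-2}\|m_k^x\|$ (respectively $m\|m_4^x\|$ in the $\alpha_4$ case), and then gain a single factor $m$ in passing to the operator norm. The only cosmetic difference is that the paper cites the $\ell^2$-form $\|M\| \leq \big(\sum_{i,j}\|M_{ij}\|^2\big)^{1/2}$ from \cite[Exercise 3.10 (i)]{Paulsen} rather than the equivalent $\|M\| \leq m\max_{i,j}\|M_{ij}\|$ that you derive.
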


\begin{proof}
Notice that the first inequality is obvious for $k=1$; thus, it suffices to consider the case $k\geq 2$. To this end, take $B^1,\dots,B^{k-1} \in M_m(\B)$. Then, we have that
$$m_k^{\1_m \otimes x}(B^1,\dots,B^{k-1}) = \sum_{1\leq i_1, \ldots, i_k  \leq m} E_{i_1i_k} \otimes E[x B^1_{i_1 i_2} x \cdots B^{k-1}_{i_{k-1}i_k} x]$$
and hence, thanks to \cite[Exercise 3.10 (i)]{Paulsen},
$$\|m_k^{\1_m \otimes x}(B^1,\dots,B^{k-1})\| \leq \Bigg(\sum_{1\leq i_1,i_k\leq m} \bigg\|\sum_{1\leq i_2, \ldots, i_{k-1} \leq m} E[x B^1_{i_1 i_2} x \cdots B^{k-1}_{i_{k-1} i_k} x]\bigg\|^2\Bigg)^{1/2}.$$
Because
$$\bigg\|\sum_{1\leq i_2, \ldots, i_{k-1} \leq m} E[x B^1_{i_1 i_2} x \cdots B^{k-1}_{i_{k-1} i_k} x]\bigg\| \leq m^{k-2} \|m_k^x\| \|B^1\| \cdots \|B^{k-1}\|,$$
we infer from the latter that $\|m_k^{\1_m \otimes x}(B^1,\dots,B^{k-1})\| \leq m^{k-1} \|m_k^x\| \|B^1\| \cdots \|B^{k-1}\|$. This yields $\|m_k^{\1_m \otimes x}\| \leq m^{k-1} \|m_k^x\|$, as asserted.\\

Similarly, we prove for $B\in M_m(\B)$ that $\|m_4^{\1_m \otimes x}(B^\ast,\1_m \otimes \1,B)\| \leq m^2 \|m_4^x\| \|B\|^2$, which leads us to the asserted bound $\alpha_4(\1_m \otimes x) \leq m^2 \|m_4^x\|$.
\end{proof}

\section{Proof of the operator-valued setting}\label{section:proof-operator-valued}

The aim of this section is to prove Theorem \ref{theo1:Lin-Lin:freeness} and Theorem \ref{theo:opval_semicirculars_comparison}. The proofs rely on an operator-valued version of the Lindeberg method extending the one in \cite{Ba-Ce-18} to operator-valued Cauchy transforms and refining some estimates to get bounds in terms of moments instead of the operator norm.

\begin{proof}[Proof of Theorem \ref{theo1:Lin-Lin:freeness}]
We start by writing, for any $\Zb \in \bH^+(\B)$, the difference as a telescoping sum:  
\begin{align*}
G_{\ux_n}  (\Zb)  - G_{\uy_n}(\Zb)
 &= \sum^n_{i=1} \big(G_{\uz_i}(\Zb) - G_{\uz_{i-1}}(\Zb)\big) 
= \sum^n_{i=1} \Big( \big( G_{\uz_i}(\Zb) -G_{\uz^0_i}(\Zb) \big)
 -  \big( G_{\uz_{i-1}}(\Zb)   -G_{\uz^0_i}(\Zb) \big) \Big)
\end{align*}
 where for any $i= 1, \ldots, n$,
 \[
 \uz_i =\sum_{j=1}^i x_j + \sum_{j=i+1}^n y_j
\qquad \text{and} \qquad 
  \uz^0_i =\sum_{j=1}^{i-1} x_j + \sum_{j=i+1}^n y_j.
 \]
Noting that $\uz_n=\ux_n$, $\uz_0=\uy_n$, $\uz_i-  \uz^0_i = x_i$ and $\uz_{i-1}- \uz^0_i = y_i$ for $i=1,\dots,n$. This shows that in the above telescoping sum, we are replacing only one variable at a time; at each step $i$, we replace $x_i$ by $y_i$. We will then control the  error terms and sum over $i=1,\dots,n$. This is the so-called Lindeberg method which is also known in probability theory as the replacement trick.

Applying the algebraic identity in Lemma \ref{lem:Taylor_resolvents} up to order $3$, we get 
\begin{align*}
G_{\uz_i}(\Zb) - G_{\uz^0_i}(\Zb)  
&= G_{\uz^0_i}(\Zb) \ x_i \  G_{\uz^0_i}(\Zb)
 +G_{\uz^0_i}(\Zb) \big( x_i \  G_{\uz^0_i}(\Zb)\big)^2
+G_{\uz_i}(\Zb) \big( x_i \  G_{\uz^0_i}(\Zb)\big)^3 \qquad\text{and}\\
G_{\uz_{i-1}}(\Zb) - G_{\uz^0_i}(\Zb)  
&= G_{\uz^0_i}(\Zb) \ y_i \ G_{\uz^0_i}(\Zb)
+G_{\uz^0_i}(\Zb) \big( y_i \  G_{\uz^0_i}(\Zb)\big)^2
+G_{\uz_{i-1}}(\Zb)\big(  y_i \  G_{\uz^0_i}(\Zb)\big)^3. 
\end{align*}
We assume without loss of generality that $\{x_1, \ldots , x_n\}$ and $\{y_1, \ldots , y_n\}$ are free over $\B$. Now fixing $i \in [n]$, we note that $E[x_i]= E[y_i]=0$ and that $x_i$ and $y_i$ are free from $\uz^0_i$ with amalgamation over $\B$. Therefore, by freeness, we get
\begin{align*}
E \big[G_{\uz^0_i}(\Zb) \ x_i \  G_{\uz^0_i}(\Zb)  \big] 
&= E \big[G_{\uz^0_i}(\Zb) \ E[ x_i ] \  G_{\uz^0_i}(\Zb)  \big] =0 \qquad\text{and}\\
E  \big[G_{\uz^0_i}(\Zb)  \ y_i \  G_{\uz^0_i}(\Zb) \big] 
&= E \big[G_{\uz^0_i}(\Zb)  \ E [ y_i ] \  G_{\uz^0_i}(\Zb)  \big] =0. 
\end{align*}
Moreover, as  $E[x_i b x_i] = E[y_i b y_i]$ for any $b \in \B$, we get again, by $E[x_i]= E[y_i]=0$ and the moment-cumulant formula in \eqref{operator-moment-cumulant-for}, that
\begin{multline*}
E \big[ G_{\uz_i^0}(\Zb)\  x_i \  G_{\uz_i^0}(\Zb) \ x_i \  G_{\uz_i^0}(\Zb) \big]
  = E \big[ G_{\uz_i^0}(\Zb) \ E \big[ x_i \  E[ G_{\uz_i^0}(\Zb)] \ x_i  \big]\  G_{\uz_i^0}(\Zb)  \big]
\\  = E \big[ G_{\uz_i^0}(\Zb) \ E \big[\  y_i \  E[ G_{\uz_i^0}(\Zb) ] \ y_i    \big]\  G_{\uz_i^0}(\Zb)  \big]
 =E \big[ G_{\uz_i^0}(\Zb) \ y_i \  G_{\uz_i^0}(\Zb) \ y_i \  G_{\uz_i^0}(\Zb)\big].
\end{multline*}
Therefore, for any $\Zb\in \bH^+(\B)$,
\[
E \big[ G_{\ux_n}(\Zb) \big] - E \big[ G_{\uy_n}(\Zb) \big] = \sum^n_{i=1} \big( E[W_i(\Zb)] - E[\widetilde{W}_i(\Zb)] \big),
\]
where we abbreviate
$
W_i(\Zb) := G_{\uz_i}(\Zb) \big( x_i \  G_{\uz^0_i}(\Zb)\big)^3$ and $\widetilde{W}_i(\Zb) := G_{\uz_{i-1}}(\Zb) \big( y_i \  G_{\uz^0_i}(\Zb)\big)^3.
$

\vspace{0.2cm}

\paragraph*{\bf Proof of \eqref{eq:theo1_Lin-Lin:freeness-1} } To prove the estimate on the norm of the operator-valued Cauchy transform, we write for any $\Zb \in \bH^+(\B)$,
\[
\| E [G_{\ux_n}(\Zb)] - E [G_{\uy_n}(\Zb)] \| \leq \sum^n_{i=1} \big( \|E [W_i(\Zb)] \| + \|E [\widetilde{W}_i(\Zb)]\| \big).
\]
Noting that $G_{\uz_i}(\Zb) \ x_i \ G_{\uz^0_i}(\Zb) = G_{\uz^0_i}(\Zb) \ x_i \ G_{\uz_i}(\Zb)$, we get by Lemma \ref{lemma:moment-bounds} and using \eqref{def:second-fourth-moment} that
\[
\|E [W_i(\Zb)] \|  \leq \|\Im(\Zb)^{-1}\|^4 \sqrt{\alpha_2(x)\big(\alpha_4 (x) +  \alpha_2(x)^2\big)}\, .
\]
Similarly, we bound the second term and get
\[
\|E [\widetilde{W}_i(\Zb)] \|   \leq \| \Im (\Zb)^{-1}\|^4  \sqrt{  \alpha_2(y) \big(  \alpha_4(y)+  \alpha_2(y)^2 \big)} 
 = \| \Im (\Zb)^{-1}\|^4  \sqrt{  
 \alpha_2(x) \big( \alpha_4(y)+  \alpha_2(x)^2 \big) } \, .
\]
Collecting the above bounds and summing over $i = 1, \dots, n$, we end the proof of \eqref{eq:theo1_Lin-Lin:freeness-1}, the first estimate of the theorem.

\vspace{0.2cm}

\paragraph*{\bf Proof of \eqref{eq:theo1_Lin-Lin:freeness-2}} Now to prove the estimate on the scalar-valued Cauchy transform, we set $\Zb=z\1$ with $z \in \mathbb{C}^+$ (but we will omit the $\1$ in the following for better legibility) and note that
\[
\varphi [G_{\ux_n}(z)] - \varphi [G_{\uy_n}(z)] = \sum_{i=1}^n  \varphi[W_i(z)] + \varphi[\widetilde{W}_i(z)].
\]
Then by H\"older inequality we get 
\begin{align*}
|\varphi[W_i(z)]| & =\big|\varphi\big[ G_{\uz_i}(\Zb)  x_i   G_{\uz^0_i}(\Zb)  x_i   G_{\uz^0_i}(\Zb)x_i G_{\uz^0_i}(\Zb)\big] \big|
\\&\leq \| G_{\uz_i}(\Zb)  x_i   G_{\uz^0_i}(\Zb)\|_{L^3} \| x_i   G_{\uz^0_i}(\Zb)\|_{L^3} \| x_i G_{\uz^0_i}(\Zb)\|_{L^3}
\\& \leq \big \| G_{\uz_i}(z) \|  \| G_{\uz^0_i}(z) \|^3 \|x_i\|_{L^3}^3 \leq \frac{1}{\Im (z)^4} \|x_i\|_{L^3}^3.
\end{align*}
Similarly we bound the term $\varphi[\widetilde{W}_i(z)]$, collect the above bounds and sum over $i$ to get our second estimate \eqref{eq:theo1_Lin-Lin:freeness-2}.

\vspace{0.2cm}

\paragraph*{\bf Proof of \eqref{eq:theo1_Lin-Lin:freeness-3} } Again, noting that $G_{\uz_i}(\Zb) \ x_i \ G_{\uz^0_i}(\Zb) = G_{\uz^0_i}(\Zb) \ x_i \ G_{\uz_i}(\Zb)$, we finally get by a direct application of Lemma \ref{lemma:moment-bounds_variant},
\begin{align*}
|\varphi[W_i(z)]| &\leq \frac{1}{\Im(z)^2} \|G_{\uz^0_i}(z)\|_{L^2}^2 \sqrt{\alpha_2(x) \big( \alpha_4(x)+  \alpha_2(x)^2 \big)}\, ,\\
|\varphi[\widetilde{W}_i(z)]| &\leq \frac{1}{\Im(z)^2} \|G_{\uz^0_i}(z)\|_{L^2}^2 \sqrt{\alpha_2(x) \big( \alpha_4(y)+  \alpha_2(x)^2 \big)}\, .
\end{align*}
Putting these facts together and involving \eqref{eq:Cauchy-integral}, we arrive at \eqref{eq:theo1_Lin-Lin:freeness-3} and end the proof.
\end{proof}

\begin{proof}[Proof of Theorem \ref{theo:opval_semicirculars_comparison}]
Fix $n\in\bN$ and let $x=\{x_j \mid 1\leq j \leq n\}$ and $y = \{y_j \mid 1 \leq j \leq n\}$ consist of $\B$-freely independent copies of the given operator-valued semicircular elements $\frac{1}{\sqrt{n}} S_0$ and $\frac{1}{\sqrt{n}} S_1$, respectively, such that $x$ and $y$ are also free. We follow now the lines of the proof of Theorem \ref{theo1:Lin-Lin:freeness}. 
As the families $x$ and $y$ are centered, then clearly the first order terms are zero as before. However, as they do not have matching second moments, the second order terms will not cancel and we get 
\begin{align*}
E &\big[ G_{\uz_i^0}(\Zb) \ x_i E[ G_{\uz_i^0}(\Zb)]   x_i \  G_{\uz_i^0}(\Zb) \big]
-E \big[ G_{\uz_i^0}(\Zb) \ y_i E[ G_{\uz_i^0}(\Zb)]   y_i \  G_{\uz_i^0}(\Zb) \big]
\\& =E \Big[ G_{\uz_i^0}(\Zb) \Big( E \big[ x_i E[ G_{\uz_i^0}(\Zb)]   x_i \big]  - E \big[ y_i E[ G_{\uz_i^0}(\Zb)]  y_i \big] \Big) G_{\uz_i^0}(\Zb) \Big]
\\& =
\frac{1}{n}E \big[ G_{\uz_i^0}(\Zb) \ (\eta_0-\eta_1)\big( E[ G_{\uz_i^0}(\Zb)] \big) \  G_{\uz_i^0}(\Zb) \big],
\end{align*}
where we have used the fact that for any $b \in \B$,  $E[x_i b x_i]=n^{-1}E[S_0 b S_0]= n^{-1} \eta_0(b)$ and  $E[y_i b y_i]=n^{-1}E[S_1 b S_1]= n^{-1} \eta_1(b)$.
As for the third order terms, we have by Lemma \ref{lemma:moment-bounds} that
\begin{IEEEeqnarray*}{lClCl}
\|E [W_i(\Zb)] \| &\leq &\|\Im(\Zb)^{-1}\|^4 \sqrt{\alpha_2(x)\big(\alpha_4 (x) +  \alpha_2(x)^2\big)} &\leq &C n^{-3/2} \|\Im(\Zb)^{-1}\|^4,\\
\|E [\widetilde{W}_i(\Zb)] \| &\leq &\|\Im(\Zb)^{-1}\|^4 \sqrt{\alpha_2(y)\big(\alpha_4 (y) +  \alpha_2(y)^2\big)} &\leq &C n^{-3/2} \|\Im(\Zb)^{-1}\|^4,
\end{IEEEeqnarray*}
for all $\Zb\in\bH^+(\B)$, where $C>0$ is some constant independent of $n$. Hence
\begin{align*}
\lefteqn{\| \G^\B_{S_0}(\Zb) - \G^\B_{S_1}(\Zb)\| = \| E [G_{\ux_n}(\Zb)] - E [G_{\uy_n}(\Zb)] \|}\nonumber\\
&\leq \frac{1}{n} \sum^n_{i=1} \big\|E \big[ G_{\uz_i^0}(\Zb) \ (\eta_0-\eta_1)\big( E[ G_{\uz_i^0}(\Zb)] \big) \  G_{\uz_i^0}(\Zb) \big] \big\| + \sum^n_{i=1} \big( \|E [W_i(\Zb)] \| + \|E [\widetilde{W}_i(\Zb)]\| \big)\nonumber \\
&\leq \|\Im(\Zb)^{-1}\|^3 \|\eta_0-\eta_1\| + \frac{2C}{\sqrt{n}} \|\Im(\Zb)^{-1}\|^4.
\end{align*}
By letting $n\to \infty$, we obtain the asserted bound \eqref{eq:opval_semicirculars_comparison_Cauchy} in the case $k=1$, namely
\begin{equation}\label{eq:opval_semicirculars_comparison_Cauchy_k=1}
\| \G^\B_{S_0}(\Zb) - \G^\B_{S_1}(\Zb)\| \leq \|\Im(\Zb)^{-1}\|^3 \|\eta_0-\eta_1\| \qquad\text{for all $\Zb\in\bH^+(B)$}.
\end{equation}
To get the assertion \eqref{eq:opval_semicirculars_comparison_Cauchy} for general $k\in\bN$, we apply
\eqref{eq:opval_semicirculars_comparison_Cauchy_k=1}
to the operator-valued semicircular elements $\1_k \otimes S_1$ and $\1_k \otimes S_0$ in the operator-valued $C^\ast$-probability space $(M_k(\A), \id_k \otimes E, M_k(\B))$; 
this yields that for all $\Zb \in \bH^+(M_k(\B))$
\[
\| \G^\B_{\1_k \otimes S_0}(\Zb) - \G^\B_{\1_k \otimes S_1}(\Zb)\| \leq  \|\Im(\Zb)^{-1}\|^3 \|\id_k \otimes \eta_0 - \id_k \otimes \eta_1\|.
\]
By involving that $\| \id_k \otimes \eta_0 - \id_k \otimes \eta_1 \| \leq k \| \eta_0 - \eta_1 \|$, which follows from \cite[Exercise 3.10]{Paulsen}), we arrive at the bound given in \eqref{eq:opval_semicirculars_comparison_Cauchy} for general $k$.

To prove \eqref{eq:opval_semicirculars_comparison_integral}, we use Lemma \ref{lemma:moment-bounds_variant} that yields for $z\in\C^+$ that
\begin{IEEEeqnarray*}{lClCl}
|\varphi[W_i(z)]| &\leq &\frac{1}{\Im(z)^2} \|G_{\uz^0_i}(z)\|_{L^2}^2 \sqrt{\alpha_2(x) \big( \alpha_4(x) + \alpha_2(x)^2 \big)} &\leq &C n^{-3/2} \frac{1}{\Im(z)^2} \|G_{\uz^0_i}(z)\|_{L^2}^2,\\
|\varphi[\widetilde{W}_i(z)]| &\leq &\frac{1}{\Im(z)^2} \|G_{\uz^0_i}(z)\|_{L^2}^2 \sqrt{\alpha_2(y) \big( \alpha_4(y) + \alpha_2(y)^2 \big)} &\leq &C n^{-3/2} \frac{1}{\Im(z)^2} \|G_{\uz^0_i}(z)\|_{L^2}^2,
\end{IEEEeqnarray*}
with the same constant $C>0$ as above, and hence
\begin{align*}
\lefteqn{|\G_{S_0}(z) - \G_{S_1}(z)| = |\G_{x_n}(z) - \G_{y_n}(z)|}\\
&\leq \frac{1}{n} \sum^n_{i=1} \varphi\big( G_{\uz_i^0}(\Zb) \ (\eta_0-\eta_1)\big( E[ G_{\uz_i^0}(\Zb)] \big) \  G_{\uz_i^0}(\Zb) \big) + \sum^n_{i=1} \big(|\varphi[W_i(z)]| +  |\varphi[\widetilde{W}_i(z)]| \big)\\
&\leq \Big(\frac{1}{\Im(z)} \|\eta_0-\eta_1\| + \frac{1}{\sqrt{n}} \frac{2C}{\Im(z)^2} \Big) \frac{1}{n} \sum^n_{i=1} \|G_{\uz^0_i}(z)\|_{L^2}^2.  
\end{align*}
Using this for $z=t+i\epsilon$, integrating over $t\in\R$ by taking \eqref{eq:Cauchy-integral} into consideration, and letting $n\to \infty$, we obtain \eqref{eq:opval_semicirculars_comparison_integral}.
Finally, we involve \eqref{eq:Levy_bound} to deduce from the latter that for all $\epsilon>0$
$$L(\mu_{S_0},\mu_{S_1}) \leq 2\sqrt{\frac{\epsilon}{\pi}} + \frac{1}{\epsilon^2} \|\eta_0-\eta_1\|.$$
Optimizing over $\epsilon \in (0,\infty)$ yields \eqref{eq:opval_semicirculars_comparison_Levy}.
\end{proof}

\subsection{Application to linear matrix pencils: proof of Theorem \ref{theo:Linear-pencil:freeness}}\label{section:proof-LP}
In this section, we prove the multivariate case when linear matrix functions are chosen as test functions. We shall see that it follows directly from the operator-valued setting in Theorem \ref{theo1:Lin-Lin:freeness} by simply adjusting the setting to the operator-valued $W^\ast$-probability space $(M_m(\A), \tr_m \otimes \varphi, M_m(\B), E_m)$ with $E_m := \id_m \otimes E$.

\begin{proof}[Proof of Theorem \ref{theo:Linear-pencil:freeness}]
Note that without loss of generality, we assume $Q_0=0$. For $ Q_1, \ldots, Q_d \in M_m(\mathbb{C})$, the linear matrix pencil in $\ux_n$ and $\uy_n$ are given by
\begin{align*}
 \begin{aligned}
 g(\ux_n) &= 
 Q_1 \otimes \sum_{j=1}^n x_j^{(1)} +  Q_1^* \otimes \sum_{j=1}^n x_j^{(1)*} + \cdots + Q_d \otimes \sum_{j=1}^n x_j^{(d)} +  Q_d^* \otimes \sum_{j=1}^n x_j^{(d)*}
 := \sum_{j=1}^n X_j,
 \\  g(\uy_n) &= 
 Q_1 \otimes \sum_{j=1}^n y_j^{(1)} +  Q_1^* \otimes \sum_{j=1}^n y_j^{(1)*} + \cdots + Q_d \otimes \sum_{j=1}^n y_j^{(d)} +  Q_d^* \otimes \sum_{j=1}^n y_j^{(d)*}
 := \sum_{j=1}^n Y_j,
 \end{aligned}
\end{align*}
where $X_j= \sum_{\ell=1}^d \big(Q_\ell \otimes  x_j^{(\ell)} +  Q_\ell^* \otimes  x_j^{(\ell)*}\big)$ and $Y_j= \sum_{\ell=1}^d \big(Q_\ell \otimes  y_j^{(\ell)} +  Q_\ell^* \otimes  y_j^{(\ell)*}\big)$. Now as freeness lifts up to matrices, the free independence assumption on the family $x$ passes to $X=\{X_j \mid 1\leq j\leq n\}$. More precisely,  $X_1,\ldots , X_n$ are free with amalgamation over $M_m(\B)$ while the summands of each $X_j$ may be correlated. Furthermore, Assumption \ref{A:general} lifts to the family $Y=\{Y_j \mid 1\leq j\leq n\}$. Namely, $Y_1,\ldots , Y_n$ are free with amalgamation over $M_m(\B)$ and for any $j=1 , \ldots , n$,
\[
E_m[X_j]=E_m[Y_j]=0 
\quad \text{and} \quad 
E_m[X_j b X_j]=E_m[Y_jb Y_j] 
\quad \text{for all $b \in M_m(\B)$}.
\]
Now that all assumptions of Theorem \ref{theo1:Lin-Lin:freeness} hold for $X$ and $Y$, we directly deduce the desired estimates by adjusting them to the $W^\ast$-probability space $(M_m(\A), \tr_m \otimes \varphi,  E_m, M_m(\B))$.

\vspace{0.2cm}

\paragraph*{\bf Proof of \eqref{eq:theo_Linearpencil-1}} By \eqref{eq:theo1_Lin-Lin:freeness-1}, we have for any $\Zb \in \bH^+(M_m(\B))$,
\begin{align*}
\big\| E_m [G_{g(\ux_n)}(\Zb)] - E_m[G_{g(\uy_n)}(\Zb)] \big\| 
&= \big\| E_m [G_{ \sum_{j=1}^n X_j}(\Zb)] - E_m[G_{ \sum_{j=1}^n Y_j}(\Zb)] \big\| 
\\&
\leq \|\Im(\Zb)^{-1}\|^4 A_1(X,Y) n,
\end{align*}
with $A_1(X,Y)=\sqrt{\alpha_2(X)} \big(\sqrt{\alpha_4(X) +  \alpha_2(X)^2} + \sqrt{\alpha_4(Y) + \alpha_2(X)^2}\, \big).$
Now by Lemma \ref{lem:tensor_moments}, we have 
\[
\begin{aligned}
 &\alpha_2(X)= \max_{1 \leq j \leq n}  \|E_{m}[X_j^2]\| \leq \Big( \sum^d_{\ell=1} \|Q_\ell\| \Big)^2 \beta_2(x), 
 \\&\alpha_4(X)=\max_{1 \leq j \leq n} \sup \|m_4^{X_j}(b^*,\1,b)\| \leq m^6 \Big( \sum^d_{\ell=1} \|Q_\ell\| \Big)^4 \beta_4(x),
 \\ & \alpha_4(Y)= \max_{1 \leq j \leq n} \sup \|m_4^{Y_j}(b^*,\1,b)\|  \leq m^6 \Big( \sum^d_{\ell=1} \|Q_\ell\| \Big)^4 \beta_4(y),
\end{aligned}
\]
with the supremum taken over all $b \in M_m(\B)$ such that $\| b\| \leq 1$ and $\beta_2(x)$, $\beta_4(x)$ and $\beta_4(y)$ as defined in \eqref{def:betas}. Putting these bounds together, we get that $A_1(X,Y)\leq C_g B_1(x,y)$ with $C_g=m^3\big(\sum_{\ell=1}^d\|Q_\ell\|\big)^3$; hence, the desired bound in \eqref{eq:theo_Linearpencil-1}.

\vspace{0.2cm}

\paragraph*{\bf Proof of \eqref{eq:theo_Linearpencil-2}} In the operator-valued $W^\ast$-probability space $(M_m(\A), \tr_m \otimes \varphi, E_m, M_m(\B))$, we have by \eqref{eq:theo1_Lin-Lin:freeness-2} for any $z\in \mathbb{C}^+$ that 
\begin{align*}
\big| (\tr_m \otimes \varphi) [G_{g(\ux_n)}(z)] - (\tr_m \otimes \varphi)[G_{g(\uy_n)}(z)] \big| &\leq \frac{1}{\Im (z)^4} \big( \|X\|_{L^3}^3 + \|Y\|_{L^3}^3 \big) n
\\ & \leq d^3 \max_{1\leq \ell \leq d} \|Q_\ell\|^3 \, \frac{1}{\Im (z)^4}  \big( \|x\|_{L^3}^3 + \|y\|_{L^3}^3 \big) n.
\end{align*}

\vspace{0.2cm}

\paragraph*{\bf Proof of \eqref{eq:theo_Linearpencil-3}} By applying \eqref{eq:theo1_Lin-Lin:freeness-3} with respect to $\tr_m \otimes \varphi$, we get for every $\epsilon>0$, 
\[
\frac{1}{\pi} \int_\R \big| (\tr_m \otimes \varphi) [G_{g(\ux_n)}(t+i\epsilon)] - (\tr_m \otimes \varphi) [G_{g(\uy_n)}(t+i\epsilon)] \big|\, \mathrm{d} t
\leq \frac{1}{\epsilon^3} A_1(X,Y) n
  \leq C_g\frac{1}{\epsilon^3} B_1(x,y) n,
\]
which proves the last estimate in the theorem.
\end{proof}

\changelocaltocdepth{1}

 \section{Proof of the multivariate setting: noncommutative polynomials}\label{section:proof-Polynomials}
This section is dedicated to prove Theorem~\ref{theo:Lin-Lin:freeness} in which we consider noncommutative polynomials as test functions. The proof is based on linearization techniques for operator-valued noncommutative polynomials. An overview with definitions, properties and main results on linearizations can be found in Appendix \ref{section:linearizations}.

Consider a polynomial $p\in \B\langle x_1,x_1^*,\dots,x_d, x_d^*\rangle$ for which we fix, all along this section, a selfadjoint linear representation $\rho= (v^*, Q, v)$ of dimension $m$, where $v$ is a column vector in $\C^m$ and $Q \in  M_{m}(\B\langle x_1,x_1^*,\dots,x_d,x_d^*\rangle )$  is an $m\times m$ $\B$-valued affine linear pencil of the particular form 
\[
Q( x_1,x_1^*,\dots,x_d,x_d^*) = Q_0 \otimes \1 + \sum_{\ell=1}^d \big(Q_\ell \otimes x_\ell + Q_\ell^* \otimes x_\ell^*\big) 
\]
with $Q_0=Q_0^* \in M_m(\B)$ and $Q_1,\dots,Q_d \in M_{m}(\C) \subseteq M_m(\B)$ are complex-valued matrices. Then consider the linearization matrix and write for any $\Zb \in \bH^+(\B)$,  
\begin{equation}\label{Linearisation-formula}
L_{p(x_1, x_1^*, \dots ,x_d, x_d^*)}(\Zb) = (\Lambda(\Zb) + \widehat{Q}_0) \otimes \1 + \widehat{Q}_1 \otimes x_1 + \widehat{Q}_1^* \otimes x_1^*+ \dots + \widehat{Q}_d \otimes x_d + \widehat{Q}_d^* \otimes x_d^*
\end{equation}
with $\Lambda(\Zb)\in M_{m+1}(\mathbb{\B})$ and $ \widehat{Q}_0, \dots , \widehat{Q}_d \in M_{m+1}(\C)$  given by 
\[
\Lambda (\Zb) = \begin{bmatrix} \Zb & 0\\ 0 & 0 \end{bmatrix}, \quad
\widehat{Q}_0 = \begin{bmatrix} 0 & v^*\\ v & -Q_0 \end{bmatrix} \quad\text{and}\quad \widehat{Q}_j  = \begin{bmatrix} 0 & 0\\ 0 & -Q_j\end{bmatrix} \quad
\text{for all  } j=1, \ldots,d.
\]
By Theorem \ref{thm:linearization}, we have for any $\Zb \in \bH^+(\B)$,  
\[
G_{p(\cdot)}(\Zb)=(\Zb - p(\cdot))^{-1} = \big( L_{p(\cdot)}(\Zb)^{-1}\big)_{11},
\]
and therefore, 
\begin{equation}\label{eq:Cauchy-linearization}
E \big[G_{p(\cdot)}(\Zb)\big] = \big( E_{m+1} \big[L_{p(\cdot)}(\Zb)^{-1}\big]\big)_{11},
\end{equation}
where we recall that $E_{m+1} := (\id_{m+1} \otimes E)$.

Note that $E_{m+1}[L_{p(\cdot)}(\Zb)^{-1}]$ is not an operator-valued Cauchy-transform as $\Lambda(\Zb)$ does not belong to $\bH^+(M_{m+1}(\B))$ but rather lies on its boundary; this prevents a direct reduction of the multivariate to the single-variable operator-valued free CLT. To overcome this hurdle, the approach of \cite{Mai-Speicher-13} originating in \cite{Sp-07} was to move $\Lambda(\Zb)$ into $\Lambda_\epsilon(\Zb) \in \bH^+(M_{m+1}(\B))$ by filling up the diagonal of $\Lambda(\Zb)$ by $i\epsilon\1$ for sufficiently small $\epsilon>0$. Bounds on how much $E_{m+1} [L_{p(\cdot)}(\Zb)^{-1}]$ deviates from the Cauchy-transform $E_{m+1} [G_{\hat{p}_\rho(\cdot)}(\Lambda_\epsilon(\Zb))]$ in terms of $\epsilon$ were used to build a bridge to the operator-valued free CLT, but this resulted in bounds far from the optimal order $\frac{1}{\sqrt{n}}$ and valid only at points near $\infty$. While improvements of the linearization techniques would allow an extension of the said bounds to the entire upper half-plane and also the comparison result between $E_{m+1} [L_{p(\cdot)}(\Zb)^{-1}]$ and $E_{m+1}[G_{\hat{p}_\rho(\cdot)}(\Lambda_\epsilon(\Zb))]$ has been strengthened after \cite{Mai-Speicher-13} (see the appendix of \cite{Banna-Mai-18}), the problem of getting a rate of convergence below the optimal one remained open until now. Our solution rests on the operator-valued Lindeberg method by blocks and a careful examination of the terms emerging by linearization.

\subsection{Operator-valued Lindeberg method by blocks} With linearizations, linearity is recovered and freeness with amalgamation over $\B$ is lifted to $M_{m+1}(\B)$. By lifting the operator-valued Lindeberg method to the level of the linearization matrix $L_{p}$ of $p$, we recover the desired estimates on the Cauchy transforms of $p(\cdot)$ by controlling the $(1,1)$ entry of  $E_{m+1} \big[L_{p(\cdot)}(\Zb)^{-1}\big]$. However, one still needs to group the correlated components together and replace them all at once. This is shown in the following proposition which is also of independent interest.

\begin{proposition}\label{prop:Lin-Lin}
Let $x=(x_j^{(k)})_{1\leq j\leq n, \, 1\leq k\leq d}$ and $y=(y_j^{(k)})_{1\leq j\leq n, \, 1\leq k\leq d}$ be two families in $\A$. For all  $i =1, \dots, n$,  let $x_i:= (x_i^{(1)},$ $ x_i^{(1)*}, $  $\dots ,x_i^{(d)},$ $ x_i^{(d)*})$ and $y_i:= (y_i^{(1)}, y_i^{(1)*}, \dots ,y_i^{(d)}, y_i^{(d)*})$ and set
\[
\uz_i= \sum_{j=1}^ix_j + \sum_{j=i+1}^n y_j
\qquad \text{and} \qquad
\uz_i^0= \sum_{j=1}^{i-1}x_j + \sum_{j=i+1}^n y_j  .
\]
Then for any $\Zb\in \bH^+(\B)$,
\[
E [G_{p(\uz_n)}(\Zb)] - E [G_{p(\uz_0)}(\Zb)] = \sum_{i=1}^n 
\big((A_i)_{11}+(B_i)_{11}+(C_i)_{11}\big),
\]
where, for any $i \in [n]$,
\begin{align*}
A_i &=  E_{m+1} \Big[ L_{p(\uz_i^0)}(\Zb)^{-1} \  \X_i \ L_{p(\uz_i^0)}(\Zb)^{-1} \Big] - E_{m+1} \Big[ L_{p(\uz_i^0)}(\Zb)^{-1} \  \Y_i \ L_{p(\uz_i^0)}(\Zb)^{-1} \Big], \\
B_i &= E_{m+1} \Big[ L_{p(\uz_i^0)}(\Zb)^{-1} \ \big(\X_i \  L_{p(\uz_i^0)}(\Zb)^{-1} \big)^2\Big] -E_{m+1} \Big[ L_{p(\uz_i^0)}(\Zb)^{-1} \ \big( \Y_i \  L_{p(\uz_i^0)}(\Zb)^{-1}\big)^2 \Big] ,
\\
C_i &= E_{m+1} \Big[ L_{p(\uz_i)}(\Zb)^{-1} \ \big(\X_i \  L_{p(\uz_i^0)}(\Zb)^{-1} \big)^3\Big] -E_{m+1} \Big[ L_{p(\uz_{i-1})}(\Zb)^{-1} \ \big( \Y_i \  L_{p(\uz_i^0)}(\Zb)^{-1}\big)^3 \Big] ,
\end{align*}
with $\X_i:= \sum_{\ell=1}^d \big(\widehat{Q}_\ell \otimes x_i^{(\ell)} + \widehat{Q}_\ell^* \otimes x_i^{(\ell)*} \big)$ and  $\Y_i:= \sum_{\ell=1}^d  \big(\widehat{Q}_\ell \otimes y_i^{(\ell)} + \widehat{Q}_\ell^* \otimes y_i^{(\ell)*}\big)$.
\end{proposition}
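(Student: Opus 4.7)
The plan is to run the Lindeberg replacement trick at the level of the linearization matrix $L_{p(\cdot)}(\Zb)$ and then extract the $(1,1)$-entry via \eqref{eq:Cauchy-linearization}. First, I would invoke \eqref{eq:Cauchy-linearization} to obtain
\begin{equation*}
E[G_{p(\uz_n)}(\Zb)] - E[G_{p(\uz_0)}(\Zb)] = \bigl( E_{m+1}[L_{p(\uz_n)}(\Zb)^{-1}] - E_{m+1}[L_{p(\uz_0)}(\Zb)^{-1}] \bigr)_{11},
\end{equation*}
and express the right-hand side as a telescoping sum in $i$. At each step I insert the auxiliary $L_{p(\uz_i^0)}(\Zb)^{-1}$ so that the $i$-th term decomposes into
\begin{equation*}
\bigl(L_{p(\uz_i)}(\Zb)^{-1} - L_{p(\uz_i^0)}(\Zb)^{-1}\bigr) - \bigl(L_{p(\uz_{i-1})}(\Zb)^{-1} - L_{p(\uz_i^0)}(\Zb)^{-1}\bigr).
\end{equation*}
This is the Lindeberg-by-blocks step: at index $i$, the full correlated tuple $(x_i^{(1)}, x_i^{(1)*}, \ldots, x_i^{(d)}, x_i^{(d)*})$ is replaced simultaneously by $(y_i^{(1)}, y_i^{(1)*}, \ldots, y_i^{(d)}, y_i^{(d)*})$, which is exactly what $\X_i$ and $\Y_i$ encode on the linearization side.

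The crucial observation is the linearity of the linearization: from \eqref{Linearisation-formula}, only the $\widehat Q_\ell$-terms depend on the indeterminates, and that dependence is linear via fixed scalar matrix coefficients. Combined with the identities $\uz_i - \uz_i^0 = x_i$ and $\uz_{i-1} - \uz_i^0 = y_i$ (viewed entry-wise in the $2d$ variables), this yields
\begin{equation*}
L_{p(\uz_i)}(\Zb) - L_{p(\uz_i^0)}(\Zb) = \X_i \qquad\text{and}\qquad L_{p(\uz_{i-1})}(\Zb) - L_{p(\uz_i^0)}(\Zb) = \Y_i.
\end{equation*}
This is the structural fact that makes the Lindeberg method applicable at the linearization level even though $p$ is a nonlinear polynomial: nonlinearity has been absorbed into the matrix coefficients, leaving a linear dependence on the (amplified) variables.

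Next, I would apply the algebraic resolvent identity in Lemma \ref{lem:Taylor_resolvents} up to order three, with $y = L_{p(\uz_i^0)}(\Zb)$ and $x = L_{p(\uz_i)}(\Zb)$ (respectively $x = L_{p(\uz_{i-1})}(\Zb)$). Using the increment identities from the previous paragraph, one obtains an expansion of $L_{p(\uz_i)}^{-1} - L_{p(\uz_i^0)}^{-1}$ into a first-order term in $L_{p(\uz_i^0)}^{-1} \X_i L_{p(\uz_i^0)}^{-1}$, a second-order term $L_{p(\uz_i^0)}^{-1}(\X_i L_{p(\uz_i^0)}^{-1})^2$, and a cubic remainder $L_{p(\uz_i)}^{-1}(\X_i L_{p(\uz_i^0)}^{-1})^3$; the analogous expansion holds for $\Y_i$ with $L_{p(\uz_{i-1})}$ in the remainder slot. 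Subtracting the two expansions, applying $E_{m+1}$, extracting the $(1,1)$-entry, and summing over $i = 1, \ldots, n$ groups the first-, second-, and third-order contributions respectively into $A_i$, $B_i$, and $C_i$, yielding the claimed identity.

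The result is essentially an algebraic identity, so there is no real ``main obstacle'' at this stage; the only care needed is to track signs arising from $y - x = -\X_i$ in Lemma \ref{lem:Taylor_resolvents} and to verify the linearity equality above from \eqref{Linearisation-formula}. The genuine difficulty is postponed to subsequent sections, where $A_i$, $B_i$, and $C_i$ must be controlled: the first two via freeness with amalgamation over $M_{m+1}(\B)$ (exploiting $E[x_j^{(k)}] = 0$ and the matching of mixed second moments in Assumption \ref{A:general} to cancel the analogue of the linear and quadratic terms up to first-order remainders) and the cubic term bounded via Lemmas \ref{lemma:moment-bounds} and \ref{lemma:moment-bounds_variant} together with norm estimates on the linearization matrix inverse. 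Proposition \ref{prop:Lin-Lin} is precisely the clean algebraic decomposition that feeds into that analysis.
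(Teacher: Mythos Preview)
Your proposal is correct and follows essentially the same approach as the paper's own proof: invoke \eqref{eq:Cauchy-linearization}, telescope in $i$ with the auxiliary pivot $\uz_i^0$, use the linearity of \eqref{Linearisation-formula} to identify the increments as $\X_i$ and $\Y_i$, and then apply Lemma \ref{lem:Taylor_resolvents} to order three before taking $E_{m+1}$ and the $(1,1)$-entry. Your remark about tracking the sign from $y-x=-\X_i$ is well taken and is the only bookkeeping point; otherwise the argument is identical to the paper's.
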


\begin{proof}[Proof of Proposition \ref{prop:Lin-Lin}]
For the fixed linearization matrix, we have by \eqref{eq:Cauchy-linearization} for any $\Zb \in \bH^+(\B)$,  
\[
E \big[ G_{p(\uz_n)}(\Zb) \big] - E \big[ G_{p(\uz_0)}(\Zb) \big] = \big(E_{m+1} \big[L_{p(\uz_n)}(\Zb)^{-1} - L_{p(\uz_0)}(\Zb)^{-1} \big]\big)_{11}.
\]
We work now on the matrix level and apply the operator-valued Lindeberg method to control $ E_{m+1} \big[L_{p(\uz_n)}(\Zb)^{-1}\big] - E_{m+1}\big[L_{p(\uz_0)}(\Zb)^{-1} \big]$ where we replace, at each step $i$,  $X_i$ by $Y_i$. This allows to replace, at one step, all the correlated elements $\widehat{Q}_\ell \otimes x_i^{(\ell)}$ and their adjoints by the corresponding $\widehat{Q}_\ell \otimes y_i^{(\ell)}$ and their adjoints. This is why we refer to this replacement method as an operator-valued Lindeberg method by blocks. To do so, we start by writing the difference as a telescoping sum: 
\begin{align*}
L_{p(\uz_n)} (\Zb)^{-1}  - L_{p(\uz_0)}(\Zb)^{-1} 
 &= \sum^n_{i=1} \big( L_{p(\uz_i)}(\Zb)^{-1} - L_{p(\uz_{i-1})}(\Zb)^{-1} \big) 
\\&= \sum^n_{i=1} \Big( \big( L_{p(\uz_i)}(\Zb)^{-1} - L_{p(\uz^0_i)}(\Zb)^{-1}  \big)
 -  \big( L_{p(\uz_{i-1})}(\Zb)^{-1}  -L_{p(\uz^0_i)}(\Zb)^{-1}\big) \Big)
\end{align*}
Noting that  $L_{p(\uz_i)}(\Zb) -  L_{p(\uz^0_i)}(\Zb) = \X_i$ and $L_{p(\uz_{i-1})}(\Zb) -  L_{p(\uz^0_i)}(\Zb) = \Y_i$, then applying the algebraic identity in Lemma \ref{lem:Taylor_resolvents} up to order $3$, we get 
\begin{multline*}
L_{p(\uz_i)}(\Zb)^{-1} - L_{p(\uz^0_i)}(\Zb)^{-1}  
= L_{p(\uz^0_i)}(\Zb)^{-1} \ \X_i \  L_{p(\uz^0_i)}(\Zb)^{-1}
\\  +L_{p(\uz^0_i)}(\Zb)^{-1} \Big( \X_i \  L_{p(\uz^0_i)}(\Zb)^{-1}\Big)^2
+L_{p(\uz_i)}(\Zb)^{-1} \Big( \X_i \  L_{p(\uz^0_i)}(\Zb)^{-1}\Big)^3
\end{multline*}
and 
\begin{multline*}
L_{p(\uz_{i-1})}(\Zb)^{-1} - L_{p(\uz^0_i)}(\Zb)^{-1}  
= L_{p(\uz^0_i)}(\Zb)^{-1} \ \Y_i \ L_{p(\uz^0_i)}(\Zb)^{-1}
\\+L_{p(\uz^0_i)}(\Zb)^{-1} \Big( \Y_i \  L_{p(\uz^0_i)}(\Zb)^{-1}\Big)^2
 +L_{p(\uz_{i-1})}(\Zb)^{-1}\Big(  \Y_i \  L_{p(\uz^0_i)}(\Zb)^{-1}\Big)^3. 
\end{multline*}
The assertion follows by taking the difference, summing over $i=1,\dots,n$ and taking the conditional expectation. 
\end{proof}

\subsection{Bounds on the operator norm}
 In order to prove Theorem \ref{theo:Lin-Lin:freeness}, we would need to control first the operator norm of the inverse of the linearization matrices appearing in the telescoping sum. Let us first define the quantity
 \begin{equation}\label{linearization-constant-2}
M_{\rho;x,y} := \|Q_0^{-1}\| \sum^{r_\rho}_{k=0} \big( c_\rho M_{x,y}  \big)^k
\end{equation}
where $c_\rho := 2d \max_{1 \leq \ell \leq d} \|Q_\ell Q_0^{-1}\|$,  $M_{x,y} := \|x\| + \|y\| + 2 \sqrt{n} \sqrt{\| E [xx^*] \|}+2\sqrt{n} \sqrt{\| E [x^*x] \|}$  and $r_\rho$ is a non-negative integer associated with $\rho$ as by Proposition \ref{prop:Q_inverse}. 
\begin{lemma}\label{lem:estimates-L-inverse}
Assume that the families $x$ and $y$ are as in Theorem \ref{theo:Lin-Lin:freeness}.  Fix $i \in [n] $ and let $\uz_i$ and $\uz^0_i$ be as defined in Proposition \ref{prop:Lin-Lin}.  Then, uniformly in $n$,
\begin{equation}\label{eq:estimates-L-inverse-1}
\max \{ \|Q(\uz_i)^{-1}\|, \|Q(\uz_i^0)^{-1}\| \} \leq M_{\rho;x,y},
\end{equation}
and moreover, for any $\Zb \in \bH^+(\B)$,
\begin{equation}\label{eq:estimates-L-inverse-2}
\max \{ \|L_{p(\uz_i)}(\Zb)^{-1}\|, \|L_{p(\uz^0_i)}(\Zb)^{-1}\| \} \leq M_{\rho;x,y} + (1 + v^*v M_{\rho;x,y}^2) \| \Im (\Zb)^{-1}\|.
\end{equation} 
\end{lemma}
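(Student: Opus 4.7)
The plan is to prove the two estimates separately. The first bound \eqref{eq:estimates-L-inverse-1} rests crucially on Proposition \ref{prop:Q_inverse}, which supplies the integer $r_\rho$ together with a finite identity of the shape
\[
Q(X)^{-1} = \sum_{k=0}^{r_\rho}(-Q_0^{-1} R(X))^k Q_0^{-1},\qquad R(X):=\sum_{\ell=1}^d(Q_\ell\otimes X_\ell+Q_\ell^*\otimes X_\ell^*),
\]
valid for any tuple $X$ in place of $(x_1,x_1^*,\dots,x_d,x_d^*)$; the termination of this expansion at degree $r_\rho$ is the nilpotency feature that is built into (selfadjoint) linearizations. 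The second bound \eqref{eq:estimates-L-inverse-2} then follows from \eqref{eq:estimates-L-inverse-1} by a direct Schur-complement inversion of the $2\times 2$ block form of $L_{p(\cdot)}(\Zb)$, combined with the observation that the $(1,1)$-Schur complement is exactly the scalar resolvent of $p(\cdot)$ by Theorem \ref{thm:linearization}.

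For \eqref{eq:estimates-L-inverse-1}, I would apply the above expansion at $X=\uz_i$ and $X=\uz_i^0$, reducing the task to a uniform bound on $\|Q_0^{-1}R(\uz_i)\|$. Since $Q_0=Q_0^*$ one has $\|Q_0^{-1}Q_\ell^*\|=\|Q_\ell Q_0^{-1}\|$, hence
\[
\|Q_0^{-1}R(\uz_i)\|\;\leq\;2d\max_{1\leq\ell\leq d}\|Q_\ell Q_0^{-1}\|\cdot\max_{1\leq\ell\leq d}\bigg\|\sum_{j=1}^{i}x_j^{(\ell)}+\sum_{j=i+1}^{n}y_j^{(\ell)}\bigg\|.
\]
The innermost norm is that of a sum of centered operators which are free with amalgamation over $\B$, and here I would invoke the operator-valued Haagerup/Junge–type inequality from \cite[Proposition 7.1]{Junge} (already used elsewhere in the paper). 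Applied separately to each of the two sub-sums (free over $\B$ by construction, with matching second moments), this yields a bound of the order $\|x\|+\|y\|+2\sqrt{n}\sqrt{\|E[xx^*]\|}+2\sqrt{n}\sqrt{\|E[x^*x]\|}$, i.e.\ exactly $M_{x,y}$ in the sense of the lemma, uniformly in $i$ and $n$. Summing the resulting finite geometric-type series then produces the stated $M_{\rho;x,y}$.

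For \eqref{eq:estimates-L-inverse-2}, I would exploit the block structure
\[
L_{p(\uz_i)}(\Zb)=\begin{bmatrix}\Zb & v^*\\ v & -Q(\uz_i)\end{bmatrix}
\]
and carry out the Schur-complement inversion; Theorem \ref{thm:linearization} identifies the Schur complement with $\Zb-p(\uz_i)$, so the inverse decomposes as
\[
L_{p(\uz_i)}(\Zb)^{-1}=\begin{bmatrix}0 & 0\\ 0 & -Q(\uz_i)^{-1}\end{bmatrix}+\begin{bmatrix}\1\\ Q(\uz_i)^{-1}v\end{bmatrix}G_{p(\uz_i)}(\Zb)\begin{bmatrix}\1 & v^*Q(\uz_i)^{-1}\end{bmatrix}.
\]
Since each of the column/row factors has operator norm at most $(1+v^*v\,\|Q(\uz_i)^{-1}\|^2)^{1/2}$, submultiplicativity gives
\[
\|L_{p(\uz_i)}(\Zb)^{-1}\|\;\leq\;\|Q(\uz_i)^{-1}\|+(1+v^*v\,\|Q(\uz_i)^{-1}\|^2)\,\|G_{p(\uz_i)}(\Zb)\|,
\]
and plugging in \eqref{eq:estimates-L-inverse-1} together with the standard bound $\|G_{p(\uz_i)}(\Zb)\|\leq\|\Im(\Zb)^{-1}\|$ yields the claim.

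The main obstacle, and the only nontrivial input, is the uniform-in-$i,n$ operator-norm bound on the partially replaced sum $\sum_{j\leq i}x_j^{(\ell)}+\sum_{j>i}y_j^{(\ell)}$. A crude triangle-inequality bound would produce a factor $n(\|x\|+\|y\|)$ and ruin the rate; it is the sharp operator-valued free Khintchine/Rosenthal-type inequality that gives instead the correct $\sqrt{n}\sqrt{\|E[xx^*]\|}$ variance-scaling recorded in $M_{x,y}$. Everything else is algebraic manipulation and the selfadjointness $Q_0=Q_0^*$.
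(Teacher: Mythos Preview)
Your proposal is correct and follows essentially the same approach as the paper: the first bound is obtained from Proposition~\ref{prop:Q_inverse} combined with the operator-valued Voiculescu/Junge estimate \cite[Proposition~7.1]{Junge} for the norm of the partially replaced free sum, and the second bound is exactly the Schur-complement argument recorded in Lemma~\ref{lem:norm-L_inverse}, which you reprove inline rather than cite.
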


\begin{proof}
In view of Lemma \ref{lem:norm-L_inverse}, applied for the given linear representation $\rho=(v^*,Q,v)$, \eqref{eq:estimates-L-inverse-2} is an immediate consequence of \eqref{eq:estimates-L-inverse-1}; thus, it suffices to verify the bound \eqref{eq:estimates-L-inverse-1}. 
By Proposition \ref{prop:Q_inverse}, there exists a finite number $r_\rho$ depending on the chosen linear representation $\rho$ such that
\[
\|Q(\uz_i)^{-1}\| \leq \|Q_0^{-1}\| \sum^{r_\rho}_{k=0} (d C_1C_2)^k\] 
where 
$$C_1 := \max_{1\leq \ell \leq d} \|Q_\ell Q_0^{-1}\|  \qquad \text{and} \qquad C_2 := \max_{1\leq k \leq d} \big\| \sum_{j=1}^i x_j^{(k)} + \sum_{j=i+1}^n y_j^{(k)} \big\|.$$
It is obvious that $C_1$ is bounded and does not depend on $n$. Now, fix $k \in [d]$ and recall that $x^{(k)}_1 , \ldots , x^{(k)}_n$ are free with amalgamation over $\B$. Hence, by \cite[Proposition 7.1]{Junge}, which is an operator-valued extension of Voiculescu's estimate \cite[Lemma 3.2]{Voi-86}, we get 
\begin{align*}
\Big\|\sum_{j=1}^i x_j^{(k)} \Big\| 
&\leq \max_{1\leq j \leq i} \|x_j^{(k)} \| + \Big\| \sum_{j=1}^i E [x_j^{(k)*}x_j^{(k)}] \Big\|^{1/2} + \Big\| \sum_{j=1}^i E [x_j^{(k)}x_j^{(k)*}] \Big\|^{1/2}
\\ & \leq \max_{1\leq j \leq n} \|x_j^{(k)} \|  + \sqrt{n}  \max_{1\leq j \leq n} \big\| E [x_j^{(k)*}x_j^{(k)}]\big\|^{1/2} +  \sqrt{n}  \max_{1\leq j \leq n} \big\| E [x_j^{(k)}x_j^{(k)*}] \big\|^{1/2}.
\end{align*}
In a similar way, we control $\big\|\sum_{j=i+1}^n y_j^{(k)} \big\|$. Finally by taking into account the matching second-order moments in Assumption \ref{A:general}, and taking the maximum over $k \in [d]$, we prove that $C_2 \leq  M_{x,y}$ with $M_{x,y} = \|x\| + \|y\| +  2\sqrt{n} \sqrt{\| E [xx^*] \|}+ 2\sqrt{n} \sqrt{\| E [x^*x] \|}$. Analogously, we control $\|Q(\uz_i^0)^{-1}\|$ and end the proof.
\end{proof}

\subsection{Proof of the main result}
We are now ready to prove Theorem \ref{theo:Lin-Lin:freeness}. We start by applying Proposition \ref{prop:Lin-Lin} which gives for any $\Zb\in \bH^+(\B)$,
\[
E [G_{p(\ux)}(\Zb)] - E [G_{p(\uy)}(\Zb)] = \sum_{i=1}^n 
\big((A_i)_{11}+(B_i)_{11}+(C_i)_{11}\big),
\]  
and we adopt the same notation therein.We assume, without loss of generality, that the families $x = \{ x^{(k)}_j \mid 1 \leq j \leq n, 1 \leq k \leq d\}$ and $y = \{ y^{(k)}_j \mid 1 \leq j \leq n, 1 \leq k \leq d\}$ are also free with amalgamation over $\B$. Fixing $i\in [n]$, we proceed to prove that the first and second order terms $A_i$ and $B_i$  are zero.  We first note that freeness with amalgamation is conserved when lifted to matrices. Hence, as $\uz_i^{0}$ is free  from $\A_{x_i}$ and $\A_{y_i}$ over $\B$, then $L_{p(\uz^0_i)}(\Zb)^{-1}$ is free from $\X_i$ and $\Y_i$ over $M_{m+1}(\B)$. Also as $E[x_i^{(\ell)} ]=E[y_i^{(\ell)} ]=0$, then $E_{m+1}[ \X_i ]=E_{m+1}[ \Y_i ]=0$. Therefore, freeness and the moment-cumulant formula \eqref{operator-moment-cumulant-for} yield
\begin{align*}
E_{m+1} \big[L_{p(\uz^0_i)}(\Zb)^{-1} \ \X_i \  L_{p(\uz^0_i)}(\Zb)^{-1}  \big] 
= E_{m+1} \big[L_{p(\uz^0_i)}(\Zb)^{-1} \ E_{m+1}[ \X_i ] \  L_{p(\uz^0_i)}(\Zb)^{-1}  \big] =0,
\end{align*}
\begin{align*}
E_{m+1} \big[L_{p(\uz^0_i)}(\Zb)^{-1} \ \Y_i \  L_{p(\uz^0_i)}(\Zb)^{-1} \big] 
= E_{m+1} \big[L_{p(\uz^0_i)}(\Zb)^{-1} \ E_{m+1}[ \Y_i ] \  L_{p(\uz^0_i)}(\Zb)^{-1}  \big] =0, 
\end{align*}
and hence $A_i=0$. Now, as for all $\ell_1, \ell_2 \in [d]$, $\epsilon_1 , \epsilon_2 \in \{1,*\}$ and  $b \in \B$,  $E [x_k^{(\ell_1),\epsilon_1} b x_k^{(\ell_2),\epsilon_2} ]$ $ =E [ y_k^{(\ell_1),\epsilon_1} b   y_k^{(\ell_2),\epsilon_2}]$ then $E_{m+1}[ \X_i B \X_i ]=E_{m+1}[ \Y_i B \Y_i ]$ for any $B \in M_{m+1}( \B)$. By freeness and the fact that $E_{m+1}[ \X_i ]=E_{m+1}[ \Y_i ]=0$, we get by the moment-cumulant formula \eqref{operator-moment-cumulant-for} that 
\begin{align*}
E_{m+1} \Big[ L_{p(\uz_i^0)}(\Zb)^{-1} &\ \X_i \  L_{p(\uz_i^0)}(\Zb)^{-1} \ \X_i \  L_{p(\uz_i^0)}(\Zb)^{-1} \Big]
\\ & = E_{m+1} \Big[ L_{p(\uz_i^0)}(\Zb)^{-1} \ E_{m+1} \big[ \X_i \  E_{m+1}[ L_{p(\uz_i^0)}(\Zb)^{-1}] \ \X_i  \big]\  L_{p(\uz_i^0)}(\Zb)^{-1}  \Big]
\\ & = E_{m+1} \Big[ L_{p(\uz_i^0)}(\Zb)^{-1} \ E_{m+1} \big[\  \Y_i \  E_{m+1}[ L_{p(\uz_i^0)}(\Zb)^{-1} ] \ \Y_i    \big]\  L_{p(\uz_i^0)}(\Zb)^{-1}  \Big]
\\& =E_{m+1} \Big[ L_{p(\uz_i^0)}(\Zb)^{-1} \ \Y_i \  L_{p(\uz_i^0)}(\Zb)^{-1} \ \Y_i \  L_{p(\uz_i^0)}(\Zb)^{-1}\Big],
\end{align*}
which yields that $B_i = 0$. Therefore, we get for any $\Zb\in \bH^+(\B)$,
\begin{equation}\label{eq:Lin-Lin:freeness-Lindeberg}
E \big[ G_{p(\ux)}(\Zb) \big] - E \big[ G_{p(\uy)}(\Zb) \big] 
= \sum_{i=1}^n \big( (E_{m+1}[W_i])_{11} - (E_{m+1}[\widetilde{W}_i])_{11} \big)
\end{equation}
with 
$
W_i(\Zb) :=
  L_{p(\uz_i)}(\Zb)^{-1} \ \big(\X_i \  L_{p(\uz_i^0)}(\Zb)^{-1} \big)^3
$ and $\widetilde{W}_i(\Zb):=  L_{p(\uz_{i-1})}(\Zb)^{-1} \ \big( \Y_i \  L_{p(\uz_i^0)}(\Zb)^{-1}\big)^3.$

\vspace{0.2cm}
\paragraph{\bf Proof of \eqref{oper:Lin-Lin:free}} We deduce from \eqref{eq:Lin-Lin:freeness-Lindeberg} that, for any $\Zb \in \bH^+(\B)$,
\[
\| E [G_{p(\ux)}(\Zb)] - E [G_{p(\uy)}(\Zb)] \| 
\leq \sum^n_{i=1} \big( \|E_{m+1} [W_i(\Zb)] \| + \|E_{m+1} [\widetilde{W}_i(\Zb)]\|\big).
\]
Noting that $ L_{p(\uz_i)}(\Zb)^{-1} \X_i L_{p(\uz^0_i)}(\Zb)^{-1}= L_{p(\uz^0_i)}(\Zb)^{-1} \X_i L_{p(\uz_i)}(\Zb)^{-1}$, we get by Lemma \ref{lemma:moment-bounds}
\begin{align*}
\|E_{m+1} [W_i(\Zb)] \| 
& \leq \|  L_{p(\uz_i)}(\Zb)^{-1} \| \|L_{p(\uz^0_i)}(\Zb)^{-1}\|^3 \sqrt{\|E_{m+1} [X_i^2]\|}\sqrt{  \sup \|m_4^{X_i} (b^*,1,b)\| +  \|E_{m+1}[X_i^2]\|^2}
\end{align*}
where the supremum is over all $b \in M_{m+1}(\B)$ such that $\|b\| \leq 1$. By Lemma \ref{lem:tensor_moments}, we have 
\begin{align*}
 \|E_{m+1}[X_i^2]\| \leq \Big( \sum^d_{\ell=1} \|Q_\ell\| \Big)^2
\beta_2(x) \quad \text{and} \quad \sup \|m_4^{X_i}(b^*,1,b)\|  \leq (m+1)^6 \Big( \sum^d_{\ell=1} \|Q_\ell\| \Big)^4 \beta_4(x),
\end{align*}
with $\beta_2(x)$ and $\beta_4(x)$ as defined in \eqref{def:betas}.
Hence, together with the bounds in Lemma \ref{lem:estimates-L-inverse}, we get
\begin{align*}
\|E_{m+1} [W_i(\Zb)] \| 
 \leq C_{\rho; x,y} (\Zb) \sqrt{\beta_2(x)}\sqrt{\beta_4(x) +\beta_2^2(x)}
\end{align*}
with $ C_{\rho; x,y} (\Zb) := 8m^3  \big( \sum^d_{\ell=1} \|Q_\ell\| \big)^3 \big( M_{\rho;x,y} + (1 + v^*v M_{\rho;x,y}^2)  \| \Im (\Zb)^{-1}\|\big)^4  $ and $M_{\rho;x,y}$ as defined in \eqref{linearization-constant-2}. Note that for sufficiently large $n$, we have
\begin{align}\label{bound:Crho}
\max \big\{ M_{\rho;x,y},  1+ v^*v M_{\rho;x,y}^2 \big\} \leq C_p   M_{x,y}^{2r_\rho}
\end{align}
for some positive constants $C_p$ and $r_\rho$ that only depend on the linearization $\rho$ of $p$. Therefore,
\begin{align*}
C_{\rho; x,y} (\Zb) \leq C_p  M_{x,y}^{8r_\rho} \big(1 + \| \Im (\Zb)^{-1}\|\big)^4.
\end{align*}
Note that $C_p$ can change from one line to the other. Hence, we get for any $\Zb \in \bH^+(\B)$,
\begin{align*}
\|E_{m+1} [W_i(\Zb)] \| 
 \leq C_p  M_{x,y}^{8r_\rho} \big(1 + \| \Im (\Zb)^{-1}\|\big)^4 \sqrt{\beta_2(x)}\sqrt{\beta_4(x) +\beta_2^2(x)}.
\end{align*}
Similarly, we treat the second term and obtain for any $\Zb \in \bH^+(\B)$,
\begin{align*}
\|E_{m+1} [\widetilde{W}_i(\Zb)] \| 
 \leq C_p  M_{x,y}^{8r_\rho} \big(1 + \| \Im (\Zb)^{-1}\|\big)^4  \sqrt{\beta_2(x)}\sqrt{\beta_4(y) +\beta_2^2(x)}.
\end{align*}
Putting the above bounds together, we prove \eqref{oper:Lin-Lin:free}. 

\vspace{0.2cm}

\paragraph{\bf Proof of \eqref{scalar:Lin-Lin:free}} Starting by \eqref{eq:Lin-Lin:freeness-Lindeberg}, we have for any $z \in \C_+$,
\begin{equation}\label{eq:Lin-Lin:freeness-Lindeberg_scalar}
\varphi [G_{p(\ux)}(z)] - \varphi [G_{p(\uy)}(z)] = \sum_{i=1}^n \big( ( \id_{m+1} \otimes \varphi)(W_i(z))_{11} + ( \id_{m+1} \otimes \varphi)(\widetilde{W}_i(z))_{11} \big).
\end{equation}
By H\"older inequality and the estimate \eqref{eq:estimates-L-inverse-2} stated in Lemma \ref{lem:estimates-L-inverse}, we get
\begin{align*}
(& \id_{m+1}  \otimes  \varphi)(W_i(z))_{11}
\\&=  \sum_{i_1, \dots, i_6=1}^{m+1} 
\varphi \big[L_{p(\uz_i)}(z)^{-1}_{1i_1} \ (\X_i )_{i_1i_2} L_{p(\uz^0_i)}(z)^{-1}_{i_2i_3} \  (\X_i )_{i_3i_4} L_{p(\uz^0_i)}(z)^{-1}_{i_4i_5} \ (\X_i )_{i_5i_6} L_{p(\uz^0_i)}(z)^{-1}_{i_61} \big]
 \\&\leq \sum_{i_1, \dots, i_6=1}^{m+1} 
\|  L_{p(\uz_i)}(\Zb)^{-1}_{1i_1} \, (\X_i )_{i_1i_2} \, L_{p(\uz^0_i)}(z)^{-1}_{i_2i_3} \|_{L^3} \| (\X_i )_{i_3i_4} \, L_{p(\uz^0_i)}(z)^{-1}_{i_4i_5} \|_{L^3}\|(\X_i )_{i_5i_6} \,L_{p(\uz^0_i)}(z)^{-1}_{i_61} \|_{L^3}
\\&  \leq  
2^3   \| L_{p(\uz_i)}(\Zb)^{-1}\| \|  L_{p(\uz^0_i)}(z)^{-1}\|^3  
 \max_{1\leq \ell \leq d} \| x_i^{(\ell)}\|_{L^3}^3 
 \sum_{i_1, \dots, i_6=1}^{m+1}  \sum_{\ell_1, \ell_2, \ell_3=1}^{d} | (\widehat{Q}_{\ell_1} )_{i_1i_2}| \, | (\widehat{Q}_{\ell_2} )_{i_3i_4}| \, |(\widehat{Q}_{\ell_3} )_{i_5i_6}|
\\& \leq  
2^3  (m+1)^6 \Big( \sum_{\ell=1}^d \|Q_\ell\|\Big)^3 \Big(M_{\rho;x,y} + (1 + v^*v M_{\rho;x,y}^2) \frac{1}{\Im(z)}\Big)^4   \| x\|_{L^3}^3 
\\ & \leq C_p  M_{x,y}^{8r_\rho} \Big(1 +\frac{1}{\Im (z)} \Big)^4 \|x\|_{L^3}^3.
\end{align*}
In the last inequality, we used the bound in \eqref{bound:Crho} which holds for sufficiently large $n$.
In an analogous way, we control the term $ ( \id_{m+1} \otimes \varphi )( \widetilde{W}_i(z) )_{11}$. Finally, by summing over $i=1 , \dots , n$ we get the bound in \eqref{scalar:Lin-Lin:free}.

\vspace{0.2cm}

\paragraph{\bf Proof of \eqref{scalar:Lin-Lin:free-Levy}} Again, we begin with \eqref{eq:Lin-Lin:freeness-Lindeberg} and proceed as follows. Consider the term
$$W_i(\Zb) = L_{p(\uz_i)}(\Zb)^{-1} (X_i  L_{p(\uz_i^0)}(\Zb)^{-1})^3 = L_{p(\uz_i^0)}(\Zb)^{-1} X_i  L_{p(\uz_i)}(\Zb)^{-1} (X_i  L_{p(\uz_i^0)}(\Zb)^{-1})^2.$$
In the sequel, we will denote by $E_{11}$ and $E_{22}$ the block matrix units
\[
E_{11} = \begin{bmatrix} \1 & 0\\ 0 & 0 \end{bmatrix} \qquad\text{and}\qquad E_{22} = \begin{bmatrix} 0 & 0\\ 0 & \1_m\end{bmatrix}
\]
in $M_{m+1}(\A)$. Note that $E_{22} X_i E_{22} = X_i$; thus
\begin{multline*}
E_{11} W_i(\Zb) E_{11}\\
= (E_{11} L_{p(\uz_i^0)}(\Zb)^{-1} E_{22}) X_i (E_{22} L_{p(\uz_i)}(\Zb)^{-1} E_{22}) X_i (E_{22} L_{p(\uz_i^0)}(\Zb)^{-1} E_{22}) X_i (E_{22} L_{p(\uz_i^0)}(\Zb)^{-1} E_{11})
\end{multline*}
Notice that according to the decomposition \eqref{eq:Schur_decomposition} 
$$E_{22} L_{p(\uz_i^0)}(\Zb)^{-1} E_{22} = \begin{bmatrix} 0 & 0\\ 0 & A_i^0(\Zb) \end{bmatrix} \qquad\text{and}\qquad E_{22} L_{p(\uz_i)}(\Zb)^{-1} E_{22} = \begin{bmatrix} 0 & 0\\ 0 & A_i(\Zb) \end{bmatrix}$$
with
\begin{align*}
A_i^0(\Zb) &:= Q(\uz_i^0)^{-1} v G_{p(\uz_i^0)}(\Zb) v^* Q(\uz_i^0)^{-1} - Q(\uz_i^0)^{-1},\\
A_i(\Zb) &:= Q(\uz_i)^{-1} v G_{p(\uz_i)}(\Zb) v^* Q(\uz_i)^{-1} - Q(\uz_i)^{-1}.
\end{align*}   
 Furthermore, we have that
$$E_{11} L_{p(\uz_i^0)}(\Zb)^{-1} E_{22} = \begin{bmatrix} 0 & - G_{p(\uz_i^0)}(\Zb) v^* Q(\uz_i^0)^{-1}\\ 0 & 0 \end{bmatrix}, \; E_{22} L_{p(\uz_i^0)}(\Zb)^{-1} E_{11} = \begin{bmatrix} 0 & 0\\ -Q(\uz_i^0)^{-1} v G_{p(\uz_i^0)}(\Zb) & 0 \end{bmatrix}.$$ 
Therefore, in summary, we get that
$$(W_i(\Zb))_{11} = -G_{p(\uz_i^0)}(\Zb) v^* Q(\uz_i^0)^{-1} \widetilde{Q}_1(x_i) A_i(\Zb) \widetilde{Q}_1(x_i) A_i^0(\Zb) \widetilde{Q}_1(x_i) Q(\uz_i^0)^{-1} v G_{p(\uz_i^0)}(\Zb),$$
with $ \widetilde{Q}_1(x_i) = Q(x_i) -Q_0 \otimes 1_\A =  \sum_{\ell=1}^d \big(Q_\ell \otimes x_i^{(\ell)} + Q_\ell^* \otimes x_i^{(\ell)*} \big)$.
Now, for any $a \in \A$, we denote by $diag(a)$ the diagonal matrix with $a$ as entries and we observe that
\begin{multline*}
\varphi\big[(W_i(\Zb))_{11}\big] = - m (\tr_m \otimes \varphi)\big[ diag(G_{p(\uz_i^0)}(\Zb)) Q(\uz_i^0)^{-1} 
\\ \cdot \widetilde{Q}_1(x_i) A_i(\Zb) \widetilde{Q}_1(x_i) A_i^0(\Zb) \widetilde{Q}_1(x_i) Q(\uz_i^0)^{-1}  v G_{p(\uz_i^0)}(\Zb) v^*\big].
\end{multline*}
We apply now Lemma \ref{lemma:moment-bounds_variant} in this setting of the operator-valued $W^\ast$-probability space $(M_m(\A), $ $ \tr_m \otimes \varphi, E_m, M_m(\B))$ with $E_m := \id_m \otimes E$. This yields
\begin{align*}
\lefteqn{\big| \big((\id \otimes \varphi)[W_i(\Zb)]\big)_{11} \big|^2 =  \big| \varphi \big[(W_i(\Zb))_{11}\big] \big|^2}\\
&\qquad\leq m^2 \| A_i(\Zb)\|^2 \| diag(G_{p(\uz_i^0)}(\Zb))Q(\uz_i^0)^{-1}\|_{L^2}^2 \|Q(\uz_i^0)^{-1} v G_{p(\uz_i^0)}(\Zb) v^*\|_{L^2}^2 \|E_m[ A_i^0(\Zb)^* A_i^0(\Zb)]\| \\
&\qquad\qquad\qquad \cdot \|E_m[\widetilde{Q}_1(x_i)^2]\| \, \Big( \sup \|m_4^{\widetilde{Q}_1(x_i)} (b^*,1,b)\| +  \|E_m[\widetilde{Q}_1(x_i)^2]\|^2 \Big),
\end{align*}
where the supremum is taken over all $b \in M_m(\B)$ such that $\|b\| \leq 1$. We proceed by controlling the above terms. We have
\begin{align*}
\|diag(G_{p(\uz_i^0)}(\Zb))Q(\uz_i^0)^{-1}\|_{L^2}^2
&\leq \|Q(\uz_i^0)^{-1}\|^2 \|G_{p(\uz_i^0)}(\Zb)\|_{L^2}^2
\end{align*}
and 
\begin{align*}
\|Q(\uz_i^0)^{-1} v G_{p(\uz_i^0)}(\Zb) v^*\|_{L^2}^2
 &\leq v^*v \|Q(\uz_i^0)^{-1}\|^2 (\tr_m \otimes \varphi)\big[v G_{p(\uz_i^0)}(\Zb)^* G_{p(\uz_i^0)}(\Zb) v^*\big]
\\& = \frac{1}{m} (v^*v)^2 \|Q(\uz_i^0)^{-1}\|^2 \|G_{p(\uz_i^0)}(\Zb)\|_{L^2}^2.
\end{align*}
Due to the bound \eqref{eq:estimates-L-inverse-1} in Lemma \ref{lem:estimates-L-inverse}, $ \|Q(\uz_i^0)^{-1}\| \leq M_{\rho;x,y}$ and hence  $\max \big\{ \|A_i(\Zb)\|, \|A_i^0(\Zb)\| \big\} \leq v^*v M_{\rho;x,y}^2 \|\Im(\Zb)^{-1}\| + M_{\rho;x,y} $. Together with the above estimates, we infer
\begin{multline*}
\big|\big((\id \otimes \varphi)[W_i(\Zb)]\big)_{11} \big|\\
\leq \tilde{c}_{\rho;x,y}(\Zb) \|G_{p(\uz_i^0)}(\Zb)\|_{L^2}^2 \|E_m[Q_1(x_i)^2]\|^{1/2} \Big(\sup \|m_4^{Q_1(x_i)}(b^\ast,1,b)\| + \|E_m[Q_1(x_i)^2]\|^2 \Big)^{1/2},
\end{multline*}
where $\tilde{c}_{\rho;x,y}(\Zb) = \frac{1}{\sqrt{m}} v^*v M_{\rho;x,y}^4 \big(1 + v^*v M_{\rho;x,y} \|\Im(\Zb)^{-1}\| \big)^2$. Then, we involve Lemma \ref{lem:tensor_moments} and \eqref{def:betas} to bound
\begin{align*}
\begin{aligned}
\|E_m[\widetilde{Q}_1(x_i)^2]\| & \leq  \Big( \sum^d_{\ell=1} \|Q_\ell\| \Big)^2 \beta_2(x),
 \quad \text{and}
\\ \sup \|m_4^{\widetilde{Q}_1(x_i)}(b^\ast,1,b)\| &\leq \|m_4^{\widetilde{Q}(x_i)}\| \leq m^6 \Big( \sum^d_{\ell=1} \|Q_\ell\| \Big)^4 \beta_4(x).
\end{aligned}
\end{align*}
Setting $c_{\rho;x,y}(\Zb)  =m^{5/2}v^*v \Big( \sum^d_{\ell=1} \|Q_\ell\| \Big)^3  M_{\rho;x,y}^4 \big(1+v^*v M_{\rho;x,y} \|\Im(\Zb)^{-1}\| \big)^2$ and collecting the above bounds, we infer that
$$\big| \big((\id \otimes \varphi)(W_i(\Zb))\big)_{11} \big| \leq c_{\rho; x,y}(\Zb) \|G_{p(\uz_i^0)}(\Zb)\|_{L^2}^2 \sqrt{\beta_2(x)} \sqrt{\beta_4(x) + \beta_2(x)^2}.$$
Now, for a sufficiently large $n$, $M_{\rho;x,y}\leq C_p M_{x,y}^{r_\rho}$ for some positive constant $C_p$. This yields that $ c_{\rho;x,y}(\Zb)  \leq C_p M_{x,y}^{6r_\rho} \big(1+ \|\Im(\Zb)^{-1}\| \big)^2$ and hence 
$$\big| \big((\id \otimes \varphi)(W_i(\Zb)\big)_{11} \big| \leq C_p M_{x,y}^{6r_\rho} \big(1+ \|\Im(\Zb)^{-1}\| \big)^2 \|G_{p(\uz_i^0)}(\Zb)\|_{L^2}^2 \sqrt{\beta_2(x)} \sqrt{\beta_4(x) +  \beta_2(x)^2}.$$  
In a similar way, we obtain the bound
$$\big| \big((\id \otimes \varphi)(\widetilde{W}_i(\Zb)\big)_{11} \big| \leq C_p M_{x,y}^{6r_\rho} \big(1+ \|\Im(\Zb)^{-1}\| \big)^2 \|G_{p(\uz_i^0)}(\Zb)\|_{L^2}^2 \sqrt{\beta_2(x)} \sqrt{\beta_4(y) +  \beta_2(x)^2}.$$
Specializing to $\Zb = (t+i\epsilon)\1$, summing over $i=1, \dots, n$, using \eqref{eq:Lin-Lin:freeness-Lindeberg_scalar}, and integrating over $t\in\R$ by using \eqref{eq:Cauchy-integral} produces then the bound asserted in \eqref{scalar:Lin-Lin:free}.

\appendix

\section{Linearizations of operator-valued noncommutative polynomials}  \label{section:linearizations}

In this section, we are going to give a brief introduction to linearization techniques. What became known in the free probability community as the ``linearization trick'' \cite{Voi-95,HT05,HST06,And13,And15,B-M-S-17,P18} turned out
some years ago to be used extensively also in other branches of mathematics, ranging from system engineering over automata theory to the theory of non-commutative rings; see \cite{HMS18,MS18} and the references collected therein.
Let us point out that, to a great extent, linearization works equally well for noncommutative rational functions. For noncommutative polynomials, however, linearizations have some additional features which are crucial for our purpose. Thus, we restrict in the following exposition from the beginning to the case of noncommutative polynomials.

Roughly speaking, the ``linearization trick'' allows a translation of non-linear problems involving noncommutative polynomials into linear problems about linear but matrix-valued polynomials. While not clear at first sight, this translation brings indeed a significant simplification, since it can be combined effectively with tools from operator-valued free probability.

For that purpose, however, it is of particular importance that such linearizations are able to preserve selfadjointness. While in the past, mostly polynomials in selfadjoint operators were considered, we will present here a more general version of the ``selfadjoint linearization trick'' that works equally well for polynomials in not necessarily selfadjoint operators. Moreover, instead of scalar-valued noncommutative polynomials, we will consider here polynomials with operator-valued coefficients.

\subsubsection*{\bf Definition and basic properties}

In this subsection, we present the selfadjoint version of the linearization trick for operator-valued noncommutative polynomials. Throughout the following, let $\B$ be a unital complex $\ast$-algebra and denote by $\B\langle x_1,\dots,x_d\rangle$ the complex unital algebra of noncommutative $\B$-valued polynomials in the non-commuting indeterminates $x=(x_1,\dots,x_d)$; more explicitly, $\B\langle x_1,\dots,x_d\rangle$ is the $\C$-linear span of all \emph{$\B$-valued monomials}, i.e., expressions of the form
$$b_0 x_{i_1} b_1 x_{i_2} b_2 \cdots b_{k-1} x_{i_k} b_k$$
for integers $k\geq 0$, indices $1\leq i_1,\dots,i_k \leq d$, and elements $b_0,b_1,\dots,b_k\in\B$.

Furthermore, we assume that $\B\langle x_1,\dots,x_d\rangle$ is endowed with an anti-linear involution $\ast$ with respect to which it forms a $\ast$-algebra. There are two prototypical instances that will be important for us:
\begin{itemize}
 \item on $\B\langle x_1,\dots,x_d\rangle$, we have the canonical involution which declares each $x_i$ to be selfadjoint; it thus satisfies for monomials $$(b_0 x_{i_1} b_1 x_{i_2} b_2 \cdots b_{k-1} x_{i_k} b_k)^\ast = b_k^\ast x_{i_k} b_{k-1}^\ast \cdots b_2^\ast x_{i_2} b_1^\ast x_{i_1}b_0^\ast.$$
 \item on $\B\langle y_1,y_1^\ast,\dots,y_d,y_d^\ast\rangle$, which is the algebra of all noncommutative polynomials in the $2d$ non-commuting variables $y_1,y_1^\ast,\dots,y_d,y_d^\ast$, we declare each $y_i^\ast$ to be the adjoint of $y_i$ and vice versa; we thus have for monomials
$$(b_0 y_{i_1}^{\epsilon_1} b_1 y_{i_2}^{\epsilon_2} b_2 \cdots b_{k-1} y_{i_k}^{\epsilon_k} b_k)^\ast = b_k^\ast y_{i_k}^{\epsilon_k\ast} b_{k-1}^\ast \cdots b_2^\ast y_{i_2}^{\epsilon_2\ast} b_1^\ast y_{i_1}^{\epsilon_1\ast}b_0^\ast,$$ where $\epsilon_1,\dots,\epsilon_n\in\{1,\ast\}$ are subjected to the rules $\ast \ast = 1$ and $1 \ast = \ast$.
\end{itemize}
We point out that of course also $\B\langle x_1, \dots, x_{d_1}, y_1, y_1^\ast, \dots, y_{d_2}, y_{d_2}^\ast \rangle$ with $d=d_1+2d_2$ non-commuting variables, which is an amalgamated free product of two algebras of the above types, fits into our framework; thus, we cover in particular the setting of \cite{EKY18}.

Note that the involution $\ast$ on $\B\langle x_1,\dots,x_d\rangle$ induces an involution on $M_m(\B\langle x_1,\dots,x_d\rangle)$ for every integer $m\geq 1$, with respect to which it forms a $\ast$-algebra as well. To be more precise, the involution on $M_m(\B\langle x_1,\dots,x_d\rangle)$, which we denote again by $\ast$, is given by $P^\ast = (p_{l,k}^\ast)_{k,l=1}^m$ for every matrix $P = (p_{k,l})_{k,l=1}^m$ in $M_m(\B\langle x_1,\dots,x_d\rangle)$.

By a \emph{$\B$-valued affine linear pencil of size $m \times m$}, we mean an element of $M_m(\B\langle x_1,\dots,x_d\rangle)$ which is of the form $Q = Q_0 + Q_1 x_1 + \dots + Q_d x_d$ with matrices $Q_0,Q_1,\dots,Q_d \in M_m(\B)$; sometimes, we identify $Q$ with $Q_0 \otimes 1 + Q_1 \otimes x_1 + \dots + Q_d \otimes x_d$ in $M_m(\B) \otimes \C\langle x_1,\dots,x_d\rangle$.

The tools that enable us to handle noncommutative $\B$-valued polynomials efficiently are the so-called linearizations. Those objects can be built out of linear representations, which we thus introduce first. 

\begin{definition}[Linear representations] 
Let $p\in \B\langle x_1,\dots,x_d\rangle$ be any noncommutative $\B$-valued polynomial. A \emph{linear representation of $p$} is a triple $\rho=(u,Q,v)$ such that 
\[p = -u Q^{-1} v,\]
where $u$ and $v$ are respectively a row vector and a column vector in $\C^{m}$ and 
  $Q \in M_{m}(\B\langle x_1,\dots,x_d\rangle \!)$  is a $\B$-valued affine linear pencil of size $m\times m$ of the particular form $Q = Q_0 + Q_1 x_1 + \dots + Q_d x_d$
with $Q_0 \in M_m(\B)$ and scalar matrices $Q_1,\dots,Q_d \in M_{m}(\C) \subseteq M_m(\B)$.
The linear representation $\rho$ of $p$ is said to be \emph{selfadjoint}, if $v=u^\ast$ holds and if $Q$ is selfadjoint with respect to the involution on $M_{m}(\B\langle x_1,\dots,x_d\rangle)$. We also refer to $m$ as the \emph{dimension of $\rho$}.
\end{definition}

It is easily seen that the existence of a selfadjoint linear representation $\rho$ of $p$ forces $p$ to be selfadjoint. On the other hand, the existence of linear representations and especially the existence of selfadjoint linear representations are by no means clear; this will be ensured by the following theorem.

\begin{theorem}[Existence of linear representations]\label{thm:linearizations_existence}
Each $p\in\B\langle x_1,\dots,x_d\rangle$ admits a linear representation $\rho$. Moreover, if $p$ is selfadjoint, then $\rho$ can be chosen to be selfadjoint as well.
\end{theorem}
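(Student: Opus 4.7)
The plan is to prove the existence statement by a standard inductive construction: exhibit linear representations for atomic polynomials (constants in $\B$ and the variables $x_i$), and then show that the class of polynomials admitting a linear representation is closed under addition and multiplication. Since every $p \in \B\langle x_1,\dots,x_d\rangle$ is a finite $\C$-linear combination of $\B$-valued monomials, this will suffice.

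For the atoms I would use the following $2\times 2$ constructions. For a constant $b \in \B$, set
\[
Q=\begin{pmatrix} -1 & 0\\ b & -1 \end{pmatrix}, \quad u=(0,\,1), \quad v=\begin{pmatrix} 1\\ 0 \end{pmatrix},
\]
and verify $-uQ^{-1}v = b$; note that $Q$ is a constant affine pencil with entries in $\B$. For a variable $x_i$, essentially the same formula works with $b$ replaced by $x_i$, placing $x_i$ in the $(2,1)$ slot of $Q_0+Q_i x_i$ where $Q_i\in M_2(\C)$ is the elementary matrix $E_{21}$ and $Q_0 = -I$. For closure, suppose $p=-u_pQ_p^{-1}v_p$ and $q=-u_qQ_q^{-1}v_q$ are linear representations of dimensions $m_p$ and $m_q$. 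Define
\[
Q_{p+q}=\begin{pmatrix} Q_p & 0\\ 0 & Q_q\end{pmatrix}, \quad u_{p+q}=(u_p,u_q), \quad v_{p+q}=\begin{pmatrix} v_p\\ v_q\end{pmatrix}
\]
for the sum, and
\[
Q_{pq}=\begin{pmatrix} Q_p & v_p u_q\\ 0 & Q_q\end{pmatrix}, \quad u_{pq}=(u_p,0), \quad v_{pq}=\begin{pmatrix} 0\\ v_q\end{pmatrix}
\]
for the product, and check via direct block inversion that $-u_{p+q}Q_{p+q}^{-1}v_{p+q}=p+q$ and $-u_{pq}Q_{pq}^{-1}v_{pq}=pq$. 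Crucially, the off-diagonal block $v_pu_q$ has scalar entries, so $Q_{pq}$ remains a $\B$-valued affine linear pencil of the required form (its non-constant coefficients $Q_i$ for $i\geq 1$ stay scalar). An easy induction on the number of monomial factors and summands then produces a linear representation of any $p$.

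For the selfadjoint refinement, I start from an arbitrary linear representation $\rho=(u,Q,v)$ of dimension $m$ produced by the above procedure for a selfadjoint $p=p^\ast$, and I construct a selfadjoint representation $\tilde{\rho}=(\tilde{u},\widetilde{Q},\tilde{v})$ of dimension $2m$ by setting
\[
\widetilde{Q}=\begin{pmatrix} 0 & Q\\ Q^\ast & 0 \end{pmatrix}, \quad \tilde{u}=\tfrac{1}{\sqrt{2}}\bigl(v^\ast,\,u\bigr), \quad \tilde{v}=\tilde{u}^\ast=\tfrac{1}{\sqrt{2}}\begin{pmatrix} v\\ u^\ast \end{pmatrix}.
\]
Then $\widetilde{Q}^\ast=\widetilde{Q}$ by inspection, and $\widetilde{Q}$ is still a $\B$-valued affine linear pencil whose coefficients at the variables are scalar (the involution sends scalar matrices to scalar matrices, and in either of the two admissible involutions on $\B\langle x_1,\dots,x_d\rangle$ the adjoint of an affine linear pencil is again one). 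Using the block inverse $\widetilde{Q}^{-1}=\left(\begin{smallmatrix} 0 & (Q^\ast)^{-1}\\ Q^{-1} & 0 \end{smallmatrix}\right)$, a direct computation gives
\[
-\tilde{u}\,\widetilde{Q}^{-1}\tilde{v} = -\tfrac{1}{2}\bigl(uQ^{-1}v+v^\ast(Q^\ast)^{-1}u^\ast\bigr) = \tfrac{1}{2}(p+p^\ast) = p,
\]
so $\tilde{\rho}$ is the desired selfadjoint linear representation.

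The only genuinely delicate point I expect is bookkeeping: one must check at each closure step that $Q$ retains the required structure (the $Q_i$ for $i\geq 1$ remain scalar, while $Q_0$ is allowed to lie in $M_m(\B)$), and that the involution used on the pencil side matches the chosen involution on $\B\langle x_1,\dots,x_d\rangle$ so that $\widetilde{Q}$ in the selfadjoint step is indeed an affine linear pencil in the same variables. Both checks are routine once the two admissible involutions are treated case by case.
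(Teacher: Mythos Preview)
Your proof is correct and follows essentially the same inductive scheme as the paper: build representations for atoms, close under sum and product, then symmetrize for the selfadjoint case. The differences are purely cosmetic. Your atom and product representations use a lower/upper triangular layout where the paper uses an antidiagonal one, and for the selfadjoint step you symmetrize a representation of $p$ itself via $p=\tfrac{1}{2}(p+p^\ast)$ with a $\tfrac{1}{\sqrt{2}}$ normalization, whereas the paper first writes $p=p_0+p_0^\ast$, linearizes $p_0$, and then takes the antidiagonal block matrix $\left(\begin{smallmatrix}0&Q_0^\ast\\ Q_0&0\end{smallmatrix}\right)$ without any normalizing factor. Both symmetrizations amount to the same idea, and your direct version avoids the auxiliary decomposition.
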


The proof of Theorem \ref{thm:linearizations_existence} is of constructive nature. In order to verify that each noncommutative $\B$-valued polynomial has a linear representation, we only need to observe the following: for every $b\in\B$ and for each of the formal variables $x_j$, $j=1,\dots,d$, linear representations are given by
$$\rho_{b} := \bigg(\begin{bmatrix} 0 & 1\end{bmatrix}, \begin{bmatrix} b & -\1\\ -\1 & 0\end{bmatrix}, \begin{bmatrix} 0\\ 1 \end{bmatrix}\bigg) \quad \text{and} \quad \rho_{x_j} := \bigg(\begin{bmatrix} 0 & 1\end{bmatrix}, \begin{bmatrix} x_j & -\1\\ -\1 & 0\end{bmatrix}, \begin{bmatrix} 0\\ 1 \end{bmatrix}\bigg),$$
respectively; if $\rho_1=(u_1,Q_1,v_1)$ and $\rho_2=(u_2,Q_2,v_2)$ are linear representations of the noncommutative $\B$-valued polynomials $p_1$ and $p_2$, then
$$\rho_1 \oplus \rho_2 := \bigg(\begin{bmatrix} u_1 & u_2\end{bmatrix}, \begin{bmatrix} Q_1 & 0\\ 0 & Q_2\end{bmatrix}, \begin{bmatrix} v_1\\ v_2 \end{bmatrix}\bigg) \quad \text{and} \quad \rho_1 \odot \rho_2 := \bigg(\begin{bmatrix} 0 & u_1\end{bmatrix}, \begin{bmatrix} v_1u_2 & Q_1\\ Q_2 & 0\end{bmatrix}, \begin{bmatrix} 0\\ v_2 \end{bmatrix}\bigg)$$
give pure linear representation of $p_1 + p_2$ and $p_1 \cdot p_2$, respectively.
When $p \in \B\langle x_1,\dots,x_d\rangle$ is selfadjoint, we can build a selfadjoint linear representation $\rho=(u,Q,v)$ as follows: we choose $p_0\in\B\langle x_1,\dots,x_d\rangle$ such that $p=p_0+p_0^\ast$ and we construct a linear representation $\rho_0 = (u_0,Q_0,v_0)$ of $p_0$ with the help of the the previously discussed rules; a selfadjoint linear representation $\rho = (u,Q,v)$ is then given by
$$\rho = \bigg(\begin{bmatrix} u_0 & v_0^\ast \end{bmatrix}, \begin{bmatrix} 0 & Q_0^\ast\\ Q_0 & 0\end{bmatrix}, \begin{bmatrix} u_0^\ast \\ v_0\end{bmatrix}\bigg).$$

As announced above, we will work in the sequel with linearizations, which are canonically associated with linear representations. The precise definition reads as follows.

\begin{definition}[Linearization]
Let $p\in \B\langle x_1,\dots,x_d\rangle$ be a noncommutative $\B$-valued polynomial and let $\rho=(u,Q,v)$ be any linear representation of $p$. The associated matrix
$$\hat{p}_\rho := \begin{bmatrix} 0 & u\\ v & Q\end{bmatrix} \in M_{{m}+1}(\B\langle x_1,\dots,x_d\rangle)$$
is called the \emph{linearization of $p$ with respect to $\rho$}.
\end{definition}

The motivation to work with linearizations comes from the well-known Schur complement formula, which relates resolvents of noncommutative $\B$-valued polynomials with resolvents of their associated linearizations. The following theorem gives the precise statement; the proof is analogous to that of \cite[Proposition 3.2]{B-M-S-17} and thus omitted. Note that for any matrix $A$ with entries $A_{i,j}$, we put $[A]_{i,j} := A_{i,j}$. 

\begin{theorem}\label{thm:linearization}
Let $p\in\B\langle x_1,\dots,x_d\rangle$ be a noncommutative $\B$-valued polynomial and let $\hat{p} := \hat{p}_\rho$ be the linearization of $p$ with respect to a linear representation $\rho=(u,Q,v)$ of $p$.
Moreover, let $\A$ be a unital complex algebra which contains $\B$ as a unital subalgebra with the same unit element. Consider a tuple $X=(X_1,\dots,X_d) \in \A^d$.
Then the following statements hold true:
\begin{enumerate}
 \item The evaluation $Q(X) = Q_0 + Q_1 X_1 + \dots + Q_d X_d$ of $Q$ at the point $X$ is invertible in $M_{m}(\A)$ and we have that $p(X) = - u Q(X)^{-1} v$.
 \item\label{it:linearization-ii} For any $b\in\B$, we have that $b-p(X)$ is invertible in $\A$ if and only if the matrix $L_{p(X)}(b) := \Lambda(b)-\hat{p}_\rho(X)$ is invertible in $M_{{m}+1}(\A)$, where
$$\Lambda(b) := \begin{bmatrix} b & 0 & \hdots & 0\\ 0 & 0 & \hdots & 0\\ \vdots & \vdots & \ddots & \vdots\\ 0 & 0 & \hdots & 0\end{bmatrix} \in M_{m+1}(\B).$$
In the case when one and hence both conditions are satisfied, we have that
$$(b - p(X))^{-1} = \Big(\big(\Lambda(b) - \hat{p}_\rho(X)\big)^{-1}\Big)_{11} = \big( L_{p(X)}(b)^{-1} \big)_{11}.$$
\end{enumerate} 
\end{theorem}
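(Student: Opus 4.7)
The plan is to establish part~(i) by induction along the inductive construction of linear representations sketched just after Theorem~\ref{thm:linearizations_existence}, and then to deduce part~(ii) as an immediate consequence of the Schur complement formula together with~(i).

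For~(i), I would first verify the claim for the atomic representations $\rho_b$ (with $b\in\B$) and $\rho_{x_j}$: in each case the evaluated matrix $Q(X)$ is of the form $\bigl[\begin{smallmatrix} \ast & -\1 \\ -\1 & 0 \end{smallmatrix}\bigr]$ with $\ast\in\{b,X_j\}\subseteq\A$, whose explicit inverse in $M_2(\A)$ is $\bigl[\begin{smallmatrix} 0 & -\1 \\ -\1 & -\ast \end{smallmatrix}\bigr]$, and the identity $-uQ(X)^{-1}v=\ast$ follows by inspection. I would then check by direct block-matrix manipulation that the property ``$Q(X)$ is invertible in $M_m(\A)$ and $p(X)=-uQ(X)^{-1}v$'' is preserved under the three compound constructions $\rho_1\oplus\rho_2$, $\rho_1\odot\rho_2$ and the selfadjointification step. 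The direct-sum case is trivial because $Q(X)=Q_1(X)\oplus Q_2(X)$ is block diagonal; the selfadjointification produces a block anti-diagonal $\bigl[\begin{smallmatrix}0 & Q_0^\ast \\ Q_0 & 0\end{smallmatrix}\bigr]$, which is invertible as soon as $Q_0(X)$ is, and the associated evaluation reproduces $p(X)=p_0(X)+p_0(X)^\ast$. The product step is the computationally most involved one: after swapping its two block rows, the matrix $\bigl[\begin{smallmatrix}v_1u_2 & Q_1(X)\\ Q_2(X) & 0\end{smallmatrix}\bigr]$ becomes block lower triangular with invertible diagonal entries $Q_2(X)$ and $Q_1(X)$, so that an explicit block inverse is available, and expanding $-uQ(X)^{-1}v$ yields exactly $p_1(X)p_2(X)$ in the correct order.

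Given~(i), part~(ii) is essentially a single application of the Schur complement formula. Indeed,
\[
L_{p(X)}(b) \;=\; \Lambda(b) - \hat{p}_\rho(X) \;=\; \begin{bmatrix} b & -u \\ -v & -Q(X) \end{bmatrix},
\]
and since $-Q(X)$ is invertible in $M_m(\A)$ by~(i), the Schur complement lemma over the unital ring $\A$ ensures that $L_{p(X)}(b)$ is invertible in $M_{m+1}(\A)$ if and only if the scalar Schur complement $b-(-u)(-Q(X))^{-1}(-v)=b+uQ(X)^{-1}v=b-p(X)$ is invertible in $\A$; in that case the $(1,1)$ entry of $L_{p(X)}(b)^{-1}$ equals the inverse of that Schur complement, which is precisely $(b-p(X))^{-1}$. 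This is exactly the identity asserted in~(ii).

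The main technical obstacle is the product step $\rho_1\odot\rho_2$ in~(i): one must invert a $2\times 2$ block matrix containing a zero block and verify that the resulting scalar expression $-uQ(X)^{-1}v$ reproduces $p_1(X)p_2(X)$ in the correct order (rather than $p_2(X)p_1(X)$), which requires care with signs and with the order of block multiplication. Once this bookkeeping is in place, everything else reduces to a mechanical application of block-matrix manipulations and the classical Schur complement identity.
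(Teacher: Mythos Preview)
Your argument for part~(ii) via the Schur complement is correct and is exactly the approach the paper has in mind; the explicit factorization displayed later in the appendix as \eqref{eq:Schur_decomposition} is precisely this computation.

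For part~(i), however, your inductive strategy has a genuine gap. The theorem is stated for an \emph{arbitrary} linear representation $\rho=(u,Q,v)$ of $p$, whereas the inductive construction after Theorem~\ref{thm:linearizations_existence} only produces \emph{some} linear representations. There is no claim (and it is not true) that every linear representation of $p$ arises from the atomic ones via $\oplus$, $\odot$ and selfadjointification, so an induction along that construction does not establish~(i) in the generality required. The correct argument is both simpler and more general: by definition of a linear representation, $Q$ is invertible in $M_m(\B\langle x_1,\dots,x_d\rangle)$, say with inverse $P$. Since $\B$ is unitally embedded in $\A$, evaluation at $X$ defines a unital algebra homomorphism $\B\langle x_1,\dots,x_d\rangle \to \A$, which extends entrywise to a unital algebra homomorphism $M_m(\B\langle x_1,\dots,x_d\rangle) \to M_m(\A)$. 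Applying it to the identities $QP=PQ=\1_m$ shows that $Q(X)$ is invertible in $M_m(\A)$ with inverse $P(X)=Q^{-1}(X)$, and applying it to $p=-uQ^{-1}v$ yields $p(X)=-uQ(X)^{-1}v$. This is the argument underlying \cite[Proposition~3.2]{B-M-S-17} to which the paper refers, and it is also what is implicitly used in the proof of Proposition~\ref{prop:Q_inverse}, where the identity $Q(X)^{-1}=Q^{-1}(X)$ is invoked.
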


\subsubsection*{\bf Algebraic and analytic properties of linearizations}

Suppose that $\B$ is a unital $C^\ast$-algebra. Consider a noncommutative $\B$-valued polynomial $p\in\B\langle x_1,\dots,x_d\rangle$. According to Theorem \ref{thm:linearizations_existence}, we find a linear representation $\rho=(u,Q,v)$ of $p$. By definition, we have that the $\B$-valued affine linear pencil $Q = Q_0 + Q_1 x_1 + \dots + Q_d x_d$ is an invertible element in $M_{m}(\B\langle x_1,\dots,x_d\rangle)$. The latter is clearly a strong requirement and it is thus natural to ask, how this is reflected by properties of the coefficient matrices $Q_0,Q_1,\dots,Q_d$.
In this subsection, we address this question in order to control the norm of $Q^{-1}(X)$ and, at each point $b\in\B$ for which $\Lambda(b) - \hat{p}_\rho(X)$ becomes invertible, the norm of its inverse $(\Lambda(b) - \hat{p}_\rho(X))^{-1}$ for $d$-tuples $X$ over any unital $C^\ast$-algebra $\A$ in which $\B$ is unitally embedded.

This requires some preparations. We observe that each $P \in M_{m}(\B\langle x_1,\dots,x_d\rangle)$ can be written as $P = \sum^r_{k=0} P_k$, where each $P_k \in M_{m}(\B\langle x_1,\dots,x_d\rangle)$ is real homogeneous of degree $k$, i.e., $P_k(tx_1,\dots,tx_d) = t^k P_k(x_1,\dots,x_d)$ for all $t\in\R$. If we suppose additionally that $P_r \neq 0$, then the latter representation of $P$ is unique.

\begin{proposition}\label{prop:Q_inverse}
Consider $Q\in M_{m}(\B\langle x_1,\dots,x_d\rangle)$ which is of the particular form $Q = \widetilde{Q}_0 + \widetilde{Q}_1$ with $\widetilde{Q}_0 \in M_m(\B)$ and a matrix $\widetilde{Q}_1 \in M_{m}(\B\langle x_1,\dots,x_d\rangle)$ which is homogeneous of degree $1$. If $Q$ is invertible, then the following statements hold true:
\begin{enumerate}
 \item The matrix $\widetilde{Q}_0$ is invertible in $M_m(\B)$.
 \item There exists an integer $r\geq 0$ such that $(\widetilde{Q}_1 \widetilde{Q}_0^{-1})^{r+1} = 0$ and $(\widetilde{Q}_0^{-1} \widetilde{Q}_1)^{r+1} = 0$.
 \item For each unital $C^\ast$-algebra $\A$ with $\1 \in \B \subseteq \A$ and for every $d$-tuple $X=(X_1,\dots,X_d)$ in $\A^d$, the inverse of $Q(X)$ in $M_m(\A)$ can be bounded in operator norm by $$\|Q^{-1}(X)\| \leq \|\widetilde{Q}_0^{-1}\| \sum^r_{k=0} \|\widetilde{Q}_1(X) \widetilde{Q}_0^{-1}\|^k.$$
\end{enumerate}
\end{proposition}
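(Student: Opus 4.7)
The plan is to exploit that invertibility of $Q$ in $M_m(\B\langle x_1,\ldots,x_d\rangle)$ is much more rigid than invertibility of an evaluation $Q(X)$ in $M_m(\A)$: indeed, the inverse $Q^{-1}$ is itself a $\B$-valued polynomial of some finite degree $s$, and matching real-homogeneous components on both sides of $Q \cdot Q^{-1} = I$ will deliver all three claims together with an explicit Neumann-type formula for the inverse.

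For (i), I would apply the $\C$-algebra homomorphism $M_m(\B\langle x_1,\ldots,x_d\rangle) \to M_m(\B)$ obtained by setting $x_1=\cdots=x_d=0$ to the identity $Q\cdot Q^{-1} = I = Q^{-1}\cdot Q$. Since $\widetilde{Q}_1$ is real-homogeneous of degree $1$ it vanishes at the origin, and we obtain $\widetilde{Q}_0 \cdot (Q^{-1})(0) = I = (Q^{-1})(0) \cdot \widetilde{Q}_0$, so $\widetilde{Q}_0$ is invertible in $M_m(\B)$ with $\widetilde{Q}_0^{-1} = (Q^{-1})(0)$.

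For (ii), I would decompose $Q^{-1} = \sum_{k=0}^{s} R_k$ into its real-homogeneous components with $R_s\neq 0$. Since $Q = \widetilde{Q}_0 + \widetilde{Q}_1$ is bi-homogeneous of degrees $0$ and $1$, comparing the degree-$k$ part of $Q\cdot Q^{-1}=I$ yields $\widetilde{Q}_0 R_0 = I$ (hence $R_0 = \widetilde{Q}_0^{-1}$); for $1\le k\le s$, $\widetilde{Q}_0 R_k + \widetilde{Q}_1 R_{k-1} = 0$, which solves by induction to $R_k = (-\widetilde{Q}_0^{-1}\widetilde{Q}_1)^k \widetilde{Q}_0^{-1}$; and the degree-$(s+1)$ part of the identity forces $\widetilde{Q}_1 R_s = 0$, from which $(\widetilde{Q}_0^{-1}\widetilde{Q}_1)^{s+1}=0$ follows after left- and right-multiplication by $\widetilde{Q}_0^{-1}$ and $\widetilde{Q}_0$ respectively. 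The symmetric analysis of $Q^{-1}\cdot Q=I$ produces $R_k = \widetilde{Q}_0^{-1}(-\widetilde{Q}_1\widetilde{Q}_0^{-1})^k$ and yields $(\widetilde{Q}_1\widetilde{Q}_0^{-1})^{s+1}=0$, so $r := s$ works for both relations.

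For (iii), the two nilpotency identities from (ii) are polynomial identities in $M_m(\B\langle x_1,\ldots,x_d\rangle)$, hence persist under the $\C$-algebra evaluation homomorphism $x_j \mapsto X_j$ into $M_m(\A)$; in particular $(\widetilde{Q}_1(X)\widetilde{Q}_0^{-1})^{r+1} = 0$ in $M_m(\A)$. A direct telescoping computation then shows
$$Q(X)^{-1} = \widetilde{Q}_0^{-1}\sum_{k=0}^{r} \bigl(-\widetilde{Q}_1(X)\widetilde{Q}_0^{-1}\bigr)^k,$$
from which the triangle inequality and submultiplicativity of the operator norm give the claimed bound. The argument is essentially algebraic bookkeeping, and there is no serious analytic obstacle; the only point requiring care is to use \emph{real} rather than complex homogeneity when matching degrees, since $\widetilde{Q}_1$ may involve both $x_j$ and adjoint-type variables $x_j^\ast$, but the degree-matching argument is unaffected.
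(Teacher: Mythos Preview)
Your proof is correct and follows essentially the same approach as the paper: decompose $Q^{-1}$ into real-homogeneous components, match degrees in $Q\cdot Q^{-1}=I$ and $Q^{-1}\cdot Q=I$ to obtain invertibility of $\widetilde{Q}_0$, the recursion $R_k=(-1)^k\widetilde{Q}_0^{-1}(\widetilde{Q}_1\widetilde{Q}_0^{-1})^k$, and the nilpotency relations, and then evaluate and estimate. The only cosmetic difference is that the paper extracts (i) directly from the degree-$0$ equation rather than via the evaluation homomorphism at $0$, and it derives the second nilpotency by conjugating the first rather than repeating the symmetric degree-matching argument.
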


\begin{proof}
We consider the homogeneous decomposition of the inverse $Q^{-1}$ in $M_{m}(\B\langle x_1,\dots,x_d\rangle)$, say $Q^{-1} = \sum^r_{k=0} P_k$. Since
$$\1_m = (\widetilde{Q}_0 + \widetilde{Q}_1) \Big(\sum^r_{k=0} P_k\Big) = \widetilde{Q}_0 P_0 + \Big(\sum^r_{k=1} (\widetilde{Q}_0 P_k + \widetilde{Q}_1 P_{k-1})\Big) + \widetilde{Q}_1 P_r$$
is a homogeneous decomposition of the unit element $\1_m$ in $M_{m}(\B\langle x_1,\dots,x_d\rangle)$, we conclude that
$$\widetilde{Q}_0 P_0 = \1_m,\qquad \widetilde{Q}_1 P_r = 0, \qquad\text{and}\qquad \widetilde{Q}_0 P_k + \widetilde{Q}_1 P_{k-1} = 0 \quad \text{for $k=1,\dots,r$}.$$
Similarly, we deduce from
$$\1_m = \Big(\sum^r_{k=0} P_k\Big) (\widetilde{Q}_0 + \widetilde{Q}_1) = P_0 \widetilde{Q}_0 + \Big(\sum^r_{k=1} (P_k \widetilde{Q}_0 + P_{k-1} \widetilde{Q}_1)\Big) + P_r \widetilde{Q}_1$$
that
$$P_0 \widetilde{Q}_0 = \1_m,\qquad P_r \widetilde{Q}_1 = 0, \qquad\text{and}\qquad P_k \widetilde{Q}_0 + P_{k-1} \widetilde{Q}_1 = 0 \quad \text{for $k=1,\dots,r$}.$$
In summary, it follows that $\widetilde{Q}_0$ is invertible with inverse $P_0$, as claimed in (i). 

Now as $P_k = - P_{k-1} (\widetilde{Q}_1 \widetilde{Q}_0^{-1})$ for any $k=1,\dots,r$, we infer that $P_k = (-1)^k \widetilde{Q}_0^{-1} (\widetilde{Q}_1 \widetilde{Q}_0^{-1})^k$ by induction for $k=0,\dots,r$. In particular, $0 = \widetilde{Q}_1 P_r = (-1)^r (\widetilde{Q}_1 \widetilde{Q}_0^{-1})^{r+1}$, which establishes the first of the properties stated in (ii). As we have that $\widetilde{Q}_0^{-1} (\widetilde{Q}_1 \widetilde{Q}_0^{-1})^{r+1} \widetilde{Q}_0 = (\widetilde{Q}_0^{-1} \widetilde{Q}_1)^{r+1}$, the second property follows from the latter.

Thanks to the previous observations, we obtain the identity $Q^{-1} = \sum^r_{k=0} (-1)^k \widetilde{Q}_0^{-1} (\widetilde{Q}_1 \widetilde{Q}_0^{-1})^k$ in $M_{m}(\B\langle x_1,\dots,x_d\rangle)$. Under evaluation at any given $d$-tuple $X$ in $\A^d$, the latter gives that $Q(X)^{-1} = \sum^r_{k=0} (-1)^k \widetilde{Q}_0^{-1} (\widetilde{Q}_1(X) \widetilde{Q}_0^{-1})^k$, because we obviously have that $Q(X)^{-1} = Q^{-1}(X)$. By using the triangle inequality, (iii) follows.
\end{proof}

We conclude by the following lemma which generalizes \cite[Lemma 21]{Be-Bo-Ca-Ce-19} and \cite[Lemma 2.5]{EKY18}.

\begin{lemma}\label{lem:norm-L_inverse}
Let $p\in \B\langle x_1,\dots,x_d\rangle$ and let $\rho=(u,Q,v)$ be a linear representation of $p$ of dimension $m\geq 1$; put $w_\rho := \max\{v^\ast v, u u^\ast\}$. Suppose that $\A$ is a unital $C^\ast$-algebra satisfying $\1 \in \B \subseteq \A$ and that $X_1,\dots,X_d \in \A$. Then, for any $b \in \B$, the matrix $L_{p(X)}(b) = \Lambda(b) - \hat{p}_\rho(X)$ is invertible in $M_{m+1}(\A)$ if and only if $b - p(X)$ is invertible in $\A$. In this case, we have
\[
\| L_{p(X)}(b)^{-1} \| \leq \| Q(X)^{-1} \| + \big( 1 + w_\rho \| Q(X)^{-1} \|^2 \big)  \| (b - p(X))^{-1} \|.
\]
\end{lemma}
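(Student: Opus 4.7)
The equivalence of invertibility of $L_{p(X)}(b)$ and $b-p(X)$ is already contained in Theorem~\ref{thm:linearization}\eqref{it:linearization-ii}, so all that remains is the norm estimate. The plan is to apply Schur complementation to the block matrix
\[
L_{p(X)}(b) = \begin{bmatrix} b & -u \\ -v & -Q(X) \end{bmatrix},
\]
using that $-Q(X)$ is invertible in $M_m(\A)$ by Theorem~\ref{thm:linearization}\eqref{it:linearization-ii} and that the Schur complement is precisely $b-u Q(X)^{-1}v = b-p(X)$.

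A direct computation then yields the block formula
\[
L_{p(X)}(b)^{-1} = \begin{bmatrix} 0 & 0 \\ 0 & -Q(X)^{-1} \end{bmatrix} + \begin{bmatrix} \1 \\ -Q(X)^{-1} v \end{bmatrix} (b-p(X))^{-1} \begin{bmatrix} \1 & -u Q(X)^{-1} \end{bmatrix},
\]
which decomposes the inverse into a diagonal part of operator norm $\|Q(X)^{-1}\|$ and a factorized ``rank-one-type'' remainder.

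The key step is to bound the norm of the column vector $\xi := \big[\begin{smallmatrix}\1 \\ -Q(X)^{-1} v\end{smallmatrix}\big]$ and of the row vector $\eta := \big[\begin{smallmatrix}\1 & -u Q(X)^{-1}\end{smallmatrix}\big]$ viewed as elements of $M_{(m+1)\times 1}(\A)$ and $M_{1\times (m+1)}(\A)$, respectively. For the column, using $\|\xi\|^2 = \|\xi^\ast \xi\|$ in the $C^\ast$-algebra $\A$ gives
\[
\|\xi\|^2 = \big\| \1 + v^\ast Q(X)^{-\ast} Q(X)^{-1} v \big\| \leq 1 + (v^\ast v)\, \|Q(X)^{-1}\|^2 \leq 1 + w_\rho \|Q(X)^{-1}\|^2,
\]
where the first inequality uses that $Q(X)^{-\ast}Q(X)^{-1} \leq \|Q(X)^{-1}\|^2 \1_m$ as positive elements in $M_m(\A)$ together with $v\in\C^m$ being a scalar vector. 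Symmetrically, $\|\eta\|^2 \leq 1 + w_\rho \|Q(X)^{-1}\|^2$ via $\|\eta\|^2 = \|\eta\eta^\ast\|$ and $u u^\ast \leq w_\rho$.

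Combining, the remainder in the block decomposition has norm at most $\|\xi\|\cdot\|(b-p(X))^{-1}\|\cdot\|\eta\| \leq (1 + w_\rho \|Q(X)^{-1}\|^2)\,\|(b-p(X))^{-1}\|$, and the triangle inequality yields the asserted bound. No step is technically hard; the only place one must be careful is to treat $\xi^\ast\xi$ and $\eta\eta^\ast$ as elements of $\A$ (respectively $M_m(\A)$) and apply the standard $C^\ast$-inequality $v^\ast A v \leq \|A\|\, v^\ast v$ for $A\geq 0$ and scalar $v\in\C^m$, rather than naively bounding by operator norms on vectors.
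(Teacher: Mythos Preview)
Your proof is correct and follows exactly the same approach as the paper: Schur decomposition of $L_{p(X)}(b)^{-1}$ into the diagonal block plus the rank-one-type term, then bounding the column and row vectors via the $C^\ast$-identity. One tiny slip: the invertibility of $Q(X)$ in $M_m(\A)$ is part~(i) of Theorem~\ref{thm:linearization}, not~\ref{it:linearization-ii}.
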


\begin{proof}
The equivalence property in the statement of the lemma is insured by Theorem \ref{thm:linearization} \ref{it:linearization-ii}. Noting that
\[
\left\|\begin{bmatrix}
\1 \\ - Q(X)^{-1} v\end{bmatrix}\right\| \leq \sqrt{1 + v^\ast v \|Q(X)^{-1}\|^2} \quad\text{and}\quad \left\| \begin{bmatrix}
\1 & -u Q(X)^{-1} \end{bmatrix} \right\| \leq \sqrt{1 + u u^\ast \|Q(X)^{-1}\|^2},
\]
we end the proof by writing
\begin{equation}\label{eq:Schur_decomposition}
\begin{aligned}
L_{p(X)}(b)^{-1} &=
 \begin{bmatrix}
\1 & 0 \\ -Q(X)^{-1}v & \1_m
\end{bmatrix}
\begin{bmatrix}
(b-p(X))^{-1} & 0 \\ 0 & -Q(X)^{-1}
\end{bmatrix}
 \begin{bmatrix}
\1
& -uQ(X)^{-1} \\ 0 & \1_m
\end{bmatrix}\\
&= \begin{bmatrix}
0 & 0\\ 0 & -Q(X)^{-1}
\end{bmatrix} +
\begin{bmatrix}
\1\\ - Q(X)^{-1} v\end{bmatrix} (b-p(X))^{-1}
\begin{bmatrix}
\1 & -u Q(X)^{-1}
\end{bmatrix}
\end{aligned}
\end{equation}
and then taking the operator norm.
\end{proof}

\section{Bound for the L\'evy distance}\label{sec:Levy_bound}

For any Borel probability measure $\mu$ on $\R$ and each $\epsilon>0$, we define a Borel probability measure $\mu_\epsilon$ by $\mathrm{d}\mu_\epsilon(t) = -\frac{1}{\pi} \Im(\G_\mu(t+i\epsilon))\, \mathrm{d}t$. Notice that $\mu_\epsilon = \mu \ast \gamma_\epsilon$ where $\gamma_\epsilon$ is the Borel probability measure given by $d\gamma_\epsilon(t) := \frac{1}{\pi}\frac{\epsilon}{\epsilon^2+t^2}\, \mathrm{d}t$.

The known Stieltjes inversion states that $\mu_\epsilon$ converges weakly to $\mu$ as $\epsilon \searrow 0$. In \cite{Salazar2020}, Salazar obtained a quantified version of the Stieltjes inversion in terms of the L\'evy distance defined in Section \ref{section:Levy-Kolmogorov}. Namely, he proved that $L(\mu_\epsilon,\mu) \leq \sqrt{2 \frac{\epsilon}{\pi}}$ for all $\epsilon>0$. In the following lemma, we provide an improved version of this bound; our approach is inspired by the derivation of a similar but also weaker bound presented in the appendix of the lecture notes ``Random matrix theory'' (2017) by M. Krishnapur.

\begin{lemma}\label{lem:Stieltjes_quantified}
Let $\mu$ be a Borel probability measure on $\R$. Then, for $\epsilon>0$, we have $L(\mu_\epsilon,\mu) \leq \sqrt{\frac{\epsilon}{\pi}}$.
\end{lemma}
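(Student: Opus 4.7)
The plan is to exploit the convolution representation $\mu_\epsilon = \mu \ast \gamma_\epsilon$, where $\gamma_\epsilon$ denotes the Cauchy distribution with parameter $\epsilon$. Writing $F_{\mu_\epsilon}(t) = \int_\R F_\mu(t-s)\, \mathrm{d}\gamma_\epsilon(s)$, I would split this integral along $|s| = \delta$ with $\delta := \sqrt{\epsilon/\pi}$ and use the monotonicity of $F_\mu$ on the ``good'' range while controlling only a one-sided tail of $\gamma_\epsilon$ on the ``bad'' range. The sharper bound over Salazar's $\sqrt{2\epsilon/\pi}$ comes precisely from the observation that for each of the two L\'evy inequalities only one of the two tails $\{s > \delta\}$ or $\{s < -\delta\}$ contributes nontrivially.

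More precisely, for the upper inequality I would write
\begin{align*}
F_{\mu_\epsilon}(t) - F_\mu(t+\delta) &= \int_\R \big(F_\mu(t-s) - F_\mu(t+\delta)\big)\, \mathrm{d}\gamma_\epsilon(s).
\end{align*}
On $\{s \geq -\delta\}$ we have $t-s \leq t+\delta$, so by monotonicity of $F_\mu$ the integrand is $\leq 0$; on $\{s < -\delta\}$ the integrand is at most $1$. Hence
\begin{align*}
F_{\mu_\epsilon}(t) - F_\mu(t+\delta) \leq \gamma_\epsilon\big((-\infty,-\delta)\big) = \frac{1}{\pi}\arctan\!\left(\frac{\epsilon}{\delta}\right).
\end{align*}
The symmetric argument applied to $F_\mu(t-\delta) - F_{\mu_\epsilon}(t)$, splitting at $s = \delta$, yields the bound $\frac{1}{\pi}\arctan(\epsilon/\delta)$ for the other direction as well.

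It then remains to impose $\frac{1}{\pi}\arctan(\epsilon/\delta) \leq \delta$. Using the elementary inequality $\arctan(x) \leq x$ for $x \geq 0$, it suffices to take $\delta$ so that $\frac{\epsilon}{\pi \delta} \leq \delta$, which is equivalent to $\delta \geq \sqrt{\epsilon/\pi}$. With the choice $\delta = \sqrt{\epsilon/\pi}$, both L\'evy inequalities are satisfied for every $t \in \R$, giving $L(\mu_\epsilon,\mu) \leq \sqrt{\epsilon/\pi}$.

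There is no real obstacle here; the mild subtlety is simply that one must resist bounding the tail symmetrically as $\gamma_\epsilon(\R \setminus [-\delta,\delta])$, since doing so would cost the factor of $2$ and reproduce only Salazar's bound. Treating the two L\'evy inequalities separately and exploiting the monotonicity of $F_\mu$ on one side in each case is what delivers the sharper constant.
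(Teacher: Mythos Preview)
Your proof is correct and follows essentially the same approach as the paper. The paper phrases the argument in terms of the product measure $\mu\otimes\gamma_\epsilon$ on $\R^2$ and inclusion--exclusion on suitably chosen sets, while you work directly with the convolution identity $\F_{\mu_\epsilon}(t)=\int_\R \F_\mu(t-s)\,\mathrm{d}\gamma_\epsilon(s)$; in both cases the crux is that only a one-sided tail $\gamma_\epsilon((\delta,\infty))\leq \frac{\epsilon}{\pi\delta}$ enters each of the two L\'evy inequalities, and the choice $\delta=\sqrt{\epsilon/\pi}$ then balances the terms.
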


\begin{proof}
We prove that for all $t_0\in\R$ and each $\delta>0$
\begin{equation}\label{eq:Levy_estimates}
\F_{\mu_\epsilon}(t_0) \geq \F_\mu(t_0-\delta) - \frac{\epsilon}{\pi\delta} \qquad\text{and}\qquad \F_\mu(t_0)  \; \geq \;  \F_{\mu_\epsilon}(t_0-\delta) - \frac{\epsilon}{\pi\delta}.
\end{equation}
Once this is shown, it follows with the particular choice $\delta = \sqrt{\frac{\epsilon}{\pi}}$ that $L(\mu_\epsilon,\mu) \leq \delta$ and hence the assertion.

First of all, we use the fact $\mu_\epsilon = \mu \ast \gamma_\epsilon$ to write
$$\F_{\mu_\epsilon}(t_0) = (\mu \otimes \gamma_\epsilon)\big(\{(t,s)\in\R^2 \mid t + s \leq t_0\}\big).$$

For $\delta>0$, we consider the two sets
\begin{multline*}
A(t_0,\delta) := \{(t,s) \in\R^2 \mid (t \leq t_0-\delta)\, \lor\, (t + s > t_0)\} \qquad\text{and}\\
B(t_0,\delta) := \{(t,s) \in\R^2 \mid (t \leq t_0-\delta)\, \land\, (t + s > t_0)\}.
\end{multline*}
First, we notice that $B(t_0,\delta) \subseteq \R \times (\delta,\infty)$, so that
$$(\mu\otimes\gamma_\epsilon)(B(t_0,\delta)) \leq \frac{1}{\pi} \int_{\delta}^\infty \frac{\epsilon}{\epsilon^2+s^2}\, \mathrm{d}s \leq \frac{\epsilon}{\pi\delta}.$$
Next, we observe that
\begin{align*}
1 &\geq (\mu\otimes\gamma_\epsilon)(A(t_0,\delta))\\
  &= (\mu\otimes\gamma_\epsilon)\big((-\infty,t_0-\delta] \times \R\big) + (\mu\otimes\gamma_\epsilon)\big(\{(t,s)\in\R^2 \mid t + s > t_0\}\big) - (\mu\otimes\gamma_\epsilon)(B(t_0,\delta))\\
	&\geq \F_\mu(t_0-\delta) + (\mu\otimes\gamma_\epsilon)\big(\{(t,s)\in\R^2 \mid t + s > t_0\}\big) - \frac{\epsilon}{\pi\delta},
\end{align*}
from which we infer that
$$\F_{\mu_\epsilon}(t_0) = 1 - (\mu\otimes\gamma_\epsilon)\big(\{(t,s)\in\R^2 \mid t + s > t_0\}\big) \geq \F_\mu(t_0-\delta) - \frac{\epsilon}{\pi\delta},$$
which is the first inequality in \eqref{eq:Levy_estimates}.

In order to prove the second inequality, we consider the two sets
\begin{multline*}
A'(t_0,\delta) := \{(t,s) \in\R^2 \mid (t > t_0)\, \lor\, (t + s \leq t_0-\delta)\} \qquad\text{and}\\
B'(t_0,\delta) := \{(t,s) \in\R^2 \mid (t > t_0)\, \land\, (t + s \leq t_0 - \delta)\}.
\end{multline*}
First, we notice that $B'(t_0,\delta) \subseteq \R \times (-\infty,-\delta)$, so that
$$(\mu\otimes\gamma_\epsilon)(B'(t_0,\delta)) \leq \frac{1}{\pi} \int_{-\infty}^{-\delta} \frac{\epsilon}{\epsilon^2+s^2}\, \mathrm{d}s \leq \frac{\epsilon}{\pi\delta}.$$
Next, we observe that
\begin{align*}
1 &\geq (\mu\otimes\gamma_\epsilon)(A'(t_0,\delta))\\
  &= (\mu\otimes\gamma_\epsilon)\big((t_0,\infty) \times \R\big) + (\mu\otimes\gamma_\epsilon)\big(\{(t,s)\in\R^2 \mid t + s \leq t_0-\delta\}\big) - (\mu\otimes\gamma_\epsilon)(B'(t_0,\delta))\\
	&\geq (1-\F_\mu(t_0)) + (\mu\otimes\gamma_\epsilon)\big(\{(t,s)\in\R^2 \mid t + s \leq t_0-\delta\}\big) - \frac{\epsilon}{\pi\delta},
\end{align*}
from which we infer that
$$\F_\mu(t_0) \geq (\mu\otimes\gamma_\epsilon)\big(\{(t,s)\in\R^2 \mid t + s \leq t_0-\delta\}\big) - \frac{\epsilon}{\pi\delta} = \F_{\mu_\epsilon}(t_0-\delta) - \frac{\epsilon}{\pi\delta},$$
which is the second inequality in \eqref{eq:Levy_estimates}. 
\end{proof}

As observed in \cite{Salazar2020}, Lemma \ref{lem:Stieltjes_quantified} allows us to derive bounds for the L\'evy distance in terms of Cauchy transforms. Suppose that $\mu$ and $\nu$ are Borel probability measures on $\R$ and let $\epsilon>0$ be given. It is easy to verify that one has
$$L(\mu_\epsilon,\nu_\epsilon) \leq \frac{1}{\pi} \int^\infty_{-\infty} \big|\Im(\G_\mu(t+i\epsilon)) - \Im(\G_\nu(t+i\epsilon))\big|\, \mathrm{d}t.$$
In combination with the Lemma \ref{lem:Stieltjes_quantified}, the latter bound yields that
$$L(\mu,\nu) \leq L(\mu,\mu_\epsilon) + L(\mu_\epsilon,\nu_\epsilon) + L(\nu_\epsilon,\nu) \leq 2 \sqrt{\frac{\epsilon}{\pi}} + \frac{1}{\pi} \int^\infty_{-\infty} \big|\Im(\G_\mu(t+i\epsilon)) - \Im(\G_\nu(t+i\epsilon))\big|\, \mathrm{d}t,$$
which is the bound asserted in \eqref{eq:Levy_bound}.

\bigskip
\nocite{*}

\bibliographystyle{abbrv}
\bibliography{ref}

\begin{thebibliography}{10}

\bibitem{AEK2019}
O.~H. {Ajanki}, L.~{Erd\H{o}s}, and T.~{Kr\"uger}.
\newblock {Stability of the matrix Dyson equation and random matrices with
  correlations}.
\newblock {\em {Probab. Theory Relat. Fields}}, 173(1-2):293--373, 2019.

\bibitem{AEK2020}
J.~{Alt}, L.~{Erd\H{o}s}, and T.~{Kr\"uger}.
\newblock {The Dyson equation with linear self-energy: spectral bands, edges
  and cusps}.
\newblock {\em {Doc. Math.}}, 25:1421--1539, 2020.

\bibitem{And13}
G.~W. {Anderson}.
\newblock {Convergence of the largest singular value of a polynomial in
  independent Wigner matrices}.
\newblock {\em {Ann. Probab.}}, 41(3B):2103--2181, 2013.

\bibitem{And15}
G.~W. {Anderson}.
\newblock {A local limit law for the empirical spectral distribution of the
  anticommutator of independent Wigner matrices}.
\newblock {\em {Ann. Inst. Henri Poincar\'e, Probab. Stat.}}, 51(3):809--841,
  2015.

\bibitem{BS10}
Z.~D. {Bai} and J.~W. {Silverstein}.
\newblock {\em {Spectral analysis of large dimensional random matrices}}.
\newblock Dordrecht: Springer, 2nd ed. edition, 2010.

\bibitem{Ba-Ce-18}
M.~Banna and G.~C{\'e}bron.
\newblock Operator-valued matrices with free or exchangeable entries.
\newblock {\em To appear in {Annales de l'Institut Henri Poincaré,
  Probabilités et Statistiques} arXiv:1811.05373}, 2022.

\bibitem{Banna-Mai-18}
M.~{Banna} and T.~{Mai}.
\newblock {H\"older continuity of cumulative distribution functions for
  noncommutative polynomials under finite free Fisher information}.
\newblock {\em {J. Funct. Anal.}}, 279(8):44, 2020.
\newblock Id/No 108710.

\bibitem{Be-Bo-Ca-Ce-19}
S.~Belinschi, C.~Bordenave, M.~Capitaine, and G.~C{\'e}bron.
\newblock Outlier eigenvalues for non-hermitian polynomials in independent iid
  matrices and deterministic matrices.
\newblock {\em arXiv preprint arXiv:1906.10674}, 2019.

\bibitem{B-M-S-17}
S.~T. Belinschi, T.~Mai, and R.~Speicher.
\newblock Analytic subordination theory of operator-valued free additive
  convolution and the solution of a general random matrix problem.
\newblock {\em J. Reine Angew. Math.}, 732:21--53, 2017.

\bibitem{BPV2012}
S.~T. {Belinschi}, M.~{Popa}, and V.~{Vinnikov}.
\newblock {Infinite divisibility and a non-commutative Boolean-to-free
  Bercovici-Pata bijection}.
\newblock {\em {J. Funct. Anal.}}, 262(1):94--123, 2012.

\bibitem{BPV2013}
S.~T. {Belinschi}, M.~{Popa}, and V.~{Vinnikov}.
\newblock {On the operator-valued analogues of the semicircle, arcsine and
  Bernoulli laws}.
\newblock {\em {J. Oper. Theory}}, 70(1):239--258, 2013.

\bibitem{BV1993}
H.~{Bercovici} and D.~{Voiculescu}.
\newblock {Free convolution of measures with unbounded support}.
\newblock {\em {Indiana Univ. Math. J.}}, 42(3):733--773, 1993.

\bibitem{Biane-Speicher}
P.~Biane and R.~Speicher.
\newblock Stochastic calculus with respect to free {B}rownian motion and
  analysis on {W}igner space.
\newblock {\em Probab. Theory Related Fields}, 112(3):373--409, 1998.

\bibitem{Chi-Got-08}
G.~P. Chistyakov and F.~G\"{o}tze.
\newblock Limit theorems in free probability theory. {I}.
\newblock {\em Ann. Probab.}, 36(1):54--90, 2008.

\bibitem{CG13}
G.~P. {Chistyakov} and F.~{G\"otze}.
\newblock {Asymptotic expansions in the CLT in free probability}.
\newblock {\em {Probab. Theory Relat. Fields}}, 157(1-2):107--156, 2013.

\bibitem{Deya-Nourdin}
A.~Deya and I.~Nourdin.
\newblock Invariance principles for homogeneous sums of free random variables.
\newblock {\em Bernoulli}, 20(2):586--603, 2014.

\bibitem{Dyk-05}
K.~Dykema.
\newblock Hyperinvariant subspaces for some {$B$}-circular operators.
\newblock {\em Math. Ann.}, 333(3):485--523, 2005.
\newblock With an appendix by Gabriel Tucci.

\bibitem{Dyk-Haa-04-2}
K.~Dykema and U.~Haagerup.
\newblock D{T}-operators and decomposability of {V}oiculescu's circular
  operator.
\newblock {\em Amer. J. Math.}, 126(1):121--189, 2004.

\bibitem{Dyk-Haa-04}
K.~Dykema and U.~Haagerup.
\newblock Invariant subspaces of the quasinilpotent {DT}-operator.
\newblock {\em J. Funct. Anal.}, 209(2):332--366, 2004.

\bibitem{EKY18}
L.~{Erd\H{o}s}, T.~{Kr\"uger}, and Y.~{Nemish}.
\newblock {Local laws for polynomials of Wigner matrices}.
\newblock {\em {J. Funct. Anal.}}, 278(12):59, 2020.
\newblock Id/No 108507.

\bibitem{Fat-Nel-17}
M.~Fathi and B.~Nelson.
\newblock Free {S}tein kernels and an improvement of the free logarithmic
  {S}obolev inequality.
\newblock {\em Adv. Math.}, 317:193--223, 2017.

\bibitem{HST06}
U.~{Haagerup}, H.~{Schultz}, and S.~{Thorbj{\o}rnsen}.
\newblock {A random matrix approach to the lack of projections in \(C_{\mathrm
  {red}}^{\ast}(\mathbb F_2)\)}.
\newblock {\em {Adv. Math.}}, 204(1):1--83, 2006.

\bibitem{HT05}
U.~{Haagerup} and S.~{Thorbj{\o}rnsen}.
\newblock {A new application of random matrices: \(\operatorname{Ext}
  (C_{\text{red}}^*(F_2))\) is not a group}.
\newblock {\em {Ann. Math. (2)}}, 162(2):711--775, 2005.

\bibitem{HeltonFarSpeicher2007}
J.~W. {Helton}, R.~R. {Far}, and R.~{Speicher}.
\newblock {Operator-valued semicircular elements: solving a quadratic matrix
  equation with positivity constraints}.
\newblock {\em {Int. Math. Res. Not.}}, 2007(22):15, 2007.
\newblock Id/No rnm086.

\bibitem{HMS18}
J.~W. {Helton}, T.~{Mai}, and R.~{Speicher}.
\newblock {Applications of realizations (aka linearizations) to free
  probability}.
\newblock {\em {J. Funct. Anal.}}, 274(1):1--79, 2018.

\bibitem{Je-Li-19}
D.~Jekel and W.~Liu.
\newblock An operad of non-commutative independences defined by trees.
\newblock {\em arXiv preprint arXiv:1901.09158}, 2019.

\bibitem{Junge}
M.~Junge.
\newblock Embedding of the operator space {$OH$} and the logarithmic `little
  {G}rothendieck inequality'.
\newblock {\em Invent. Math.}, 161(2):225--286, 2005.

\bibitem{Kargin-Berry-Esseen}
V.~Kargin.
\newblock Berry-{E}sseen for free random variables.
\newblock {\em J. Theoret. Probab.}, 20(2):381--395, 2007.

\bibitem{Kargin}
V.~Kargin.
\newblock A proof of a non-commutative central limit theorem by the {L}indeberg
  method.
\newblock {\em Electron. Comm. Probab.}, 12:36--50, 2007.

\bibitem{Ke-No-Pe-Sp}
T.~Kemp, I.~Nourdin, G.~Peccati, and R.~Speicher.
\newblock Wigner chaos and the fourth moment.
\newblock {\em Ann. Probab.}, 40(4):1577--1635, 2012.

\bibitem{Lindeberg}
J.~W. Lindeberg.
\newblock Eine neue herleitung des exponentialgesetzes in der
  wahrscheinlichkeitsrechnung.
\newblock {\em Mathematische Zeitschrift}, 15(1):211--225, 1922.

\bibitem{Liu-18}
W.~Liu.
\newblock Operator valued random matrices and asymptotic freeness.
\newblock {\em arXiv preprint arXiv:1806.04848}, 2018.

\bibitem{Mai-Speicher-13}
T.~{Mai} and R.~{Speicher}.
\newblock {Operator-valued and multivariate free Berry-Esseen theorems}.
\newblock In {\em {Limit theorems in probability, statistics and number theory.
  In honor of Friedrich G\"otze on the occasion of his 60th birthday. Selected
  papers based on the presentations at the workshop, Bielefeld, Germany, August
  4--6, 2011}}, pages 113--140. Berlin: Springer, 2013.

\bibitem{MS18}
T.~{Mai} and R.~{Speicher}.
\newblock {Free probability, random matrices, and representations of
  non-commutative rational functions}.
\newblock In {\em {Computation and combinatorics in dynamics, stochastics and
  control. The Abel symposium, Rosendal, Norway, August 16--19, 2016. Selected
  papers}}, pages 551--577. Cham: Springer, 2018.

\bibitem{MSY18}
T.~{Mai}, R.~{Speicher}, and S.~{Yin}.
\newblock {The free field: zero divisors, Atiyah property and realizations via
  unbounded operators}.
\newblock {\em arXiv preprint arXiv:1805.04150}, 2018.

\bibitem{Mingo-Speicher}
J.~A. {Mingo} and R.~{Speicher}.
\newblock {\em {Free probability and random matrices}}, volume~35.
\newblock Toronto: The Fields Institute for Research in the Mathematical
  Sciences; New York, NY: Springer, 2017.

\bibitem{Mo-OD-Ol}
E.~Mossel, R.~O'Donnell, and K.~Oleszkiewicz.
\newblock Noise stability of functions with low influences: invariance and
  optimality.
\newblock {\em Ann. of Math. (2)}, 171(1):295--341, 2010.

\bibitem{Ni-Di-Sp-operator-valued}
A.~Nica, D.~Shlyakhtenko, and R.~Speicher.
\newblock Operator-valued distributions. i. characterizations of freeness.
\newblock {\em International Mathematics Research Notices},
  2002(29):1509--1538, 2002.

\bibitem{Nica-Speicher}
A.~Nica and R.~Speicher.
\newblock {\em {Lectures on the combinatorics of free probability.}}
\newblock Cambridge: Cambridge University Press, 2006.

\bibitem{No-Pe-Reinert}
I.~Nourdin, G.~Peccati, and G.~Reinert.
\newblock Invariance principles for homogeneous sums: universality of
  {G}aussian {W}iener chaos.
\newblock {\em Ann. Probab.}, 38(5):1947--1985, 2010.

\bibitem{Paulsen}
V.~Paulsen.
\newblock {\em Completely bounded maps and operator algebras}, volume~78 of
  {\em Cambridge Studies in Advanced Mathematics}.
\newblock Cambridge University Press, Cambridge, 2002.

\bibitem{P18}
G.~{Pisier}.
\newblock {On a linearization trick}.
\newblock {\em {Enseign. Math. (2)}}, 64(3-4):315--326, 2018.

\bibitem{Pisier-Xu}
G.~Pisier and Q.~Xu.
\newblock {Non-commutative $L_p$-spaces}.
\newblock {\em Handbook of the geometry of Banach spaces}, 2:1459--1517, 2003.

\bibitem{Ryan-98}
{\O}.~{Ryan}.
\newblock {On the limit distributions of random matrices with independent or
  free entries.}
\newblock {\em {Commun. Math. Phys.}}, 193(3):595--626, 1998.

\bibitem{Salazar2020}
M.~{Salazar}.
\newblock {On a Berry-Esseen type limit theorem for Boolean convolution}.
\newblock {\em {Preprint, arXiv:2009.13628 [math.PR]}}, 2020.

\bibitem{Shl-96}
D.~{Shlyakhtenko}.
\newblock {Random Gaussian band matrices and freeness with amalgamation}.
\newblock {\em {Int. Math. Res. Not.}}, 1996(20):1013--1025, 1996.

\bibitem{Sh-97}
D.~{Shlyakhtenko}.
\newblock {Limit distributions of matrices with bosonic and fermionic entries}.
\newblock In {\em {Free probability theory. Papers from a workshop on random
  matrices and operator algebra free products, Toronto, Canada, Mars 1995}},
  pages 241--252. Providence, RI: American Mathematical Society, 1997.

\bibitem{Shl1999}
D.~{Shlyakhtenko}.
\newblock {\(A\)-valued semicircular systems}.
\newblock {\em {J. Funct. Anal.}}, 166(1):1--47, 1999.

\bibitem{Simone}
R.~Simone.
\newblock Universality of free homogeneous sums in every dimension.
\newblock {\em ALEA Lat. Am. J. Probab. Math. Stat.}, 12(1):213--244, 2015.

\bibitem{Sni-03}
P.~\'{S}niady.
\newblock Multinomial identities arising from free probability theory.
\newblock {\em J. Combin. Theory Ser. A}, 101(1):1--19, 2003.

\bibitem{Speicher-98}
R.~{Speicher}.
\newblock {\em {Combinatorial theory of the free product with amalgamation and
  operator-valued free probability theory}}, volume 627.
\newblock American Mathematical Society (AMS), Providence, RI, 1998.

\bibitem{Sp-07}
R.~Speicher.
\newblock On the rate of convergence and {B}erry-{E}sseen type theorems for a
  multivariate free central limit theorem.
\newblock {\em arXiv preprint arXiv:0712.2974}, 2007.

\bibitem{SV12}
R.~{Speicher} and C.~{Vargas}.
\newblock {Free deterministic equivalents, rectangular random matrix models,
  and operator-valued free probability theory}.
\newblock {\em {Random Matrices Theory Appl.}}, 1(2):1150008, 26, 2012.

\bibitem{Voi-86}
D.~Voiculescu.
\newblock Addition of certain non-commuting random variables.
\newblock {\em Journal of Functional Analysis}, 66(3):323 -- 346, 1986.

\bibitem{Voi-91}
D.~Voiculescu.
\newblock Limit laws for random matrices and free products.
\newblock {\em Invent. Math.}, 104(1):201--220, 1991.

\bibitem{Voi-95}
D.~{Voiculescu}.
\newblock {Operations on certain non-commutative operator-valued random
  variables}.
\newblock In {\em {Recent advances in operator algebras. Collection of talks
  given in the conference on operator algebras held in Orl\'eans, France in
  July 1992}}, pages 243--275. Paris: Soci\'et\'e Math\'ematique de France,
  1995.

\bibitem{W10}
J.-C. {Wang}.
\newblock {Local limit theorems in free probability theory}.
\newblock {\em {Ann. Probab.}}, 38(4):1492--1506, 2010.

\bibitem{Wil2017}
J.~{Williams}.
\newblock {\(B\)-valued free convolution for unbounded operators}.
\newblock {\em {Indiana Univ. Math. J.}}, 66(4):1417--1451, 2017.

\end{thebibliography}

\end{document}